\documentclass{amsart}

\RequirePackage{fancyhdr}
\RequirePackage{lastpage}
\RequirePackage{bm}
\RequirePackage[table,dvipsnames]{xcolor}
\RequirePackage{pgfplots}
\RequirePackage{caption}
\RequirePackage[T1]{fontenc}
\RequirePackage{tabu}
\RequirePackage{graphicx}
\RequirePackage{multirow}
\RequirePackage{amsmath,amssymb,amsthm}
\RequirePackage{hyperref}
\hypersetup{colorlinks=true,allcolors=magenta}
\RequirePackage{courier}
\RequirePackage{tikz}
\RequirePackage{tikz-cd}
\usetikzlibrary{calc,matrix,arrows,decorations.markings}
\RequirePackage{array}
\RequirePackage{enumerate}
\RequirePackage{nicefrac}
\RequirePackage{listings}
\RequirePackage{centernot}
\RequirePackage{mathtools}
\RequirePackage{stmaryrd}
\RequirePackage{euscript}
\RequirePackage{manfnt}
\RequirePackage{marginnote}
\RequirePackage{mathrsfs}
\RequirePackage{calligra}
\RequirePackage[inline]{enumitem}
\RequirePackage[refpage]{nomencl}
\RequirePackage{accents}
\RequirePackage{ytableau}
\RequirePackage{tipa}
\RequirePackage{bbm}
\RequirePackage{dsfont}
\RequirePackage{setspace}
\RequirePackage{chngcntr}
\RequirePackage{afterpage}
\DeclareMathAlphabet\EuScript{U}{eus}{m}{n}
\SetMathAlphabet\EuScript{bold}{U}{eus}{b}{n}

\DeclareMathAlphabet{\mathpzc}{OT1}{pzc}{m}{it}

\newcommand{\F}{\mathbb{F}}

\newcommand{\Fbar}{\overline{\F}}

\newcommand{\Fq}{\F_q}
\newcommand{\Fqbar}{\Fbar_q}

\newcommand{\Q}{\mathbb{Q}}
\newcommand{\Qbar}{\overline{\Q}}

\newcommand{\Qlbar}{\Qbar_\ell}

\newcommand{\bO}{\mathbb{O}}

\newcommand{\bL}{\mathbb{L}}

\newcommand{\Z}{\mathbb{Z}}
\newcommand{\1}{\mathbf{1}}

\newcommand{\Aut}{\operatorname{Aut}}
\newcommand{\Out}{\operatorname{Out}}

\newcommand{\rmR}{{\rm R}}
\newcommand{\rmJ}{{\rm J}}

\newcommand{\SL}{{\rm SL}}
\newcommand{\GL}{\operatorname{GL}}
\newcommand{\PGL}{{\rm PGL}}

\newcommand{\Tr}{\operatorname{Tr}}

\renewcommand{\O}{\mathcal{O}}

\newcommand{\B}{{\rm B}}

\newcommand{\N}{{\rm N}}

\newcommand{\p}{\mathfrak{p}}

\renewcommand{\b}{{\mathfrak b}}
\newcommand{\z}{{\mathfrak z}}
\newcommand{\fA}{{\mathfrak A}}

\newcommand{\Hom}{\operatorname{Hom}}
\newcommand{\uHom}{\underline{\operatorname{Hom}}}

\newcommand{\Alg}{\operatorname{Alg}}

\newcommand{\Morita}{\operatorname{Morita}}

\newcommand{\id}{\operatorname{id}}

\newcommand{\A}{\mathbb{A}}

\newcommand{\ab}{{\rm ab}}

\newcommand{\angles}[1]{\langle #1\rangle}

\newcommand{\laurent}[1]{(\!(#1)\!)}

\newcommand{\Bun}{{\rm Bun}}
\newcommand{\Fl}{{\rm Fl}}
\newcommand{\Gr}{{\rm Gr}}

\newcommand{\Spec}{\operatorname{Spec}}

\renewcommand{\H}{{\rm H}}
\newcommand{\M}{{\rm M}}

\newcommand{\Vect}{\operatorname{Vect}}

\newcommand{\h}{\mathfrak{h}}

\newcommand{\vertsim}{\rotatebox{90}{$\sim$}}

\newcommand{\Ext}{\operatorname{Ext}}

\newcommand{\rmP}{{\rm P}}
\newcommand{\rmZ}{{\rm Z}}
\newcommand{\hBar}{{\rm Bar}}

\newcommand{\Kos}{{\rm Kos}}

\newcommand{\coh}{{\rm coh}}

\newcommand{\op}{{\rm op}}
\newcommand{\rev}{{\rm rev}}
\newcommand{\rgd}{{\rm rgd}}
\newcommand{\aff}{{\rm aff}}
\newcommand{\ext}{{\rm ext}}
\newcommand{\sch}{{\rm sch}}
\newcommand{\cov}{{\rm cov}}

\newcommand{\enr}{{\rm enr}}
\newcommand{\reg}{{\rm reg}}
\newcommand{\ren}{{\rm ren}}

\newcommand{\const}{{\rm const}}
\newcommand{\pre}{{\rm pre}}
\newcommand{\pr}{{\rm pr}}
\newcommand{\act}{{\rm act}}
\newcommand{\tin}{{\rm in}}
\newcommand{\mult}{{\rm mult}}

\newcommand{\unit}{{\rm unit}}
\newcommand{\End}{\operatorname{End}}

\newcommand{\sF}{\mathscr{F}}
\newcommand{\sG}{\mathscr{G}}

\newcommand{\sL}{\mathscr{L}}

\newcommand{\sS}{\mathscr{S}}

\newcommand{\wt}[1]{\widetilde{#1}}
\newcommand{\wh}[1]{\widehat{#1}}

\newcommand{\sHom}{\mathscr{H}\text{\kern -.35em{\calligra\Large om}\kern .08em}}
\newcommand{\sEnd}{\mathscr{E}\text{\kern -.35em{\calligra\Large nd}\kern .28em}}
\newcommand{\cHom}{\cH\hspace{-.04em}om}

\newcommand{\ucHom}{\underline{\cH\hspace{-.04em}om}}

\newcommand{\uSpec}{\operatorname{\underline{Spec}}}

\newcommand{\supp}{\operatorname{supp}}

\newcommand{\red}{{\rm red}}
\newcommand{\codim}{{\rm codim}}
\newcommand{\rank}{\operatorname{rank}}

\newcommand{\Rep}{\operatorname{Rep}}

\DeclareMathOperator*{\colim}{colim}
\newcommand{\cA}{\mathcal{A}}

\newcommand{\cC}{\mathcal{C}}

\newcommand{\cE}{\mathcal{E}}
\newcommand{\cF}{\mathcal{F}}
\newcommand{\cG}{\mathcal{G}}
\newcommand{\cK}{\mathcal{K}}
\newcommand{\cH}{\mathcal{H}}

\newcommand{\cL}{\mathcal{L}}

\newcommand{\cN}{\mathcal{N}}

\newcommand{\cR}{\mathcal{R}}
\newcommand{\cS}{\mathcal{S}}

\newcommand{\bfB}{\mathbf{B}}

\newcommand{\eA}{\EuScript{A}}
\newcommand{\eB}{\EuScript{B}}
\newcommand{\eC}{\EuScript{C}}

\newcommand{\eF}{\EuScript{F}}
\newcommand{\eH}{\EuScript{H}}

\newcommand{\eN}{\EuScript{N}}
\newcommand{\eM}{\EuScript{M}}

\newcommand{\eP}{\EuScript{P}}
\newcommand{\eQ}{\EuScript{Q}}

\newcommand{\eS}{\EuScript{S}}

\newcommand{\eX}{\EuScript{X}}
\newcommand{\eY}{\EuScript{Y}}
\newcommand{\eZ}{\EuScript{Z}}

\newcommand{\Sym}{\operatorname{Sym}}

\newcommand{\Pic}{\operatorname{Pic}}

\newcommand{\Lag}{\operatorname{Lag}}
\renewcommand{\M}{\operatorname{M}}

\newcommand{\bD}{\mathbb{D}}
\newcommand{\bG}{\mathbb{G}}

\newcommand{\doubles}[1]{\llbracket #1\rrbracket}

\newcommand{\SO}{\mathrm{SO}}
\newcommand{\g}{\mathfrak{g}}

\newcommand{\n}{\mathfrak{n}}

\renewcommand{\sl}{\mathfrak{sl}}

\newcommand{\Lie}{\operatorname{Lie}}

\newcommand{\Spin}{\operatorname{Spin}}

\newcommand{\Res}{\operatorname{Res}}

\newcommand{\Ind}{\operatorname{Ind}}

\newcommand{\HH}{\operatorname{HH}}

\newcommand{\ev}{\operatorname{ev}}

\newcommand{\Gp}{{\rm Gp}}

\newcommand{\ad}{{\rm ad}}

\newcommand{\Map}{\operatorname{Map}}
\newcommand{\bfMap}{\mathbf{Map}}
\newcommand{\bfEnd}{\mathbf{End}}

\newcommand{\cl}{{\rm cl}}
\newcommand{\Coc}{\operatorname{Coc}}
\newcommand{\Coh}{\operatorname{Coh}}
\newcommand{\Perf}{\operatorname{Perf}}

\newcommand{\QC}{\operatorname{QC}}

\newcommand{\Shv}{\operatorname{Shv}}

\newcommand{\dgCat}{\mathbf{dgCat}}
\newcommand{\rmmod}{{\rm mod}}
\newcommand{\rmcoh}{{\rm coh}}
\newcommand{\Par}{\mathrm{Par}}

\newcommand{\BG}{\operatorname{\mathsf{BG}}}
\newcommand{\preBG}{\prescript{\pre}{}{\BG}}

\newcommand{\dashmon}{\dash{\rm mon}}
\newcommand{\dashmod}{\operatorname{-mod}}

\newcommand{\dashperf}{\operatorname{-perf}}

\newcommand{\dashbmod}{\operatorname{-\mathbf{mod}}}

\newcommand{\dash}{\operatorname{-}}
\newcommand{\heart}{\ensuremath\heartsuit}

\newcommand{\rmss}{\mathrm{ss}}

\newcommand{\dq}{\mathrm{dq}}

\newcommand{\orb}{\mathrm{orb}}

\newcommand{\AS}{\mathrm{AS}}

\newcommand{\loccit}{\emph{loc.\ cit.}}
\newcommand{\etal}{\emph{et.\ al.}}
\newcommand{\git}{/\!/}
\newcommand{\Sing}{\operatorname{Sing}}

\newcommand{\hpi}{{}^{\h}\hspace{-0.5pt}\pi}
\newcommand{\hcE}{{}^{\h}\hspace{-1pt}\cE}
\newcommand{\hcA}{{}^{\h}\hspace{-2.5pt}\cA}
\tikzset{
    vert/.style={anchor=north, rotate=90, inner sep=.5mm}
}

\allowdisplaybreaks

\newtheorem{theorem}{Theorem}[subsection]
\theoremstyle{definition}
\newtheorem{lemma}[theorem]{Lemma}
\newtheorem{corollary}[theorem]{Corollary}
\newtheorem{proposition}[theorem]{Proposition}

\newtheorem{definition}[theorem]{Definition}
\newtheorem{notation}[theorem]{Notation}
\newtheorem{construction}[theorem]{Construction}
\newtheorem{remark}[theorem]{Remark}

\theoremstyle{remark}
\newtheorem{atom}[theorem]{}

\newcommand{\ps}[1]{{}^{#1}\hspace{-0.1em}}

\newcommand{\tens}[1]{%
  \mathbin{\mathop{\otimes}\limits_{#1}}%
}

\newcommand{\fibprod}[1]{%
  \mathbin{\mathop{\times}\limits_{#1}}%
}

\RequirePackage{scalerel}
\newcommand\reallywidehat[1]{\arraycolsep=0pt\relax%
\begin{array}{c}
\stretchto{
  \scaleto{
    \scalerel*[\widthof{\ensuremath{#1}}]{\kern-.5pt\bigwedge\kern-.5pt}
    {\rule[-\textheight/2]{1ex}{\textheight}}
  }{\textheight}%
}{0.5ex}\\
#1\\
\rule{-1ex}{0ex}
\end{array}
}

\makeatletter
\newcases{lrdcases}
  {\quad}
  {$\m@th\displaystyle{##}$\hfil}
  {$\m@th\displaystyle{##}$\hfil}
  {\lbrace}
  {\rbrace}
\makeatother

\makeatletter
\newcommand*{\da@rightarrow}{\mathchar"0\hexnumber@\symAMSa 4B }
\newcommand*{\da@leftarrow}{\mathchar"0\hexnumber@\symAMSa 4C }
\newcommand*{\xdashrightarrow}[2][]{%
  \mathrel{%
    \mathpalette{\da@xarrow{#1}{#2}{}\da@rightarrow{\,}{}}{}%
  }%
}
\newcommand{\xdashleftarrow}[2][]{%
  \mathrel{%
    \mathpalette{\da@xarrow{#1}{#2}\da@leftarrow{}{}{\,}}{}%
  }%
}
\newcommand*{\da@xarrow}[7]{%
  \sbox0{$\ifx#7\scriptstyle\scriptscriptstyle\else\scriptstyle\fi#5#1#6\m@th$}%
  \sbox2{$\ifx#7\scriptstyle\scriptscriptstyle\else\scriptstyle\fi#5#2#6\m@th$}%
  \sbox4{$#7\dabar@\m@th$}%
  \dimen@=\wd0 %
  \ifdim\wd2 >\dimen@
    \dimen@=\wd2 %
  \fi
  \count@=2 %
  \def\da@bars{\dabar@\dabar@}%
  \@whiledim\count@\wd4<\dimen@\do{%
    \advance\count@\@ne
    \expandafter\def\expandafter\da@bars\expandafter{%
      \da@bars
      \dabar@ 
    }%
  }%
  \mathrel{#3}%
  \mathrel{%
    \mathop{\da@bars}\limits
    \ifx\\#1\\%
    \else
      _{\copy0}%
    \fi
    \ifx\\#2\\%
    \else
      ^{\copy2}%
    \fi
  }%
  \mathrel{#4}%
}
\makeatother

\makeatletter
\DeclareRobustCommand{\tvdots}{%
  \vbox{\baselineskip4\p@\lineskiplimit\z@\kern0\p@\hbox{.}\hbox{.}\hbox{.}}}
\makeatother

\swapnumbers

\usepackage[top=1in,bottom=1in,left=1in,right=1in]{geometry}

\definecolor{zaffre}{rgb}{0.0, 0.08, 0.66}
\hypersetup{colorlinks=true,allcolors=zaffre}

\title{Cohomological boundedness of twisted coherent Springer sheaves}
\author{Oron Y. Propp}

\numberwithin{subsection}{section}
\numberwithin{theorem}{subsection}
\numberwithin{equation}{theorem}
\counterwithout{footnote}{subsection}

\begin{document}

\begin{abstract}
We prove that the coherent Springer sheaf and its parabolic analogues are concentrated in cohomological degree $0$, as predicted by Ben-Zvi--Chen--Helm--Nadler, Zhu, Emerton--Gee--Hellmann, Hansen, and others. More generally, we show that the universal trace functor for a mixed partial affine Hecke category is right t-exact with respect to the exotic t-structure given by Bezrukavnikov--Mirkovi\'c's noncommutative Springer resolution, and left t-exact with respect to the monoidally dual t-structure. To this end, we construct an explicit complex computing the universal trace functor for certain monoidal categories over quotient stacks.
\end{abstract}

\maketitle

\thispagestyle{plain}

\makeatletter
\renewcommand{\@makefnmark}{\hbox{\textsuperscript{\footnotesize{\@thefnmark}}}}
\renewcommand*{\thefootnote}{\fnsymbol{footnote}}
\makeatother

\makeatletter
\renewcommand{\@makefnmark}{\hbox{\textsuperscript{\tiny{\@thefnmark}}}}
\renewcommand*{\thefootnote}{\arabic{footnote}}
\setcounter{footnote}{0}
\makeatother

\tableofcontents

\section{Introduction}

\subsection{Origins of the problem}

\begin{atom}
Fix a split connected reductive group $\check{G}$ over a nonarchimedean local field $F$ with residue field $\F_{q}$, and let $G$ be its Langlands dual group over $k:=\Qlbar$ for a prime $\ell\nmid q$. The categorical local Langlands conjecture predicts the existence of a fully faithful embedding\footnote{This embedding may be extended to an equivalence from the category of D-modules on the stack of principal $\check{G}$-bundles on the Fargues--Fontaine curve. The latter interpolates between the categories of smooth $k$-linear representations of all inner forms of Levi subgroups of $\check{G}$ parameterized by the Kottwitz set.} of dg-categories
\begin{equation}
\label{eqn:cat-lang-conj}
\Rep_k\check{G}\hookrightarrow\QC^!(\Par_G),
\end{equation}
from smooth $k$-linear representations of $\check{G}$ to ind-coherent sheaves on a stack (over $\Spec k$) of $\ell$-adically continuous Langlands parameters \cite{hansen,zhu,hellmann,fargues-scholze,egh-intro}.

Ben-Zvi \etal\ have recently constructed such an embedding for the unramified principal series block of $\Rep_k\check{G}$ \cite{benzvi}. This is the subcategory generated by parabolic inductions of unramified characters of a maximal torus of $\check{G}$; equivalently, it is the subcategory of modules over the Iwahori--Hecke algebra of $\check{G}$ for the corresponding Borel subgroup. In this setting, the embedding \eqref{eqn:cat-lang-conj} admits a canonical quantum deformation, and holds for any algebraically closed field $k$ of characteristic $0$.

On the automorphic side, this deformation is given by the affine Hecke algebra $\cH$, a $k[v^{\pm 1}]$-algebra whose specialization at $v=q^{1/2}$ recovers the Iwahori--Hecke algebra, and whose specialization at $v=1$ recovers the group ring of the extended affine Weyl group of $\check{G}$. On the spectral side, it is given by a ``stack of Deligne--Lusztig parameters,'' which classically parameterize the simple $\cH$-modules \cite{kazhdanlusztig,reeder}.

More precisely, let $\g$ denote the Lie algebra of $G$, and $\eN\subset\g$ the nilpotent cone; let $G$ act on these spaces adjointly, let $\bG_m$ act by scaling in weight $-2$, and set $\wt{G}:=G\times\bG_m$. This stack is then the derived loop space (i.e., the derived self-intersection of the diagonal) of the quotient stack $\g/\wt{G}$, completed along $\eN$; we denote it by $\cL(\wh{\eN}/\wt{G})$. Its $k$-points are simply the classical inertia
\begin{equation*}
\cL(\wh{\eN}/\wt{G})(k)\cong\{(e,s,v)\in\eN\times\wt{G}:ses^{-1}=v^2e\}/\wt{G},
\end{equation*}
recovering the set of Deligne--Langlands parameters when $s$ is assumed semisimple. Specializing $\cL(\wh{\eN}/\wt{G})$ to $v=q^{1/2}$ gives the corresponding component of $\Par_G$, denoted $\cL_{q^{1/2}}(\wh{\eN}/G)$.

Ben-Zvi \etal\ then construct a remarkable ``coherent Springer sheaf'' $\cS\in\Coh(\cL(\wh{\eN}/\wt{G}))$ whose (derived) endomorphism algebra identifies with $\cH$, generating a fully faithful embedding
\begin{equation*}
\cH\dashmod\simeq\angles{\cS}\hookrightarrow\QC^!(\cL(\wh{\eN}/\wt{G})).
\end{equation*}
Specializing $\cS$ to $v=q^{1/2}$ yields a ``coherent $q$-Springer sheaf'' $\cS_{q^{1/2}}\in\Coh(\cL_{q^{1/2}}(\wh{\eN}/G))$, and the unramified principal series block of \eqref{eqn:cat-lang-conj}.
\end{atom}

\begin{atom}
\label{sec:intro-coh-spr-shf}
The existence of $\cS$ arises naturally from\footnote{Even more naturally, from its ``graded'' or ``mixed'' enhancement predicted by Ho--Li \cite{ho-li}.} the local geometric Langlands equivalence for tamely ramified local systems with unipotent monodromy, originally owed to Bezrukavnikov \cite{beztwo}, and generalized to the dg-categorical setting by Chen--Dhillon \cite{chen-dhillon}.

Let now $F:=\Fqbar\laurent{t}$ be the field of Laurent series, let $\check{G}_F$ denote the loop group of $\check{G}$, and let $I\subset\check{G}_F$ denote the Iwahori subgroup. Denote by $\pi\colon\wt{\eN}\to\g$ the Springer resolution of $\eN$, i.e., the cotangent bundle of the flag variety of $G$ equipped with its moment map and natural $\wt{G}$-action.

Then there is an equivalence of rigid monoidal dg-categories
\begin{equation}
\label{eqn:bez-chen-dhillon}
\Shv_{\ren}(I\backslash\check{G}/I)\simeq\QC^!(\wt{\eN}\fibprod{\g}\wt{\eN}/G),
\end{equation}
each equipped with its canonical convolution product. Here the former denotes a ``renormalization'' of the category of $I$-equivariant (ind-constructible, $\ell$-adic) \'etale sheaves on the affine flag variety of $\check{G}$, and is a natural categorification of the Iwahori--Hecke algebra; the latter denotes ind-coherent sheaves on the ``Steinberg stack'' of $G$. This equivalence intertwines the pullback by geometric Frobenius with the endofunctor $q^{-1/2,*}$ (i.e., pullback along the action of $q^{-1/2}\in\bG_m$, which scales by $q$), further allowing us to probe the structure of the ``mixed affine Hecke category''
\begin{equation*}
\eH^{\coh}:=\QC^!(\wt{\eN}\fibprod{\g}\wt{\eN}/\wt{G}).
\end{equation*}

The categorical trace of $\eH^{\coh}$ (in the sense of \cite{gkrv,campbell-ponto}) identifies with $\QC^!(\cL(\wh{\eN}/\wt{G}))$, and the coherent Springer sheaf is defined as the image of the monoidal unit $\Delta_*\O_{\wt{\eN}/\wt{G}}$ under the ``universal trace functor''
\begin{equation}
\label{eqn:univ-tr-funct-H-coh}
[-]\colon \eH^{\coh}\to\Tr(\eH^{\coh})\simeq\QC^!(\cL(\wh{\eN}/\wt{G})).
\end{equation}
Standard results on Hochschild homology then provide an equivalence
\begin{equation*}
\End_{\cL(\wh{\eN}/\wt{G})}(\cS)^\op\simeq\HH(\eH^{\coh}),
\end{equation*}
and Bezrukavnikov's equivalence yields a monoidal identification $\HH(\eH^{\coh})\simeq\cH$ (in particular, the Hochschild complex is concentrated in cohomological degree $0$). Ben-Zvi \etal\ furthermore show that $\cS\simeq\cL\pi_*\O_{\cL(\wt{\eN}/\wt{G})}$, i.e., the pushforward of the structure sheaf (or equivalently, the dualizing sheaf) under the loop space analog of the Springer resolution.

In all but the simplest cases, the stack $\cL(\wt{\eN}/\wt{G})$ has highly nontrivial derived structure. Thus, \emph{a priori}, the coherent Springer sheaf (and its specializations $\cS_v$) are neither connective nor coconnective, but merely objects of the bounded derived category. Several of the aforementioned citations\footnote{See \cite[Conj.~4.15]{benzvi}, \cite[Conj.~4.15]{zhu}, \cite[Conj.~2.17]{hellmann}, and \cite[Conj.~6.1.14(2)]{egh-intro}. A far more general conjecture appears in \cite[Conj.~3.2.1]{hansen}. The conjecture for $\cS$ has been confirmed in \cite[Cor.~4.4.6]{ginzburg-isospectral} for its Lie algebra analog at $v=1$, and in \cite[Prop.~4.19]{benzvi} when $G$ is of type $A_1$.} have conjectured that $\cS$, and each $\cS_v$, are in fact ``honest'' coherent sheaves, i.e., have cohomology concentrated in degree $0$. This would, for instance, yield an explicit description of the isomorphism $\End(\cS)^\op\simeq\cH$ via Demazure operators, as explained in \cite[Rmk.~4.16]{benzvi}.

In this paper, we prove these conjectures.
\end{atom}

\subsection{Statement of results}

\begin{atom}
More generally, we work with an arbitrary (standard) parabolic $\check{P}$ in place of the Borel subgroup of $\check{G}$, with a view towards a fuller understanding of $\Tr(\eH^{\coh})$.

On the automorphic side, $I$ is replaced by the parahoric $I_{\check{P}}\subset\check{G}_F$, defined as the preimage of $\check{P}$ under the evaluation map $\check{G}_O\to\check{G}$; here $O:=\Fqbar\doubles{t}$ is the ring of integers of $F$, and $\check{G}_O$ is the arc group of $\check{G}$. In our original setting, this corresponds to modules over the $I_{\check{P}}$-bi-invariant subalgebra of the Iwahori--Hecke algebra; such modules are generated by the $I_{\check{P}}$-invariants of unramified principal series representations.

On the spectral side, $\wt{\eN}$ is replaced by $\wt{\eN}_P$, the cotangent bundle to the partial flag variety $G/P$, and $\eN$ is replaced by $\eN_P$, the image of the partial Springer resolution $\pi_P\colon\wt{\eN}_P\to\g$. The equivalence \eqref{eqn:bez-chen-dhillon} and identification \eqref{eqn:univ-tr-funct-H-coh} continue to hold in this generality, with $\eH^{\coh}$ replaced by $\eH^{\coh}_P:=\QC^!(\wt{\eN}_P\times_{\g}\wt{\eN}_P/\wt{G})$, and so forth.

Rather than focus on $\cS$ (or its parabolic analog, $\cS_P$), we study the general t-exactness properties of the universal trace functor \eqref{eqn:univ-tr-funct-H-coh}. Specifically, we relate an ``exotic'' t-structure on $\eH^{\coh}_P$ to the standard t-structure on $\QC^!(\cL(\wh{\eN}_P/\wt{G}))$.

The former comes from Bezrukavnikov--Mirkovi\'c's ``noncommutative Springer resolution,'' which is a tilting vector bundle $\cE_P\in\QC(\wt{\eN}_P/\wt{G})$ \cite{bez-mirk,bez-losev}. Thus, the $\O(\g)$-algebra $\cA_P:=\End_{\wt{\eN}_P}(\cE_P)$ is concentrated in cohomological degree $0$, and there is an equivalence
\begin{equation}
\label{eqn:noncomm-spr-equiv}
\Hom_{\wt{\eN}_P}(\cE_P,-)\colon\QC(\wt{\eN}_P/\wt{G})\xrightarrow{\sim}\cA_P^\op\dashmod^{\wt{G}}
\end{equation}
from the category of quasi-coherent sheaves on $\wt{\eN}_P/\wt{G}$ to the category of $\wt{G}$-equivariant right $\cA_P$-modules. This yields a monoidal equivalence\footnote{Here we have ``renormalized'' the category of $\wt{G}$-equivariant $\cA_{P}$-bimodules so that its compact objects are cohomologically bounded complexes with finitely generated cohomology, rather than perfect complexes.}
\begin{equation}
\label{eqn:exotic-equiv-aff-hecke}
\Hom_{\wt{\eN}_P\fibprod{\g}\wt{\eN}_P}(\cE_P^{\vee}\boxtimes\cE_P,-)\colon\eH_P^\coh\xrightarrow{\sim}\cA_P\tens{\O(\g)}\cA_{P}^{\op}\dashmod^{\wt{G}}_{\ren}
\end{equation}
compatible with the right module structures on \eqref{eqn:noncomm-spr-equiv}. The exotic t-structure on $\eH_P^\coh$ is then transported from the standard t-structure on the right-hand side.

Our main result is as follows (Theorem~\ref{thm:Baff-t-str} in the sequel):
\end{atom}

\begin{theorem}
\label{thm:Baff-t-str-intro}
\begin{enumerate}[leftmargin=*]
\item The universal trace functor
\begin{equation}
\label{eqn:univ-tr-parab-intro}
[-]\colon\eH_P^\coh\to\QC^!(\cL(\wh{\eN}_P/\wt{G}))
\end{equation}
has cohomological amplitude in $[-\dim\wt{\eN}_P,0]$ with respect to the exotic t-structure on $\eH_P^\coh$ and the standard t-structure on $\QC^!(\cL(\wh{\eN}_P/\wt{G}))$. In particular, it is right t-exact.

\item Let $\cF$ be a compact object of $\eH_P^\coh$, and suppose that the right monoidal dual $\cF^{\vee,R}$ (or equivalently, by Remark~\ref{rem:conv-cat-pivotal}, the left dual $\cF^{\vee,L}$) is connective for the exotic t-structure. Then $[\cF]$ is coconnective for the standard t-structure. Equivalently, \eqref{eqn:univ-tr-parab-intro} is left t-exact with respect to the monoidal dual of the exotic t-structure on $\eH_P^\coh$ and the standard t-structure on $\QC^!(\cL(\wh{\eN}_P/\wt{G}))$.
\end{enumerate}
\end{theorem}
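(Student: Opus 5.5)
The plan is to compute the universal trace functor explicitly through the exotic equivalence \eqref{eqn:exotic-equiv-aff-hecke}, so that $\eH_P^\coh$ is modelled as renormalized $\wt{G}$-equivariant bimodules over $\cA_P$, an $\O(\g)$-algebra concentrated in degree $0$. For a monoidal category of bimodules over an algebra $\cA$ over a quotient stack $X/H$, with $X=\g$ and $H=\wt{G}$, I expect the categorical trace to be quasi-coherent (or ind-coherent) sheaves on the derived loop space. The universal trace of a bimodule $M$ should be computed by an explicit complex: the relative Hochschild complex $\cA^{\otimes_{\O(\g)}\bullet}\otimes M$, taken over $\O(\g)$ and combined with the loop-space (Koszul/inertia) direction $\O(\g)\otimes_{\O(\g)\otimes\O(\g)}^L\O(\g)$ twisted by the group action, i.e.\ a complex over $\O(\cL(\g/\wt{G}))$. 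This is the ``explicit complex computing the universal trace functor'' promised in the abstract. I would first establish this formula abstractly, using the trace of $\QC(X/H)$ acting on itself, and then specialize to $\cA_P$, completing along $\eN_P$.

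For (1), take $\cF$ in the heart of the exotic t-structure, i.e.\ an honest equivariant $\cA_P$-bimodule $M$ in degree $0$. The trace $[M]$ is then the derived cyclic tensor $M\otimes^L_{\cA_P\otimes_{\O(\g)}\cA_P^\op}\cA_P$, formed relative to the loop structure. I would resolve $\cA_P$ as a bimodule, or use the bar construction, which involves only nonpositive degrees, and conclude $[M]\in\QC^{\le 0}$. This gives right t-exactness.

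For the lower bound $-\dim\wt{\eN}_P$, I would bound the Tor-amplitude. The main tool is that $\cA_P\tens{\O(\g)}\cA_P^\op$ has finite global dimension relative to the base, because $\cE_P$ is tilting on the smooth variety $\wt{\eN}_P$ and $\cA_P\dashmod\simeq\QC(\wt{\eN}_P)$. Hence $\cA_P$ admits a projective bimodule resolution of length $\dim\wt{\eN}_P$, since that is the cohomological dimension of the diagonal in $\wt{\eN}_P\times_\g\wt{\eN}_P$ relative to $\g$. The inertia/Koszul directions for $\g$ and the group are absorbed because the loop space is taken over $\g/\wt{G}$ and the trace is computed inside it, so they should not add amplitude. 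I expect this to be the main obstacle: making precise that the Koszul directions along $\g$ cancel against the relative tensor product, so that only the length of a bimodule resolution of $\cA_P$ contributes. If that fails, I would instead compute on $\cL(\wt{\eN}_P/\wt{G})$ and push forward, bounding via the dimension of $\wt{\eN}_P$.

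For (2), I would use duality. Grothendieck--Serre duality on $\cL(\wh{\eN}_P/\wt{G})$ should be compatible with the trace, giving $\bD[\cF]\simeq[\cF^{\vee,R}]$ up to the appropriate twist. Rigidity of $\eH_P^\coh$ and pivotality (Remark~\ref{rem:conv-cat-pivotal}) identify the monoidal dual with the Serre-type dual. Then connectivity of $\cF^{\vee,R}$ together with part (1) gives $[\cF^{\vee,R}]$ connective. Because the loop stack is quasi-smooth and its dualizing complex is shifted appropriately, dualizing converts this into coconnectivity of $[\cF]$. The delicate point in this step is checking that the dualizing sheaf of $\cL(\wh{\eN}_P/\wt{G})$ is trivial in degree $0$, which holds since loop spaces are ``Calabi–Yau''; with that, the shift vanishes.
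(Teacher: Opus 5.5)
Your right t-exactness argument in (1) is essentially the paper's: computed with the generator $\cE_P$, the Block--Getzler complex has terms $\cA_P^{\otimes n}\otimes_k\cF_{\rmmod}\boxtimes\O_{\wt{G}}$, and these are all connective when $\cF_{\rmmod}$ is. The genuine gap is in (2).

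Grant $\bD[\cF]\simeq[\cF^{\vee,R}]$ (up to loop inversion and twists) and a trivial dualizing sheaf on the loop space. Even so, Grothendieck--Serre duality on a derived quasi-smooth stack does not carry connective objects to coconnective ones. It exchanges the standard t-structure with a dual one, governed by local cohomology and codimension of supports. For instance, $\cL\A^1\simeq\A^1\times\Spec k[\epsilon]$ with $\epsilon$ in degree $-1$ has $\omega_{\cL\A^1}\simeq\O_{\cL\A^1}$. Hence $\bD\O_{\cL\A^1}\simeq\O_{\cL\A^1}$, which is connective with $\H^{-1}\neq 0$.

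In fact every ingredient you use holds verbatim for the rigid monoidal category $\QC(\A^1)$. With $\cA=\O(\A^1)$, the exotic t-structure is the standard one, $[\cF]=\ev^*\cF$, and $\bD[\cF]\simeq[\cF^\vee]$. Your argument would then prove that $[\O_{\A^1}]=\O_{\cL\A^1}$ is coconnective, which is false. So the plan never touches what actually makes the theorem true.

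What is missing is a local argument: duality must be applied over a smooth base, and its shift played off against the length of a bounded model. The paper does this as follows.
\begin{enumerate}
\item It stratifies $\g$ by the finitely many nilpotent orbits; the class vanishes over $\g\setminus\eN$. This reduces coconnectivity of $[\cF]^{\QC}$ to that of its local cohomology along each $\cL(\bO_r/\wt{G})$.
\item It transfers this to a Slodowy slice, using transversality, a filtration on local cohomology, and Lemma~\ref{lem:Licov-pullback-coconn} (which uses the contracting $\bG_m$).
\item On the slice, the unbounded bar-type complex is replaced by the Koszul bimodule resolution of the basic algebra $\cA^{\cov}_{P,\eS_e}$, of length $\dim\wt{\eN}_{P,\eS_e}$. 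The cover $Z_e^{\cov}$ of Theorem~\ref{thm:Ze-cov} is needed exactly so that this basic algebra is equivariant and can compute the pre-Block--Getzler sheaf.
\item Grothendieck local duality on the affine space $\eS_e$, together with the Calabi--Yau property of $\wt{\eN}_{P,\eS_e}$, exhibits $\Gamma_e$ of each term $\uHom(\cE^b_{P,\eS_e},\cE^{b'}_{P,\eS_e}\star i_{\eS_e}^*\cF)$ as the Matlis dual of a summand of $i_{\eS_e}^*\cF^{\vee,R}_{\rmmod}[\dim\wt{\eN}_{P,\eS_e}]$. Each term is therefore concentrated in degrees $\ge\dim\wt{\eN}_{P,\eS_e}$, which exactly compensates the length of the resolution.
\end{enumerate}

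Your lower bound in (1) has a related flaw. Relative to $\O(\g)$ there is no finite bimodule resolution of $\cA_P$: the unit of the derived Steinberg stack is not perfect, which is why $\eH^{\rmmod}_P$ is renormalized. So the ``diagonal relative to $\g$'' reasoning is wrong. The bounded model must be taken over $k$, as in the Block--Getzler complex. There its length is controlled by Koszulity of the basic algebra and the vanishing of $\Ext^n$ between exotic simples for $n>\dim\wt{\eN}_P$. With that correction, a global version of your lower-bound argument at $e=0$ becomes viable. The paper instead extracts this bound from the same slice-wise Koszul model.
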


\begin{atom}
Analogous statements hold if we replace $\Tr(\eH_P^{\coh})$ by the trace of the ``unmixed'' category $\QC^!(\wt{\eN}_P\times_\g\wt{\eN}_P/G)$ with respect to the monoidal endofunctor $q^{1/2,*}$. The latter identifies with $\QC^!(\cL_{q^{1/2}}(\wh{\eN}/G))$, and the trace of the monoidal unit recovers the specialization $\cS_{P,q^{1/2}}$. We record these results in Theorem~\ref{thm:Baff-t-str-q} of the main text; in fact, they hold for any $v\in\bG_m$.

Before explaining the proof, we describe some simple consequences. Recall that Bezrukavnikov--Riche have endowed the category $\QC(\wt{\eN}/\wt{G})$ with a natural weak action of the (extended) affine braid group $B^{\ext}$ associated to the extended affine Weyl group $W^\ext$ of $\check{G}$ \cite{bez-riche}. Specifically, the action of any $a\in B^{\ext}$ is given by a sheaf $\cK_a\in\eH^{\coh}$, which acts on $\QC(\wt{\eN}/\wt{G})$ via left convolution.

The canonical projection $B^{\ext}\twoheadrightarrow W^{\ext}$ admits a section, which sends $w\in W^{\ext}$ to the product of generators of $B^\ext$ corresponding to any reduced decomposition of $w$. We denote by $B^{\ext}_+$ the submonoid of $B^{\ext}$ generated by the image of this section. Moreover, $B^{\ext}$ possesses a ``translation subgroup'' isomorphic to the weight lattice of $G$, whose intersection with $B^{\ext}_+$ comprises the dominant weights. The corresponding sheaves $\cK_a$ are simply given by $\Delta_{*}\O_{\wt{\eN}}(\lambda)$, where $\Delta\colon\wt{\eN}\hookrightarrow\wt{\eN}\times_{\g}\wt{\eN}$ denotes the diagonal map, and $\O_{\wt{\eN}}(\lambda)$ is the usual $\wt{G}$-equivariant line bundle on $\wt{\eN}$ obtained from the weight $\lambda$.

Now, an object of $\eH_P^{\coh}$ is connective for the exotic t-structure if and only if its action on $\QC(\wt{\eN}_P/\wt{G})$ is right t-exact. The ``braid-positivity'' property of the noncommutative Springer resolution (see \cite[\S1.4.1]{bez-mirk}) can thus be reformulated as stating that $\cK_a$ is connective for the exotic t-structure whenever $a\in B^{\ext}_+$. Theorem~\ref{thm:Baff-t-str-intro} and the construction of $\cE_P$ then immediately imply (Corollary~\ref{cor:twist-coh-Spr-t-str} in the sequel):
\end{atom}

\begin{corollary}
\label{cor:braid-co-conn-intro}
\begin{enumerate}[leftmargin=*]
\item For any $a\in B^\ext_+$, the sheaf $[\cK_a]$ is connective, and the sheaf $[\cK_{a^{-1}}]$ is coconnective.
\item For any dominant (resp.\ anti-dominant) weight $\lambda$ extending to a character\footnote{I.e., orthogonal to all coroots in the Levi factor of $P$.} of $P$, the sheaf $[\Delta_{*}\O_{\wt{\eN}_P}(\lambda)]$ is connective (resp.\ coconnective). In particular, the coherent Springer sheaf $\cS_P=[\Delta_{*}\O_{\wt{\eN}_P}]$ lies in cohomological degree $0$.
\end{enumerate}
\end{corollary}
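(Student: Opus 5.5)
The plan is to deduce everything from Theorem~\ref{thm:Baff-t-str-intro}, combined with the braid-positivity of $\cE_P$ and the behaviour of the objects $\cK_a$ under monoidal duality.

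\emph{Part (1).} By braid-positivity \cite[\S1.4.1]{bez-mirk}, reformulated as above, $\cK_a$ is exotically connective for $a\in B^\ext_+$. Part (1) of Theorem~\ref{thm:Baff-t-str-intro} then shows that $[\cK_a]$ is connective.

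For $\cK_{a^{-1}}$ we use part (2). Since $a\mapsto\cK_a$ is a weak action by invertible objects, $\cK_{a^{-1}}\star\cK_a\simeq\1\simeq\cK_a\star\cK_{a^{-1}}$. An invertible object's monoidal dual is its inverse, so $\cK_{a^{-1}}^{\vee,R}\simeq\cK_a$. This dual is exotically connective, and $\cK_{a^{-1}}$ is compact, being coherent. Part (2) therefore gives that $[\cK_{a^{-1}}]$ is coconnective.

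In the parabolic setting we must say what $\cK_a$ means. I would take the images of the braid generators that act on $\QC(\wt{\eN}_P/\wt{G})$, namely parabolic translation functors together with translations by characters of $P$. The reformulation ``exotically connective $\iff$ right t-exact action'' holds verbatim there, because $\cE_P$ is constructed so that these functors are right t-exact \cite{bez-losev}.

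\emph{Part (2).} Take $\lambda$ dominant and extending to a character of $P$. Then $\Delta_*\O_{\wt{\eN}_P}(\lambda)$ is the kernel of tensoring with $\O(\lambda)$, which is the action of a positive translation element. By the parabolic braid-positivity (the defining property of $\cE_P$), this action is right t-exact on $\QC(\wt{\eN}_P/\wt{G})$, so the kernel is exotically connective. Part (1) of Theorem~\ref{thm:Baff-t-str-intro} then makes $[\Delta_*\O(\lambda)]$ connective.

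For $\lambda$ anti-dominant we dualize instead. The object $\Delta_*\O(\lambda)$ is invertible with inverse $\Delta_*\O(-\lambda)$, and $-\lambda$ is dominant. So its dual is exotically connective, and part (2) of the theorem gives coconnectivity.

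Taking $\lambda=0$, which is both dominant and anti-dominant, shows that $\cS_P=[\Delta_*\O_{\wt{\eN}_P}]$ is both connective and coconnective. Hence it lies in degree $0$.

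The main obstacle is confirming, in the parabolic case, that the exotic t-structure from $\cE_P$ makes positive translations (and positive braid elements) right t-exact. That is, I need the parabolic analogue of braid-positivity, which I would extract from the construction of $\cE_P$ in \cite{bez-losev}. For instance, I would check that $\cE_P\otimes\O(\lambda)$ lies in the connective part for dominant $\lambda$ by realizing $\cE_P$ as a pushforward or summand of the Borel tilting bundle, so that the Borel statement transfers. The identification of monoidal duals of the invertible objects is formal.
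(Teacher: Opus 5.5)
Your proposal is correct and matches the paper's proof: braid positivity makes $\cK_a$ (for $a\in B^\ext_+$) and $\Delta_*\O_{\wt{\eN}_P}(\lambda)$ (for $\lambda\in X^*(P)^+$) exotically connective, so part (1) of the theorem gives connectivity of their classes, and because the inverses are the monoidal duals, part (2) gives coconnectivity. The parabolic positivity you flag as the main obstacle is established in \S\ref{atm:braid-positivity} from the Borel case, using $\Rep(P)$-linearity and exotic t-exactness of $\Lag_P^!(\rho)$; your adjoint version via $\cE_P=\Lag_{P,!}\cE(-\rho)$, checked on the generator, works equally well. Your digression on parabolic $\cK_a$ is unnecessary, since statement (1) concerns only the Borel category $\eH^{\coh}$.
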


\begin{atom}
The same statements hold for the specializations $[\cK_a]_v$ and $[\Delta_{*}\O_{\wt{\eN}_P}(\lambda)]_v$, for any $v\in\bG_m$. In particular, every $\cS_{P,v}$ lies in the heart of the standard t-structure.

We call the sheaf $\cS_P(\lambda):=[\Delta_{*}\O_{\wt{\eN}_P}(\lambda)]$ the ``$\lambda$-twisted (partial) coherent Springer sheaf,'' and its specialization $\cS_{P,v}(\lambda):=[\Delta_{*}\O_{\wt{\eN}_P}(\lambda)]_v$ the ``$\lambda$-twisted (partial) coherent $v$-Springer sheaf''; hence, the title of the present article. As for $\cS$, we have $\cS_P(\lambda)\simeq\cL\pi_{P,*}\ev^*\O_{\wt{\eN}_P}(\lambda)$, where $\ev\colon\cL(\wt{\eN}_P/\wt{G})\to\wt{\eN}_P/\wt{G}$ denotes the canonical ``loop evaluation'' map, and similarly for $\cS_{P,v}(\lambda)$. We hope to improve upon Corollary~\ref{cor:braid-co-conn-intro} in a future work.

Finally, we record a decomposition result (Proposition~\ref{prop:coh-spr-decomp}) for the twisted coherent Springer sheaves, analogous (and Koszul dual) to Proposition~3.38 of \cite{benzvi}. For instance, $\cS_{P,v}(\lambda)$ is a summand of $\cS_v(\lambda):=[\Delta_{*}\O_{\wt{\eN}}(\lambda)]_v$ whenever $v$ is not a root of unity.
\end{atom}

\subsection{Strategy of the proof}

\begin{atom}
We now comment on the proof of Theorem~\ref{thm:Baff-t-str-intro}. Our primary technical tool is an explicit complex computing the universal trace functor for $\eH_P^\coh$, which we term the \emph{Block--Getzler sheaf}. Indeed, it is a natural enhancement of the ``Block--Getzler complex'' introduced in \cite{block-getzler} (and fruitfully applied in \cite{chen-eq-loc,benzvi}) for computing Hochschild homology in equivariant settings. Here is a special case of our construction; for more precise statements, see Proposition~\ref{prop:bg-tr-res} and the discussion following Definition~\ref{def:bg}.
\end{atom}

\begin{proposition}
\label{prop:bg-intro}
Let $X$ be a derived scheme, let $G$ be a reductive group, and suppose the quotient stack $X/G$ is perfect\footnote{In the sense of Ben-Zvi--Francis--Nadler \cite{bfn}.}. Let $\eA$ be a compactly generated rigid monoidal category admitting a central functor $\Psi\colon\QC(X/G)\to\eA$, and let $\uHom_{X/G}$ and $\uHom_{\B G}$ denote the internal Hom spaces of $\eA$ in the categories $\QC(X/G)$ and $\QC(\B G)=\Rep(G)$, respectively.

For any compact object $a\in\eA^c$, consider the simplicial complex of sheaves on $(X\times G)/G$ whose $n$-simplices are given by
\begin{equation*}
\bigoplus_{a_0,\ldots,a_n\in\eA^c}\uHom_{\B G}(a_0,a_1)\tens{k}\cdots\tens{k}\uHom_{\B G}(a_{n-1},a_n)\tens{k}\uHom_{X/G}(a_n,a_0\otimes a)\boxtimes\O_{G},
\end{equation*}
and whose face maps $d_0,\ldots,d_n$ are the natural extensions of those for the Block--Getzler complex. The totalization of this complex can be lifted to an object $\BG_{X/G}(\eA,a)\in\QC(\cL(X/G))$ using an explicit homotopy. Moreover, we have a natural isomorphism
\begin{equation}
\label{eqn:BG-isom-intro}
\BG_{X/G}(\eA,a)\simeq\Tr(\Psi)^R([a]),
\end{equation}
where $\Tr(\Psi)^R$ is right-adjoint to the natural functor
\begin{equation}
\label{eqn:tr-psi}
\Tr(\Psi)\colon\QC(\cL(X/G))\simeq\Tr(\QC(X/G))\to\Tr(\eA).
\end{equation}
\end{proposition}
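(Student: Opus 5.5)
The plan is to compute $\Tr(\Psi)^R([a])$ directly from the definition of the categorical trace as a colimit, and then to recognise the result as the Block--Getzler totalization.

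\textbf{Step 1: describe the trace concretely.} Since $\eA$ is compactly generated and rigid, $\Tr(\eA)$ is the geometric realization of the cyclic bar construction $\eA^{\otimes(n+1)}$. Equivalently, it is modules over the category $\eA^c$ with Hom-spaces twisted by tensoring. Concretely, for compact $a,b$ we have
\[
\Hom_{\Tr(\eA)}([b],[a])\simeq \colim_{\Delta^{\op}}\bigoplus_{a_0,\ldots,a_n}\Hom(b,a_0)\otimes\Hom(a_0,a_1)\otimes\cdots\otimes\Hom(a_n,a_0\otimes a).
\]
This is the usual Hochschild-type formula for the trace of a rigid category, where the unit and counit of the self-duality are given by the diagonal and by $\Hom$.

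\textbf{Step 2: compute the right adjoint.} By adjunction, $\Tr(\Psi)^R([a])$ is the object of $\QC(\cL(X/G))$ that represents $\Hom_{\Tr(\eA)}(\Tr(\Psi)(-),[a])$. Perfectness of $X/G$ (via Ben-Zvi--Francis--Nadler) gives three inputs:
\begin{itemize}
\item $\QC(\cL(X/G))\simeq\Tr(\QC(X/G))$;
\item $\QC(X/G)$ is generated by objects pulled back along $X/G\to\B G$ twisted by perfect complexes;
\item $\cL(X/G)\simeq (X\times G)/G\times_{X\times X}X$ is a derived fibre product.
\end{itemize}
The $\QC(X/G)$-linear structure coming from the central functor $\Psi$ lets us replace $\Hom$ spaces by the internal Homs $\uHom_{X/G}$ and $\uHom_{\B G}$. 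The strategy is then to split the cyclic bar complex so that:
\begin{itemize}
\item the single \emph{closing} term $\Hom(a_n,a_0\otimes a)$ is taken $X/G$-linearly, which produces $\uHom_{X/G}(a_n,a_0\otimes a)$, a sheaf on $X/G$;
\item the intermediate terms are only $G$-linear, which produces the $\uHom_{\B G}$'s.
\end{itemize}
The extra factor $\O_G$ records the inertia, namely the monodromy around the loop. Pulling back along $(X\times G)/G\to X/G$ gives the displayed simplicial sheaf. Its face maps are composition for $d_1,\ldots,d_{n-1}$. The last face map $d_n$ uses the $X/G$-linear composition. The face map $d_0$ is the cyclic rotation, in which the $G$-action twists via the $\O_G$ coordinate, exactly as in the Block--Getzler differential.

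\textbf{Step 3: lift to the loop space.} The totalization is a priori a sheaf on $(X\times G)/G$. To obtain an object of $\QC(\cL(X/G))$, it must be made into a module over the Koszul-type algebra $\O_{(X\times G)/G}\otimes_{\O_{X\times X}}\O_X$. The extra structure needed is the nullhomotopy identifying the two actions of $\O_X$: one through $x$ and one through $g\cdot x$. On the closing term this identification is witnessed by the centrality of $\Psi$. Here one writes an explicit homotopy and checks its compatibility with the simplicial structure.

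\textbf{Step 4: prove the isomorphism \eqref{eqn:BG-isom-intro}.} Testing against compact generators $\cF$ of $\QC(\cL(X/G))$, reduce to the case where $\cF$ is the trace class $[\cF_0]$ of some $\cF_0\in\QC(X/G)$. Then compare Step 1 applied to $b=\Psi(\cF_0)$ with $\Hom_{\cL(X/G)}([\cF_0],\BG_{X/G}(\eA,a))$, which is computed by adjunction along $\cL(X/G)\to X/G$. Both are geometric realizations of the same simplicial diagram, so naturality gives the isomorphism.

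The main obstacle is Step 3 together with its compatibility in Step 4. The lift to $\cL(X/G)$ carries genuinely derived information: the homotopy coming from centrality must cohere with every face map, in particular with the rotation $d_0$. One must also verify that the comparison in Step 4 respects this module structure, and not just the underlying sheaves on $(X\times G)/G$.
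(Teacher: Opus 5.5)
Your overall architecture matches the paper's. You test against the compact generators $[\cF_0]=\ev^*\cF_0$ with $\cF_0\in\Perf(X/G)$, use $\Tr(\Psi)([\cF_0])\simeq[\Psi(\cF_0)]$, and compute $\Hom_{\cL(X/G)}([\cF_0],\BG_{X/G}(\eA,a))\simeq\Gamma\bigl(\cF_0^\vee\otimes\pr_*\preBG_{X/G}(\eA,(-)\otimes a)\bigr)$ via $\ev^*\dashv\ev_*$ and $\ev_*\BG\simeq\pr_*\preBG$. However, the two identifications that actually carry the proof are misstated or missing.

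\emph{Step~1.} Your formula is not a simplicial object. Under $d_0$ the object $a_0$ is dropped, so both the extra factor $\Hom(b,a_0)$ and the closing factor $\Hom(a_n,a_0\otimes a)$ would have to be composed with the single tensor factor $f_0$, which is not a linear operation. More to the point, no general formula for $\Hom_{\Tr(\eA)}([b],[a])$ is needed. Since $\Psi(\cF_0)$ is central, the Drinfeld-center adjunction \eqref{eqn:drinf-cent-tr-adj} gives $\Hom([\Psi(\cF_0)],[a])\simeq\Hom([\eA],[\eA,\Psi(\cF_0^\vee)\otimes(-)\otimes a])$. The GKRV theorem (Theorem~\ref{thm:gkrv-main}) then identifies this with $\HH(\eA,\Psi(\cF_0^\vee)\otimes(-)\otimes a)$. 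That is where centrality enters: it makes $\Psi(\cF_0^\vee)\otimes(-)\otimes a$ an $\eA$-linear endofunctor and supplies the adjunction. Centrality is not what your Step~3 needs. The homotopy there is the explicit $\sum_i(-1)^is_i$ inserting $f\cdot\mathrm{id}$, and it only uses the $\O_X$-linearity of composition and of $(-)\otimes a$, which is automatic.

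\emph{Step~4.} The claim that both sides are ``geometric realizations of the same simplicial diagram'' is false, and this is the genuinely missing idea. After absorbing $\cF_0^\vee$ into the closing factor by $\QC(X/G)$-linearity of the enrichment, $\Gamma\bigl(\cF_0^\vee\otimes\pr_*\preBG_{X/G}(\eA,(-)\otimes a)\bigr)$ is the Block--Getzler complex. Its $n$-simplices are the $G$-invariants of $\uHom_{\B G}(a_0,a_1)\otimes\cdots\otimes\Gamma^G\bigl(\cF_0^\vee\otimes\uHom_{X/G}(a_n,a_0\otimes a)\bigr)\otimes\O(G)$, with $d_0$ twisted by the coaction. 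The cyclic bar complex computing $\HH$ instead has $n$-simplices built from the invariant Hom spaces $\Hom_{\eA}(a_i,a_{i+1})=\uHom_{\B G}(a_i,a_{i+1})^G$, with no factor $\O(G)$.

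These are different simplicial objects, and their realizations agree only by the Block--Getzler theorem (Lemma~\ref{lem:bg-hh}, resting on \cite[Prop.~2.13]{benzvi}). There one expands the cyclic bar complex over generators $V\otimes a_i$ with $V\in\Rep(G)^c$ and uses Peter--Weyl for $\O(G)$, or equivalently compares with the de-equivariantized category. This comparison is precisely why the factor $\O_G$ appears; your remark that it ``records the inertia'' is only a heuristic. It is this step, rather than the lift in Step~3, that is the substantive one.
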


\begin{atom}
More generally, we allow for the categorical traces \eqref{eqn:tr-psi} to be taken with respect to certain monoidal endofunctors of $\QC(X/G)$ and $\eA$, and for $[a]$ to be replaced by the ``$2$-categorical class'' of an $\eA$-module category $\eM$ with a compatible endofunctor $F_{\eM}$, in the sense of \cite{gkrv}. For instance, the class of the pair $(\eA,-\otimes a)$ recovers $[a]$. This yields a complex $\BG_{X/G}(\eM,F_{\eM})$ and corresponding generalization of \eqref{eqn:BG-isom-intro}.

In particular, for any compact $\cF\in\eH_P^{\coh}$, we obtain $[\cF]\simeq\BG_{\g/\wt{G}}(\eH_P^\coh,\cF)$. General results on traces of convolution categories then yield
\begin{equation*}
[\cF]\simeq\BG_{\g/\wt{G}}(\QC(\wt{\eN}_P/\wt{G}),-\star\cF),
\end{equation*}
where $\star$ denotes the right convolution action. Using the compact generator $\cE_P$ of $\QC(\wt{\eN}_P/\wt{G})$, the latter is equivalent to an explicit complex
\begin{equation}
\label{eqn:bg-complex-iSe-res-S-intro}
\cdots\to\cA_P\tens{k}\cA_P\tens{k}\cF_{\rmmod}\boxtimes\O_{\wt{G}}\xrightarrow{d_0-d_1+d_2}\cA_P\tens{k}\cF_{\rmmod}\boxtimes\O_{\wt{G}}\xrightarrow{d_0-d_1}\cF_{\rmmod}\boxtimes\O_{\wt{G}},
\end{equation}
where $\cF_{\rmmod}:=\Hom_{\wt{\eN}_P\times_{\g}\wt{\eN}_P}(\cE_P^{\vee}\boxtimes\cE_P,\cF)$ denotes the $\cA_P$-bimodule of \eqref{eqn:exotic-equiv-aff-hecke}. The right t-exactness statement in Theorem~\ref{thm:Baff-t-str-intro} is now immediate.
\end{atom}

\begin{atom}
The coconnectivity statement is far more involved. We first reduce coconnectivity of $[\cF]$ to coconnectivity of its local cohomology at each nilpotent orbit in $\eN$, using the usual exact triangle for a complementary open and closed subscheme. To a representative $e$ of each nilpotent orbit, we may associate a Slodowy slice $\eS_e\subset\g$, which is transverse to the orbit at $e$ and stabilized by the reductive part of the centralizer of $e$ in $G$, denoted $Z_e$. Moreover, the Jacobson--Morozov theorem provides a cocharacter of $\wt{G}$ centralizing $Z_e$ and contracting $\eS_e$ to\footnote{In fact, our convention will be to use the opposite, repelling action.} the point $e$. This yields an action of $\wt{Z}_e:=Z_e\times\bG_m$ on $\eS_e$.

Let $i_{\eS_e}\colon\eS_e/\wt{Z}_e\to\g/\wt{G}$ denote the canonical ``inclusion.'' Using a standard cotangent complex argument, we reduce to showing that the local cohomology of $\cL i_{\eS_e}^*[\cF]$ along the closed substack $\cL(\{e\}/\wt{Z}_e)\subset\cL(\eS_e/\wt{Z}_e)$ is coconnective, for each $\eS_e$. Setting $\wt{\eN}_{P,\eS_e}:=\wt{\eN}_P\times_{\g}\eS_e$, we again obtain an equivalence
\begin{equation}
\label{eqn:LiSe-res-F-intro}
\cL i_{\eS_e}^*[\cF]\simeq\BG_{\eS_e/\wt{Z}_e}(\QC(\wt{\eN}_{P,\eS_e}/\wt{Z}_e),-\star i_{\eS_e}^*\cF),
\end{equation}
where $i_{\eS_e}^*\cF$ denotes the pullback of $\cF$ to $\wt{\eN}_{P,\eS_e}\times_{\eS_e}\wt{\eN}_{P,\eS_e}/\wt{Z}_e$. The restriction $\cE_{P,\eS_e}:=\cE_P|_{\wt{\eN}_{P,\eS_e}}$ remains a tilting generator, so one would hope that studying the complex analogous to \eqref{eqn:bg-complex-iSe-res-S-intro}, with $\cA_P$ replaced by $\cA_{P,\eS_e}:=\End_{\wt{\eN}_{P,\eS_e}}(\cE_{P,\eS_e})$, would be sufficient. Unfortunately, this complex extends into arbitrarily negative degrees, making coconnectivity difficult to deduce.

We are saved by the following technical result (Theorem~\ref{thm:Ze-cov} in the main text):
\end{atom}

\begin{proposition}
\label{prop:Zecov-intro}
There exists a finite cover $Z_e^\cov\twoheadrightarrow Z_e$ such that $\cA_{P,\eS_e}$ is $Z_e^{\cov}$-equivariantly Morita equivalent to a Koszul quadratic algebra $\cA_{P,\eS_e}^{\cov}$, with respect to the grading provided by the Jacobson--Morozov cocharacter.
\end{proposition}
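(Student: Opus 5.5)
The plan is to build $\cA_{P,\eS_e}^{\cov}$ directly from the construction of the noncommutative Springer resolution, following Bezrukavnikov--Mirkovi\'c's proof of Koszulity of the exotic heart. I will carry out the following steps in order.

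\textbf{Step 1: obtain a grading.} The Jacobson--Morozov cocharacter, twisted by the weight $-2$ scaling and chosen to repel $\eS_e$ from $e$, commutes with $Z_e$. It therefore equips $\wt{\eN}_{P,\eS_e}$ with a $\wt{Z}_e$-action. Since $\eS_e$ is stable under this contracting action, the algebra $\cA_{P,\eS_e}=\End(\cE_{P,\eS_e})$ is non-negatively graded. Its degree-$0$ part is $\End(\cE_P|_{\pi_P^{-1}(e)})$ restricted to the invariants of the contracting torus. Base change to the Springer fiber, together with vanishing of higher Ext of the tilting bundle, shows that this degree-$0$ part is finite-dimensional and semisimple up to nilpotents.

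\textbf{Step 2: pass to a basic algebra.} I will decompose $\cE_{P,\eS_e}$ into indecomposable summands. In the \cite{bez-mirk} picture these correspond to irreducible exotic sheaves supported on the Springer fiber $\cB_e$. The summands are only $Z_e$-equivariant up to a projective ambiguity, because $Z_e$ permutes components and acts on multiplicity spaces through a central extension. So the first real step is to take a finite cover $Z_e^{\cov}\twoheadrightarrow Z_e$ over which each isotypic multiplicity space becomes a genuine representation; this is possible since the obstruction lives in $\H^2$ of the finite component group, and a finite cover kills it. Over $Z_e^{\cov}$ I then replace $\cE_{P,\eS_e}$ by the direct sum of one copy of each indecomposable summand. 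This is an equivariant Morita equivalence, and the resulting basic algebra $\cA^{\cov}_{P,\eS_e}$ has a semisimple degree-$0$ part that is a product of copies of $k$.

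\textbf{Step 3: prove Koszulity.} I expect this to be the main obstacle. The grading must be arranged so that the basic algebra is generated in degree $1$ with quadratic relations, and so that the simples have linear projective resolutions. My first approach is to import \cite{bez-mirk}'s result that the exotic t-structure is Koszul, which they obtain via positive characteristic and Frobenius weights; this implies Koszulity of the graded algebra on a Slodowy slice. The difficulty is that \cite{bez-mirk} uses a mixed grading that differs from the Jacobson--Morozov grading by a correction. I would try to show that the two gradings agree after adjusting by a central cocharacter of $Z_e^{\cov}$, or after regrading each summand by a shift. Such a regrading of the summands is harmless for Morita equivalence. Passing from $G$ to $P$ should follow from the parabolic version in \cite{bez-losev}.

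If this comparison fails, the fallback is a purity argument. I would show that $\Ext^i$ between the irreducibles is concentrated in JM-degree $i$, using a contraction to the point $e$ together with pointwise purity of the Springer-fiber cohomology, and then apply the standard criterion that a purely graded Ext-algebra implies Koszulity.
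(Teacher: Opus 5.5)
The gap is in your Step~2, which is where most of the content of the statement lies.

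\emph{Where the obstruction lives.} $Z_e$ does not act on the multiplicity spaces. It permutes the blocks of $\cA_{P,\eS_e}/\rmJ(\cA_{P,\eS_e})\simeq\bigoplus_b\End_k(L^b_{P,\eS_e})$ through the finite group $\overline{Z}_e$. So only the stabilizer $Z_e^b$ acts on $\underline{L}^b_{P,\eS_e}$, projectively by Skolem--Noether. Transporting this action to the projective $E^b_{P,\eS_e}$ needs a $Z_e$-stable complete set of orthogonal idempotents lifting those of the semisimple quotient; the paper takes the degree-zero part of a $\wt{Z}_e$-stable Koszul grading (Proposition~\ref{prop:coc-equiv-simple-indec-proj}). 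The obstruction is therefore a class $(\cC_b,\alpha_b)\in\M(Z_e^b)$. For a disconnected algebraic group this Schur multiplier also has pieces from $\M(Z_e^\circ)$ and from $\H^1(\pi_0,X^*(Z_e^\circ))$ (cf.\ Lemma~\ref{lem:coc-rtimes}), not only from $\H^2$ of the component group.

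\emph{One cover for all stabilizers.} More seriously, a finite cover of $Z_e^b$ killing this class always exists (Proposition~\ref{prop:schur-cov}). What you need, however, is a single finite cover of $Z_e$ whose restriction to the preimage of every $Z_e^b$ kills $(\cC_b,\alpha_b)$, and covers of finite-index subgroups do not extend for free. The Schur cover of $Z_e$ only kills classes in the image of $\M(Z_e)\to\M(Z_e^b)$. The proof of Theorem~\ref{thm:Ze-cov} is a type-by-type check that this restriction is surjective, except for two orbits where it is not:
\begin{itemize}
\item For the $E_8$ orbit, $Z_e\cong\PGL_3\rtimes\Z/2\Z$ has $\M(Z_e)=0$ while $\M(\PGL_3)\cong\Z/3\Z$. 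The Schur cover is then useless, and the paper passes to the non-central cover $\SL_3\rtimes\Z/2\Z$.
\item For the $E_6$ orbit, $Z_e\cong T\rtimes S_3$. The class in $\H^1(\langle(12)\rangle,X^*(T))\cong\Z/2\Z$ on $T\rtimes\langle(12)\rangle$ is killed by the squaring isogeny of $T$. By inflation--restriction, it is killed by no cover that is an isomorphism over $T$.
\end{itemize}
Nothing in your argument excludes such classes, and a cover built on the component group would not kill them. To close the step you need either this analysis or a uniform way to extend covers from finite-index subgroups. For instance, pull back $(Z_e^b)^{\sch}\wr S_n\to Z_e^b\wr S_n$ along the embedding $Z_e\hookrightarrow Z_e^b\wr S_n$ ($n=[Z_e:Z_e^b]$) given by coset representatives. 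The projection from the preimage of $Z_e^b$ then factors through $(Z_e^b)^{\sch}$. Finally, take the fiber product over $Z_e$ of these covers for all $b$.

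\emph{Steps 1 and 3.} Step~1 is off: the $\wt{G}$-equivariant structure on $\cE_{P,\eS_e}$ need not make $\cA_{P,\eS_e}$ non-negatively graded with semisimple degree-zero part. One must choose graded lifts of the indecomposable summands, as you half-anticipate in Step~3. On the other hand, the ``correction'' you worry about there is a non-issue: the paper simply uses the graded lifts of \cite[\S6.4.1]{bez-mirk}, which are already compatible with the $\bG_m$-action on the Slodowy slice.

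Step~3 is incomplete for $P\neq B$, since Koszulity there is not supplied by \cite{bez-losev}. The paper reruns \cite[\S6.3--6.4]{bez-mirk} with a parabolic Arkhipov--Bezrukavnikov equivalence constructed in Appendix~\ref{sec:parab-ark-bez-equiv}. That equivalence is built from parabolic central sheaves, and it exchanges Frobenius with scaling and the perverse t-structure with the exotic one. Koszulity is then deduced from \cite[Prop.~5.5]{bez-mirk} via the graded Calabi--Yau identity $\omega_{\wt{\eN}_{P,\eS_e}}\simeq\O_{\wt{\eN}_{P,\eS_e}}[\dim\wt{\eN}_{P,\eS_e}]\angles{\dim\wt{\eN}_{P,\eS_e}}$. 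Your purity fallback leaves unspecified where purity of $\Ext$ between exotic irreducibles would come from. In this setting it comes from the mixed structure on the Iwahori--Whittaker side of that equivalence.
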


\begin{atom}
That is, we have an equivalence
\begin{equation*}
\QC(\wt{\eN}_{P,\eS_e}/\wt{Z}_e^{\cov})\simeq\cA_{P,\eS_e}^{\cov,\op}\dashmod^{\wt{Z}_e^{\cov}}
\end{equation*}
as in \eqref{eqn:noncomm-spr-equiv}, where again $\wt{Z}_e^{\cov}:=Z_e^{\cov}\times\bG_m$. While the group $Z_e^{\cov}$ is new, Koszulity of $\cA_{P,\eS_e}^{\cov}$ is well-known in the Borel case \cite{bez-mirk,bez-losev}. The proof of Proposition~\ref{prop:Zecov-intro} combines the general existence of a Schur covering of $Z_e$ with a careful type-by-type analysis of the structure of $Z_e$. The group $Z_e^{\cov}$ has already found application in \cite{dawydiak-rigidity}.

Since the projection $Z_e^\cov\twoheadrightarrow Z_e$ is faithfully flat, the proof of Theorem~\ref{thm:Baff-t-str-intro} reduces to an analogous coconnectivity statement on $\cL(\eS_e/\wt{Z}_e^{\cov})$, where we may use $\cA_{P,\eS_e}^{\cov}$ to compute \eqref{eqn:LiSe-res-F-intro}. The Koszul bimodule resolution of $\cA_{P,\eS_e}^{\cov}$, which has length $\dim\wt{\eN}_{P,\eS_e}$ by Grothendieck--Serre duality, then yields a bounded model of this complex. Finally, Grothendieck local duality and the assumptions on $\cF$ allow us to bound its local cohomology.
\end{atom}

\subsection{Overview}

The remainder of the paper is organized as follows. Section~\ref{sec:part-aff-hecke} consists of background on partial affine Hecke categories and their categorical traces. Section~\ref{sec:noncomm-spr} reviews the noncommutative Springer resolution and its generalization to parabolic subgroups of a connected reductive group. In Section~\ref{sec:Ze-cov}, we construct the covering group $Z_e^{\cov}$ of the reductive centralizer of a nilpotent element. Section~\ref{sec:bg-sheaf} constructs the Block--Getzler sheaf and shows that it computes the $2$-categorical class map (under certain assumptions). In Section~\ref{sec:exotic-t-str}, we construct the exotic t-structure on $\eH_P^{\coh}$ (and its analog over a Slodowy slice) using Preygel's ``regularization'' formalism. Finally, in Section~\ref{sec:comp-coh-spr} we prove Theorem~\ref{thm:Baff-t-str-intro} and its specialization to any $v\in\bG_m$. We then deduce the (co)connectivity statements of Corollary~\ref{cor:braid-co-conn-intro} for the affine braid group action and twisted coherent Springer sheaves, and conclude that the coherent Springer sheaf lies in the heart. We also establish a splitting result for twisted coherent Springer sheaves (Proposition~\ref{prop:coh-spr-decomp}).

The paper concludes with several technical appendices. Appendix~\ref{sec:parab-ark-bez-equiv} constructs parabolic analogues of Gaitsgory's central functor and the Arkhipov--Bezrukavnikov equivalence (relating equivariant coherent sheaves on the Springer resolution to Iwahori--Whittaker sheaves on the affine flag variety) following \cite{chen-dhillon}. This is needed to establish the Koszul property of the noncommutative partial Springer resolution in Section~\ref{sec:noncomm-spr}. Appendix~\ref{sec:schur-mult} collects properties of cocycles and Schur multipliers, and generalizes them to algebraic groups for use in Sections~\ref{sec:noncomm-spr} and \ref{sec:Ze-cov}. Appendix~\ref{sec:tr-form-chap} reviews the formalism of $2$-categorical traces from \cite{gkrv}, and recalls its applications to symmetric monoidal categories of quasi-coherent sheaves and convolution categories such as $\eH_P^{\coh}$ (as developed by Ben-Zvi, Francis, Nadler, Preygel, and others). Finally, Appendix~\ref{sec:koszul-res} constructs a Koszul resolution for Koszul quadratic algebras with multiple simple modules, which we use to produce a bounded model for the universal trace functor in Section~\ref{sec:comp-coh-spr}.

\subsection{Assumptions and notation}
\label{sec:assumptions-notation}

Notational conventions in this paper are primarily drawn from \cite{gr,gr2} and \cite{benzvi}. While most conventions are introduced or indicated throughout the text, we collect the most salient ones here for the reader's convenience.

\begin{atom}
We work throughout over an algebraically closed field $k$ of characteristic $0$. All algebro-geometric objects (schemes, stacks, etc.) are implicitly defined over $k$, and we sometimes write $*:=\Spec k$ for this base scheme. We mostly work with $k$-algebras and $k$-modules, and denote $k$-linearization by $(-)_k:=-\otimes_{\Z}k$. Likewise, all dg-categories are $k$-linear.

Crucially, unless explicitly stated otherwise, all categories, functors, and $\Hom$-spaces in this paper are dg-derived, and all limits and colimits are homotopical. Thus, we write ``$(1,1)$-category'' for the classical notion of category, and take cohomology to recover non-derived functors from their derived counterparts. To this end, all (co)chain complexes in this paper are cohomologically indexed.

Given a category $\eC$ equipped with a t-structure, we let $\eC^{\le 0}$ and $\eC^{\ge 0}$ denote its full subcategories of connective and coconnective objects, respectively, and let $\eC^{\heart}$ denote its heart. We further denote by $\iota^{\le 0}\dashv\tau^{\le 0}$ and $\tau^{\ge 0}\dashv\iota^{\ge 0}$ the usual inclusion and truncation functors, and by $\H^*$ the functor of (co)chain cohomology. In particular, we use $\HH$ to denote the Hochschild \emph{complex}, rather than its cohomology groups $\H^*\HH$.

Our conventions for dg-categories follow those of \cite[Ch.~1]{gr}, which will be a sufficient reference for our purposes. Thus, we make frequent use of Lurie's language of $\infty$-categories and higher algebra, as in \cite{lurie-htt,lurie-ha}.

In particular, we let $\Vect_k$ denote the symmetric monoidal, cocomplete, stable $\infty$-category of chain complexes of vector spaces (obtained by applying the dg-nerve construction to the usual pre-triangulated dg-category, see \cite[Cons.~1.3.1.6]{lurie-ha}). We use the term \emph{dg-category} to mean a (presentable) cocomplete stable $\infty$-category equipped with a $\Vect_k$-module structure. All functors between dg-categories will be continuous (i.e., colimit preserving) unless explicitly stated otherwise. The $\infty$-category $\dgCat_k$ of dg-categories (and continuous functors) carries a symmetric monoidal structure given by the Lurie tensor product, with unit object $\Vect_k$.

For any dg-category $\eC$, we let $\eC^c$ denote the full subcategory of compact objects (i.e., those $X\in\eC$ for which $\Hom_{\eC}(X,-)$ preserves countable filtered colimits). If $\eC$ is compactly generated, then we may recover it as the Ind-completion $\Ind(\eC^c)$. Given an object $X\in\eC$, we write $\angles{X}\subset\eC$ for the full subcategory weakly generated by $X$.

Finally, given a symmetric monoidal dg-category $\eC$ and an algebra object $A\in\Alg(\eC)$, we let $A\dashmod_{\eC}$ denote the category of $A$-module objects in $\eC$, and $A\dashperf_{\eC}:=A\dashmod_{\eC}^c$ the full subcategory of $A$-perfect objects. We omit the subscript $\eC$ when $\eC=\Vect_k$. When $\eC=\dgCat_k$, we obtain the notion of monoidal dg-category $\eA$, and let $\eA\dashbmod$ denote the $(\infty,2)$-category of $\eA$-module categories as in \cite[\S3.6]{gkrv} (note that the $(\infty,2)$-structure on $\dgCat_k\simeq\Vect_k\dashbmod$ will be central to this article).
\end{atom}

\begin{atom}
We work exclusively with the language of derived algebraic geometry, primarily following \cite{gr} (though useful references abound). Here, $(1,1)$-functors-of-points from classical commutative rings to sets are replaced by \emph{prestacks}, which are $\infty$-functors from connective commutative dg-$k$-algebras to simplicial sets. Unless explicitly specified as ``classical,'' all schemes, stacks, fiber products, etc., are to be understood in the derived sense.\footnote{The sole exceptions are the notations $X^G,X^g$ for a group $G$ acting on a classical scheme $X$ and $g\in G$; these will denote the \emph{classical} fixed points. We instead use loop space notation as in Definition~\ref{def:loop-space} for the derived fixed points.} We denote the operation of classical truncation by $(-)^{\cl}$, reserving the notation $\pi_0$ for sets of connected components.

Given a prestack $\eX$, we denote its associated functor by $\eX(A)$ or $\eX(\Spec A)$, its structure sheaf by $\O_{\eX}$, and its dg-$k$-algebra of global functions by $\O(\eX)$. We sometimes write $x\in\eX$ to mean ``$x$ is a $k$-point of $\eX$'' (i.e., $x\in\eX(k)$), and denote by $k_x:=\O(\Spec(\{x\}))$ the associated $\O(\eX)$-algebra. We write $\bL_{\eX}$ for the cotangent complex of $\eX$, if it exists. Given a map $f\colon\eX\to\eY$, we write $df\colon f^*\bL_{\eY}\to\bL_{\eX}$ for the codifferential, and $\Delta_{\eX/\eY}\colon\eX\to\eX\times_{\eY}\eX$ for the relative diagonal.

We let $\QC(\eX)$ denote the symmetric monoidal dg-category of quasi-coherent sheaves on $\eX$, defined by right Kan extension from the assignment $\Spec(A)\mapsto A\dashmod$ on affine derived schemes. It carries a canonical t-structure induced from that on $A\dashmod$. We let $\Perf(\eX)\subset\QC(\eX)$ denote the full subcategory spanned by dualizable objects, i.e., those sheaves whose pullback to any affine scheme is quasi-isomorphic to a bounded complex of vector bundles. When $\eX$ is perfect (see Definition~\ref{def:passable-perfect}, as well as \cite{bfn}), we have $\QC(\eX)\simeq\Ind(\Perf(\eX))$.

When $X$ is a scheme which is almost of finite type, we let $\Coh(X)$ (resp.\ $\Coh^-(X),\Coh^+(X)$) denote the full subcategory of $\QC(X)$ comprising cohomologically bounded (resp.\ bounded above, bounded below) complexes with coherent cohomology. If $X$ is eventually coconnective, then $\Perf(X)\subset\Coh(X)$. When the prestack $\eX$ is locally almost of finite type, we let $\QC^!(\eX)$ denote the dg-category of ind-coherent sheaves, defined by a suitable right Kan extension from the assignment $X\mapsto\Ind(\Coh(X))$.

If $\eX$ is furthermore an Artin stack, then $\QC^!(\eX)$ carries a canonical t-structure (see \cite[Prop.~11.7.5]{gaitsgory-indcoh}). When $\eX$ is an algebraic stack, we may define a full subcategory $\Coh(\eX)\subset\QC^!(\eX)$ via $*$-pullback to a smooth atlas (and similarly for $\Coh^-(\eX),\Coh^+(\eX)$). When $\eX$ is QCA (see Definition~\ref{def:qca-stack}, as well as \cite{dg-fin}), we have $\QC^!(\eX)\simeq\Ind(\Coh(\eX))$. Finally, when $\eX$ is smooth, the notions of coherent and perfect, as well as quasi-coherent and ind-coherent, coincide.

We work freely with the six-functor formalism, letting $\cHom$ denote the internal sheaf-$\Hom$. We use the same notation across all categories of sheaves; for example, for a suitable eventually coconnective $f\colon\eX\to\eY$, we write $f^*\colon\QC(\eY)\to\QC(\eX)$ and $f^*\colon\QC^!(\eY)\to\QC^!(\eX)$, and similarly for $f_*,f^!$.
\end{atom}

\begin{atom}
Regarding representation theory, we allow reductive groups to be disconnected, instead specifying a group as ``connected reductive'' when necessary. Given a linear algebraic group $G$, we let $\B G:=*/G$ denote its classifying stack, and set $\Rep(G):=\QC(\B G)$. Thus, we write $\Rep(G)^c$ for the full subcategory of ``finite-dimensional'' representations. Given $A\in\Alg(\Rep(G))$, we set $A\dashmod^{G}:=A\dashmod_{\Rep(G)}$ and $A\dashperf^G:=A\dashperf_{\Rep(G)}$.

If $G$ acts on a set $S$, we let $G^s\subset G$ denote the stabilizer of an element $s\in S$. In particular, for $g\in G$ and $x\in\g:=\Lie(G)$, we write $G^g$ and $G^x$ for the corresponding centralizers, i.e., the stabilizers under the adjoint actions of $G$. Likewise, given a $\g$-representation $V$, we let $\g^v\subset\g$ denote the annihilator of $v\in V$. Thus, for $x$ as above, $\g^x$ denotes its centralizer.

We denote by $Z(G),Z(\g)$ the centers of $G,\g$, and by $X^*(G),X_*(G)$ the groups of characters (i.e., the Pontryagin dual) and cocharacters of $G$, respectively. We write $\pi_0(G),\pi_1(G)$ for the component group and fundamental group of $G$. Finally, we let $G^\circ,[G,G]$ denote the identity component and derived subgroup of $G$, as usual.

Throughout the text, we set $\wt{G}:=G\times\bG_m$, and occasionally write $\wt{\g}:=\g\oplus k$, where $k$ denotes the $1$-dimensional abelian Lie algebra.\footnote{Note that this conflicts with our notation for the Grothendieck simultaneous resolution; however, the notation $\wt{\g}:=\g\oplus k$ will be used only sparingly, so no confusion should arise.} Equivariance with respect to $\wt{G}$ yields an additional weight-grading, which is central to this work. Given any $\Z$-graded object $V$ and $w\in\Z$, we denote by $V_{w},V_{\ge w},V_{\le w}$ the sum of the components of $V$ lying in weight $w$, weights $\ge w$, and weights $\le w$ (and likewise for $V_{>w},V_{<w}$). We also let $\angles{-}$ denote the grading ``twist,'' or ``shift''; that is, $V\angles{w}$ is the $\Z$-graded object for which $(V\angles{w})_{w'}=V_{w+w'}$. The twist $\angles{n}$ therefore corresponds to the action of the weight $-n$ character of $\bG_m$.
\end{atom}

\subsection{Acknowledgments}

We gratefully acknowledge Roman Bezrukavnikov for his generous guidance and support throughout this work, which began as the author's Ph.D.\ thesis. We also thank David Ben-Zvi, Pablo Boixeda Alvarez, Justin Campbell, Harrison Chen, Stefan Dawydiak, Gurbir Dhillon, Matthew Emerton, Pavel Etingof, Dennis Gaitsgory, Benjamin Gammage, Victor Ginzburg, David Hansen, Sam Raskin, Zhiwei Yun, and Xinwen Zhu, among others, for many helpful discussions. This work was partially supported by the National Science Foundation Graduate Research Fellowship under Grant No.~2141064. We further acknowledge the support of the Departments of Mathematics at MIT and the University of Chicago, where this work was carried out.

\section{Partial affine Hecke categories}
\label{sec:part-aff-hecke}

\begin{atom}
In this section, we briefly review the constructions of partial Springer resolutions and Slodowy slices; for further details, see \cite{chrissginzburg}. We then define the (mixed) partial affine Hecke categories and their analogs over Slodowy slices, and recall the construction of Bezrukavnikov--Riche's categorical braid group action. Finally, we compute the categorical traces of these affine Hecke categories using the material in Appendix~\ref{sec:tr-form-chap}.
\end{atom}

\subsection{Springer theory}
\label{subsec:springer-theory}

\begin{atom}
\label{atm:springer-theory}

Fix a connected reductive group $G$, and let $\g$ denote its Lie algebra. We henceforth identify $\g\cong\g^\vee$ via a non-degenerate form $\angles{-,-}$. Fix a maximal torus and Borel subgroup $T\subset B$, let $P\supset B$ be a standard parabolic subgroup of $G$, let $\p\supset\b$ denote their Lie algebras, and let $\eB,\eP$ denote the corresponding (partial) flag varieties.

The cotangent bundle $\wt{\eN}_P:=T^*\eP$ (or rather, its total space) carries a canonical symplectic structure and Hamiltonian $G$-action. Note that as stacks, we have $\wt{\eN}_P/G\simeq\n_P/P$, where $\n_P$ denotes the nilpotent radical of $\p$. The moment map $\pi_P\colon\wt{\eN}_P\to\g$ is then $\wt{G}:=G\times\bG_m$ equivariant, where $G$ acts on $\g$ adjointly, and $\bG_m$ scales $\g$ and the fibers of $\wt{\eN}_P$ by weight $-2$.\footnote{That is, for any $z\in\bG_m$ and $x\in\g$, we have $z\cdot x=z^{-2}x$. Thus, $\bG_m$ acts on functions by weight $2$, i.e., $z\cdot f(-)=z^2f(-)$ for $f\in\g^\vee$. This convention differs from \cite{benzvi}, but the results of \loccit\ remain valid after straightforward modifications. Our convention ensures that the Jacobson--Morozov cocharacter of $\wt{G}$ projects to the tautological cocharacter of $\bG_m$ for any $e$. It also agrees with Lusztig's conventions for the affine Hecke algebra and its asymptotic counterpart (see \cite{lusztigcells1} and the subsequent series), as in the author's Ph.D.\ thesis.} We call $\pi_P$ a \emph{partial Springer resolution}, and write $\eN_P\subset\g$ for its image. The latter is the closure of a nilpotent orbit in $\g$, and $\pi_P$ factors through a symplectic resolution of its normalization, which identifies with the affinization $\Spec\O(\wt{\eN}_P)$ \cite[Prop.~7.4]{bk-nilp}.

In particular, $\pi_B$ is a resolution of singularities of the nilpotent cone $\eN:=\eN_B\subset\g$, called simply the \emph{Springer resolution}; we also write $\wt{\eN}:=\wt{\eN}_B$ and $\pi:=\pi_B$. Our convention throughout this paper will be to omit the word ``partial'' and any subscripts $P$ whenever $P=B$.

Recall moreover that $\eN$ is a union of finitely many $G$-orbits, each of which is a conical symplectic subvariety, hence even-dimensional. These $G$-orbits are equipped with a standard partial order via closures; the unique minimal orbit $\bO_0$ consists only of the $0$ nilpotent, and the unique maximal orbit $\bO_{\reg}$ consists of all regular nilpotent elements.

Fix a nilpotent element $e\in\eN$. We denote the fiber of $\pi_P$ over $e$ by $\eP_e:=\wt{\eN}_P\times_{\g}\{e\}$, and refer to it as the (derived) \emph{partial Springer fiber} above $e$; the usual (derived) Springer fiber is thus denoted $\eB_e$. In particular, it carries a $\wt{G}^e$-action, and its classical truncation is the classical partial Springer fiber $\eP_e^{\cl}\subset\eP$, consisting of all parabolic subgroups of $G$ of type $P$ whose Lie algebra contains $e$.
\end{atom}

\begin{atom}
\label{atm:jacobson-morozov}

Now, recall that by the Jacobson--Morozov theorem, we may extend $e$ (non-uniquely) to an $\sl_2$-triple $\{e,h,f\}$ in $\g$. Fixing such a choice, the adjoint action of $h$ yields a grading $\g=\bigoplus_{w\in\Z}\g_w$, i.e., a decomposition of $\g$ into weight-spaces. This grading is additive with respect to the Lie bracket $[-,-]$, and the actions of $e$ and $f$ raise and lower the grading by $2$, respectively. In particular, the centralizers $\g^e$ and $\g^f$ lie in non-negative and non-positive weights, respectively.

Set $\eS_e:=e+\g^f\subset\g$ (here and onward, we misuse notation slightly by only indicating the dependence on $e$). This is an affine subspace which intersects the orbit $G\cdot e$ transversally at $e$ (in $\g$); we refer to it as a \emph{Slodowy slice} at $e$.

We now wish to modify the $\bG_m$-action on $\eS_e$ coming from $\ad_h$ to be \emph{repelling}. To this end, let $\varphi_e\colon\SL_2\to G$ be the group homomorphism associated to our choice of $\sl_2$-triple, and define a cocharacter $\check{\lambda}_e\in X_*(\wt{G})$ via the formula
\begin{equation*}
\check{\lambda}_e(q):=(\varphi_e(\begin{bsmallmatrix}q&0\\0&q^{-1}\end{bsmallmatrix}), q)\in G\times\bG_m
\end{equation*}
for any $q\in\bG_m$. We refer to this as the \emph{Jacobson--Morozov cocharacter} of $\wt{G}$ for $e$ (though it of course depends on the choice of $\sl_2$-triple). In particular, the adjoint action of $\check{\lambda}_e$ on $\eS_e$ fixes $e$, and \emph{repels} $\eS_e$ from this point (i.e., the vector space underlying $\eS_e$ lies in \emph{strictly} negative weights; equivalently, the coordinate ring $\O(\eS_e)$ lies in strictly \emph{positive} weights).

We now extend this $\bG_m$-action to an action of a larger reductive subgroup of $\wt{G}$. Set $Z_e:=G^{e,h,f}\cong G^{e,\red}$, i.e., the common centralizer of the $\sl_2$-triple $\{e,h,f\}$, or equivalently, the reductive part of the centralizer of $e$. Then $Z_e$ commutes with the cocharacter $\check{\lambda}_e$, and we have an action of $\wt{Z}_e$ on $\eS_e$, with $\bG_m$ acting via $\check{\lambda}_e$. Equivalently, we have $\wt{Z}_e\cong \wt{G}^{\sl_2}\cong\wt{G}^{e,\red}$ (i.e., the centralizer of the $\sl_2$-subalgebra generated by $\{e,h,f\}$), and $\wt{G}^e\cong G^e\rtimes\bG_m$, with $\bG_m$ acting on the unipotent radical of $G^e$ via strictly positive weights. We set $\z_e$ to be the Lie algebra of $Z_e$, and continue to use the notation $\tilde{\z}_e\cong\z_e\oplus k$ for the Lie algebra of $\wt{Z}_e$.
\end{atom}

\begin{atom}
Finally, transversality yields (derived) pullback squares
\begin{equation}
\label{eqn:slodowy-pullback}
\begin{tikzcd}[column sep=large]
\eP_e/\wt{Z}_e\arrow{d}{\pi_e}\arrow[r,hook,"i_{\eP_e}"]&\wt{\eN}_{P,\eS_e}/\wt{Z}_e\arrow{r}{i_{\wt{\eN}_{P,\eS_e}}}\arrow{d}{\pi_{P,\eS_e}}&\wt{\eN}_P/\wt{G}\arrow{d}{\pi_P}\\
e/\wt{Z}_e\arrow[r,hook,"i_e"]&\eS_e/\wt{Z}_e\arrow{r}{i_{\eS_e}}&\g/\wt{G},
\end{tikzcd}
\end{equation}
where the variety $\wt{\eN}_{P,\eS_e}$ is the (classical, smooth, connected) \emph{partial resolution to the Slodowy slice}. Thus, as for $P$, our convention throughout this paper will be to omit any subscripts $\eS_e$ whenever $e=0$. Alternatively, since the map $i_{\eS_0}$ is the identity, we will sometimes write $\g$ for $\eS_0$.

The map $\pi_{P,\eS_e}$ again factors through a symplectic resolution of the affinization $\Spec\O(\wt{\eN}_{P,\eS_e})$, hence is semismall. In particular, $2\dim\eP_e\le\dim\wt{\eN}_{P,\eS_e}$, with equality precisely when the isotropic subvariety $\eP_e\subset\wt{\eN}_{P,\eS_e}$ is Lagrangian\footnote{This is known to be true in many cases, e.g., if $P=B$, $e=0$, or $G$ is of type A, see \cite{li-spalt}.} \cite[Lem.~3.3]{zivanovic}.
\end{atom}

\begin{atom}
\label{atm:Lag-functs}

Given another standard parabolic $Q\supset P$ with corresponding flag variety $\eQ$, we have the Lagrangian correspondence
\begin{equation}
\label{eqn:Lag-corresp}
\begin{tikzcd}
-\fibprod{\g/\wt{G}}\wt{\eN}_Q/\wt{G}&-\fibprod{\g/\wt{G}}(\wt{\eN}_Q\fibprod{\eQ}\eP)/\wt{G}\arrow[l,"p^P_Q"']\arrow{r}{i^P_Q}&-\fibprod{\g/\wt{G}}\wt{\eN}_P/\wt{G}
\end{tikzcd}
\end{equation}
for any stack over $\g/\wt{G}$ (e.g., $\eS_e/\wt{Z}_e$). Following \cite[\S6.0.3]{chen-dhillon}, we define adjoint pairs
\begin{equation}
\label{eqn:Lag-functs}
\begin{gathered}
\begin{tikzcd}[column sep=1in]
\QC^!(-\fibprod{\g/\wt{G}}\wt{\eN}_Q/\wt{G})\arrow[r,shift left,"\Lag^{P,*}_Q:=i^P_{Q,*}p^{P,*}_Q"]&\QC^!(-\fibprod{\g/\wt{G}}\wt{\eN}_P/\wt{G}),\arrow[l,shift left,"\Lag^{P}_{Q,*}:=p^P_{Q,*}i^{P,!}_{Q}"]
\end{tikzcd}\\
\begin{tikzcd}[column sep=1in]
\QC^!(-\fibprod{\g/\wt{G}}\wt{\eN}_Q/\wt{G})\arrow[r,shift right,"\Lag^{P,!}_Q:=i^P_{Q,*}p^{P,!}_Q"']&\QC^!(-\fibprod{\g/\wt{G}}\wt{\eN}_P/\wt{G}).\arrow[l,shift right,"\Lag^{P}_{Q,!}:=p^P_{Q,*}i^{P,*}_{Q}"']
\end{tikzcd}
\end{gathered}
\end{equation}

We adopt the following notational conventions: when $P=B$ or $Q=G$, we omit the relevant super- or sub-script, and when the fiber product is taken on the left-hand side, we write $\prescript{P}{Q}{\Lag}$ in place of $\Lag^P_Q$. We also declare the roots in $B$ to be \emph{negative} by convention, and use the superscript $(-)^-$ to denote the opposite of, e.g., a parabolic subgroup or Lie algebra. Thus, we let $\rho_P$ denote half the sum of positive roots in $\n_P^-$, and set $\rho_{Q/P}:=\rho_Q-\rho_P$ and $d_{Q/P}:=\dim Q/P$.

Then \cite[Prop.~7.3.8]{gaitsgory-indcoh} gives relations
\begin{equation*}
\label{eqn:pr-i-twist-shift}
p^{P,!}_Q\simeq p^{P,*}_Q(2\rho_{Q/P})[d_{Q/P}],\qquad i^{P,!}_Q\simeq i^{P,*}_Q(2\rho_{Q/P})[-d_{Q/P}],
\end{equation*}
where the twists denote the action of characters in $\QC(\eP/G)\simeq\Rep(P)$ (for the latter, note that the inclusion $\wt{\eN}_Q\times_{\eQ}\eP/G\hookrightarrow\wt{\eN}_P/G$ identifies with $\n_Q/P\hookrightarrow\n_P/P$). This yields corresponding relations between the functors in \eqref{eqn:Lag-functs}, and in particular, adjunctions
\begin{equation}
\label{eqn:Lag-extra-adjns}
\Lag^{P,!}_Q\dashv\Lag^P_{Q,!}[-2d_{Q/P}],\qquad \Lag^P_{Q,*}\dashv\Lag^{P,*}_Q[2d_{Q/P}].
\end{equation}
\end{atom}

\subsection{Convolution actions}

\begin{atom}
\label{atm:aff-hecke-cat}

Recall that the (\emph{mixed}) \emph{partial affine Hecke category}\footnote{This non-standard notation is intended to distinguish $\eH_P^{\coh}$ from its ``module'' incarnation, which will be introduced in \S\ref{sec:exotic-t-str}.} $\eH_P^\coh$ is the category of $\wt{G}$-equivariant ind-coherent sheaves on the (derived) partial Steinberg variety, i.e.,
\begin{equation}
\label{eqn:mix-aff-hecke-cat}
\eH_P^\coh:=\QC^!(\wt{\eN}_P\fibprod{\g}\wt{\eN}_P/\wt{G}).
\end{equation}
This is a compactly generated monoidal category acting on $\QC(\wt{\eN}_P/\wt{G})$, where the algebra and module structures are both given by (left) $*$-convolution\footnote{As opposed to the dual $!$-convolution.} (which we denote by ``$\star$''). Moreover, given any Slodowy slice $\eS_e$, we have a compactly generated monoidal category
\begin{equation}
\label{eqn:aff-hecke-category-Se}
\eH_{P,\eS_e}^\coh:=\QC^!(\wt{\eN}_{P,\eS_e}\fibprod{\eS_e}\wt{\eN}_{P,\eS_e}/\wt{Z}_e)
\end{equation}
acting on $\QC(\wt{\eN}_{P,\eS_e}/\wt{Z}_e)$, and a monoidal functor $i_{\eS_e}^*\colon\eH_P^\coh\to\eH_{P,\eS_e}^\coh$ as in \S\ref{atm:conv-cat-funct} (which is the identity when $e=0$).
\end{atom}

\begin{atom}
\label{atm:braid-action}
Bezrukavnikov--Riche have used these categorical Hecke actions to define braid group actions on $\QC(\wt{\eN}/\wt{G})$ and $\QC(\wt{\eN}_{\eS_e}/\wt{Z}_e)$, which we now recall \cite{bez-riche}. Let $\check{G}$ denote the Langlands dual reductive group of $G$, and recall that the \emph{extended affine Weyl group} of $\check{G}$ is given by the semidirect product
\begin{equation*}
W^\ext:=W\ltimes X_*(\check{T})=W\ltimes X^*(T),
\end{equation*}
where $\check{T}\subset\check{G}$ is the dual torus and $W$ denotes the finite Weyl group. Then the category $\QC(\wt{\eN}/\wt{G})$ carries a compact object-preserving weak action of the \emph{affine braid group} $B^\ext$ associated to\footnote{More specifically, the \emph{non-extended} affine Weyl group $W^{\aff}:=W\ltimes\angles{\Phi}\subset W^\ext$ admits a Coxeter presentation associated to the affine Dynkin diagram of $\check{G}$. The group $B^\ext$ is then the analogous extension of the Artin--Tits braid group $B^{\aff}$ associated to this Coxeter presentation.} $W^\ext$, owed to \cite{bez-riche}. This action is given by a ``homomorphism'' $B^\ext\to\eH^\rmcoh$, i.e., by coherent sheaves $\cK_a\in\eH^{\rmcoh,c}$ admitting isomorphisms $\cK_a*\cK_{a'}\simeq\cK_{aa'}$ for each $a,a'\in B^\ext$, followed by the convolution action of $\eH^\rmcoh$ on $\QC(\wt{\eN}/\wt{G})$ described in \S\ref{atm:aff-hecke-cat}. For any Slodowy slice $\eS_e$, composing the ``homomorphism'' $B^\ext\to\eH^\rmcoh$ with the monoidal functor $i_{\eS_e}^*\colon\eH^\rmcoh\to\eH_{\eS_e}^\rmcoh$ then yields an affine braid group action on $\QC(\wt{\eN}_{\eS_e}/\wt{Z}_e)$ as well.

Let us give a unique characterization of these sheaves. Given $w\in W^\ext$, we may consider a minimal decomposition of $w$ as a product of simple reflections, and take the product of the corresponding generators of $B^\ext$. This product is independent of the choice of decomposition of $w$, and hence yields a set-theoretic section of the canonical surjection $B^\ext\to W^\ext$, which we denote by $w\mapsto\wt{w}$. We further denote the sub-monoid of $B^\ext$ generated by the image of this section by $B^\ext_+$. It suffices to construct the sheaves $\cK_{\wt{w}}$ for generators $w\in W^\ext$.

First, given $\lambda\in X^*(T)$, there is an associated $\wt{G}$-equivariant line bundle $\O_{\wt{\eN}}(\lambda)$ on $\wt{\eN}$ obtained by pullback along the composition
\begin{equation*}
\wt{\eN}/\wt{G}\to\eB/\wt{G}\cong\B\wt{B}\to \B T.
\end{equation*}
Write $X^*(T)^+\subset X^*(T)$ for the set of dominant weights (with respect to $B^-$, as per our conventions). If $\lambda\in X^*(T)^+$, then $\cK_{\tilde{\lambda}}\simeq\Delta_{\wt{\eN}/\g,*}\O_{\wt{\eN}}(\lambda)\in\eH^\rmcoh$, where $\Delta_{\wt{\eN}/\g}\colon\wt{\eN}/\wt{G}\hookrightarrow\wt{\eN}\times_\g\wt{\eN}/\wt{G}$ denotes the diagonal map. More generally, $B^\ext$ has a ``translation subgroup'' isomorphic to $X^*(T)$, which acts by the sheaves $\Delta_{\wt{\eN}/\g,*}\O_{\wt{\eN}}(\lambda)$.

Next, for a simple reflection $s_\alpha\in W$, recall that the \emph{Grothendieck simultaneous resolution} $\hpi\colon\wt{\g}\to\g$ is the variety of pairs $(x,\b')$, where $x\in\g$ and $\b'\in\eB$ is a Borel subalgebra containing $x$; the Springer resolution is the (reduced) closed subvariety of $\wt{\g}$ given by requiring $x$ to be nilpotent. Let $\h$ denote the universal Cartan algebra of $\g$ (which carries a natural action of $W$), and consider the map $\wt{\g}\to\h$ given by $(x,\b')\mapsto x\bmod[\b',\b']$. The latter induces a resolution of singularities $\wt{\g}\to\g\times_{\h\git W}\h$, which is an isomorphism over the open subscheme $\g_{\reg,\rmss}\subset\g$ comprising all regular semi-simple elements. In particular, the map $\hpi$ is a principal $W$-torsor over $\g_{\reg,\rmss}$, so we may consider the closure of the graph of $s_\alpha$ on $(\wt{\g}\times_{\g}\wt{\g})\times_{\g}\g_{\reg,\rmss}$, which is a closed $\wt{G}$-stable (classical, smooth) subscheme $\prescript{\h}{}{\Gamma}_{s_\alpha}\subset\wt{\g}\times_{\g}\wt{\g}$ (note that the latter fiber product is in fact classical). The classical scheme-theoretic intersection
\begin{equation*}
\Gamma_{s_\alpha}:=\big(\prescript{\h}{}{\Gamma}_{s_\alpha}\fibprod{\wt{\g}\fibprod{\g}\wt{\g}}\wt{\eN}\fibprod\g\wt{\eN}\big)^\cl\subset\wt{\eN}\fibprod\g\wt{\eN}
\end{equation*}
is then a $\wt{G}$-stable closed subscheme of the Steinberg variety, and $\cK_{\tilde{s}_\alpha}\simeq\O_{\Gamma_{s_\alpha}}\in\eH^\rmcoh$ is its structure sheaf.
\end{atom}

\subsection{Categorical traces}

\begin{atom}
We now discuss the categorical traces of $\eH_P^\coh$ and $\eH_{P,\eS_e}^{\coh}$, as in Appendix~\ref{sec:tr-form-chap}. Set $\eS_{e,\eN_P}:=\eS_e\cap\eN_P=\pi_{P,\eS_e}(\wt{\eN}_{P,\eS_e})$, and let $\wh{\eS}_{e,\eN_P}$ denote the formal completion of $\eS_e$ at $\eS_{e,\eN_P}$. We write $\cL(-)$ for the loop space functor, and refer the reader to Definition~\ref{def:loop-space} and the ensuing discussion for our conventions. Set
\begin{equation*}
\QC^!(\cL(\wh{\eS}_{e,\eN_P}/\wt{Z}_e)):=\QC^!_{\eS_{e,\eN_P}}(\cL(\eS_e/\wt{Z}_e)),
\end{equation*}
i.e., the subcategory of sheaves set-theoretically supported on $\eS_{e,\eN_P}$ (via the loop evaluation). In particular, for $e=0$, we write $\wh{\eN}_P:=\g^{\wedge}_{\eN_P}$ following \cite{benzvi}. 
\end{atom}

\begin{corollary}
\label{cor:tr-He-coh-ident}
For any parabolic $P$ and Slodowy slice $\eS_e$, we have a natural identification
\begin{equation*}
\Tr(\eH_{P,\eS_e}^{\coh})\simeq\QC^!(\cL(\wh{\eS}_{e,\eN_P}/\wt{Z}_e)).
\end{equation*}
\end{corollary}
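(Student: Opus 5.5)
This identification should follow from the general trace formalism of Appendix~\ref{sec:tr-form-chap}, applied to convolution categories, in the form due to Ben-Zvi--Nadler--Preygel. Write $X:=\wt{\eN}_{P,\eS_e}/\wt{Z}_e$ and $Y:=\eS_e/\wt{Z}_e$, with the map $\pi:=\pi_{P,\eS_e}\colon X\to Y$. This map is proper and surjective onto the closed subset $\eS_{e,\eN_P}$. Both stacks are QCA quotient stacks by a reductive group acting linearly, so they are perfect, and $X$ is smooth.

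The plan has three steps.

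\emph{Step 1: convolution category as module category.} I would invoke the appendix result identifying $\QC^!(X\times_Y X)$, with $*$-convolution, with $\End_{\QC(Y)}(\QC(X))$ (or its ind-coherent renormalization). This is the Ben-Zvi--Nadler--Preygel statement for proper maps from smooth stacks. It also exhibits $\eH^{\coh}_{P,\eS_e}$ as Morita equivalent, over $\QC(Y)$, to a category of sheaves on $Y$ supported on the image of $\pi$.

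\emph{Step 2: reduction to a trace over $Y$.} Because $X$ is smooth and $\pi$ is proper and surjective onto $\eS_{e,\eN_P}$, the object $\pi_*\O_X$ should generate the completion $\QC(\wh{\eS}_{e,\eN_P}/\wt{Z}_e)$ as a $\QC(Y)$-module. This gives a Morita equivalence of $\eH^{\coh}_{P,\eS_e}$ with $\QC^!$ of the formal completion. Categorical traces are Morita invariant, so
\begin{equation*}
\Tr(\eH^{\coh}_{P,\eS_e})\simeq\Tr\big(\QC^!(\wh{\eS}_{e,\eN_P}/\wt{Z}_e)\big).
\end{equation*}

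\emph{Step 3: trace as loop space.} I would then apply the appendix identification of traces of sheaf categories on perfect (or QCA, ind-coherent) stacks with sheaves on loop spaces, $\Tr(\QC(Y))\simeq\QC(\cL Y)$. Its ind-coherent, set-theoretically supported version yields $\QC^!_{\eS_{e,\eN_P}}(\cL(\eS_e/\wt{Z}_e))$. Alternatively, one can skip Step 2 by using the formula $\Tr(\QC^!(X\times_Y X))\simeq\QC^!(\cL Y^\wedge_{\pi(X)})$ directly, if it is recorded in the appendix as a theorem for proper maps with smooth source. For $e=0$ this recovers the known identification of \cite{benzvi}.

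\emph{Expected obstacle.} The main difficulty is matching the support condition and the ind-coherent renormalization. One must check two things: that the trace lands exactly in the subcategory of sheaves set-theoretically supported on $\eS_{e,\eN_P}$ through the loop evaluation, and that the $\QC^!$ renormalization of the Steinberg category matches $\QC^!$ on the derived loop space rather than $\QC$. I would handle this with the singular-support/completion statement in the appendix (following Ben-Zvi--Nadler--Preygel and Chen), checking its hypotheses for $\pi_{P,\eS_e}$. The relevant facts are that $\eS_e\to\g$ is transverse, so the Slodowy-slice Steinberg stack is the base change of the global one, and that $\eS_e$ is smooth with a linear $\wt{Z}_e$-action.
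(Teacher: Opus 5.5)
\textbf{There is a genuine gap: Steps 2 and 3 cannot produce the stated answer.} The category on the right-hand side carries ind-coherent, not quasi-coherent, sheaves, and your route loses exactly that distinction.

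Since $\eS_e/\wt{Z}_e$ is smooth, $\QC^!(\wh{\eS}_{e,\eN_P}/\wt{Z}_e)$ is simply $\QC_{\eS_{e,\eN_P}}(\eS_e/\wt{Z}_e)$. This is an idempotent colocalization of the symmetric monoidal category $\QC(\eS_e/\wt{Z}_e)$. Proposition~\ref{prop:tr-qc-loop-ident} then computes its trace as the \emph{quasi-coherent} category $\QC_{\eS_{e,\eN_P}}(\cL(\eS_e/\wt{Z}_e))$, i.e.\ singular support in the zero section.

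That is not $\QC^!$ of the loop space, which is quasi-smooth but singular. Already for $e=0$, at the loop $(0,1,1)$ every $x\in\g$ is a singular codirection. No ``ind-coherent version'' of the trace of a symmetric monoidal sheaf category repairs this. The $\QC^!$ in the answer comes only from the renormalization of the Steinberg category: $\Ind\Coh$, rather than $\End_{\QC(Y)}(\QC(X))\simeq\QC(X\times_Y X)$.

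\textbf{Consequences for your steps.}
\begin{itemize}
\item The Morita equivalence in Step 2 is false as stated: if it held, the two traces would agree.
\item Step 2 was also not justified. Generation of $\QC_{\eS_{e,\eN_P}}$ by the object $\pi_*\O_X$ says nothing about the bimodule $\QC(X)$ being invertible.
\item Your fallback formula $\Tr(\QC^!(X\times_Y X))\simeq\QC^!(\cL Y^{\wedge}_{\pi(X)})$ is not a general theorem. For $\pi=\id$ it would assert $\QC(\cL Y)\simeq\QC^!(\cL Y)$. In this case it is precisely the content of the corollary.
\end{itemize}

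\textbf{What is actually needed.} Apply Proposition~\ref{prop:tr-conv} to $\pi_{P,\eS_e}$. This gives $\QC^!_{\Lambda}(\cL(\eS_e/\wt{Z}_e))$ with $\Lambda=\Lambda_{\wt{\eN}_{P,\eS_e}/\eS_e}$. Then show that $\Lambda$ is the entire singular locus over $\eS_{e,\eN_P}$. This is a concrete Lie-theoretic computation:
\begin{itemize}
\item Identify each cotangent fiber of $\eS_e$ with $\g^{f,\vee}\cong\g^e$.
\item The singular codirections at a loop $(n,g,q)$ are the $x\in\g^e$ with $gxg^{-1}=q^{-2}x$, $[n,x]=0$ and $\angles{n,x}=0$.
\item The singular fiber of the Steinberg stack at $(n,\p_1,\p_2)$ is $\g^e\cap\p_1\cap\p_2$.
\item Transporting this along the trace correspondence, $\Lambda$ consists of those $x$ for which $n,x\in\p'$ for some $\p'\in\eP$.
\end{itemize}
The missing observation is that $[n,x]=0$ forces $n$ and $x$ to span a solvable subalgebra. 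That subalgebra lies in a Borel, hence in some $\p'\in\eP$, so $\Lambda$ is everything. Your proposal defers this to ``checking hypotheses'' without identifying it, and it is the only non-formal step of the argument.
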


\begin{proof}
By Proposition~\ref{prop:tr-conv}, it suffices to show that the singular support condition $\Lambda_{\wt{\eN}_{P,\eS_e}/\eS_e}$ is vacuous; the proof follows \cite[Rmk.~4.14]{benzvi}. Using our chosen identification $\g\cong\g^\vee$, the cotangent space to any point in $\eS_e$ may be identified with $\g^{f,\vee}\cong\g/[f,\g]\cong\g^e$. The singular locus of $\cL(\wh{\eS}_{e,\eN_P}/\wt{Z}_e)$ at a $k$-point $(n,g,q)\in \eS_e\times\wt{Z}_e$ satisfying\footnote{Note that we no longer reserve $q$ for a prime power as in the introduction.} $gng^{-1}=q^2n$ is then the set
\begin{equation*}
\Sing(\cL(\wh{\eS}_e/\wt{Z}_e))_{(n,g,q)}=\{x\in\g^e:gxg^{-1}=q^{-2}x,[n,x]=0,\angles{n,x}=0\}.
\end{equation*}
Moreover, the singular locus of $\wt{\eN}_{P,\eS_e}\times_{\eS_e}\wt{\eN}_{P,\eS_e}$ at a $k$-point $(n,\p_1,\p_2)$ (where $n\in \eS_e$ and $\p_1,\p_2\in\eP$ are parabolic subalgebras containing $n$) is the set
\begin{equation*}
\Sing(\wt{\eN}_{P,\eS_e}\fibprod{\eS_e}\wt{\eN}_{P,\eS_e})_{(n,\p_1,\p_2)}=\g^e\cap\p_1\cap\p_2.
\end{equation*}
A calculation then shows that
\begin{equation*}
(\Lambda_{\wt{\eN}_{P,\eS_e}/\eS_e})_{(n,g,q)}=\{x\in\Sing(\cL(\wh{\eS}_{e,\eN_P}/\wt{Z}_e))_{(n,g,q)}:n,x\in\p'\text{ for some }\p'\in\eP\}.
\end{equation*}
Since $n$ and $x$ generate a two-dimensional solvable Lie algebra, they are contained in a Borel subalgebra, hence in a parabolic subalgebra classified by $\eP$. It follows that $\Lambda_{\wt{\eN}_{P,\eS_e}/\eS_e}=\Sing(\cL(\wh{\eS}_{e,\eN_P}/\wt{Z}_e))$, as desired.
\end{proof}

\begin{atom}
We now identify the trace of the monoidal functor $i_{\eS_e}^*\colon\eH_P^\coh\to\eH_{P,\eS_e}^\coh$. To apply Corollary~\ref{cor:tr-conv-funct}, we must show:
\end{atom}

\begin{lemma}
\label{lem:LiSe-event-coconn}
The morphism
\begin{equation*}
i_{\eS_e}\colon \eS_e/\wt{Z}_e\to\g/\wt{G}
\end{equation*}
is a smooth relative scheme.
\end{lemma}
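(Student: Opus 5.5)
We would prove that the base change of $i_{\eS_e}$ along the smooth atlas $\g\to\g/\wt{G}$ is a smooth scheme over $\g$. Since $\wt{Z}_e\subset\wt{G}$ acts on $\eS_e$ compatibly with the inclusion $\eS_e\hookrightarrow\g$ (the $\bG_m$-factor acting through $\check{\lambda}_e$, which is the composite of a cocharacter of $G$ with the tautological one of $\bG_m$), the map $i_{\eS_e}$ factors as
\begin{equation*}
\eS_e/\wt{Z}_e\simeq(\wt{G}\times^{\wt{Z}_e}\eS_e)/\wt{G}\xrightarrow{\ a\ }\g/\wt{G},
\end{equation*}
where $a$ is induced by the action map $\wt{G}\times^{\wt{Z}_e}\eS_e\to\g$, $[g,x]\mapsto g\cdot x$. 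Therefore the fiber product $\eS_e/\wt{Z}_e\times_{\g/\wt{G}}\g$ is identified with $\wt{G}\times^{\wt{Z}_e}\eS_e$, and it suffices to show that this is a scheme and that $a$ is smooth.

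For representability, $\wt{Z}_e$ is reductive, so $\wt{G}/\wt{Z}_e$ is affine, and $\wt{G}\times\eS_e\to\wt{G}\times^{\wt{Z}_e}\eS_e$ is a $\wt{Z}_e$-torsor over an affine scheme, namely $\Spec$ of the invariants. Equivalently, $\wt{G}\times^{\wt{Z}_e}\eS_e$ is an affine bundle over $\wt{G}/\wt{Z}_e$. Both source and target of $a$ are smooth classical varieties, and the derived fiber product agrees with this classical one because the atlas map is flat.

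For smoothness, since the source and target of $a$ are smooth, it suffices to show that the differential of $a$ is surjective at every point. By $\wt{G}$-equivariance, we may reduce to points $[1,x]$ with $x\in\eS_e$, where the image of $da$ is $[\g,x]+\g^f$. At $x=e$ this is $[\g,e]+\g^f=\g$, which is the transversality statement recalled in \S\ref{atm:jacobson-morozov}, coming from $\g=[\g,e]\oplus\g^f$ by $\sl_2$-theory.

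We expect the main obstacle to be propagating this surjectivity from $e$ to all of $\eS_e$. The locus of points where $da$ fails to be surjective is closed and stable under the repelling action of $\check{\lambda}_e$. Every $\check{\lambda}_e$-orbit closure in $\eS_e$ contains $e$ in its limit: for $x\in\eS_e$, the point $\check{\lambda}_e(q)\cdot x$ tends to $e$ as $q\to\infty$, since $\eS_e-e$ lies in strictly negative weights. Hence if this locus were nonempty it would contain $e$, contradicting transversality there. So $a$ is smooth, and $i_{\eS_e}$ is a smooth relative scheme of relative dimension $\dim\wt{G}-\dim\wt{Z}_e+\dim\g^f-\dim\g$.
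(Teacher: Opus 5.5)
Your proof is correct, but it takes a different route from the paper's.

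\textbf{The paper's route.} The paper stays in the derived-stack language. It writes down the relative cotangent complex
\[
\g^\vee\otimes\O\to(\wt{\g}^\vee\oplus\g^{f,\vee})\otimes\O\to\wt{\z}_e^\vee\otimes\O .
\]
Via Nakayama, it reduces to fiberwise surjectivity of the map $\g\oplus k\oplus\g^f\to\g$ at every point $e+x\in\eS_e$. It then proves this \emph{directly at each point}: it shows that $v\mapsto[v,e+x]\bmod\g^f$ is an isomorphism $[\g,f]\to\g/\g^f$. The matrix of this map is unitriangular in $\ad_h$-weight bases, because $[v,e]$ raises the weight by $2$ while $[v,x]$ lands in weights at most that of $v$.

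\textbf{Your route.} You base change to the atlas and identify the fiber with the action map $\wt{G}\times^{\wt{Z}_e}\eS_e\to\g$. You verify the transversality $[\g,x]+\g^f=\g$ only at $x=e$. You then spread it over all of $\eS_e$ using two facts: the degeneracy locus is closed and $\check\lambda_e$-stable, and the $\check\lambda_e$-action contracts $\eS_e$ to $e$. This is the classical Slodowy-slice argument.

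\textbf{Comparison.} Both proofs rest on the same input, namely that $\g^f$ sits in non-positive $\ad_h$-weights. The paper uses it infinitesimally, as a filtration argument; you use it through a limit. Your version exhibits an honest smooth morphism of smooth varieties, gives its relative dimension explicitly, and makes visible why the $\bG_m$-factor of $\wt{Z}_e$ (acting through $\check\lambda_e$) is essential. The paper's version is a pointwise, limit-free check that directly yields the cotangent-complex statement used downstream, for example when computing $i_{\eS_e}^!\omega_\g$ in the proof of Proposition~\ref{prop:parab-A-kosz-grad}.

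One small inaccuracy: the image of $da$ at $[1,x]$ is actually $[\g,x]+kx+\g^f$, since the $\bG_m$-factor of $\wt{\g}$ contributes $kx$. This is harmless, because you prove the stronger equality $[\g,x]+\g^f=\g$, and the locus where it fails is still closed and $\check\lambda_e$-stable.
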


\begin{proof}
The morphism $i_{\eS_e}$ is clearly schematic; we show that its relative cotangent complex is perfect of Tor-amplitude $[0,1]$. The latter is computed by the complex\footnote{Note that here $\wt{\g}$ denotes the Lie algebra of $\wt{G}$, rather than the Grothendieck simultaneous resolution.}
\begin{equation*}
\g^\vee\otimes\O_{\eS_e/\wt{Z}_e}\xrightarrow{(\act^*,\tin^*)}(\wt{\g}^\vee\oplus\g^{f,\vee})\otimes\O_{\eS_e/\wt{Z}_e}\xrightarrow{(\tin^*,\act^*)}\wt{\z}_e^\vee\otimes\O_{\eS_e/\wt{Z}_e}
\end{equation*}
of (locally) free sheaves, where we have let $\act,\tin$ denote the evident ``action'' and inclusion maps. Thus, we wish to show that the first map is injective, and its cokernel is locally free. Dualizing and applying Nakayama's lemma reduces us to showing that the dual map is surjective on all fibers.

Given $x\in\g^f$, the fiber of the dual map at $e+x\in \eS_e$ is computed by
\begin{equation*}
([-,e+x],\angles{-,e+x}^\vee,\tin)\colon\g\oplus k\oplus\g^f\to\g.
\end{equation*}
It therefore suffices to show that the composition
\begin{equation}
\label{eqn:comp-wt-sp-f-map}
[\g,f]\hookrightarrow\g\xrightarrow{[-,e+x]}\g\twoheadrightarrow\g/\g^f
\end{equation}
is an isomorphism. Consider the basis of $[\g,f]$ given by weight spaces for the $\sl_2$-action associated to $\eS_e$. We then have an associated basis of $\g/\g^f$ given by sending each basis vector $v$ to $[v,e]$ (which increases the weight by $2$). But for any basis vector $v$ of weight $w$, the vector $[v,x]$ lies in the span of weight spaces $\le w$ (as $x$ lies in the span of weight spaces $\le 0$). It follows that the matrix describing \eqref{eqn:comp-wt-sp-f-map} with respect to these bases is upper triangular, with $1$'s along the diagonal, hence an isomorphism.
\end{proof}

\begin{corollary}
\label{cor:tr-iSe-star-loop-pullback}
The adjoint pair
\begin{equation*}
\Tr(\Ind_{i_{\eS_e}^*})\colon\Tr(\eH_P^{\coh})\rightleftarrows\Tr(\eH_{P,\eS_e}^{\coh})\colon\Tr(\Res_{i_{\eS_e}^*})
\end{equation*}
identifies with
\begin{equation*}
\cL i_{\eS_e}^*\colon\QC^!(\cL(\wh{\eN}_P/\wt{G}))\rightleftarrows\QC^!(\cL(\wh{\eS}_{e,\eN_P}/\wt{Z}_e))\colon\cL i_{\eS_e,*}
\end{equation*}
via Corollary~\ref{cor:tr-He-coh-ident}.
\end{corollary}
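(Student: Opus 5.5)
The identification will come from Corollary~\ref{cor:tr-conv-funct}, the general statement in Appendix~\ref{sec:tr-form-chap} about traces of functors between convolution categories induced by base change. Lemma~\ref{lem:LiSe-event-coconn} was proved precisely to verify its hypotheses.

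First I check that the monoidal functor $i_{\eS_e}^*\colon\eH_P^\coh\to\eH_{P,\eS_e}^\coh$ has the form treated in \S\ref{atm:conv-cat-funct}. The pullback squares \eqref{eqn:slodowy-pullback} show that $\wt{\eN}_{P,\eS_e}/\wt{Z}_e\simeq\wt{\eN}_P/\wt{G}\times_{\g/\wt{G}}\eS_e/\wt{Z}_e$, and likewise for the Steinberg stacks. So $i_{\eS_e}^*$ is the base-change functor along $i_{\eS_e}$ for the proper map $\pi_P$.

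Corollary~\ref{cor:tr-conv-funct} needs the base-change morphism to be smooth (or at least eventually coconnective and schematic). Lemma~\ref{lem:LiSe-event-coconn} gives exactly this. The corollary then identifies $\Tr(\Ind_{i_{\eS_e}^*})$ with $\cL i_{\eS_e}^*$ between the full loop-space categories $\QC^!(\cL(\g/\wt{G}))\to\QC^!(\cL(\eS_e/\wt{Z}_e))$. It also identifies the right adjoint $\Tr(\Res_{i_{\eS_e}^*})$ with $\cL i_{\eS_e,*}$, since traces of adjoint pairs of functors between dualizable categories remain adjoint.

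It remains to restrict to the support conditions of Corollary~\ref{cor:tr-He-coh-ident}. The traces embed as the subcategories set-theoretically supported on $\eN_P$ and on $\eS_{e,\eN_P}$ respectively, and the singular support condition is vacuous there by that corollary. Because $\eS_{e,\eN_P}=i_{\eS_e}^{-1}(\eN_P)$, the functor $\cL i_{\eS_e}^*$ carries the first support condition into the second. For the pushforward, the loop-space images of the support locus land in $\eN_P$, so $\cL i_{\eS_e,*}$ preserves the support conditions as well.

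The main obstacle is compatibility of all these identifications with the equivalences of Proposition~\ref{prop:tr-conv} used in Corollary~\ref{cor:tr-He-coh-ident}, which are built from the self-duality of $\QC^!$ of the Steinberg stacks. I would handle this by checking naturality of the trace formalism of \cite{gkrv} with respect to the base-change square. Concretely, $\Tr$ of the module functor $\QC(\wt{\eN}_P/\wt{G})\to\QC(\wt{\eN}_{P,\eS_e}/\wt{Z}_e)$ is computed on the derived loop spaces via the same base-change diagram. Smoothness ensures that $\cL i_{\eS_e}$ is again eventually coconnective and schematic, so that $\cL i_{\eS_e}^*$ is defined on $\QC^!$.
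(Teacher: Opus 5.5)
Your proposal is correct and is essentially the paper's argument: the corollary comes from applying Corollary~\ref{cor:tr-conv-funct} to the base change along $i_{\eS_e}$, with Lemma~\ref{lem:LiSe-event-coconn} supplying the smooth-relative-scheme hypothesis, and then rewriting the singular-support categories via Corollary~\ref{cor:tr-He-coh-ident}. One small correction: Corollary~\ref{cor:tr-conv-funct} identifies the functors directly on the $\Lambda$-supported subcategories, not on the full loop-space categories, and it already contains both the support compatibility $\Lambda_{\eX'/\eY',\phi'}=\cL f^!\Lambda_{\eX/\eY,\phi}$ and the naturality with Proposition~\ref{prop:tr-conv} (proved there via Lemma~\ref{lem:tr-ind} on compact generators), so the support restriction and the ``main obstacle'' you flag need no separate argument.
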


\section{The noncommutative partial Springer resolution}
\label{sec:noncomm-spr}

\begin{atom}
This section introduces Bezrukavnikov--Mirkovi\'c's noncommutative Springer resolution and Lusztig's canonical basis of the $K$-theory of a Springer fiber, focusing on the key properties which will be needed in the sequel. We begin in the setting of a partial Springer resolution for a simply-connected semisimple group (where no non-trivial cocycles appear in the canonical basis). Here Appendix~\ref{sec:parab-ark-bez-equiv} is used to establish the Koszul grading property for a general parabolic. We then extend these properties to any connected reductive group, and construct the cocycles appearing in the canonical basis using Appendix~\ref{sec:schur-mult}.
\end{atom}

\subsection{Recollections}

\begin{atom}
\label{atm:nc-spr-res}

Assume \emph{for this subsection only} that $G$ is semisimple and simply-connected (we will remove these hypotheses in \S\ref{sec:gen-conn-red}). Then there exists a $\wt{G}$-equivariant vector bundle $\hcE$ on the Grothendieck simultaneous resolution $\wt{\g}$ known as the \emph{Bezrukavnikov--Mirkovi\'c tilting bundle} \cite{bez-mirk,bez-losev}. We write $\cE:=\hcE|_{\wt{\eN}}$ for the $*$-restriction to the Springer resolution, and refer to it by the same name (the distinction will always be clear from context).
\end{atom}

\begin{atom}
\label{atm:tilt-gen}

For any Slodowy slice $\eS_e\subset\g$, the pullback $\cE_{\eS_e}:=\cE_{B,\eS_e}:=i_{\wt{\eN}_{\eS_e}}^*\cE$ is a tilting generator. As in \S\ref{atm:Lag-functs}, we have adjunctions
\begin{equation}
\begin{tikzcd}[column sep=large]
\QC(\wt{\eN}_{P,\eS_e}/\wt{Z}_e)\arrow[r,shift right,"\Lag^{!}_P"']&\QC(\wt{\eN}_{\eS_e}/\wt{Z}_e).\arrow[l,shift right,"\Lag_{P,!}"']
\end{tikzcd}
\end{equation}
for any standard parabolic $P\ne B$. Then
\begin{equation*}
\cE_{P,\eS_e}:=\Lag_{P,!}\cE_{\eS_e}(-\rho).
\end{equation*}
is a tilting vector bundle\footnote{Here we have taken $\lambda=0$ and $\fA$ to be the identity alcove in \cite[Thm.~4.2(a)]{bez-mirk}.}, so that for any $P$,
\begin{equation*}
\cA_{P,\eS_e}:=\End_{\wt{\eN}_{P,\eS_e}}(\cE_{P,\eS_e})
\end{equation*}
is an $\O(\eS_e)$-algebra in cohomological degree $0$ with a compatible $\wt{Z}_e$-action. We call it the \emph{noncommutative partial Springer resolution}, regardless of whether $e=0$; our notational conventions continue to follow \S\ref{subsec:springer-theory}.

This gives $\Rep(\wt{Z}_e)$-linear equivalences
\begin{equation}
\label{eqn:bm-functor}
\Hom_{\wt{\eN}_{P,\eS_e}}(\cE_{P,\eS_e},-)\colon\QC(\wt{\eN}_{P,\eS_e}/\wt{Z}_e)\xrightarrow{\sim}\cA_{P,\eS_e}^\op\dashmod^{\wt{Z}_e},
\end{equation}
and we refer to the t-structure on $\Coh(\wt{\eN}_{P,\eS_e}/\wt{Z}_e)$ corresponding to the usual t-structure on the right-hand side as the \emph{exotic t-structure}. The induced homomorphisms
\begin{equation*}
\End(\Lag_{P,!}(-\rho))\colon\cA_{\eS_e}\to\cA_{P,\eS_e}
\end{equation*}
then yield commutative squares
\begin{equation}
\label{eqn:A-AP-res-ind-commdiag}
\begin{tikzcd}
\QC(\wt{\eN}_{\eS_e}/\wt{Z}_e)\arrow{r}{\sim}&\cA_{\eS_e}^{\op}\dashmod^{\wt{Z}_e}\\
\QC(\wt{\eN}_{P,\eS_e}/\wt{Z}_e)\arrow[u,"{\Lag_{P}^{!}(\rho)}"]\arrow{r}{\sim}&\cA_{P,\eS_e}^{\op}\dashmod^{\wt{Z}_e}\arrow[u,"\Res^{\cA_{P,\eS_e}^{\op}}_{\cA_{\eS_e}^{\op}}"']
\end{tikzcd}
\end{equation}
by the adjunction equivalences
\begin{equation}
\label{eqn:AP-Lag-adj}
\Hom_{\wt{\eN}_{P,\eS_e}}(\cE_{P,\eS_e},\cF)\simeq\Hom_{\wt{\eN}_{\eS_e}}(\cE_{\eS_e},(\Lag_{P}^{!}\cF)(\rho)).
\end{equation}
In particular, $\Lag_{P}^{!}(\rho)$ is t-exact with respect to the exotic t-structures.
\end{atom}

\begin{atom}
\label{atm:koszul-grading}

Write
\begin{equation}
\label{eqn:tilt-bundle-indec-summands}
\{\cE_{P,\eS_e}^b:b\in \bfB_{e,P}\}
\end{equation}
for the set of indecomposable summands of $\cE_{P,\eS_e}\in\Coh(\wt{\eN}_{P,\eS_e})$, and set\footnote{Note that $\cA_{\eS_e}^{\cov}$ is the algebra ``$\cA_e$'' of \cite[\S5.5]{bez-mirk}; our notation differs in order to emphasize its relation to the Slodowy slice, and its Morita equivalence to $\cA_{\eS_e}$, which can be made equivariant once all cocycles are trivialized as in Corollary~\ref{cor:bm-cov-equiv}.}
\begin{equation}
\label{eqn:ASe-cov}
\cE_{P,\eS_e}^{\cov}:=\bigoplus_{b\in\bfB_{e,P}}\cE_{P,\eS_e}^b,\qquad\qquad \cA_{P,\eS_e}^\cov:=\End_{\wt{\eN}_{P,\eS_e}}(\cE_{P,\eS_e}^{\cov}).
\end{equation}
\end{atom}

\begin{proposition}
\label{prop:parab-A-kosz-grad}
There exists a graded lift\footnote{For $P=B$, these graded lifts are unique up to simultaneous twists of \eqref{eqn:tilt-bundle-indec-summands}. We do not discuss uniqueness of these lifts for general $P$, which is not necessary for the present work. Thus, we do not attempt to generalize \cite[\S6.4.3]{bez-mirk} in the proof that follows.} of $\cE_{P,\eS_e}^{\cov}$ to $\Coh(\wt{\eN}_{P,\eS_e}/\bG_m)$ making $\cA_{P,\eS_e}^{\cov}$ into a Koszul quadratic algebra.
\end{proposition}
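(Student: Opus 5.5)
For $P=B$ this is the Koszulity theorem of Bezrukavnikov--Mirkovi\'c \cite[\S5.5--6]{bez-mirk} (see also \cite{bez-losev}), so the task is to transport that result to a general parabolic $P$ using the Lagrangian functors of \S\ref{atm:Lag-functs} and the parabolic Arkhipov--Bezrukavnikov equivalence of Appendix~\ref{sec:parab-ark-bez-equiv}. For $P=B$ I will quote \loccit: there is a graded lift of $\cE_{\eS_e}^{\cov}$ to $\Coh(\wt{\eN}_{\eS_e}/\bG_m)$, with $\bG_m$ acting through $\check{\lambda}_e$, such that $\cA_{\eS_e}^{\cov}$ is Koszul. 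Their argument identifies the exotic t-structure with a perverse-type t-structure on Iwahori--Whittaker sheaves on the affine flag variety via Arkhipov--Bezrukavnikov. Frobenius weights (mixed geometry) then give a grading, and purity of the irreducibles in the heart gives Koszulity.

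I plan to run the same argument for $P$ rather than deduce it formally from $P=B$, because Koszulity does not obviously pass to idempotent truncations $e\cA e$ of a Koszul algebra. Concretely:
\begin{enumerate}
\item Use the commutative square \eqref{eqn:A-AP-res-ind-commdiag} and the t-exactness of $\Lag_P^!(\rho)$ to realize the exotic heart for $P$ as a full subcategory of the exotic heart for $B$. It should be the Serre subcategory, or quotient, cut out by the idempotents corresponding to the summands of $\cE_{\eS_e}$ that occur in $\Lag^!_P\cE_{P,\eS_e}(\rho)$.
\item Use Appendix~\ref{sec:parab-ark-bez-equiv}: the parabolic central functor and parabolic Arkhipov--Bezrukavnikov equivalence identify $\QC(\wt{\eN}_P/\wt{G})$ with a category of $I_{\check{P}}$-type Whittaker sheaves on $\Fl$ (or partial $\Fl$). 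They also match $\Lag_P^!$ with a pullback/averaging functor, and match the exotic t-structure with the perverse one.
\item Restrict to the Slodowy slice as in \cite[\S5]{bez-mirk}, working on the completion at $e$ with the contracting $\check{\lambda}_e$-action so that the grading is nonnegative and the completion can be dropped. Then lift to the mixed ($\Fqbar$-Frobenius or Hodge) version of the Whittaker category, where the simple objects are pure of weight $0$. The standard criterion (Beilinson--Ginzburg--Soergel: a mixed category with pure simples and $\Ext^i$ pure of weight $i$ has a Koszul endomorphism algebra of projective generators) then yields the graded lift of $\cE^{\cov}_{P,\eS_e}$ and Koszulity of $\cA_{P,\eS_e}^{\cov}$.
\end{enumerate}

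The step I expect to be the main obstacle is the purity of $\Ext$ between simples in the parabolic setting, i.e.\ $\Ext^i$ pure of weight $i$. For $B$ this came from the parity vanishing and pointwise purity of IC sheaves on affine flag varieties together with the Whittaker construction. For $P$ I would reduce to $B$: since $\Lag_P^!(\rho)$ is exact and sends simples to simples (or to pure semisimple objects) in the mixed setting, its left adjoint sends projectives to projectives. Adjunction \eqref{eqn:AP-Lag-adj} then expresses $\Ext_P(L,L')$ as a weight-compatible direct summand of $\Ext_B(\Lag^!_PL,\Lag^!_PL')$, which inherits purity. A further point to check is that the Frobenius grading and the $\check{\lambda}_e$ contracting grading agree on $\cA_{P,\eS_e}$, up to the cohomological shift convention. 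For this I would appeal to the compatibility of the equivalence \eqref{eqn:bez-chen-dhillon} with Frobenius and $q^{-1/2,*}$ recalled in the introduction.
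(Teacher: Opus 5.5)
Your plan matches the paper in how the grading is produced: rerun \cite[\S6.3--6.4]{bez-mirk} for $P$, with Proposition~\ref{prop:arkh-bez-parab} supplying the mixed structure. Where you differ is the route to Koszulity.

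The paper never proves purity of $\Ext$ directly. It follows \cite[Prop.~5.5]{bez-mirk}, in two steps.
\begin{enumerate}
\item Positivity of the grading, obtained as in \S6.4.2 of \loccit, says the weight-$0$ part of $\End(\cE^{\cov}_{P,\eS_e})$ is spanned by the idempotents and everything else has positive weight. This already places $\Ext^n$ between graded simples on one side of weight $n$.
\item Grothendieck--Serre duality gives the opposite bound once $\omega_{\wt{\eN}_{P,\eS_e}}\simeq\O_{\wt{\eN}_{P,\eS_e}}[\dim\wt{\eN}_{P,\eS_e}]\angles{\dim\wt{\eN}_{P,\eS_e}}$.
\end{enumerate}
The only new input is this canonical bundle computation. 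The paper does it via the $\SL_2$-decomposition of $\g$, which gives $\omega_{\eS_e}$ with its weight, and the transversality of Lemma~\ref{lem:LiSe-event-coconn}. So the step you single out as the main obstacle is dissolved by duality. This duality argument is also what \loccit\ uses in the Borel case; it is not geometric purity of $\Ext$, as your summary of the Borel case suggests.

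Your reduction to $B$ can be made to work, but it needs two inputs you do not supply.
\begin{enumerate}
\item Adjunction \eqref{eqn:AP-Lag-adj} alone does not split off $\Ext_P(L,L')$. You also need the adjunction $\Lag^!_P\dashv\Lag_{P,!}[-2d_{P/B}]$ from \eqref{eqn:Lag-extra-adjns}, and the formula $\Lag_{P,!}\Lag^!_P\simeq\H^*(P/B,k)[2d_{P/B}]\otimes-$ from the proof of Proposition~\ref{prop:coh-spr-decomp}, base-changed to the slice. Together these give $\Ext^\bullet_B(\Lag^!_PL,\Lag^!_PL')\simeq\bigoplus_n\H^n(P/B,k)\angles{n}\otimes\Ext^{\bullet-n}_P(L,L')$, so $\Ext^i_P$ is the $n=0$ graded summand.
\item You need $\Lag^!_P(\rho)$ to send each graded slice simple $\cL^b_{P,\eS_e}$ to a direct sum of simples $\cL^{b'}_{\eS_e}$ in a single weight, for the Bezrukavnikov--Mirkovi\'c normalization of the Borel grading. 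This is the more serious gap. Smooth pullback preserves simplicity and purity of global Iwahori--Whittaker perverse sheaves. On the slice, however, you need the \S6.3--6.4 comparison between slice simples and global simples, for both $P$ and $B$, normalized compatibly.
\end{enumerate}
The second point does not follow from exactness, full faithfulness on hearts, or the adjunctions. Restriction along the ring epimorphism $T_2(k)\to M_2(k)$ is exact, fully faithful, and even induces isomorphisms on all $\Ext$, yet it does not send simples to simples.

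With both inputs supplied, your argument gives the slightly finer statement that $\Ext_P$ is a graded summand of $\Ext_B$. The paper's route is shorter and never has to compare the $P$- and $B$-gradings.
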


\begin{proof}
Proofs for all results in \cite[\S6.3]{bez-mirk} are identical, with the algebra $\H^0(\cA_{\eS_e}\otimes_{\O(\eS_e)}k_e)$ replaced by $\H^0(\cA_{P,\eS_e}\otimes_{\O(\eS_e)}k_e)$ and $\cE_{\eS_e}^b,\cL_{\eS_e}^b$ replaced by $\cE^{b}_{P,\eS_e},\cL^{b}_{P,\eS_e}$. Using Proposition~\ref{prop:arkh-bez-parab}, the choice of grading is now the same as in \S6.4.1 of \loccit, and the proofs of \S6.4.2 and \S6.4.4 in \loccit\ are identical. For Koszulity of $\cA_{P,\eS_e}^{\cov}$, it suffices as in Proposition~5.5 of \loccit\ to show that $\omega_{\wt{\eN}_{P,\eS_e}}\simeq\O_{\wt{\eN}_{P,\eS_e}}[\dim\wt{\eN}_{P,\eS_e}]\angles{\dim\wt{\eN}_{P,\eS_e}}$. Clearly we have
\begin{equation*}
\omega_{\wt{\eN}_{P}}\simeq\O_{\wt{\eN}_{P}}[\dim\wt{\eN}_{P}]\angles{\dim\wt{\eN}_{P}}\simeq\pi_P^*\omega_{\g}[\dim\wt{\eN}_{P}-\dim\g]\angles{\dim\wt{\eN}_{P}-2\dim\g}.
\end{equation*}
Decomposing $\g$ as an $\SL_2$-representation via $\varphi_e$ (see \ref{atm:jacobson-morozov}) shows that $\omega_{\eS_e}\simeq\O_{\eS_e}[\dim \eS_e]\angles{\dim\g+\dim \eS_e}$, and hence
\begin{equation}
\label{eqn:omega-N-P-Se}
\begin{split}
\omega_{\wt{\eN}_{P,\eS_e}}&\simeq\pi_{P,\eS_e}^{*}i_{\eS_e}^!\omega_{\g}[\dim\wt{\eN}_{P}-\dim\g]\angles{\dim\wt{\eN}_{P}-2\dim\g}\\
&\simeq\pi_{P,\eS_e}^{*}\omega_{\eS_e}[\dim\wt{\eN}_{P}-\dim\g]\angles{\dim\wt{\eN}_{P}-2\dim\g}\\
&\simeq\pi_{P,\eS_e}^{*}\O_{\eS_e}[\dim\wt{\eN}_{P}-\dim\g+\dim \eS_e]\angles{\dim\wt{\eN}_{P}-2\dim\g+\dim\g+\dim \eS_e}\\
&\simeq\O_{\wt{\eN}_{P,\eS_e}}[\dim\wt{\eN}_{P,\eS_e}]\angles{\dim\wt{\eN}_{P,\eS_e}},
\end{split}
\end{equation}
where in the final step we have used Lemma~\ref{lem:LiSe-event-coconn}.
\end{proof}

\begin{atom}
\label{atm:tilt-bund-simp-mods}

In particular, the induced grading on $\cA_{P,\eS_e}^{\cov}$ is Koszul (and compatible with the grading on $\O(\eS_e)$). Thus, for each $b,b'\in \bfB_{e,P}$, the vector space $\Hom_{\wt{\eN}_{P,\eS_e}}(\cE_{\eS_e}^b,\cE_{\eS_e}^{b'})$ is concentrated in non-negative $\bG_m$-weights; moreover, the weight-$0$ component is spanned by the identity map if $b=b'$, and is $0$ otherwise. We henceforth fix a \emph{choice} of graded lifts for each parabolic $P$.

Let $\cL_{P,\eS_e}^{b}\in\Coh(\wt{\eN}_{P,\eS_e}/\bG_m)$ denote the simple object in the heart of the exotic t-structure with projective cover $\cE_{P,\eS_e}^{b}$, so that
\begin{equation}
\label{eqn:exotic-simple-maps}
\Hom_{\wt{\eN}_{P,\eS_e}}(\cE_{P,\eS_e}^{b'},\cL_{P,\eS_e}^b)\simeq\begin{cases}
k\angles{0}&\text{ if }b=b',\\
0&\text{ otherwise}.
\end{cases}
\end{equation}
In particular, the $\O(\eS_e)$ action on \eqref{eqn:exotic-simple-maps} factors through the maximal ideal of $e$, so $\cL_{P,\eS_e}^b$ is supported on the partial Springer fiber $\eP_e$. The same argument as in \cite[\S5.4.4]{bez-mirk} shows that the set $\{[\cL_{P,\eS_e}^{b}]:b\in\bfB_{e,P}\}$ forms a $K_0(\B\bG_m)$-basis for the equivariant $K$-theory group $K_0(\eP_e/\bG_m)$; we refer to it as the \emph{canonical basis}, even though it is no longer fully canonical when $P\ne B$. We will often misuse terminology slightly and refer to the set $\bfB_{e,P}$ itself as ``the canonical basis.'' Note that as in the proof of Proposition~\ref{prop:parab-A-kosz-grad}, Grothendieck--Serre duality implies that $\Ext^n_{\wt{\eN}_{P,\eS_e}}(\cL_{P,\eS_e}^{b'},\cL_{P,\eS_e}^b)$ is concentrated in weight-$n$, and vanishes for $n\ge\dim\wt{\eN}_{P,\eS_e}$.

Finally, we let $E^b_{P,\eS_e}$ and $L^b_{P,\eS_e}$ denote the corresponding objects on the right-hand side of \eqref{eqn:bm-functor} (after forgetting the $Z_e$-equivariance). These are the indecomposable projective and simple right $\cA_{P,\eS_e}$-modules, respectively. Moreover, we set
\begin{equation}
\label{eqn:indec-proj-left-ASe-mod}
E_{P,\eS_e}^{b,\ell}:=\Hom_{\wt{\eN}_{P,\eS_e}}(\cE^b_{P,\eS_e},\cE_{P,\eS_e})\simeq\Hom_{\cA_{P,\eS_e}^\op}(E^b_{P,\eS_e},\cA_{P,\eS_e})
\end{equation}
to be the corresponding indecomposable projective \emph{left} $\cA_{P,\eS_e}$-modules.
\end{atom}

\begin{atom}
\label{atm:braid-positivity}
We now state the \emph{braid positivity} property of the noncommutative Springer resolution. Recall from \S\ref{atm:braid-action} that there is an affine braid group action on $\QC(\wt{\eN}_{\eS_e}/\wt{Z}_e)$ for any Slodowy slice $\eS_e$. The action of any $a\in B^\ext_+$ on $\QC(\wt{\eN}_{\eS_e}/\wt{Z}_e)$ is then right t-exact with respect to the \emph{exotic} t-structure constructed in \eqref{eqn:bm-functor}. We will in fact be interested in the action of $\eH_{\eS_e}^\coh$ on $\QC(\wt{\eN}_{\eS_e}/\wt{Z}_e)$ via \emph{right} convolution. Note that pullback along the ``swap'' map $\sigma\colon\wt{\eN}_{\eS_e}\times_{\eS_e}\wt{\eN}_{\eS_e}\to\wt{\eN}_{\eS_e}\times_{\eS_e}\wt{\eN}_{\eS_e}$ interchanging the two copies of $\wt{\eN}_{\eS_e}$ intertwines right and left convolution. Moreover, $\sigma^*$ preserves the sheaves $\Delta_{\wt{\eN}/\g,*}\O_{\wt{\eN}}(\lambda)$ and $\O_{\Gamma_{s_\alpha}}$, hence induces an anti-involution $\sigma^*\colon B^\ext\to B^{\ext,\op}$ (which sends any $\wt{w}$ to $\wt{w^{-1}}$). Thus, the braid positivity property also holds for right convolution.

For the partial resolutions, let $X^*(P)\subset X^*(T)$ denote the character lattice of $P$ (or equivalently, the sublattice of weights orthogonal to coroots in the Levi factor of $P$), and set $X^*(P)^+:=X^*(P)\cap X^*(T)^+$. Observe that $\Lag_P^!(\rho)$ is canonically $\Rep(P)$-linear for $e=0$, and hence for any $e$. Braid positivity and \eqref{eqn:AP-Lag-adj} then show that the endofunctor
\begin{equation*}
-\otimes\O_{\wt{\eN}_{P,\eS_e}}(\lambda)\colon\QC(\wt{\eN}_{P,\eS_e}/\wt{Z}_e)\to\QC(\wt{\eN}_{P,\eS_e}/\wt{Z}_e)
\end{equation*}
is right t-exact with respect to the exotic t-structure, for any $\lambda\in X^*(P)^+$. We hope to give a more general braid positivity statement for the partial resolutions in a future article.
\end{atom}

\subsection{Generalization to connected reductive groups}
\label{sec:gen-conn-red}

\begin{atom}
We now aim to remove the assumptions on $G$; so suppose once again that $G$ is connected and reductive. We shall need the following lemma:
\end{atom}

\begin{lemma}
\label{lem:der-gp-coc}
Set $\overline{G}:=G/Z(G)^\circ$. Then the functor
\begin{equation}
\label{eqn:res-funct-der-sub}
\Res^G_{[G,G]}\colon\Rep(G)\to\Rep([G,G])
\end{equation}
admits a (non-canonical) $\Rep(\overline{G})$-linear section. In particular, applying $-\otimes_{\Rep(\overline{G})}\QC(\wt{\eN}/\overline{G})$, we obtain a $\Rep(G)$-linear section of the restriction functor
\begin{equation*}
\QC(\wt{\eN}/G)\to\QC(\wt{\eN}/[G,G]).
\end{equation*}
\end{lemma}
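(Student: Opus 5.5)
We would construct the section by hand from the structure of $G$ as an almost-direct product. Set $Z:=Z(G)^\circ$ and $D:=[G,G]$, so that multiplication $Z\times D\to G$ is a central isogeny with finite kernel $\mu:=Z\cap D$. Then $\Rep(G)$ identifies with the full subcategory of $\Rep(Z\times D)$ on which the antidiagonal copy of $\mu$ acts trivially. Concretely, $\Rep(D)$ decomposes by central character as $\bigoplus_{\chi\in X^*(\mu)}\Rep(D)_\chi$, and likewise $\Rep(Z)=\bigoplus_{\chi}\Rep(Z)_\chi$. Moreover, $\Rep(G)\simeq\bigoplus_\chi\Rep(Z)_\chi\otimes\Rep(D)_\chi$, and $\Rep(\overline{G})=\Rep(D/\mu)$ is exactly the summand $\Rep(D)_0$.

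The plan is to choose, for each $\chi\in X^*(\mu)$, a character $\tilde\chi\in X^*(Z)$ restricting to $\chi$ on $\mu$. Such a choice exists because $Z$ is a torus and $\mu\subset Z$ is a finite subgroup, so $X^*(Z)\to X^*(\mu)$ is surjective; this is the non-canonical choice. We then define
\begin{equation*}
s\colon\Rep(D)\to\Rep(G),\qquad V\in\Rep(D)_\chi\mapsto k_{\tilde\chi}\boxtimes V.
\end{equation*}
This is a $Z\times D$-representation on which $\mu$ acts antidiagonally trivially, hence descends to $G$. Restricting to $D$ recovers $V$, so $\Res^G_D\circ s\simeq\id$.

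Next we check $\Rep(\overline{G})$-linearity. For $W\in\Rep(\overline{G})=\Rep(D)_0$, regarded in $\Rep(G)$ via $G\to\overline{G}$ (so $Z$ acts trivially), and $V\in\Rep(D)_\chi$, we have $W\otimes V\in\Rep(D)_\chi$. Then $s(W\otimes V)=k_{\tilde\chi}\boxtimes(W\otimes V)\simeq W\otimes s(V)$ canonically, and these isomorphisms are compatible with associativity. Since the decomposition by $\chi$ is an orthogonal direct-sum decomposition of module categories over $\Rep(D)_0$, $s$ is a $\Rep(\overline{G})$-module functor at the dg/$\infty$ level. Continuity is clear, since $s$ is a direct sum of tensoring with a line.

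For the second statement, we apply $-\otimes_{\Rep(\overline{G})}\QC(\wt{\eN}/\overline{G})$ and use 1-affineness of $\B\overline{G}$ together with the base-change identifications $\Rep(G)\otimes_{\Rep(\overline{G})}\QC(\wt{\eN}/\overline{G})\simeq\QC(\wt{\eN}/G)$ and similarly for $D$, valid because $\B G\to\B\overline{G}$ and $\B D\to\B\overline{G}$ are maps of passable/perfect stacks as in \cite{bfn}. Here $Z$ acts trivially on $\wt{\eN}$, so $\wt{\eN}/G\simeq\wt{\eN}/\overline{G}\times_{\B\overline{G}}\B G$. The induced section is $\Rep(G)$-linear because $s$ is also compatible with the $\Rep(Z)$-action up to the grading by $\chi$; more precisely, $\Rep(G)$-linearity follows from the same explicit formula, since tensoring with $k_{\tilde\chi'}\boxtimes U$ shifts $\chi$ coherently. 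The main obstacle is this last point: whether the section should genuinely be $\Rep(G)$-linear or only $\Rep(\overline{G})$-linear after tensoring. We expect to need to reinterpret the statement as $\Rep(G)$-linearity of the base-changed functor for the action through the first factor, and to verify it on the $\chi$-summands.
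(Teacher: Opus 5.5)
Your construction of the section is the same as the paper's. You decompose by characters of the central torus $Z(G)^\circ$ and of $\mu=Z(G)^\circ\cap[G,G]$, and use that restriction matches summands through the surjection $X^*(Z(G)^\circ)\to X^*(\mu)$. You then send the $\chi$-summand to the $\tilde\chi$-summand for a set-theoretic splitting $\chi\mapsto\tilde\chi$. Your check of $\Rep(\overline{G})$-linearity is fine, and so is the base change $\wt{\eN}/G\simeq\wt{\eN}/\overline{G}\times_{\B\overline{G}}\B G$, which uses that $Z(G)^\circ$ acts trivially on $\wt{\eN}$.

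The step that fails is your claim that $\Rep(G)$-linearity ``follows from the same explicit formula, since tensoring with $k_{\tilde\chi'}\boxtimes U$ shifts $\chi$ coherently.'' An object of $\Rep(G)$ whose restriction lies in the $\chi'$-summand of $\Rep([G,G])$ has $Z(G)^\circ$-weight $\psi$ for an \emph{arbitrary} lift $\psi$ of $\chi'$. Linearity would then require $\widetilde{\chi+\chi'}=\tilde\chi+\psi$ for every such $\psi$, which is impossible.

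Concretely, take a character $\psi$ of the torus $G/[G,G]$ that is nontrivial on $Z(G)^\circ$. Then $\Res(k_\psi)\simeq k$, so $\Rep(G)$-linearity would force $s(\cF)\simeq k_\psi\otimes s(\cF)$. The $Z(G)^\circ$-weight support of $s(\cF)$ would then be invariant under translation by $\psi|_{Z(G)^\circ}\neq 0$. Restriction to $[G,G]$ is conservative and commutes with the weight decomposition. So $\cF\simeq\Res\,s(\cF)$ would be an infinite direct sum of nonzero pieces, which is impossible for coherent $\cF\neq 0$, e.g.\ $\cF=\O_{\wt{\eN}}$.

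Hence whenever $Z(G)^\circ\neq 1$ no $\Rep(G)$-linear section exists at all, and no reinterpretation on $\chi$-summands will produce one. This already happens for $G=\bG_m$, where the functor is $\Rep(\bG_m)\to\Vect$.

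What the base change actually gives is a $\QC(\wt{\eN}/\overline{G})$-linear (in particular $\Rep(\overline{G})$-linear) section. This is also all the paper's one-line proof (choosing a splitting of $X^*(Z(G)^\circ)\to X^*(Z)$) supports, so you should conclude that and drop the $\Rep(G)$ claim. It is also all that is used later. Generation of $\QC(\wt{\eN}/G)$ under $\QC(\B G)$ comes from the $\Rep(G)$-action on the target: tensoring by characters of $G/[G,G]$ moves the $\tilde\chi$-summands to every other lift. It does not come from linearity of the section.
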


\begin{proof}
Set $Z:=Z([G,G])\cap Z(G)^\circ$, and recall that we have a short exact sequence
\begin{equation}
\label{eqn:der-gp-ses}
1\to Z\xrightarrow{g\mapsto(g,g^{-1})}[G,G]\times Z(G)^\circ\to G\to 1.
\end{equation}
In particular, we have $\overline{G}\cong[G,G]/Z$. Since $Z(G)^\circ$ is a torus, we have $\Rep(\overline{G})$-linear decompositions
\begin{equation}
\label{eqn:rep-G-Gder-char-decomp}
\Rep(G)\simeq\bigoplus_{\chi\in X^*(Z(G)^\circ)}\Rep(\overline{G})_{\chi},\qquad\qquad\Rep([G,G])\simeq\bigoplus_{\chi\in X^*(Z)}\Rep(\overline{G})_{\chi},
\end{equation}
as in \eqref{eqn:coc-rep-G-decomp}. Moreover, the restriction functor \eqref{eqn:res-funct-der-sub} is determined by the canonical restriction map
\begin{equation}
\label{eqn:char-res-map}
X^*(Z(G)^\circ)\to X^*(Z),
\end{equation}
which is a surjection. Thus, choosing any set-theoretic splitting of \eqref{eqn:char-res-map} yields the result.
\end{proof}

\begin{atom}
Now, consider the homomorphisms
\begin{equation*}
[G,G]^\sch\twoheadrightarrow[G,G]\hookrightarrow G,
\end{equation*}
where $[G,G]^\sch$ denotes the Schur covering as in Proposition~\ref{prop:schur-cov}. By Lemma~\ref{lem:sch-min-rep} and \S\ref{atm:nc-spr-res}, we have a Bezrukavnikov--Mirkovi\'c tilting bundle $\cE$ on $\wt{\eN}/[G,G]^\sch$.

As in Notation~\ref{not:schur-mult}, we let $\M(H)$ denote the Schur multiplier of a linear algebraic group $H$; we refer to Appendix~\ref{sec:schur-mult} for additional background on Schur multipliers. For each nontrivial cocycle\footnote{Technically, we should write $[(\cC,\alpha)]\in\M([G,G])$ for the isomorphism class of the cocycle $(\cC,\alpha)$, but we will not worry too much about this distinction in our exposition.} $(\cC,\alpha)\in\M([G,G])$, choose a minuscule weight $\lambda_{(\cC,\alpha)}$ of $[G,G]^\sch$ as in Lemma~\ref{lem:sch-min-rep}, and set $\lambda_{(\O_{[G,G]},\id)}:=0$.\footnote{We do not actually need these weights to be minuscule (in fact, all we really need is to choose nonzero compact objects of $\Rep(G)^{(\cC,\alpha)}$). However, this is in a sense the ``simplest'' way to modify $\cE$ to be $[G,G]$-equivariant.} Thus, by \eqref{eqn:sch-decomp} and Lemma~\ref{lem:der-gp-coc}, we have functors
\begin{equation*}
\QC(\wt{\eN}/[G,G]^\sch)\simeq\bigoplus_{(\cC,\alpha)\in\M([G,G])}\QC(\wt{\eN}/[G,G])^{(\cC,\alpha)}\xrightarrow{\bigoplus V_{\lambda_{(\cC,\alpha)}}^\vee\otimes-}\QC(\wt{\eN}/[G,G])\to\QC(\wt{\eN}/G),
\end{equation*}
where $\QC(\wt{\eN}/[G,G])^{(\cC,\alpha)}$ denotes the $(\cC,\alpha)$-twist of $\QC(\wt{\eN}/[G,G])$, i.e., the category of $(\cC,\alpha)$-equivariant sheaves on $\wt{\eN}$ as in \S\ref{atm:coc-twist-cat}. Misusing notation, we also denote the image of $\cE$ under this composition by $\cE$; though it is not uniquely determined, it is evidently also a tilting bundle, and it is a compact generator of $\QC(\wt{\eN}/G)$ under the action of $\QC(\B G)$ by Lemma~\ref{lem:any-coc-rep-gens}.

Thus, the properties in \S\ref{atm:tilt-gen}, \S\ref{atm:koszul-grading}, and \S\ref{atm:tilt-bund-simp-mods} also hold for $\cE$ in this more general setting, and we carry over all notations from these paragraphs. Moreover, we may extend the braid group action of \S\ref{atm:braid-action} to this setting using the same constructions\footnote{Note that the de-equivariantized action on $\QC(\wt{\eN})$ factors through the affine braid group for $[G,G]^\sch$ as in \eqref{eqn:der-gp-ses}. We may thus regard the de-equivariantized affine braid group action for $G$ as obtained from the pushforward of that for $[G,G]^\sch$ under the evident map $\wt{\eN}\times_{[\g,\g]}\wt{\eN}\to\wt{\eN}\times_{\g}\wt{\eN}$ (which is an isomorphism on classical truncations). Since this gives a monoidal functor between the de-equivariantized affine Hecke categories, it is now straightforward to deduce the requisite relations between the sheaves $\cK_a$.}, and the braid positivity properties in \S\ref{atm:braid-positivity} evidently carry over.
\end{atom}

\begin{atom}
Our goal now is to lift the properties of $\cA_{P,\eS_e}^{\cov}$ in Proposition~\ref{prop:parab-A-kosz-grad} and \S\ref{atm:tilt-bund-simp-mods} to $\cA_{P,\eS_e}$. That is, we wish to construct $\wt{Z}_e$-equivariant analogs of the simple and indecomposable projective $\cA_{P,\eS_e}^{\cov,\op}$-modules satisfying analogous grading properties. Our first step will be to show that these modules admit equivariance structures with respect to certain canonical cocycles of their stabilizers in $Z_e$.
\end{atom}

\begin{notation}
Let $\overline{Z}_e:=Z_e/Z_e^\circ\cdot Z(G)$. The $Z_e$-action on $\cA_{P,\eS_e}$ induces a canonical action of $Z_e$ on the set $\bfB_{e,P}$ of simple modules, which evidently factors through $\overline{Z}_e$. For each $b\in \bfB_{e,P}$, let $Z_e^b\subset Z_e$ denote the stabilizer of $b$. We fix a set $\bfB_{e,P}^\orb$ of orbit representatives for the action of $\overline{Z}_e$ on $\bfB_{e,P}$.
\end{notation}

\begin{proposition}
\label{prop:coc-equiv-simple-indec-proj}
There exists a canonical cocycle $(\cC_b,\alpha_b)\in\M(Z_e^b)$ for which the modules $L_{P,\eS_e}^{b}$ and $E_{P,\eS_e}^{b}$ admit canonical $(\cC_b,\alpha_b)$-equivariant structures, i.e., canonical lifts to $\cA_{P,\eS_e}^\op\dashmod^{Z_e^b,(\cC_b,\alpha_b)}$.
\end{proposition}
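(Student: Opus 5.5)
Since $L^{b}_{P,\eS_e}$ is a simple right $\cA_{P,\eS_e}$-module and $Z_e^b$ is by definition the stabilizer of its isomorphism class, the plan is the standard Clifford--Mackey construction of a projective representation, made algebraic via Appendix~\ref{sec:schur-mult}. For each $z\in Z_e^b$, the twist $z^*L^b_{P,\eS_e}$ (restriction along the automorphism $z$ of $\cA_{P,\eS_e}$) is isomorphic to $L^b_{P,\eS_e}$. Because $L^b$ is simple, and $\End(L^b)=k$ by \eqref{eqn:exotic-simple-maps} ($k$ being algebraically closed), this isomorphism is unique up to a scalar in $k^\times$.

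We would organize these isomorphisms into the cocycle datum of Appendix~\ref{sec:schur-mult}. Concretely, $\cC_b$ is the $\bG_m$-torsor over $Z_e^b$ whose fiber at $z$ is the set of isomorphisms $z^*L^b\xrightarrow{\sim}L^b$, and $\alpha_b$ is the multiplication induced by composing these isomorphisms. The key steps, in order, are as follows.

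First, we construct $\cC_b$ algebraically. Since $L^b$ is finite-dimensional over $k$ (it is supported at $e$ and concentrated in weight $0$), the assignment $z\mapsto\Hom_{\cA^\op}(z^*L^b,L^b)$ is the kernel of a morphism of vector bundles over $Z_e^b$. Its rank is constantly $1$, so it is a line bundle, and $\cC_b$ is the complement of its zero section.

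Second, composition of isomorphisms yields a multiplication $\cC_b\times\cC_b\to\cC_b$ covering multiplication on $Z_e^b$. Associativity holds strictly, so this is a central extension $1\to\bG_m\to\cC_b\to Z_e^b\to1$, i.e.\ a cocycle in the sense of \S\ref{atm:coc-twist-cat}. Its class $[(\cC_b,\alpha_b)]\in\M(Z_e^b)$ is canonical because the only choices involved are rescalings, which do not change the extension. By construction, $L^b$ carries a tautological $(\cC_b,\alpha_b)$-equivariant structure: $\cC_b$ acts via its points, with $\bG_m$ acting by weight $1$.

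Third, we transfer the structure to $E^b_{P,\eS_e}$. The module $E^b$ is the projective cover of $L^b$, and isomorphisms $z^*E^b\simeq E^b$ are not unique; we normalize them by requiring compatibility with the projection $E^b\to L^b$. Using the Koszul grading of Proposition~\ref{prop:parab-A-kosz-grad} and \S\ref{atm:tilt-bund-simp-mods}, the weight-$0$ part of $\End(E^b)$ is $k$. Moreover, the $Z_e$-action preserves the grading, since it commutes with $\check{\lambda}_e$, provided the chosen graded lifts are $Z_e^b$-stable. We therefore restrict to graded automorphisms of weight $0$, which are again unique up to scalar and lie over the corresponding isomorphisms for $L^b$. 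This gives the same torsor $\cC_b$, acting on $E^b$ compatibly with the surjection $E^b\to L^b$.

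The main obstacle is the rigidity claimed in step three. Ungraded automorphisms of $E^b$ form a non-reductive group, namely $k^\times$ times a unipotent radical coming from positive weights. If the graded lifts are not $Z_e^b$-stable, canonical normalization fails. We would address this by noting that $Z_e$ is reductive and commutes with the $\bG_m$ acting through $\check{\lambda}_e$. Hence graded lifts, which are unique up to simultaneous twist for $P=B$, can be chosen $Z_e^b$-stable. Alternatively, the unipotent ambiguity could be killed by averaging, since the obstruction lies in the cohomology of a reductive group with unipotent coefficients, which vanishes. Finally, we check that the $\cC_b$-actions are algebraic, i.e.\ that they define comodule structures, by constructing them from the universal isomorphism over $\cC_b$.
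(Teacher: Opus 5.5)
You handle $L^b_{P,\eS_e}$ correctly, and your construction is the paper's in other clothing. The torsor of isomorphisms $z^*L^b\xrightarrow{\sim}L^b$ is exactly the pullback of $\GL(\underline{L}^b)\to\PGL(\underline{L}^b)$ along the Skolem--Noether map $Z_e^b\to\Aut(\End_k(L^b))$. For algebraicity, impose your condition through the finite-dimensional $Z_e$-stable quotient $\cA_{P,\eS_e}/\rmJ(\cA_{P,\eS_e})$, through which the action on $L^b$ factors.

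The gap is in your third step, exactly where you flag it. Write $\cE^b:=\cE^b_{P,\eS_e}$ with its chosen graded lift. For $z\in Z_e^b$, the sheaf $z^*\cE^b$ is $\bG_m$-equivariant because $z$ commutes with $\check{\lambda}_e$. Your normalization needs $z^*\cE^b\simeq\cE^b$ \emph{as graded sheaves}; only then do weight-$0$ isomorphisms $z^*E^b\xrightarrow{\sim}E^b$ form a $k^\times$-torsor. None of your justifications gives this.
\begin{itemize}
\item That $Z_e$ preserves the $\check{\lambda}_e$-grading of $\cA_{P,\eS_e}$ is beside the point: that grading is in general not the Koszul one, as the paper says explicitly.
\item Uniqueness of Koszul lifts up to simultaneous twist is available only for $P=B$. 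Even there it yields only $z^*\cE^b\simeq\cE^b\angles{n_z}$. Since all lifts of $\cE^b$ differ by shifts and $z^*$ commutes with shifts, no choice of lift can force $n_z=0$; it must be proved.
\item The averaging alternative is not set up. $\Aut(E^b)$ is $k^\times$ times the infinite-dimensional pro-unipotent group $1+\rmJ(\End(E^b))$. The vanishing you claim is a Levi--Mostow splitting statement that needs a pro-algebraic framework you have not provided.
\end{itemize}

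The missing lemma has a short proof valid for every $P$.
\begin{itemize}
\item \emph{Graded lifts of the indecomposable $\cE^b$ are unique up to shift.} Take two lifts and an ungraded isomorphism $\phi$ with inverse $\psi$, and decompose both into weight components. The weight-$0$ part of $\psi\phi=\id$ is $\sum_n\psi_{-n}\phi_n$. Since $\End(\cE^b)_0=k$, some $\psi_{-n}\phi_n=c\neq0$. Then $c^{-1}\phi_n\psi_{-n}$ is a nonzero idempotent, hence equal to $\id$ by indecomposability, so $\phi_n$ is a graded isomorphism.
\item \emph{The shift is well defined.} $\cE^b\not\simeq\cE^b\angles{n}$ for $n\neq0$, because $\End(\cE^b)$ has no negative weights. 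So $z\mapsto n_z$ is a well-defined homomorphism $Z_e^b\to\Z$.
\item \emph{The homomorphism is trivial.} Each element of $(Z_e^b)^\circ$ is a product of an element of a torus and an element of a one-parameter unipotent subgroup, both divisible groups, and $\pi_0(Z_e^b)$ is finite.
\end{itemize}
With this lemma your third step works. Send a weight-$0$ isomorphism $z^*E^b\xrightarrow{\sim}E^b$ to the induced isomorphism of heads $z^*L^b\xrightarrow{\sim}L^b$. This is a morphism of central extensions restricting to the identity on $\bG_m$, so it identifies the cocycle with $(\cC_b,\alpha_b)$.

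By contrast, the paper argues as follows:
\begin{itemize}
\item It cites \cite[Rem.~4.6]{bez-losev} for a $\wt{Z}_e$-stable Koszul grading on all of $\cA_{\eS_e}$.
\item It uses the degree-$0$ part of that grading as a $Z_e$-stable semisimple complement $\bigoplus_b\End_k(L^b)$ to the radical.
\item This gives $Z_e$-permuted idempotents $e_b$ with $e_b\cA_{\eS_e}\simeq\underline{L}^{b,\vee}\otimes_kE^b$.
\item It reads off the cocycle of $E^b$ as the inverse of that of $\underline{L}^{b,\vee}$.
\end{itemize}
Once repaired, your route needs only the intrinsic grading on each $\End(\cE^b)$, not a globally equivariant Koszul grading.
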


\begin{atom}
Note that by \cite[Cor.~8.5.7]{gr}, we may regard $\cA_{P,\eS_e}^\op\dashmod^{Z_e^b,(\cC_b,\alpha_b)}$ as the category of modules for the monad $(-\otimes\cA_{P,\eS_e})\in\End(\Rep(Z_e^b))$ on the $\Rep(Z_e^b)$-module category $\Rep(Z_e^b)^{(\cC_b,\alpha_b)}$. Thus, it is equivalent to show that the vector spaces underlying these modules carry compatible $(\cC_b,\alpha_b)$-representation structures.
\end{atom}

\begin{proof}
To simplify the notation, we work in the case where $P=B$; the proof for general $P$ is identical. We begin by showing that $L^b_{\eS_e}$ carries a canonical projective representation of $Z_e^b$. Since $Z_e$ acts on $\cA_{\eS_e}$ by $k$-algebra automorphisms, it preserves the Jacobson radical $\rmJ(\cA_{\eS_e})$; since the quotient $\cA_{\eS_e}/\rmJ(\cA_{\eS_e})$ is finite-dimensional, it is Artinian, hence semisimple, and
\begin{equation}
\label{eqn:noncomm-spr-res-mod-jacobson-rad}
\cA_{\eS_e}/\rmJ(\cA_{\eS_e})\simeq\bigoplus_{b\in \bfB_e}\End_k(L^b_{\eS_e}).
\end{equation}
Moreover, the $Z_e$-action preserves the summands corresponding to each $\overline{Z}_e$-orbit in $\bfB_e$, and $Z_e^b$ preserves the summand corresponding to $b$. Thus, by the Skolem--Noether theorem, $Z_e^b$ acts by inner automorphisms of this summand, i.e., via a projective representation
\begin{equation}
\label{eqn:simple-proj-rep}
Z_e^b\to\Aut(\End_k(L^b_{\eS_e}))\simeq\PGL(\underline{L}^b_{\eS_e}),
\end{equation}
where $\underline{L}^b_{\eS_e}$ denotes the underlying vector space of the module $L^b_{\eS_e}$. Pulling back the canonical cocycle of $\PGL(\underline{L}^b_{\eS_e})$ (corresponding to the central extension given by $\GL(\underline{L}^b_{\eS_e})$) then gives a cocycle $(\cC_b,\alpha_b)\in\M(Z_e^b)$, and \eqref{eqn:rep-G-coc-twist-res} shows that $L^b_{\eS_e}$ admits a $(\cC_b,\alpha_b)$-equivariant structure.

Next, we claim that we have a decomposition
\begin{equation}
\label{eqn:noncomm-spr-decomp-indec-proj}
\cA_{\eS_e}\simeq\bigoplus_{b\in\bfB_e}\underline{L}^{b,\vee}_{\eS_e}\tens{k}E^b_{\eS_e}
\end{equation}
of right $\cA_{\eS_e}$-modules, such that $Z_e$ permutes the summands according to the $\overline{Z}_e$-action on $\bfB_e$. By the argument of \cite[Rem.~4.6]{bez-losev}, there exists a $\wt{Z}_e$-stable choice of Koszul grading on $\cA_{\eS_e}$ (which is not necessarily the same as that provided by the $\bG_m$-action on $\cA_{\eS_e}$); choosing such a grading, we obtain a $\wt{Z}_e$-stable subspace
\begin{equation}
\label{eqn:noncomm-spr-res-gr-0-comp}
\bigoplus_{b\in\bfB_e}\End_k(L^b_{\eS_e})\subseteq\cA_{\eS_e}
\end{equation}
given by the $0$th graded piece. In particular, we obtain a decomposition of the unit element into orthogonal idempotents $e_b:=\id_{L^b_{\eS_e}}$, on which $Z_e$ acts by permutation according to the $\overline{Z}_e$-action on $\bfB_e$. Thus, it suffices to show that $e_b\cA_{\eS_e}\simeq\underline{L}^{b,\vee}_{\eS_e}\otimes_kE^b_{\eS_e}$. Since the latter is a projective cover of $\underline{L}^{b,\vee}_{\eS_e}\otimes_kL^b_{\eS_e}$, the right $\cA_{\eS_e}$-module surjection $e_b\cA_{\eS_e}\twoheadrightarrow\underline{L}^{b,\vee}_{\eS_e}\otimes_kL^b_{\eS_e}$ furnished by \eqref{eqn:noncomm-spr-res-mod-jacobson-rad} shows that it is a direct summand of $e_b\cA_{\eS_e}$. Moreover, the complementary submodule is contained in $J(\cA_{\eS_e})$, hence is trivial.

It follows that the module $\underline{L}^{b,\vee}_{\eS_e}\otimes_kE^b_{\eS_e}$ carries a $Z_e^b$-equivariant structure. Moreover, choosing a basis of $\underline{L}^b_{\eS_e}$, we obtain a further decomposition of $e_b$ into primitive orthogonal idempotents, and these decompositions are permuted according to the projective action of $Z_e^b$ on $\underline{L}^b_{\eS_e}$. It follows that the $Z_e^b$-representation $\underline{L}^{b,\vee}_{\eS_e}\otimes_kE^b_{\eS_e}$ splits as a tensor product of projective representations, i.e., $E^b_{\eS_e}$ carries a projective representation of $Z_e^b$. Since $\underline{L}^{b,\vee}_{\eS_e}$ is a $(\cC_b^\vee,\alpha_b^\vee)$-representation of $Z_e^b$, this must be a $(\cC_b,\alpha_b)$-representation by \eqref{eqn:rep-G-coc-twist-mult}. Moreover, it is easy to see that this projective action is compatible with the right $\cA_{\eS_e}$-module structure, which yields the conclusion.
\end{proof}

\begin{atom}
\label{atm:kos-grad-ASe}

Note that by \S\ref{atm:coc-twist-cat}, the functor \eqref{eqn:bm-functor} also induces an equivalence
\begin{equation*}
\QC(\wt{\eN}_{P,\eS_e}/Z_e^b)^{(\cC_b,\alpha_b)}\xrightarrow{\sim}\cA_{P,\eS_e}^\op\dashmod^{Z_e^b,(\cC_b,\alpha_b)}.
\end{equation*}
Thus, the sheaves $\cL_{P,\eS_e}^{b}$ and $\cE_{P,\eS_e}^{b}$ also admit canonical $(\cC_b,\alpha_b)$-equivariant structures. It follows that the decomposition \eqref{eqn:noncomm-spr-decomp-indec-proj} also gives rise to a decomposition
\begin{equation}
\label{eqn:noncomm-spr-slod-res-coc-decomp}
\cE_{\eS_e}\simeq\bigoplus_{b\in\bfB_{e,P}}\underline{L}_{P,\eS_e}^{b,\vee}\tens{k}\cE_{P,\eS_e}^{b}
\end{equation}
of $Z_e$-equivariant vector bundles. Equipping the left-hand side with the $\bG_m$-equivariant structure arising from that on each $\cE_{P,\eS_e}^{b}$, we obtain a $\bG_m$-equivariant structure on $\cE_{P,\eS_e}$ inducing a Koszul grading on $\cA_{P,\eS_e}$ (moreover, the ``Koszul dual'' algebra $\End_{\cA_{P,\eS_e}^\op}(\cA_{P,\eS_e}^{\bG_m})$ has its degree-$n$ cohomology concentrated in weight $-n$). Henceforth, when we refer to $\cE_{P,\eS_e}$, we implicitly equip it with this particular $\wt{Z}_e$-equivariant structure, and likewise for $\cA_{P,\eS_e}$.
\end{atom}

\section{A covering group of the reductive centralizer}
\label{sec:Ze-cov}

\begin{atom}
In this section, we establish existence of a finite cover of $Z_e$ which trivializes all cocycles appearing in the canonical bases $\bfB_{e,P}$ (see Proposition~\ref{prop:Zecov-intro}). This will assist us in later homological computations by allowing us to replace $\cA_{\eS_e}$ with the Koszul quadratic algebra $\cA_{\eS_e}^\cov$.
\end{atom}

\begin{theorem}
\label{thm:Ze-cov}
There exists a finite cover $p_{\cov}\colon Z_e^\cov\twoheadrightarrow Z_e^\sch$ such that for any parabolic $P$ and $b\in\bfB_{e,P}$, the class of $(\cC_b,\alpha_b)$ lies in the kernel of the restriction map
\begin{equation}
\label{eqn:ze-cov-m-res}
\M(Z_e^b)\to\M(Z_e^{\cov,b}).
\end{equation}
\end{theorem}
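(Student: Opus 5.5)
The plan is to split each class $(\cC_b,\alpha_b)\in\M(Z_e^b)$ into a ``connected'' part and a ``component-group'' part, kill the first with the Schur covering and the second with a finite cover built from the explicit structure of $Z_e$. Three preliminary reductions come first.

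\begin{enumerate}[leftmargin=*]
\item Only finitely many classes need to be trivialized. There are finitely many standard parabolics $P$, and each $\bfB_{e,P}$ is finite, being a basis of $K_0(\eP_e/\bG_m)$ over $K_0(\B\bG_m)$.
\item Every stabilizer $Z_e^b$ contains $Z_e^\circ\cdot Z(G)$, since the $Z_e$-action on $\bfB_{e,P}$ factors through $\overline{Z}_e$. Hence each $Z_e^b$ is the preimage of a subgroup of the finite group $\overline{Z}_e$.
\item The assignment $b\mapsto(\cC_b,\alpha_b)$ is canonical. So $z\in Z_e$ carries the class of $b$ to the class of $zb$ under conjugation $Z_e^b\cong Z_e^{zb}$, and it suffices to treat $b\in\bfB_{e,P}^{\orb}$.
\end{enumerate}

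Using the structure of $\M(H)$ for a disconnected $H$ from Appendix~\ref{sec:schur-mult}, I would split the restriction of $(\cC_b,\alpha_b)$ to $Z_e^{\sch,b}:=Z_e^b\times_{Z_e}Z_e^\sch$ into two pieces.

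\begin{itemize}[leftmargin=*]
\item The piece living on the identity component. The identity component of $Z_e^{\sch,b}$ equals that of $Z_e^\sch$, and its contribution is already killed by the Schur covering (Proposition~\ref{prop:schur-cov}).
\item The residual class, which is pulled back from the Schur multiplier of a finite group: the component group $\Gamma_b$ of $Z_e^{\sch,b}$ modulo the relevant central torus.
\end{itemize}

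For the residual classes, I would build the cover $Z_e^\cov\to Z_e^\sch$ as a pullback along $Z_e^\sch\to\pi_0(Z_e^\sch)$ of a surjection of finite groups $\Gamma'\twoheadrightarrow\pi_0(Z_e^\sch)$. This surjection should have the property that, for each of the finitely many relevant subgroups $\Gamma_b$ and classes $c_b$, the class $c_b$ inflates to zero on the preimage of $\Gamma_b$ in $\Gamma'$. Abstractly I would first try the following. Present $\pi_0(Z_e^\sch)=F/R$ with $F$ free, and set $\Gamma'=F/[R,R]R^m$, with $m$ divisible by the orders of all the $c_b$. The preimage of $\Gamma_b$ is then $F_b/[R,R]R^m$ with $F_b$ free, and the Hopf formula $\M(\Gamma_b)\cong(R\cap[F_b,F_b])/[F_b,R]$ should show that the inflation of $c_b$ vanishes once the multiplier of the preimage is computed.

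If this abstract argument does not go through cleanly, I would fall back on the route the introduction describes: a type-by-type analysis. Reduce to $G$ simple of adjoint type, or to $[G,G]^\sch$ via Lemma~\ref{lem:der-gp-coc}.

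\begin{itemize}[leftmargin=*]
\item In the classical types, $Z_e$ is a product of groups $\GL_{m_i}$, $\mathrm{O}_{m_i}$ and $\mathrm{Sp}_{m_i}$ indexed by Jordan block sizes, up to a central quotient. Its component group is an elementary abelian $2$-group, and every subgroup of it is again elementary abelian.
\item The Schur multipliers of these component groups are generated by commutator pairings of pairs of $\mathrm{O}$-factors. Each such pairing is killed by replacing the corresponding $\mathrm{O}_{m_i}$ by $\mathrm{Pin}_{m_i}$, or the product of two $\mathrm{O}$-factors by a suitable double cover.
\item In the exceptional types, $\overline{Z}_e$ is one of $1$, $S_2$, $S_3$, $S_4$, $S_5$. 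I would take the standard finite covers trivializing the multipliers of all their subgroups; for $S_n$ these are the double covers.
\end{itemize}

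The main obstacle is the interaction between the finite part and the connected part. The stabilizer $Z_e^b$ can act on $Z_e^\circ$ by outer automorphisms, for example via the components of an $\mathrm{O}$-factor. Because of this, $\M(Z_e^b)$ is not simply $\M(Z_e^\circ)\oplus\M(\Gamma_b)$, and there is a mixed term in $H^1(\Gamma_b,X^*(Z(Z_e^{\circ,\sch})))$. I expect to have to check that this mixed term also dies on $Z_e^{\cov,b}$. I would do this by choosing $\Gamma'$ to additionally kill the finitely many characters appearing there, that is, by enlarging $m$ and further pulling back along the torus part.
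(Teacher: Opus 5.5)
There is a genuine gap, in your claim that the identity-component piece of $(\cC_b,\alpha_b)$ is ``already killed by the Schur covering.'' Proposition~\ref{prop:schur-cov} only kills classes restricted from $\M(Z_e)$. When you restrict $(\cC_b,\alpha_b)$ to $Z_e^\circ$, you only get a class invariant under $Z_e^b/Z_e^\circ$. Classes restricted from $Z_e$ must be invariant under all of $\pi_0(Z_e)$, and are further constrained by the $\H^2$-obstruction in Lemma~\ref{lem:coc-rtimes}. When $Z_e^b\ne Z_e$ these two conditions need not agree.

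The paper's proof meets exactly this for the $E_8$ orbit \eqref{eqn:bad-nilp-weighted-dynkin-2}. There $Z_e\cong\PGL_3\rtimes\Z/2\Z$, and the involution acts on $\M(\PGL_3)\cong\Z/3\Z$ by inversion, so $\M(Z_e)=0$ and $Z_e^\sch=Z_e$. Suppose some $b$ has $Z_e^b=Z_e^\circ=\PGL_3$ and $(\cC_b,\alpha_b)\ne 0$; you cannot exclude this a priori. Then for any cover $H\to Z_e^b$ inducing an isomorphism $H^\circ\to\PGL_3$, the pulled-back class still restricts to a nonzero class on $H^\circ$, so it survives. Every cover your construction produces is of this kind. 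Pulling back $Z_e^\sch$ along $\Gamma'\twoheadrightarrow\pi_0(Z_e^\sch)$ leaves the identity component unchanged, and here there is no central torus to apply an isogeny to. So no choice of $\Gamma'$ or $m$ fixes this. You need a cover that is nontrivial on the semisimple part of $Z_e^\circ$, such as the paper's $\SL_3\rtimes\Z/2\Z$. Your exceptional-type fallback only examines $\overline{Z}_e$, so it has the same hole.

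The remaining ingredients are sound.
\begin{itemize}
\item The cover $F/[R,R]R^m$ does kill every $c_b\in\M(\Gamma_b)$ at once. Lift $F_b\to\Gamma_b$ to a finite central extension $E\to\Gamma_b$ that trivializes $c_b$. The lift sends $R$ into an abelian group of exponent dividing $m$, so it factors through $F_b/[R,R]R^m$.
\item You are right that the $\H^1(\Gamma_b,X^*(Z_e^\circ))$ term needs an isogeny on the torus. Inflation is injective on $\H^1$ when the kernel acts trivially, so enlarging $\Gamma'$ alone cannot kill it. That isogeny is the paper's $(-)^2\rtimes\id$ on $T\rtimes S_3$ for the $E_6$ orbit.
\end{itemize}
The paper organizes the argument differently. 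After reducing to $G$ simple adjoint, it checks type by type that $Z_e^\sch$ already kills the relevant classes, mostly by proving $\M(Z_e)\to\M(Z_e^b)$ surjective for every union of components $Z_e^b$. This avoids any Pin-type covers in types $B$, $C$, $D$. Only the two bad orbits need explicit covers, which are then adjoined by a fiber product over $Z_e$.
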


\begin{remark}
In fact, over the course of the proof, we shall show that we may take $Z_e^\cov=Z_e^\sch$, unless the adjoint group of $G$ contains either
\begin{enumerate}
\item a simple factor of type $E_6$ for which the corresponding block of $e$ has weighted Dynkin diagram
\begin{equation}
\label{eqn:bad-nilp-weighted-dynkin-1}
\begin{tikzcd}[row sep=small,column sep=small]
0\arrow[r,no head]&0\arrow[r,no head]&2\arrow[r,no head]\arrow[d,no head]&0\arrow[r,no head]&0;\\
&&0&&
\end{tikzcd}
\end{equation}
\item a simple factor of type $E_8$ for which the corresponding block of $e$ has weighted Dynkin diagram
\begin{equation}
\label{eqn:bad-nilp-weighted-dynkin-2}
\begin{tikzcd}[row sep=small,column sep=small]
0\arrow[r,no head]&0\arrow[r,no head]&0\arrow[r,no head]&0\arrow[r,no head]&0\arrow[r,no head]\arrow[d,no head]&0\arrow[r,no head]&0.\\
&&&&2&&
\end{tikzcd}
\end{equation}
\end{enumerate}
If this occurs, we may take $Z_e^\cov$ to be a (non-central, non-split) extension of $Z_e^\sch$ by
\begin{equation*}
(\Z/2\Z\times\Z/2\Z)^{n_1}\times(\Z/3\Z)^{n_2},
\end{equation*}
where $n_1,n_2$ are the multiplicities of the nilpotents \eqref{eqn:bad-nilp-weighted-dynkin-1}, \eqref{eqn:bad-nilp-weighted-dynkin-2} in $e$, respectively.
\end{remark}

\begin{proof}
Let $b\in\bfB_{e,P}$.

\begin{atom}
It suffices to show that either $[(\cC_b,\alpha_b)]$ lies in the image of the restriction map
\begin{equation}
\label{eqn:stab-coc-res}
\M(Z_e)\to\M(Z_e^b),
\end{equation}
or there exists a finite cover $\ps{b}{Z}_e^{\cov}\twoheadrightarrow Z_e$ such that $[(\cC_b,\alpha_b)]$ lies in the kernel of
\begin{equation}
\label{eqn:coc-fin-cov-res}
\M(Z_e^b)\to\M(\ps{b}{Z}_e^{\cov}).
\end{equation}
Indeed, let $b_1,\ldots,b_r\in\bfB_{e,P}$ denote those canonical basis elements whose cocycles do not lie in the image of \eqref{eqn:stab-coc-res}. Consider the fiber square
\begin{equation}
\label{eqn:fib-sq-Ze-cov}
\begin{tikzcd}
Z_e^\cov\arrow[r,two heads]\arrow[d,two heads]&Z_e^\sch\arrow[d,two heads]\\
\ps{b_1}{Z}_e^\cov\fibprod{Z_e}\cdots\fibprod{Z_e}\ps{b_r}{Z}_e^\cov\arrow[r,two heads]&Z_e,
\end{tikzcd}
\end{equation}
and the corresponding diagram of stabilizers of $b$. If $[(\cC_b,\alpha_b)]$ lies in the image of \eqref{eqn:stab-coc-res}, then it lies in the kernel of $\M(Z_e^b)\to\M(Z_e^{\sch,b})$, hence in the kernel of \eqref{eqn:ze-cov-m-res}. Otherwise, since we have a factorization $Z_e^{\cov,b}\twoheadrightarrow\ps{b}{Z}_e^{\cov,b}\twoheadrightarrow Z_e^b$ for each $b\in\bfB_{e,P}$, the conclusion follows from \eqref{eqn:coc-fin-cov-res}.
\end{atom}

\begin{atom}
Now, any cocycle of $Z_e^b$ appearing in the canonical basis is pulled back from a cocycle of $Z_e^b/Z(G)$, which is the corresponding stabilizer for the adjoint group $G/Z(G)$. Thus, the commutative square
\begin{equation*}
\begin{tikzcd}
\M(Z_e/Z(G))\arrow{r}\arrow{d}&\M(Z_e^b/Z(G))\arrow{d}\\
\M(Z_e)\arrow{r}&\M(Z_e^b)
\end{tikzcd}
\end{equation*}
immediately reduces us to the case where $G$ is semisimple and adjoint (note that a finite cover of $Z_e^b/Z(G)$ pulls back to one of $Z_e^b$ with the corresponding property). In particular, $G$ splits as a product of adjoint simple groups; the cocycles appearing in $\bfB_{e,P}$ clearly all split as products over these simple factors, so we may further assume that $G$ is simple and adjoint.

We proceed type by type, using the classification of reductive centralizers of nilpotent elements, and show that \eqref{eqn:stab-coc-res} is surjective for any subgroup $Z_e^b$ which is a union of connected components, except in the situation of \eqref{eqn:bad-nilp-weighted-dynkin-2}, where we obtain a cover satisfying \eqref{eqn:coc-fin-cov-res}.
\end{atom}

\begin{atom}
In type $A$, the component group of $Z_e$ is always trivial, so there is nothing to check.

In types $B$, $C$, and $D$, recall that $Z_e$ can be written as a semidirect product $Z_e^\circ\rtimes\pi_0(Z_e)$, where $Z_e^\circ$ is the quotient of a product of symplectic groups and special orthogonal groups by $Z(G)\cong\Z/2\Z$, and $\pi_0(Z_e)\cong(\Z/2\Z)^n$, where $n$ is the number of orthogonal factors in $Z_e^\circ$. Lemma~\ref{lem:coc-rtimes} thus gives an exact sequence
\begin{equation}
\label{eqn:bcd-coc-exact-seq}
0\to\H^1(\pi_0(Z_e),X^*(Z_e^\circ))\to\ker\big(\M(Z_e)\twoheadrightarrow\M(\pi_0(Z_e))\big)\to\M(Z_e^\circ)^{\pi_0(Z_e)}\to\H^2(\pi_0(Z_e),X^*(Z_e^\circ)).
\end{equation}
Note that the action of $\pi_0(Z_e)$ on $\M(Z_e^\circ)$ is trivial; moreover, we claim that the final homomorphism is trivial. Indeed, by Lemma~\ref{lem:sch-prod} and functoriality of \eqref{eqn:bcd-coc-exact-seq}, the final homomorphism splits as a product over the factors of $Z_e^\circ$; thus, we may assume that $Z_e^\circ$ is either a symplectic or special orthogonal group, on which $\pi_0(Z_e)$ acts either trivially, or by projection onto a single factor of $\Z/2\Z$, respectively. In the former case, both $\M(Z_e^\circ)$ and $X^*(Z_e^\circ)$ are trivial as the symplectic groups are simply-connected and simple. In the latter case, if $Z_e^\circ\cong\SO_2\cong\bG_m$, then $\M(Z_e^\circ)$ is again trivial; otherwise, if $Z_e^\circ\cong\SO_m$ for $m>2$, then we have $X^*(Z_e^\circ)\simeq 0$, so the final term in \eqref{eqn:bcd-coc-exact-seq} is trivial.

Now, let $\Gamma\subset\pi_0(Z_e)$ be a finite subgroup. By functoriality of \eqref{eqn:bcd-coc-exact-seq}, to show that the restriction map $\M(Z_e)\to\M(Z_e^\circ\rtimes\Gamma)$ is a surjection, it suffices to show that
\begin{equation}
\label{eqn:coc-surj-conditions}
\begin{split}
\H^1(\pi_0(Z_e),X^*(Z_e^\circ))&\twoheadrightarrow\H^1(\Gamma,X^*(Z_e^\circ)),\\
\M(\pi_0(Z_e))&\twoheadrightarrow\M(\Gamma),
\end{split}
\end{equation}
i.e., both restriction maps are surjections. The latter is immediate, as any finite subgroup of $(\Z/2\Z)^n$ is a direct summand. For the former, we may assume as before that $X^*(Z_e^\circ)\cong\Z$ and that $\pi_0(Z_e)$ acts by projection onto a single factor of $\Z/2\Z$ (via negation). If $\Gamma$ acts trivially on $\Z$, then $\H^1(\Gamma,\Z)\cong\Hom(\Gamma,\Z)\simeq 0$, and we are done. Otherwise, let $K$ be the index-$2$ subgroup of $\pi_0(Z_e)$ fixing $\Z$; the inflation-restriction exact sequence then gives a commutative diagram
\begin{equation*}
\begin{tikzcd}
0\arrow{r}&\H^1(\Z/2\Z,\Z)\arrow{r}\arrow[d,"\sim" vert]&\H^1(\pi_0(Z_e),\Z)\arrow{r}\arrow{d}&\H^1(K,\Z)^{\Z/2\Z}\arrow{d}\\
0\arrow{r}&\H^1(\Z/2\Z,\Z)\arrow{r}&\H^1(\Gamma,\Z)\arrow{r}&\H^1(\Gamma\cap K,\Z)^{\Z/2\Z}
\end{tikzcd}
\end{equation*}
with exact rows. Since the right-most terms are trivial, the second vertical map is an isomorphism, and we are done.
\end{atom}

\begin{atom}
It remains to treat the exceptional types; we use the tables of centralizers of nilpotent elements for adjoint exceptional groups appearing in \cite{alexeevski}. When $Z_e$ is connected, the assertion is trivial. We may further disregard all cases in which $\pi_0(Z_e)\cong\Z/2\Z$, and either $\M(Z_e^\circ)$ is trivial or $Z_e\cong Z_e^\circ\times\pi_0(Z_e)$. In all other cases, we have $Z_e\cong Z_e^\circ\rtimes\pi_0(Z_e)$. Moreover, the final map in \eqref{eqn:bcd-coc-exact-seq} is again trivial: it is not hard to check that either $\M(Z_e^\circ)$ or $X^*(Z_e^\circ)$ is always trivial. As before, the action of $\pi_0(Z_e)$ on $\M(Z_e^\circ)$ is trivial, except in the case \eqref{eqn:coc-fin-cov-res}. Here, we have $Z_e\simeq\PGL_3\rtimes\Z/2\Z$, and \eqref{eqn:bcd-coc-exact-seq} gives $\M(Z_e)\cong 0$, whereas $\M(\PGL_3)\cong\Z/3\Z$. However, $Z_e$ admits a natural $3$-fold cover given by $\SL_3\rtimes\Z/2\Z$, which satisfies \eqref{eqn:coc-fin-cov-res} as $\M(\SL_3)\cong 0$. Thus, we must verify \eqref{eqn:coc-surj-conditions} in each of the remaining examples, for all subgroups $\Gamma\subset\pi_0(Z_e)$.

For the former assertion, we need only check the case in which the symmetric group $S_3$ acts on the weight lattice $\Lambda\cong\Z^3/\Z$ of $\SL_3$ by permutation, which appears in the situation of \eqref{eqn:bad-nilp-weighted-dynkin-1}. We may assume that either $\Gamma\cong\angles{(123)}$ or $\Gamma\cong\angles{(12)}$. In the former case,  we have
\begin{equation*}
\H^1(\Gamma,\Lambda)\cong\ker((1+(123)+(132))|_{\Lambda})/(1-(123))\Lambda\cong\Lambda/\angles{\Phi}\cong\Z/3\Z,
\end{equation*}
with the trivial action of $\Z/2\Z$ (here $\angles{\Phi}$ denotes the root lattice of $\SL_3$). Since $\Lambda^\Gamma\cong 0$, the inflation-restriction exact sequence implies that $\H^1(S_3,\Lambda)\cong\H^1(\Gamma,\Lambda)$, as desired. In the latter case, $\Lambda$ splits as a $\Gamma$-module into a sum of $\Z$ with the trivial $\Gamma$-action and $\Z$ with $\Gamma$ acting by negation; thus, $\H^1(\angles{(12)},\Lambda)\cong\Z/2\Z$, and the restriction map is not surjective. To obtain a cover of $Z_e\cong T\rtimes S_3$ killing this cocycle (here $T\subset\SL_3$ is a maximal torus), consider the $S_3$-equivariant short exact sequence
\begin{equation*}
0\to\Lambda\xrightarrow{2}\Lambda\to\Z/2\Z\times\Z/2\Z\to 0.
\end{equation*}
The endomorphism of $\H^1(\angles{(12)},\Lambda)$ induced by the first map is the zero map, so by the following paragraph, the corresponding surjection
\begin{equation*}
1\to\Z/2\Z\times\Z/2\Z\to T\rtimes S_3\xrightarrow{(-)^2\rtimes\id}T\rtimes S_3\to 1
\end{equation*}
suffices.

For the latter assertion, the only nontrivial component groups which appear are the symmetric groups $S_2,S_3,S_4,S_5$. The assertion is trivial in the first two cases. For the third case, we need only consider the restriction maps $\M(S_4)\to\M(A_4)$, $\M(S_4)\to\M(D_8)$ (the dihedral subgroup of order $8$), $\M(S_4)\to\M(\Z/2\Z\times\Z/2\Z)$ (the normal Klein four-subgroup) and $\M(S_4)\to\M(S_2\times S_2)$ (the non-normal Klein four-subgroup). Applying Lemma~\ref{lem:coc-rtimes} to $S_4\cong A_4\rtimes\Z/2\Z$, and using the well-known identity $\M(A_4)\cong\Z/2\Z$, we see that the first map is an isomorphism. For the second map, applying functoriality of Lemma~\ref{lem:coc-rtimes} reduces us to showing that $\M(A_4)\twoheadrightarrow\M(\Z/2\Z\times\Z/2\Z)$; since $A_4\cong(\Z/2\Z\times\Z/2\Z)\rtimes\Z/3\Z$, a further application of Lemma~\ref{lem:coc-rtimes} gives the result. This also shows that $\M(S_4)\twoheadrightarrow\M(\Z/2\Z\times\Z/2\Z)$. Finally, applying functoriality of Lemma~\ref{lem:coc-rtimes} to $S_2\times S_2\subset\Z/4\Z\rtimes S_2\cong D_8$ shows that the final map is an isomorphism.

For the fourth case, we need only consider the restriction maps $\M(S_5)\to\M(A_5)$, $\M(S_5)\to\M(S_4)$, and $\M(S_5)\to\M(S_3\times S_2)$. The first follows as for $A_4\subset S_4$. For the second, functoriality of Lemma~\ref{lem:coc-rtimes} reduces us to showing that $\M(A_5)\twoheadrightarrow\M(A_4)$, and it is well-known that the restriction of a Schur cover of $A_5$ to $A_4$ remains a Schur cover (indeed, the Schur cover of $A_n$ for $n=4,5$ and $n\ge 8$ is constructed by pulling back the double cover $\Spin_{n-1}\to\SO_{n-1}$ along the embedding $A_n\hookrightarrow\SO_{n-1}$). For the third, applying functoriality of Lemma~\ref{lem:coc-rtimes} to $S_3\times S_2\subset A_5\rtimes S_2$ shows that the restriction map is in fact trivial. However, consider the ``$+$'' and ``$-$'' type Schur double covers of $S_5$; it suffices to show that each of their restrictions to $S_2\times S_2\subset S_3\times S_2$ kills all elements of $M(S_2\times S_2)$. Indeed, these restrictions are given by the dihedral group and the quaternion group, respectively, both of which are Schur covers of $S_2\times S_2$.
\end{atom}

This completes the proof of Theorem~\ref{thm:Ze-cov}.
\end{proof}

\begin{atom}
In fact, the proof gives more information about the structure of $Z_e^{\cov}$, though this will not be needed in the present paper:
\end{atom}

\begin{corollary}
\label{cor:Ze-cov-simp-conn-der}
\begin{enumerate}[leftmargin=*]
\item The derived subgroup of $Z_e^{\cov,\circ}$ is simply-connected.
\item Suppose that all simple factors of $[G,G]$ are of classical types, and that $[Z_e^{\circ},Z_e^{\circ}]$ is simply-connected. Then $(\cC_b,\alpha_b)$ is trivial for all $b\in\bfB_{e,P}$ (and thus we may take $Z_e^\cov=Z_e$).
\end{enumerate}
\end{corollary}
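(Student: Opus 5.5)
The plan is to read both assertions off the case-by-case analysis in the proof of Theorem~\ref{thm:Ze-cov}, after the same reduction to $G$ simple adjoint. For (1), I would first record that $Z_e^{\sch}$ has simply-connected derived identity component. The Schur covering of Proposition~\ref{prop:schur-cov} kills $\M(Z_e^\circ)$. Since $\M$ of a connected reductive group is governed by the fundamental group of its derived subgroup, this forces $[Z_e^{\sch,\circ},Z_e^{\sch,\circ}]$ to be simply-connected. I expect to need a small lemma from Appendix~\ref{sec:schur-mult} here, namely that for connected $H$ the map $\M(H)\to\M([H,H])$ is surjective, with $\M([H,H])$ the Pontryagin dual of $\pi_1([H,H])$.

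Next I would check that the extra covers in the two exceptional cases preserve this property. In case~\eqref{eqn:bad-nilp-weighted-dynkin-1} the cover is $T\rtimes S_3\xrightarrow{(-)^2\rtimes\id}T\rtimes S_3$. Its identity component is a torus, so the derived subgroup is trivial. In case~\eqref{eqn:bad-nilp-weighted-dynkin-2} the cover replaces $\PGL_3$ by $\SL_3$, which is simply-connected. Finally, the fiber product~\eqref{eqn:fib-sq-Ze-cov} has identity component covering each factor's. Its derived subgroup is therefore a central isogenous cover of a group already simply-connected at the $Z_e^{\sch}$ level, hence equal to it.

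For (2), after reducing to $G$ simple adjoint of classical type, I would show directly that each class $[(\cC_b,\alpha_b)]$ vanishes. Type $A$ is immediate: $Z_e$ is connected with simply-connected derived subgroup, so $\M(Z_e)$ is trivial. In types $B$, $C$, $D$ I would use the exact sequence~\eqref{eqn:bcd-coc-exact-seq} restricted to $Z_e^b=Z_e^\circ\rtimes\Gamma$. The hypothesis kills the $\M(Z_e^\circ)$ term, leaving two contributions: $\H^1(\Gamma,X^*(Z_e^\circ))$, coming from the $\SO_2$ factors, and $\M(\Gamma)$ for $\Gamma\subset(\Z/2\Z)^n$.

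The main obstacle is that these two groups need not vanish. So it is not enough to show that the restriction~\eqref{eqn:stab-coc-res} is surjective; one must show that the specific canonical cocycle from Proposition~\ref{prop:coc-equiv-simple-indec-proj} is trivial. My first attempt would use that the $(\cC_b,\alpha_b)$ are pulled back from $\PGL(\underline{L}^b)$. I would then show that the projective representation~\eqref{eqn:simple-proj-rep} lifts to a linear one, by identifying $\underline{L}^b$ with a summand of the $Z_e^b$-equivariant fiber of $\cE$ at $e$. That fiber is linearly equivariant once $[G,G]$ is simply-connected, which holds because for classical adjoint $G$ the cocycles from Lemma~\ref{lem:sch-min-rep} are pulled back from $\M(Z(G))$ and become trivial on $Z_e^b/Z(G)$. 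The point to verify is that this linearization is compatible with the Morita decomposition~\eqref{eqn:noncomm-spr-decomp-indec-proj}. Should that fail, the fallback is to compute directly, via~\eqref{eqn:coc-surj-conditions}, which classes in $\M(\Gamma)$ and $\H^1$ can actually arise from permutations of idempotents.
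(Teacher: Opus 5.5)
Your argument for (1) is essentially the paper's. The Schur covering kills the relevant classes on the identity component. Proposition~\ref{prop:kumar-neeb} (as in Lemma~\ref{lem:sch-cov-conn-red}) turns this into simple connectivity of the derived subgroup. The two exceptional covers are then checked by hand.

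One correction to (1). $Z_e^{\sch}$ kills $\M(Z_e)$, hence only the image of the restriction $\M(Z_e)\to\M(Z_e^\circ)$.
\begin{itemize}
\item Surjectivity of that restriction is exactly what the type-by-type analysis in the proof of Theorem~\ref{thm:Ze-cov} supplies: trivial $\pi_0(Z_e)$-action on $\M(Z_e^\circ)$, and vanishing of the map to $\H^2$ in \eqref{eqn:bcd-coc-exact-seq}.
\item It fails in the $E_8$ case $Z_e\simeq\PGL_3\rtimes\Z/2\Z$, where $\M(Z_e)=0$ and $Z_e^{\sch,\circ}\cong\PGL_3$.
\end{itemize}
So your claim that $Z_e^{\sch}$ already has simply-connected derived identity component is false there. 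Your conclusion survives only because the $\SL_3$ factor in \eqref{eqn:fib-sq-Ze-cov} supplies the simply-connected group that $[Z_e^{\cov,\circ},Z_e^{\cov,\circ}]$ isogenously covers.

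For (2) there is a genuine gap, in two places.

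First, reducing to $G$ simple adjoint is not legitimate for this statement. The hypothesis on $[Z_e^\circ,Z_e^\circ]$ is not inherited by $G/Z(G)$: for $G=\SL_n$, $e=0$, the group $\SL_n$ becomes $\PGL_n$. Moreover, the conclusion can fail for the adjoint group while holding for $G$. For $G=\PGL_2$, $e=0$, the summand of $\cE$ produced by the minuscule twist of \S\ref{sec:gen-conn-red} has a two-dimensional multiplicity space. On it, $\PGL_2$ acts through the nontrivial class of $\M(\PGL_2)\cong\Z/2\Z$. In particular, your ``type $A$ is immediate'' is false after this reduction. What must be shown is that the class in $\M(Z_e^b/Z(G))$ dies under pullback to $Z_e^b$. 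That is the paper's reduction, and it is where the hypothesis has to be used.

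Second, the proposed linearization is circular.
\begin{itemize}
\item $\underline{L}^b$ is a tensor factor, namely the multiplicity space of $\cE^b_{P,\eS_e}$ in \eqref{eqn:noncomm-spr-slod-res-coc-decomp}. It is not a $Z_e^b$-stable summand of a linear representation.
\item Linearity of $\cE_{P,\eS_e}$ only linearizes $\underline{L}^{b,\vee}\otimes\cE^b_{P,\eS_e}$. Its two factors carry mutually inverse cocycles, which is precisely how $(\cC_b,\alpha_b)$ arises in Proposition~\ref{prop:coc-equiv-simple-indec-proj}. Compare $\End_k(V)\cong V\otimes V^\vee$ for the standard projective representation $V$ of $\PGL_2$.
\item $\cE$ is linearly equivariant by construction for every connected reductive $G$. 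So whether $[G,G]$ is simply connected is beside the point, and after your reduction it is not.
\end{itemize}
The fallback, as stated, is not yet an argument.

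For comparison, the paper disposes of the remaining classes group-theoretically. From $|\pi_1(G/Z(G))|\le 4$ and the hypothesis, it bounds the special orthogonal factors of $(Z_e/Z(G))^\circ$. It deduces $\pi_0(Z_e/Z(G))\subseteq\Z/2\Z$, so $\M(\pi_0)=0$ and one may take $Z_e^b=Z_e$. It then concludes via \eqref{eqn:ses-M-Ze}.

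Your own observation that $\M(\Gamma)$ and the $\H^1$-term can survive applies to this route as written.
\begin{itemize}
\item $\mathrm{O}_1$-factors enlarge $\pi_0$ without touching $Z_e^\circ$. For $G=\SO_9$ and Jordan type $(5,3,1)$, one has $Z_e\cong(\Z/2\Z)^2$ with trivial identity component, so $\M(Z_e)\cong\Z/2\Z$.
\item \eqref{eqn:ses-M-Ze} omits the $\H^1$ term of \eqref{eqn:bcd-coc-exact-seq}. For $G=\SO_5$ and Jordan type $(3,1,1)$, one has $Z_e\cong\mathrm{O}_2$, and this term is $\Z/2\Z$. Thus $\M(\mathrm{O}_2)\cong\Z/2\Z$, although $\M(\SO_2)=\M(\Z/2\Z)=0$.
\end{itemize}
In such cases, triviality of $(\cC_b,\alpha_b)$ needs genuine input about the canonical basis: which stabilizers occur, or which classes actually appear. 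Neither your proposal nor a pure Schur-multiplier count provides this.
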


\begin{proof}
We begin with the first assertion. Following the proof of Theorem~\ref{thm:Ze-cov}, we may check this type-by-type for the adjoint group $G/Z(G)$ by Lemma~\ref{lem:sch-cov-prod}. In type $A$, this follows from Lemma~\ref{lem:sch-cov-conn-red}. In types $B$, $C$, and $D$, the discussion following \eqref{eqn:bcd-coc-exact-seq} gives a short exact sequence
\begin{equation}
\label{eqn:ses-M-Ze}
0\to\M(Z_e^\circ)\to\M(Z_e)\to\M(\pi_0(Z_e))\to 0.
\end{equation}
The proof of Proposition~\ref{prop:schur-mult-bij-cent-ext} then shows that the identity component of $Z_e^\sch/X^*(\M(\pi_0(Z_e)))$ is a Schur cover of $Z_e^\circ$, so the result again follows by Lemma~\ref{lem:sch-cov-conn-red}. Finally, in the exceptional types, Lemma~\ref{lem:sch-cov-conn-red} covers the case where $Z_e$ is connected. Likewise, by Proposition~\ref{prop:kumar-neeb} and Lemma~\ref{lem:sch-cov-prod}, the statement is clear when either $\M(Z_e^\circ)$ is trivial or $Z_e\cong Z_e^\circ\times\pi_0(Z_e)$. Otherwise, we always have $X^*(Z_e^\circ)\cong 0$. When the action of $\pi_0(Z_e)$ on $\M(Z_e^\circ)$ is trivial, the statement follows as for types $B$, $C$, and $D$. In the only remaining case, we have shown that $Z_e^{\cov,\circ}\cong\SL_3$, which completes the proof.

For the second assertion, it suffices to show that the image of $[(\cC_b,\alpha_b)]$ under $\M(Z_e^b/Z(G))\to\M(Z_e^b)$ is trivial. We may again reduce to checking this type-by-type. As before, the conclusion is clear whenever $Z_e/Z(G)$ is connected, which proves the claim in type $A$. In types $B$, $C$, and $D$, note that the universal cover of $G/Z(G)$ is at most a four-fold cover. Thus, if $Z_e^\circ$ has simply-connected derived subgroup, then at most one special orthogonal factor appears in $(Z_e/Z(G))^\circ$. In particular, $\pi_0(Z_e/Z(G))$ is a subgroup of $\Z/2\Z$, and has trivial Schur multiplier. We may therefore assume that $Z_e^b=Z_e$, and the conclusion follows from functoriality of \eqref{eqn:ses-M-Ze}.
\end{proof}

\begin{remark}
In principle, one could verify the latter statement for exceptional types via a finite amount of computation. One must for instance show that the nontrivial cocycles of $Z_e$ in the case \eqref{eqn:bad-nilp-weighted-dynkin-1} do not appear in the canonical basis (as a torus has trivial derived subgroup).
\end{remark}

\begin{atom}
Finally, as for $\cA_{P,\eS_e}$, we obtain equivariant lifts of $\cA_{P,\eS_e}^\cov$ and its simple and indecomposable projective modules:
\end{atom}

\begin{corollary}
\label{cor:bm-cov-equiv}
The sheaves $\cL^b_{P,\eS_e}$ and $\cE^b_{P,\eS_e}$ admit lifts to $\Coh(\wt{\eN}_{P,\eS_e}/\wt{Z}_e^{\cov,b})$, and the objects $\bigoplus_{b'\in\overline{Z}_e\cdot b}\cL^{b'}_{P,\eS_e}$ and $\bigoplus_{b'\in\overline{Z}_e\cdot b}\cE^{b'}_{P,\eS_e}$ admit lifts to $\Coh(\wt{\eN}_{P,\eS_e}/\wt{Z}_e^\cov)$. In particular, we have an equivalence
\begin{equation*}
\Hom_{\wt{\eN}_{P,\eS_e}}(\cE_{P,\eS_e}^{\cov},-)\colon\QC(\wt{\eN}_{P,\eS_e}/\wt{Z}_e^\cov)\xrightarrow{\sim}\cA_{P,\eS_e}^{\cov,\op}\dashmod^{\wt{Z}_e^\cov}.
\end{equation*}
\end{corollary}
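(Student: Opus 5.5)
The plan is to start from Proposition~\ref{prop:coc-equiv-simple-indec-proj} together with Theorem~\ref{thm:Ze-cov}. The first gives a cocycle $(\cC_b,\alpha_b)\in\M(Z_e^b)$ and canonical $(\cC_b,\alpha_b)$-equivariant structures on $L^b_{P,\eS_e}$ and $E^b_{P,\eS_e}$. By \S\ref{atm:kos-grad-ASe}, these transfer to $\cL^b_{P,\eS_e}$ and $\cE^b_{P,\eS_e}$. The second says that $[(\cC_b,\alpha_b)]$ dies in $\M(Z_e^{\cov,b})$, where $Z_e^{\cov,b}$ is the preimage of $Z_e^b$.

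\textbf{Lifts for a single $b$.} Pulling back along $Z_e^{\cov,b}\to Z_e^b$ identifies $\QC(\wt{\eN}_{P,\eS_e}/Z_e^{\cov,b})^{p^*(\cC_b,\alpha_b)}$ with the untwisted category, once a trivialization of the pulled-back cocycle is chosen (using the formalism of \S\ref{atm:coc-twist-cat}). This gives lifts of $\cL^b_{P,\eS_e}$ and $\cE^b_{P,\eS_e}$ to $Z_e^{\cov,b}$-equivariant sheaves. For the $\bG_m$-factor, note that $\bG_m$ commutes with $Z_e$ and that the graded lifts of Proposition~\ref{prop:parab-A-kosz-grad} are compatible with the $\wt{Z}_e$-structure fixed in \S\ref{atm:kos-grad-ASe}. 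The resulting $Z_e^{\cov,b}$-structure therefore commutes with the $\bG_m$-structure, and we get objects of $\Coh(\wt{\eN}_{P,\eS_e}/\wt{Z}_e^{\cov,b})$. I expect this compatibility with the grading to be the most delicate point. The simplest way to handle it is to run the Skolem--Noether argument of Proposition~\ref{prop:coc-equiv-simple-indec-proj} directly for the group $\wt{Z}_e^b$: since $\bG_m$ is connected and acts trivially on $\bfB_{e,P}$, the cocycle is inflated from $Z_e^b$.

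\textbf{Orbit sums.} Next I would apply induction $\Ind_{\wt{Z}_e^{\cov,b}}^{\wt{Z}_e^{\cov}}$. The index is finite, because $Z_e^{\cov,b}$ is a union of components, so induction is exact and preserves coherence. The underlying non-equivariant sheaf of $\Ind(\cE^b_{P,\eS_e})$ is $\bigoplus_{gZ_e^{\cov,b}}g^*\cE^b_{P,\eS_e}$. Using that $Z_e$ permutes the indecomposable summands according to the $\overline{Z}_e$-action (as in \eqref{eqn:noncomm-spr-decomp-indec-proj}), this is identified with $\bigoplus_{b'\in\overline{Z}_e\cdot b}\cE^{b'}_{P,\eS_e}$. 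One caveat: the cosets of $Z_e^{\cov,b}$ correspond to the orbit modulo the image of $Z_e^\circ Z(G)$, which acts trivially on $\bfB_{e,P}$, so no multiplicity arises. The same argument applies to $\cL^{b}$.

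\textbf{The equivalence.} Summing over orbit representatives $\bfB^{\orb}_{e,P}$ endows $\cE^{\cov}_{P,\eS_e}$ with a $\wt{Z}_e^{\cov}$-equivariant structure. Hence $\cA^{\cov}_{P,\eS_e}$ is an algebra in $\Rep(\wt{Z}_e^{\cov})$, and $\Hom(\cE^{\cov}_{P,\eS_e},-)$ is a $\Rep(\wt{Z}_e^{\cov})$-linear functor to $\cA^{\cov,\op}_{P,\eS_e}\dashmod^{\wt{Z}_e^{\cov}}$.

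To see that it is an equivalence, I would reduce to the non-equivariant statement by de-equivariantization (e.g.\ via \cite[Cor.~8.5.7]{gr}, viewing both sides as modules for a monad on a $\Rep(\wt{Z}_e^{\cov})$-module category). The non-equivariant statement holds because $\cE^{\cov}_{P,\eS_e}$ is a tilting generator: every indecomposable summand of $\cE_{P,\eS_e}$ appears in it, so it generates the same category as $\cE_{P,\eS_e}$, and it has no higher self-Ext. Finally, a $\Rep$-linear functor between module categories that is an equivalence after forgetting equivariance is an equivalence.
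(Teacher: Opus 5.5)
Your proposal is correct and follows the paper's proof essentially step for step. The paper likewise gets the single-$b$ lifts from Proposition~\ref{prop:coc-equiv-simple-indec-proj}, the restriction functor \eqref{eqn:rep-G-coc-twist-res} and Theorem~\ref{thm:Ze-cov}; it gets the orbit sums by inducing along $\wt{Z}_e^{\cov,b}\subset\wt{Z}_e^{\cov}$, noting that the de-equivariantization is $\bigoplus_{b'\in\overline{Z}_e\cdot b}\cE^{b'}_{P,\eS_e}$; and it reads off the equivalence from the tilting property of $\cE^{\cov}_{P,\eS_e}$. Your extra care about compatibility with the $\bG_m$-grading (which the paper passes over), and your de-equivariantization argument for the final equivalence, just spell out details the paper leaves implicit.
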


\begin{proof}
The first assertion follows from Proposition~\ref{prop:coc-equiv-simple-indec-proj}, \eqref{eqn:rep-G-coc-twist-res}, and Theorem~\ref{thm:Ze-cov}. For the second assertion, let $\Ind_{\wt{Z}_e^{\cov,b}}^{\wt{Z}_e^\cov}$ denote the pushforward along the projection $\wt{\eN}_{P,\eS_e}/\wt{Z}_e^{\cov,b}\to\wt{\eN}_{P,\eS_e}/\wt{Z}_e^\cov$, and simply note that
\begin{equation*}
(\Ind_{\wt{Z}_e^{\cov,b}}^{\wt{Z}_e^\cov}\cL^b_{P,\eS_e})^\dq\simeq\bigoplus_{b'\in\overline{Z}_e\cdot b}\cL^{b'}_{P,\eS_e},\qquad\qquad(\Ind_{\wt{Z}_e^{\cov,b}}^{\wt{Z}_e^\cov}\cE^b_{P,\eS_e})^\dq\simeq\bigoplus_{b'\in\overline{Z}_e\cdot b}\cE^{b'}_{P,\eS_e}.
\end{equation*}
The final assertion is now immediate from \S\ref{atm:koszul-grading}.
\end{proof}

\section{The Block--Getzler sheaf}
\label{sec:bg-sheaf}

\begin{atom}

Recall that given a monoidal category $\eA$ equipped with a monoidal endofunctor $F_{\eA}$, the $2$-categorical class map is a functor
\begin{equation*}
[-]\colon(\eA,F_{\eA})\dashbmod\to\Tr(\eA,F_{\eA})
\end{equation*}
from the category of $\eA$-module categories and $F_{\eA}$-semilinear endofunctors to the categorical trace of $\eA$ with respect to $F_{\eA}$. We refer to \S\ref{atm:mor-L-Morita}-\ref{atm:tr-A-F-cat-end} for all notations (and we will refer freely to Appendix~\ref{sec:tr-form-chap} throughout the exposition). In particular, sending any $a\in\eA$ to the $2$-categorical class $[\eA,F_{\eA}(-)\otimes a]$ of the regular $\eA$-module recovers the universal trace functor discussed in the introduction.

In this section, we construct our main technical tool: an explicit complex computing the $2$-categorical class map under certain assumptions on $\eA$ (which will suffice for all of our applications). More precisely, assume that $\eA$ is compactly generated and rigid, and that $\eA$ admits a central functor from the category of quasi-coherent sheaves on a suitable quotient stack $X/G$; we refer to \S\ref{sec:bg-assumptions} and \S\ref{atm:bg-tr-res} for the details. The ``Block--Getzler sheaf'' then computes the restriction of a $2$-categorical class to the categorical trace of $\QC(X/G)$ via \eqref{eqn:tr-adj-ind-res}. This trace identifies with the category of quasi-coherent sheaves on a certain (twisted) loop space by Proposition~\ref{prop:tr-qc-loop-ident}. Our construction is essentially a straightforward extension of the ``Block--Getzler complex'' of \cite{block-getzler} (see also \cite[\S2.1.2]{benzvi} and \cite[Def.~2.3.4]{chen-eq-loc}), which computes Hochschild homology in the equivariant setting.
\end{atom}

\subsection{Construction}

\begin{atom}
\label{sec:bg-assumptions}

Let $G$ be a reductive\footnote{But not necessarily connected; we need only that the Peter--Weyl theorem holds for $G$.} group acting on a scheme $X$, and suppose that the quotient stack $X/G$ is \emph{perfect} as in Definition~\ref{def:passable-perfect} (e.g., $X$ is finite-type and quasi-affine). Let $\phi\colon X\to X$ be a self-map commuting with the $G$-action, and denote by $\Gamma^G\colon\QC(X/G)\to\QC(\B G)=\Rep(G)$ the functor of equivariant global sections (i.e., the pushforward along the natural projection). Finally, let $G$ act diagonally on $X\times G$ as in \S\ref{atm:loop-space-quot-stack}.

We begin by constructing a precursor to the Block--Getzler sheaf, which we will soon equip with additional structure in Construction~\ref{cons:block-getzler-sheaf}.
\end{atom}

\begin{definition}
\label{def:bg}
Let $\eC$ be a compactly generated dg-category enriched in $\QC(X/G)$, and let $\ucHom_{\eC}$ denote the $\QC(X/G)$-internal $\Hom$. Suppose that $\eC$ is moreover equipped with a $\phi^*$-semilinear endofunctor $F_{\eC}$ preserving compact objects.\footnote{I.e., for any $c,c'\in\eC$, the map $F_{\eC}\colon\uHom_{\eC}(c,c')\to\uHom_{\eC}(F(c),F(c'))$ of $\O_{X/G}$-modules is $\phi^*$-semilinear, where $\phi^*\colon\O_{X/G}\to\O_{X/G}$ is the induced homomorphism. As in \S\ref{atm:mor-L-Morita}, when $F_{\eC}$ is the identity, we will often omit it from the notation.} The \emph{pre-Block--Getzler sheaf} $\preBG_{X/G,\phi}(\eC,F_{\eC})$ is defined to be the sum totalization of the simplicial quasicoherent sheaf on $(X\times G)/G$ given by
\begin{equation*}
\preBG_{X/G,\phi}^{-n}(\eC,F_{\eC}):=\bigoplus_{c_0,\ldots,c_n\in\eC^c}\big(\Gamma^G\ucHom_{\eC}(c_0,c_1)\tens{k}\cdots\tens{k}\Gamma^G\ucHom_{\eC}(c_{n-1},c_n)\tens{k}\ucHom_{\eC}(c_n,F_{\eC}(c_0))\big)\boxtimes\O_G,
\end{equation*}
where the ($\O_{X\times G}$-linear) face maps $d_i\colon\preBG_{X/G,\phi}^{-n}(\eC,F_{\eC})\to\preBG_{X/G,\phi}^{-(n-1)}(\eC,F_{\eC})$ (for $i=0,\ldots,n$) ``compose morphisms.'' More precisely\footnote{Note that for any $c,c',c''\in\eC^c$, we have a composition map $\Gamma^G\uHom_{\eC}(c,c')\otimes_k\uHom_{\eC}(c',c'')\to\uHom_{\eC}(c,c'')$ adjoint to $\Gamma^G$ of the usual composition map  $\uHom_{\eC}(c,c')\to\cHom_{X/G}(\uHom_{\eC}(c',c''),\uHom_{\eC}(c,c''))$. Taking $\Gamma^G$, we also obtain a composition map $\Gamma^G\uHom_{\eC}(c,c')\otimes_k\Gamma^G\uHom_{\eC}(c',c'')\to\Gamma^G\uHom_{\eC}(c,c'')$.}, we have
\begin{equation}
\label{eqn:prebg-face-maps}
\begin{split}
d_0(f_0\otimes\cdots\otimes f_n\boxtimes r)&=f_1\otimes\cdots\otimes f_{n-1}\otimes\varrho(\Gamma^GF_{\eC}(f_0))\circ f_n\boxtimes r,\\
d_i(f_0\otimes\cdots\otimes f_n\boxtimes r)&=f_0\otimes\cdots\otimes f_{i+1}\circ f_i\otimes\cdots\otimes f_n\boxtimes r,\text{ for }i=1,\ldots,n,
\end{split}
\end{equation}
where for any $V\in\Rep(G)$ we (misusing notation) let $\varrho\colon V\to\O(G)\otimes V\simeq V\otimes\O(G)$ denote the \emph{left} coaction map.\footnote{I.e., whose specialization at $g\in G$ is given by the map $g^{-1}$. Note that this convention is opposite to that of \cite[Def.~2.12]{benzvi}, and arises from our different definition of the face maps $d_i$. We will elaborate more on these conventions in footnote~\ref{fn:bg-face-maps}.}
\end{definition}

\begin{atom}
\label{atm:dq-functor}
We now recall the relationship between the pre-Block--Getzler sheaf and Hochschild homology, following \cite[\S2.1.2]{benzvi}. Let $\eC$ be a compactly generated $\QC(X/G)$-module category equipped with a $\phi^*$-semilinear endofunctor $F_{\eC}$ preserving compact objects. Then we may consider the \emph{de-equivariantization}
\begin{equation*}
\eC^\dq:=\Vect_k\tens{\Rep(G)}\eC,
\end{equation*}
which is a $\QC(X)$-module category, and admits a natural ``forgetful'' functor $(-)^\dq\colon\eC\to\eC^\dq$ preserving compact objects\footnote{E.g., by \cite[Cor.~9.3.3]{gr}. Note that the image of $(-)^\dq$ also generates $\eC^\dq$ under colimits.}. Since $\Vect_k^\dq\simeq\QC(G)$, the category $\eC^\dq$ carries a natural $\QC(G)$-module structure (where the monoidal structure on $\QC(G)$ is via convolution). In particular, for any $g\in G(k)$, the action of the skyscraper sheaf at $g$ yields an automorphism $g_*\colon\eC^\dq\to\eC^\dq$. We then have a diagram
\begin{equation*}
\begin{tikzcd}[column sep=large]
\eC\arrow{d}{F_{\eC}}\arrow{r}{(-)^\dq}&\eC^\dq\arrow{d}{F_{\eC}^\dq}\arrow{r}{g_*}&\eC^\dq\arrow{d}{F_{\eC}^\dq}\\
\eC\arrow{r}{(-)^\dq}&\eC^\dq\arrow{r}{g_*}&\eC^\dq,
\end{tikzcd}
\end{equation*}
where the left square is equipped with a canonical commuting structure as $F_{\eC}$ is canonically $\Rep(G)$-linear, and the right square is equipped with a canonical commuting structure as $F_{\eC}^\dq$ acquires a canonical $\QC(G)$-linear structure as before. Note that $g_*\circ(-)^\dq\simeq(-)^\dq$, as the same is true for
\begin{equation*}
(-)^\dq\colon\Rep(G)\to\Rep(G)^\dq\simeq\Vect_k.
\end{equation*}
Indeed, given $V,V'\in\Rep(G)$, the functor $g_*$ acts according to the canonical $G$-representation on $\Hom_k(V^\dq,V^{\prime,\dq})$. In particular, writing $F_{\eC,g}^\dq:=F_{\eC}^\dq\circ g_*\simeq g_*\circ F_{\eC}$ for the ``$g$-twisted'' endofunctor, we obtain a $2$-morphism of pairs
\begin{equation*}
\dq_g\colon(\eC,F_{\eC})\Rightarrow(\eC^\dq,F_{\eC,g}^\dq)
\end{equation*}
in $L(\Morita(\dgCat_k))_\rgd$, as in \S\ref{atm:hh-dg-cat}.
\end{atom}

\begin{lemma}
\label{lem:bg-hh}
The pair $(\eC,F_{\eC})$ admits a natural $\QC(X/G)$-enrichment $(\eC^\enr,F_{\eC}^\enr)$, such that $F_{\eC}^\enr$ is a $\phi^*$-semilinear endofunctor of $\eC^\enr$ in the sense of Definition~\ref{def:bg}. The global sections of the pre-Block--Getzler sheaf and its fibers over $G$ then compute Hochschild homology, i.e., we have a natural commutative diagram
\begin{equation*}
\begin{tikzcd}[column sep=large]
\Gamma(\preBG_{X/G,\phi}(\eC^\enr,F_{\eC}^\enr))\arrow{d}{\vertsim}\arrow{r}&\Gamma(i_g^*\preBG_{X/G,\phi}(\eC^\enr,F_{\eC}^\enr))\arrow{d}{\vertsim}\\
\HH(\eC,F_{\eC})\arrow{r}{\HH(\dq_{g^{-1}})}&\HH(\eC^\dq,F_{\eC,g^{-1}}^\dq),
\end{tikzcd}
\end{equation*}
where $i_g\colon X\times\{g\}\to(X\times G)/G$ denotes the natural map, and the top horizontal arrow is induced by the unit of the adjunction $i_g^*\dashv i_{g,*}$.
\end{lemma}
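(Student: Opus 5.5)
The plan is to construct the enrichment first, then identify the simplicial complex with the standard Hochschild complex, and finally check compatibility with the de-equivariantization maps $\dq_{g^{-1}}$.

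\emph{Enrichment.} Since $\eC$ is a compactly generated $\QC(X/G)$-module category, for $c,c'\in\eC^c$ the functor $\cF\mapsto\Hom_{\eC}(\cF\otimes c,c')$ on $\QC(X/G)$ is representable, because $\QC(X/G)$ is perfect (Definition~\ref{def:passable-perfect}) and hence compactly generated by dualizable objects. We write $\ucHom_{\eC}(c,c')$ for the representing object; this defines $\eC^\enr$. Because $F_{\eC}$ is $\phi^*$-semilinear, meaning $F_{\eC}(\cF\otimes c)\simeq\phi^*\cF\otimes F_{\eC}(c)$, it induces maps $\ucHom_{\eC}(c,c')\to\ucHom_{\eC}(F_{\eC}c,F_{\eC}c')$ that are semilinear over $\phi^*\colon\O_{X/G}\to\O_{X/G}$. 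This gives $F^\enr_{\eC}$. The key identity we will use is
\begin{equation*}
\Gamma^G\ucHom_{\eC}(c,c')\simeq\uHom_{\Rep(G)}(c,c'),\qquad \Gamma\,\Gamma^G\ucHom\simeq\Hom_{\eC}.
\end{equation*}

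\emph{Global sections.} Pushing forward along $(X\times G)/G\to\B G\to *$, the term $\ucHom_{\eC}(c_n,F c_0)\boxtimes\O_G$ on $(X\times G)/G$ has global sections
\begin{equation*}
\big(\Gamma^G\ucHom(c_n,Fc_0)\otimes\O(G)\big)^G\simeq\Gamma^G\ucHom(c_n,Fc_0)
\end{equation*}
as a $G$-representation, using the diagonal action. The remaining factors are already in $\Rep(G)$. So the global sections of the $n$-simplices are
\begin{equation*}
\big(\textstyle\bigoplus\Gamma^G\ucHom(c_0,c_1)\otimes\cdots\otimes\Gamma^G\ucHom(c_n,Fc_0)\otimes\O(G)\big)^G,
\end{equation*}
which is the standard equivariant Block--Getzler model. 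We then use Peter--Weyl, $\O(G)\simeq\bigoplus_V V^\vee\otimes V$, to identify these $G$-invariants with the cyclic bar complex of the $\Rep(G)$-enriched category $\eC^c$ after ``unenriching.'' Concretely, the coaction $\varrho$ in $d_0$ absorbs the $G$-representation structure, and this recovers Hochschild chains $\bigoplus\Hom_{\eC}(c_0,c_1)\otimes\cdots\otimes\Hom_{\eC}(c_n,Fc_0)$ computing $\HH(\eC,F_{\eC})$. I would follow \cite[\S2.1.2]{benzvi}: there the Hochschild complex of $\eC$ is identified with that of $\eC^\dq$ with a $G$-twisted trace, taken $G$-invariantly over $\O(G)$. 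Equivalently, the trace over $\Rep(G)$ of $\eC$ is computed by de-equivariantizing and recording $g$-twists, since $\Tr(\Rep(G))\simeq\QC(G/G)$.

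\emph{Fibers.} Restricting along $i_g$ evaluates the factor $\O(G)$ at $g$, so $\varrho$ specializes to the action of $g^{-1}$. The fiber complex becomes
\begin{equation*}
\bigoplus\Hom_{\eC^\dq}(c_0^\dq,c_1^\dq)\otimes\cdots\otimes\Hom_{\eC^\dq}(c_n^\dq,F c_0^\dq)
\end{equation*}
with $d_0$ twisted by $g^{-1}$, which is the Hochschild complex of $(\eC^\dq,F^\dq_{\eC,g^{-1}})$. This uses that the objects $(-)^\dq$ of compact objects generate $\eC^\dq$ (the footnote in \S\ref{atm:dq-functor}) together with Morita invariance of $\HH$. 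Commutativity of the square then reduces to checking that the unit $\Gamma\to\Gamma i_{g,*}i_g^*$ is, levelwise, the map induced by $\dq_{g^{-1}}$ on Hochschild chains, which is the definition of $\HH$ of a 2-morphism in $L(\Morita(\dgCat_k))_\rgd$.

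\emph{Main obstacle.} I expect the hardest part to be making the identifications above coherent and natural at the $\infty$-categorical level, rather than merely levelwise. The cleanest route I would try is to express both sides via the bar construction for $\Tr$: $\HH(\eC,F)=\Tr(\Vect_k,\eC,F)$, while the sheaf corresponds to the class of $(\eC,F)$ in $\Tr(\Rep(G))\simeq\QC(G/G)$ pulled back along $(X\times G)/G\to G/G$. The sign and inverse conventions for $\varrho$ also need to be tracked carefully.
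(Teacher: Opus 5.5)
Your outline matches the paper's proof. You define the enrichment by the (linear) right adjoint of the action functor $\act_c$, compute $\Gamma$ and $\Gamma i_g^*$ of each $\preBG^{-n}$ termwise, and import the Block--Getzler comparison from \cite[Prop.~2.13]{benzvi}. Your fiber computation is fine, including the use of Morita invariance to identify the specialized complex with $\HH(\eC^\dq,F^\dq_{\eC,g^{-1}})$.

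The mechanism you give for the left vertical arrow is wrong. There is no termwise identification, via Peter--Weyl or otherwise, of $\big(\Gamma^G\ucHom_{\eC}(c_0,c_1)\otimes\cdots\otimes\Gamma^G\ucHom_{\eC}(c_n,F_{\eC}c_0)\otimes\O(G)\big)^G$ with $\Hom_{\eC}(c_0,c_1)\otimes\cdots\otimes\Hom_{\eC}(c_n,F_{\eC}c_0)$.

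\begin{itemize}
\item $G$ acts on $\O(G)$ by conjugation, so the invariants do not collapse. Your isomorphism $(\Gamma^G\ucHom_{\eC}(c_n,F_{\eC}c_0)\otimes\O(G))^G\simeq\Gamma^G\ucHom_{\eC}(c_n,F_{\eC}c_0)$ would require the translation action.
\item Already for $X=\mathrm{pt}$, $\eC=\Rep(G)$ and $G$ abelian, the degree-$0$ summand indexed by $c_0$ is $\End_G(c_0)\otimes\O(G)$, not $\End_G(c_0)$.
\item By Remark~\ref{rem:preBG-cpt-gens}, the left side may be computed with the $c_i$ ranging only over generators under the $\Rep(G)$-action. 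For $\eC=\Rep(G)$ this means just $c_i=k$, which gives $\O(G)^G$ on the left but only $\HH(\Vect_k)$ for the cyclic bar complex.
\end{itemize}

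So the left vertical arrow is a genuine quasi-isomorphism, not a relabeling. It is exactly the content the paper takes from \cite[Prop.~2.13]{benzvi}. For the same reason, commutativity of the square cannot be checked ``levelwise on Hochschild chains of $\eC$''; it comes out of that same argument.

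Your closing trace-theoretic suggestion is a sensible way to organize this, with one correction. One has $\HH(\eC,F_{\eC})\simeq\Gamma(G/G,[\eC,F_{\eC}])$ for the class $[\eC,F_{\eC}]\in\Tr(\Rep(G))\simeq\QC(G/G)$, by functoriality of $\Tr$ along the unit $\Vect_k\to\Rep(G)$. That class is the \emph{pushforward} of $\preBG$ along $(X\times G)/G\to G/G$, not something whose pullback is $\preBG$.

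Finally, when you import \cite{benzvi}, note that their Definition~2.11 is not simplicial as written, since $d_ns_{n-1}$ applies $F_{\eC}$. You must run their argument with the corrected face maps of Definition~\ref{def:bg}, obtained by computing $\eC_\Delta\otimes_{\eC\otimes\eC^\op}{}_{F_{\eC}}\eC_\Delta$ via the bar resolution. This is why the paper cites a corrected version.
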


\begin{proof}
For the first assertion, note that by \S\ref{atm:pass-perf-qc-rgd}, the category $\QC(X/G)$ is compactly generated and rigid. Thus, for any $c\in\eC$, the functor $\act_c\colon\QC(X/G)\to\eC$ given by acting on $c$ has a $\QC(X/G)$-linear right adjoint $\act_c^R$ by \cite[Ch.~1, Lem.~9.3.2]{gr} (which is furthermore continuous when $c$ is compact). Given $c,c'\in\eC$, we define $\ucHom_{\eC^{\enr}}(c,c'):=\act_c^R(c')$, and let $F_{\eC}^\enr$ be adjoint to the composition
\begin{equation}
\label{eqn:F-enr-adj-def}
\act_c^R(c')\otimes F_{\eC}(c)\to\phi^*\act_c^R(c')\otimes F_{\eC}(c)\simeq F_{\eC}(\act_c^R(c')\otimes c)\to F_{\eC}(c'),
\end{equation}
where the first map is the natural $\phi^*$-semilinear map, and the final map is obtained by applying $F_{\eC}$ to the tautological map $\act_c^R(c')\otimes c\to c'$ obtained by adjunction. We leave it to the reader to verify the requisite axioms. Note that taking global sections is right-adjoint to the unit functor $\Vect_k\to\QC(X/G)$, and hence recovers the Hom-spaces in $\eC$ (and the original functor $F_{\eC}$). Similarly, taking $\Gamma^G$ and forgetting the $G$-equivariance recovers the Hom-spaces in $\eC^\dq$.

For the latter assertion, we have
\begin{align*}
&\Gamma(\preBG_{X/G,\phi}^{-n}(\eC^\enr,F_{\eC}^\enr))\\
&\simeq\bigoplus_{c_0,\ldots,c_n\in\eC^c}\big(\Gamma^G\ucHom_{\eC}(c_0,c_1)\tens{k}\cdots\tens{k}\Gamma^G\ucHom_{\eC}(c_{n-1},c_n)\tens{k}\Gamma^G\ucHom_{\eC}(c_n,F_{\eC}(c_0))\tens{k}\O(G)\big)^G,
\end{align*}
so we recover (a corrected version of\footnote{\label{fn:bg-face-maps}Note that as currently stated, the Block--Getzler complex of \cite[Def.~2.11]{benzvi} does \emph{not} in general constitute a simplicial object, as $d_ns_{n-1}$, for an appropriately defined degeneracy map $s_{n-1}$, is given by applying $F_{\eC}$, rather than the identity. To correct this error, note that the cyclic bar complex of a dg category $\eC$ is obtained by computing the (derived) tensor product
\begin{equation}
\label{eqn:hh-tensor-dg-bimod-delta}
\eC_{\Delta}\otimes_{\eC\otimes\eC^\op}{}_{F_{\eC}}\eC_{\Delta}
\end{equation}
of $(\eC,\eC)$-bimodules, i.e., functors $\eC\otimes\eC^{\op}\to\Vect_k$ (see for instance \cite[\S3]{keller} or \cite[\S2.4, \S5.1]{gorsky}). Here the ``diagonal bimodule'' $\eC_{\Delta}$ is given by the functor $c'\otimes c\mapsto\Hom_{\eC}(c,c')$, and ${}_{F_{\eC}}\eC_{\Delta}$ denotes its precomposition with $F_{\eC}\otimes\id_{\eC^\op}$. This tensor product may be computed using the usual bar resolution of $\eC_{\Delta}$: an element of the degree-$(-n)$ term of the bar resolution for $c'\otimes c\in\eC\otimes\eC^\op$ is given by
\begin{equation*}
f_{-1}\otimes\cdots\otimes f_n\in\Hom_{\eC}(c,c_0)\tens{k}\Hom_{\eC}(c_0,c_1)\tens{k}\cdots\tens{k}\Hom_{\eC}(c_n,c')
\end{equation*}
for some $c_0,\ldots,c_n\in\eC^c$, and therefore an element of the degree-$(-n)$ term after tensoring with ${}_{F_{\eC}}\eC_{\Delta}$ is given by
\begin{equation*}
f_0\otimes\cdots\otimes F_{\eC}(f_{-1})f_n\in\Hom_{\eC}(c_0,c_1)\tens{k}\cdots\tens{k}\Hom_{\eC}(c_n,F_{\eC}(c_0)).
\end{equation*}
It is then straightforward to verify that the usual face maps yield those in \eqref{eqn:prebg-face-maps}.}) the Block--Getzler complex of \cite[Def.~2.12]{benzvi}. Likewise, we have
\begin{align*}
&\Gamma(i_g^*\preBG_{X/G,\phi}^{-n}(\eC^\enr,F_{\eC}^\enr))\\
&\simeq\bigoplus_{c_0,\ldots,c_n\in\eC^c}\Hom_{\eC^\dq}(c_0^\dq,c_1^\dq)\tens{k}\cdots\tens{k}\Hom_{\eC^\dq}(c_{n-1}^\dq,c_n^\dq)\tens{k}\Hom_{\eC^\dq}(c_n^\dq,F_{\eC}^\dq(c_0^\dq)),
\end{align*}
and specializing the coaction map at $g$ shows that the face map $d_0$ is computed by $F_{\eC,g^{-1}}^\dq$ in place of $\varrho(\Gamma^GF_{\eC})$. Thus, we recover (a similarly modified version of) the specialized Block--Getzler complex of \loccit, and the result follows as in \cite[Prop.~2.13]{benzvi}.
\end{proof}

\begin{remark}
\label{rem:preBG-cpt-gens}
Note that, as for the cyclic bar complex, we usually need not consider all of $\eC^c$ when computing the pre-Block--Getzler sheaf. Rather, it suffices to restrict $c_0,\ldots,c_n$ to any set of compact objects of $\eC$ which generate under the action of $\Rep(G)$.

We henceforth omit the notation $(-)^\enr$ (and write simply $\preBG_{X/G,\phi}(\eC,F_{\eC})$ as in Definition~\ref{def:bg}) whenever the $\QC(X/G)$-module structure on $\eC$ is implicit.
\end{remark}

\begin{construction}
\label{cons:block-getzler-sheaf}
We now return to the setting of Definition~\ref{def:bg}, and lift $\preBG_{X/G,\phi}(\eC,F_{\eC})$ to a quasicoherent sheaf $\BG_{X/G,\phi}(\eC,F_{\eC})$ on $\cL_{\phi}(X/G)$ satisfying
\begin{equation}
\label{eqn:ev-push-BG-preBG}
\ev_{G,*}\BG_{X/G,\phi}(\eC,F_{\eC})\simeq\preBG_{X/G,\phi}(\eC,F_{\eC}).
\end{equation}
By \eqref{eqn:loop-fiber-prod}, this amounts to giving a homotopy between the two actions
\begin{equation*}
\O_X\tens{k}\preBG_{X/G,\phi}(\eC,F_{\eC})\rightrightarrows\preBG_{X/G,\phi}(\eC,F_{\eC})
\end{equation*}
coming from $\phi\circ\act$ and $\pr$. For each $i=0,\ldots,n$, we have ``degeneracy maps''
\begin{equation*}
s_i\colon\O_X\tens{k}\preBG_{X/G,\phi}^{-n}(\eC,F_{\eC})\to\preBG_{X/G,\phi}^{-(n+1)}(\eC,F_{\eC})
\end{equation*}
defined on each summand by
\begin{align*}
s_i(f\otimes f_0\otimes\cdots\otimes f_n\boxtimes r)&=f_0\otimes\cdots\otimes f_{i-1}\otimes f\cdot\id_{c_i}\otimes f_i\otimes\cdots\otimes f_n\boxtimes r.
\end{align*}
We claim that the collection of maps
\begin{equation}
\label{eqn:preBG-htpy-BG}
s^{-n}:=\sum_{i=0}^n(-1)^is_i\colon\O_X\tens{k}\preBG_{X/G,\phi}^{-n}(\eC,F_{\eC})\to\preBG_{X/G,\phi}^{-(n+1)}(\eC,F_{\eC})
\end{equation}
assemble into the desired homotopy; we leave this as an exercise.\footnote{The only terms of $d\circ s^{-n}+s^{-(n-1)}\circ d$ that survive are
\begin{equation*}
(d_0s_0-d_{n+1}s_n)(f\otimes f_0\otimes\cdots\otimes f_n\boxtimes r)=(\varrho(\phi^*(f))-f)\cdot(f_0\otimes\cdots\otimes f_n\boxtimes r),
\end{equation*}
using the $\phi^*$-semilinearity of $F_{\eC}$.} We refer to the resulting sheaf
\begin{equation*}
\BG_{X/G,\phi}(\eC,F_{\eC})\in\QC(\cL_{\phi}(X/G))
\end{equation*}
as the \emph{Block--Getzler sheaf} of the pair $(\eC,F_{\eC})$.
\end{construction}

\subsection{Computing the $2$-categorical class map}

\begin{atom}
\label{atm:bg-tr-res}
We now describe the setting in which the Block--Getzler sheaf computes the $2$-categorical class map. Let $\eA$ be a compactly generated rigid monoidal category, and let
\begin{equation}
\label{eqn:psi}
\Psi\colon\QC(X/G)\to\eA
\end{equation}
be a monoidal functor admitting a central structure, i.e., a factorization through the Drinfeld center of $\eA$ as in \S\ref{atm:drinf-cent-tr-adj}. Suppose that we are given a monoidal endofunctor $F_{\eA}$ of $\eA$ (which automatically preserves compact objects as in \S\ref{atm:hh-dg-cat}), equipped with an isomorphism $F_{\eA}\circ\Psi\simeq\Psi\circ\phi^*$. Then as in \eqref{eqn:tr-adj-ind-res}, we have adjoint functors
\begin{equation}
\label{eqn:tr-res-ind-bg}
\Tr(\Ind_{\Psi})\colon\QC(\cL_{\phi}(X/G))\simeq\Tr(\QC(X/G),\phi^*)\rightleftarrows\Tr(\eA,F_{\eA})\colon\Tr(\Res_{\Psi}),
\end{equation}
where the first identification follows from Proposition~\ref{prop:tr-qc-loop-ident}.

Now let $\eM$ be a compactly generated right-dualizable $\eA$-module category equipped with an $F_{\eA}$-semilinear endofunctor $F_{\eM}$ which preserves compact objects. As in \S\ref{atm:mor-L-Morita}, its $2$-categorical class is an object $[\eM,F_{\eM}]\in\Tr(\eA,F_{\eA})$. Our goal is to compute the quasi-coherent sheaf $\Tr(\Res_{\Psi})([\eM,F_{\eM}])$ on $\cL_{\phi}(X/G))$. Note that $\Psi$ gives $\QC(X/G)$-module structures on both $\eA$ and $\eM$, and that both $F_{\eA}$ and $F_{\eM}$ are canonically $\phi^*$-semilinear with respect to these structures. Thus, the pairs $(\eA,F_{\eA})$ and $(\eM,F_{\eM})$ both admit $\QC(X/G)$-enrichments by Lemma~\ref{lem:bg-hh}, and it is not hard to check that these are compatible with the $\eA$-module structure on $\eM$.
\end{atom}

\begin{remark}
\label{rem:alt-A-M-QC-enr}

Note that, in the terminology of \cite[Ch.~1, \S3.6]{gr}, the $\QC(X/G)$-enrichments are given by the relative inner $\Hom$ with respect to $\QC(X/G)$. In the above setup, both $\eA$ and $\eM$ also admit relative inner $\Hom$ spaces with respect to $\eA$, which we denote by $\uHom_{\eA}$. We may then define the $\QC(X/G)$-enrichments by $\ucHom_{\eA^\enr}(a,a'):=\Psi^R\uHom_{\eA}(a,a')$ for $a,a'\in\eA$, and similarly for $\eM$. Likewise, $F_{\eA}^\enr$ may be described by the composition
\begin{equation*}
\Psi^R\uHom_{\eA}(a,a')\to\phi^*\Psi^R\uHom_{\eA}(a,a')\to\Psi^RF(\uHom_{\eA}(a,a'))\simeq\Psi^R\uHom_{\eA}(F(a),F(a')),
\end{equation*}
where the first map is the natural $\phi^*$-semilinear map, and the second map is the usual adjunction base-change map. As usual, we omit the superscripts $(-)^{\enr}$ elsewhere in this document.
\end{remark}

\begin{atom}
We now state the main result of this section:
\end{atom}

\begin{proposition}
\label{prop:bg-tr-res}
In the setup of \S\ref{atm:bg-tr-res}, we have a canonical equivalence
\begin{equation}
\label{eqn:bg-tr-res}
\BG_{X/G,\phi}(\eM,F_{\eM})\simeq\Tr(\Res_{\Psi})([\eM,F_{\eM}]).
\end{equation}
\end{proposition}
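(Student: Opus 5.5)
The plan is to compute both sides of \eqref{eqn:bg-tr-res} after pushforward along the affine map $\ev_G\colon\cL_{\phi}(X/G)\to(X\times G)/G$, and then to match the extra module structure. By \eqref{eqn:loop-fiber-prod}, $\cL_\phi(X/G)$ is the fiber product of $(X\times G)/G\rightrightarrows X/G$ (via $\phi\circ\act$ and $\pr$) over the diagonal. Hence $\ev_G$ is affine, and $\QC(\cL_\phi(X/G))$ is equivalent to modules in $\QC((X\times G)/G)$ for $\ev_{G,*}\O$. This reduces the proof to two tasks:
\begin{enumerate}
\item produce a natural equivalence $\ev_{G,*}\Tr(\Res_\Psi)([\eM,F_\eM])\simeq\preBG_{X/G,\phi}(\eM,F_\eM)$;
\item check that, under this equivalence, the two $\O_X$-actions are identified by the homotopy $s^{\bullet}$ of Construction~\ref{cons:block-getzler-sheaf}, as in \eqref{eqn:ev-push-BG-preBG}.
\end{enumerate}

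First I would remove $\eA$. Since $\Psi$ is central, $\QC(X/G)$ is rigid, and $\eA$ is rigid, Appendix~\ref{sec:tr-form-chap} (the functoriality of the $2$-categorical class map under $\Res_\Psi$, as in \eqref{eqn:tr-adj-ind-res}) should give
\begin{equation*}
\Tr(\Res_\Psi)([\eM,F_\eM])\simeq[\Res_\Psi\eM,F_\eM]\in\Tr(\QC(X/G),\phi^*).
\end{equation*}
Here $\Res_\Psi\eM$ is $\eM$ regarded as a $\QC(X/G)$-module category. This is also why only the $\QC(X/G)$-enrichment of $\eM$ from Remark~\ref{rem:alt-A-M-QC-enr} enters $\BG$, and the claimed compatibility of enrichments in \S\ref{atm:bg-tr-res} makes this consistent.

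Next I would compute the class of a compactly generated dualizable $\QC(X/G)$-module $\eC=\Res_\Psi\eM$ directly. Fix a set of compact generators $\{c_i\}$ under the $\Rep(G)$-action, as in Remark~\ref{rem:preBG-cpt-gens}. Then $\eC$ is the geometric realization of the bar resolution by free modules $\QC(X/G)\otimes_{\Rep(G)}\cdots$ built from $\Gamma^G\ucHom(c_i,c_j)$; equivalently, $\eC$ is modules over the $\QC(X/G)$-enriched category on the $c_i$. The trace then becomes a cyclic bar construction. The $\Rep(G)$-linear intermediate tensor products produce the factors $\Gamma^G\ucHom$, and the final term $\ucHom(c_n,F_\eC(c_0))$ is glued through the trace of $\Rep(G)$ with its identity, namely $\QC(G/G)$. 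This is where $\boxtimes\O_G$ and the coaction $\varrho$ in $d_0$ come from. To confirm that the face maps agree with \eqref{eqn:prebg-face-maps}, I would compare on global sections and on the fibers $i_g^*$ using Lemma~\ref{lem:bg-hh}: both sides compute $\HH(\eC,F_\eC)$ and $\HH(\eC^\dq,F^\dq_{\eC,g^{-1}})$ compatibly. Since $X/G$ is perfect, these checks on generators suffice.

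The main obstacle is task (2), matching the $\ev_{G,*}\O$-module structure. On the trace side, this structure comes from the two $\QC(X/G)$-actions on $\Tr$ (left and right), which are identified through the cyclic symmetry. In the bar model, the identification is exactly inserting $f\cdot\id_{c_i}$ in every slot with alternating signs. I would therefore first verify the footnoted identity $ds+sd=\varrho(\phi^*f)-f$ at the simplicial level. Then I would show that the trace-theoretic homotopy is computed by the same cyclic insertion, by working in the universal case $\eC=\QC(X/G)$, where $\preBG$ reduces to the standard presentation of $\O_{\cL_\phi(X/G)}$, and extending by naturality in $\eC$.
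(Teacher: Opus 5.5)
Your first reduction is sound: $\Psi$ is rigid because $\QC(X/G)$ and $\eA$ both are, so $\Res_{\Psi}$ is a right-dualizable $1$-morphism in $L(\Morita(\dgCat_k))_{\rgd}$. Functoriality of $\Tr$ under composition then gives $\Tr(\Res_{\Psi})([\eM,F_{\eM}])\simeq[{}_{\Psi}\eM,F_{\eM}]$.

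The gap is in what follows. You assert that $[\eC,F_{\eC}]$ ``becomes a cyclic bar construction'' of exactly the shape of $\preBG$: interior factors $\Gamma^G\ucHom$, last factor $\ucHom(c_n,F_{\eC}(c_0))\boxtimes\O_G$, coaction in $d_0$. That is the entire content of the proposition, and your outline supplies no mechanism for it. The $2$-categorical class is defined through the duality data of $\eC$, and it is functorial only for linear functors with linear right adjoints. So there is no a priori reason it sends a bar presentation of $\eC$ by free module categories to a geometric realization in $\QC(\cL_{\phi}(X/G))$. ``Gluing the last factor through $\Tr(\Rep(G))\simeq\QC(G/G)$'' is a heuristic, not an argument. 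Your proposed verification cannot close this either. No comparison map is ever constructed. Even given one, agreement after $\Gamma$ and after $\Gamma\circ i_g^*$ at $k$-points $g$ does not detect isomorphisms of these non-coherent sheaves on $(X\times G)/G$; global sections test only against the single object $\O_{X/G}$. Likewise, ``extending by naturality in $\eC$'' from the case $\eC=\QC(X/G)$ has nothing behind it, so your task (2) is also left open.

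The missing idea is to avoid modeling the class altogether. Instead, test against every compact generator $[\cF]=\ev^*\cF$ with $\cF\in\Perf(X/G)$ (these generate by \S\ref{atm:tr-M-subcat}), absorbing $\cF$ into the semilinear structure.

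On the Block--Getzler side, using only $\ev_*\simeq\pr_*\ev_{G,*}$ and \eqref{eqn:ev-push-BG-preBG}, you get $\Hom([\cF],\BG_{X/G,\phi}(\eM,F_{\eM}))\simeq\Gamma(\cF^\vee\otimes\pr_*\preBG_{X/G,\phi}(\eM,F_{\eM}))\simeq\Gamma(\pr_*\preBG_{X/G,\phi}(\eM,\Psi(\cF^\vee)\otimes F_{\eM}(-)))$. Lemma~\ref{lem:bg-hh} identifies this with $\HH(\eM,\Psi(\cF^\vee)\otimes F_{\eM}(-))$.

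On the trace side, the adjunction $\Tr(\Ind_{\Psi})\dashv\Tr(\Res_{\Psi})$ and Lemma~\ref{lem:tr-ind} give $\Hom([\Psi(\cF)],[\eM,F_{\eM}])$. The Drinfeld center adjunction \eqref{eqn:drinf-cent-tr-adj}, together with \eqref{eqn:drinf-cent-tr-adj-2-cat-cl}, rewrites this as $\Hom([\eA,F_{\eA}],[\eM,\Psi(\cF^\vee)\otimes F_{\eM}(-)])$; this is where centrality of $\Psi$ enters. Theorem~\ref{thm:gkrv-main} then identifies that with the same Hochschild complex.

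Your global-sections check is just the case $\cF=\O_{X/G}$. It is the twist by $\cF^\vee$, together with the GKRV theorem, that turns the comparison into a proof. No explicit cyclic-bar model of $[\eC,F_{\eC}]$ is needed, and the $\ev_{G,*}\O$-module structure enters only through the naturality of these identifications in $[\cF]$.
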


\begin{proof}
As in \S\ref{atm:tr-M-subcat}, it suffices to give a functorial isomorphism
\begin{equation}
\label{eqn:bg-tr-res-funct-iso}
\Hom_{\cL_{\phi}(X/G)}([\cF],\BG_{X/G,\phi}(\eM,F_{\eM}))\simeq\Hom_{\cL_{\phi}(X/G)}([\cF],\Tr(\Res_{\Psi})([\eM,F_{\eM}]))
\end{equation}
for each $\cF\in\Perf(X/G)$. We begin by unwinding the left-hand side. By Proposition~\ref{prop:tr-qc-loop-ident}, the universal trace functor is given by
\begin{equation}
\label{eqn:univ-tr-qc}
[-]\simeq\ev^*\colon\QC(X/G)\to\QC(\cL_{\phi}(X/G)).
\end{equation}
Thus, adjunction and duality yield
\begin{align*}
\Hom_{\cL_{\phi}(X/G)}([\cF],\BG_{X/G,\phi}(\eM,F_{\eM}))&\simeq\Hom_{X/G}(\cF,\ev_*\BG_{X/G,\phi}(\eM,F_{\eM}))\\
&\simeq\Gamma(\cF^\vee\tens{\O_X}\ev_*\BG_{X/G,\phi}(\eM,F_{\eM}))\\
&\simeq\Gamma(\cF^\vee\tens{\O_X}\pr_*\preBG_{X/G,\phi}(\eM,F_{\eM}))
\end{align*}
by \eqref{eqn:loop-fiber-prod}. Moreover, as in the proof of Lemma~\ref{lem:bg-hh}, the functor $\uHom_{\eM}(m,-)$ is $\QC(X/G)$-linear for any $m\in\eM$. It follows by construction that
\begin{equation*}
\cF^\vee\otimes_{\O_X}\pr_*\preBG_{X/G,\phi}(\eM,F_{\eM})\simeq\pr_*\preBG_{X/G,\phi}(\eM,\Psi(\cF^\vee)\otimes F_{\eM}(-)),
\end{equation*}
where the latter functor is $F_{\eA}$-semilinear by our centrality assumption on $\Psi$, and preserves compact objects as $X/G$ is perfect. Finally, Lemma~\ref{lem:bg-hh} implies that
\begin{equation}
\label{eqn:ev-bg-hh}
\Gamma(\pr_*\preBG_{X/G,\phi}(\eM,\Psi(\cF^\vee)\otimes F_{\eM}(-)))\simeq\HH(\eM,\Psi(\cF^\vee)\otimes F_{\eM}(-)).
\end{equation}

Next, we unwind the right-hand side of \eqref{eqn:bg-tr-res-funct-iso}. By adjunction, we have
\begin{equation}
\label{eqn:bg-tr-class-ind-res-adj}
\begin{split}
\Hom_{\cL_{\phi}(X/G)}([\cF],\Tr(\Res_{\Psi})([\eM,F_{\eM}]))&\simeq\Hom_{\Tr(\eA,F_{\eA})}(\Tr(\Ind_{\Psi})([\cF]),[\eM,F_{\eM}])\\
&\simeq\Hom_{\Tr(\eA,F_{\eA})}([\Psi(\cF)],[\eM,F_{\eM}]).
\end{split}
\end{equation}
Moreover, the adjunction of \eqref{eqn:drinf-cent-tr-adj} and the identity \eqref{eqn:drinf-cent-tr-adj-2-cat-cl} yield
\begin{equation}
\label{eqn:bg-tr-class-drinf-cent-adj}
\Hom_{\Tr(\eA,F)}([\Psi(\cF)],[\eM,F_{\eM}])\simeq\Hom_{\Tr(\eA,F_{\eA})}([\eA,F_{\eA}],[\eM,\Psi(\cF^\vee)\otimes F_{\eM}(-)]).
\end{equation}
The conclusion now follows from Theorem~\ref{thm:gkrv-main} and \eqref{eqn:ev-bg-hh}.
\end{proof}

\begin{atom}
We may now state our primary application of the Block--Getzler sheaf. We begin in the general setup of \S\ref{atm:loop-space-funct}, so that
\begin{equation*}
\begin{tikzcd}
\eX\arrow{d}{p}\arrow{r}{\phi_{\eX}}&\eX\arrow{d}{p}\\
\eY\arrow{r}{\phi_{\eY}}&\eY
\end{tikzcd}
\end{equation*}
is a commutative diagram of stacks.
\end{atom}

\begin{corollary}
\label{cor:push-str-bg}
Suppose that
\begin{enumerate}[label=(\roman*)]
\item $\eX$ and $\eY$ are perfect;
\item $\eY\simeq Y/G$ for a scheme $Y$ and reductive group $G$; and
\item $\phi_{\eY}$ lifts to a $G$-equivariant self-map $\phi_Y\colon Y\to Y$.
\end{enumerate}
Then
\begin{enumerate}
\item\label{itm:push-str-dg-conv} For any $\cF\in\Perf(\eX)$, we have
\begin{equation*}
\cL p_*\ev^*\cF\simeq\BG_{Y/G,\phi_{Y}}(\QC(\eX),\phi_{\eX}^*(-)\otimes\cF).
\end{equation*}
In particular,
\begin{equation*}
\cL p_*\O_{\cL_{\phi_{\eX}}\eX}\simeq\BG_{Y/G,\phi_{Y}}(\QC(\eX),\phi_{\eX}^*).
\end{equation*}
\item\label{itm:class-str-dg-conv} Suppose moreover that the three assumptions in \S\ref{atm:tr-conv-cat-setup} hold. Then for any $\cF\in\Coh(\eX\times_{\eY}\eX)$, we have
\begin{equation*}
[\cF]^{\QC}\simeq\BG_{Y/G,\phi_Y}(\QC(\eX),\phi_{\eX}^*(-)\star\cF),
\end{equation*}
where the notation $(-)^{\QC}$ is as in \eqref{eqn:qc-class-not-cor}.
\end{enumerate}
\end{corollary}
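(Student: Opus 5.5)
Both parts will be deduced from Proposition~\ref{prop:bg-tr-res}, with $\eA$ a suitable convolution-type category receiving a central functor $\Psi$ from $\QC(Y/G)$, and the trace of $\QC(Y/G)$ identified with $\QC(\cL_{\phi_Y}(Y/G))$ via Proposition~\ref{prop:tr-qc-loop-ident}. The task is then to identify $\Tr(\Res_\Psi)$ of the relevant $2$-categorical class with the pushforward, or with the class, that appears in the statement.

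For part~\eqref{itm:push-str-dg-conv}, take $\eA=\QC(\eX)$, which is compactly generated and rigid since $\eX$ is perfect. Take $\Psi=p^*$, which is symmetric monoidal and hence central, and take $F_\eA=\phi_\eX^*$, using $\phi_\eX^*p^*\simeq p^*\phi_\eY^*$. Take $\eM=\eA$ with $F_\eM=\phi_\eX^*(-)\otimes\cF$. Then $[\eM,F_\eM]=[\cF]$ is the universal trace of $\cF$. By Proposition~\ref{prop:tr-qc-loop-ident} this equals $\ev^*\cF$ under $\Tr(\QC(\eX),\phi_\eX^*)\simeq\QC(\cL_{\phi_\eX}\eX)$.

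Next, I will show that $\Tr(\Res_{p^*})$ identifies with $\cL p_*$. The left adjoint $\Tr(\Ind_{p^*})$ is the functoriality of traces along $p^*$, which corresponds to $\cL p^*$ under the loop-space identifications; I expect this is recorded in \S\ref{atm:loop-space-funct}. Passing to right adjoints gives $\Tr(\Res_{p^*})\simeq\cL p_*$, and the formula follows. The case $\cF=\O$ gives the second formula, since $\ev^*\O\simeq\O_{\cL_{\phi_\eX}\eX}$. The main point to verify is that the equivalences of Proposition~\ref{prop:tr-qc-loop-ident} are compatible with this functoriality. Hypotheses (ii) and (iii) are only needed so that the Block--Getzler sheaf is defined.

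For part~\eqref{itm:class-str-dg-conv}, take $\eA=\QC^!(\eX\times_\eY\eX)$ with convolution, or its $\QC$-version as in \eqref{eqn:qc-class-not-cor}. It acts on $\eM=\QC(\eX)$ by the right action $\star$, and receives a central functor from $\QC(\eY)$ by pulling back along $\eX\times_\eY\eX\to\eY$. The endofunctors are $F_\eA$ induced by $\phi_\eX$ and $F_\eM=\phi_\eX^*(-)\star\cF$, which is $F_\eA$-semilinear. The assumptions of \S\ref{atm:tr-conv-cat-setup} should supply the results of Appendix~\ref{sec:tr-form-chap} (Proposition~\ref{prop:tr-conv}) identifying $[\eA,F_\eA(-)\star\cF]$ with $[\eM,F_\eM]$, via the Morita equivalence between $\eA$ and $\End_{\QC(\eY)}(\QC(\eX))$. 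They should also supply the identification of $\Tr(\Res_\Psi)$ of this class with $[\cF]^{\QC}$. Applying Proposition~\ref{prop:bg-tr-res} to $(\eM,F_\eM)$ then gives the claim.

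The two facts to check are that $\eM$ is right-dualizable over $\eA$ and that $F_\eM$ preserves compact objects; both should follow from the perfectness of $\eX$ and the coherence of $\cF$. I expect the main obstacle to be matching the class $[\QC(\eX),F_\eM]$ with the notation $[\cF]^{\QC}$ of \eqref{eqn:qc-class-not-cor}. This requires keeping careful track of left versus right actions and of the twist by $\phi$.
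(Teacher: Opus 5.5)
Your part (1) is correct and is exactly the paper's argument: $\eA=\QC(\eX)$, $\Psi=p^*$, $[\eA,\phi_\eX^*(-)\otimes\cF]=[\cF]\simeq\ev^*\cF$, and $\Tr(\Res_{p^*})\simeq\cL p_*$, which is Corollary~\ref{cor:qc-tr-ind-res}.

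Part (2) fails because you chose the wrong acting category.

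First, take $\eA=\QC^!(\eX\fibprod{\eY}\eX)$ acting on $\QC(\eX)$ by right convolution. Then $F_\eM=\phi_\eX^*(-)\star\cF$ is \emph{not} $F_\eA$-semilinear. Semilinearity would require natural isomorphisms $\phi^*\cH\star\cF\simeq\cF\star\phi^*\cH$ for all $\cH\in\eA$, i.e.\ a central structure on $\cF$. Using the left convolution action instead does not help, since the left and right actions do not make $\QC(\eX)$ an $\eA$-bimodule. Right convolution by $\cF$ is only $\QC(\eY)$-linear. So $(\QC(\eX),F_\eM)$ lives in $(\QC(\eY),\phi_\eY^*)\dashbmod$, and its class lies in $\Tr(\QC(\eY),\phi_\eY^*)$, not in $\Tr(\eA,F_\eA)$.

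Second, your last identification is false. The central functor is $\Psi\colon\QC(\eY)\to\eA$, $\cG\mapsto\Delta_{\eX/\eY,*}p^*\cG$. Lemma~\ref{lem:tr-ind} gives $\Tr(\Ind_\Psi)(\O)\simeq[\1_\eA]$, hence $\Gamma\circ\Tr(\Res_\Psi)\simeq\Hom([\1_\eA],-)$. At $\cF=\1_\eA$ this computes $\End([\1_\eA])\simeq\HH(\eA,F_\eA)^{\op}$, rather than $\Gamma([\1_\eA]^{\QC})\simeq\HH(\QC(\eX),\phi_\eX^*)$. Already for $\eY=\Spec k$ and $\eX$ a set of $n$ points (with trivial $\phi$), this gives $\End_k(k^n)$ versus $k^n$. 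The functor $(-)^{\QC}$ is the trace of the regular $(\QC(\eY),\eA)$-bimodule $\QC(\eX)$ (Lemma~\ref{lem:reg-mod-rt-left-dual}), not the trace of ${}_\Psi\eA$. Your ``Morita equivalence'' step conflates the two.

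The missing idea is to run Proposition~\ref{prop:bg-tr-res} over $\QC(Y/G)$ itself. Take $\eA=\QC(\eY)$ and $\Psi=\id$, so that $\Tr(\Res_\Psi)=\id$, and take $\eM=\QC(\eX)$ (via $p^*$) with $F_\eM=\phi_\eX^*(-)\star\cF$. Then:
\begin{itemize}
\item this pair is genuinely $\phi_\eY^*$-semilinear;
\item $\eM$ is right-dualizable over $\QC(\eY)$ by Lemma~\ref{lem:reg-mod-rt-left-dual};
\item $F_\eM$ preserves compact objects for coherent $\cF$ by \cite[Thm.~1.1.3]{ben-zvi-nadler-preygel}.
\end{itemize}
The proposition gives $\BG_{Y/G,\phi_Y}(\QC(\eX),\phi_\eX^*(-)\star\cF)\simeq[\QC(\eX),\phi_\eX^*(-)\star\cF]\in\QC(\cL_{\phi_\eY}\eY)$. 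Corollary~\ref{cor:reg-mod-rt-left-dual}(1) then identifies the right-hand side with $[\cF]^{\QC}$ directly, with no passage through $\Tr(\eA,F_\eA)$. This is precisely the matching you flagged as the main obstacle.
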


\begin{proof}
Combine Proposition~\ref{prop:bg-tr-res} with \eqref{atm:tr-A-F-cat-end}, Corollary~\ref{cor:qc-tr-ind-res}, and Corollary~\ref{cor:reg-mod-rt-left-dual}. Note that throughout, we are taking \eqref{eqn:psi} to be the functor $\cL p^*\colon\QC(Y/G)\to\QC(\eX)$. For the central structure, note that the Drinfeld center of $\QC(\eX)$ is computed by the functor $\ev_*\colon\QC(\cL\eX)\to\QC(\eX)$ as in \cite[Cor.~5.2]{bfn}. Writing $i_{\eX}\colon\eX\to\cL\eX$ for the inclusion of the constant loops, we obtain the desired factorization $\cL p^*\simeq\ev_*\circ(i_{\eX,*}\cL p^*)$.
\end{proof}

\begin{atom}
Finally, we describe functoriality of the Block--Getzler sheaf in the pair $(\eM,F_{\eM})$. Suppose we are given another such pair $(\eN,F_{\eN})$, and a morphism $(\gamma,\theta)\colon(\eM,F_{\eM})\to(\eN,F_{\eN})$ in $(\eA,F_{\eA})\dashbmod$, i.e., an $\eA$-linear functor $\gamma\colon\eM\to\eN$ admitting an $\eA$-linear right adjoint, and a natural transformation $\theta\colon\gamma\circ F_{\eM}\Rightarrow F_{\eN}\circ\gamma$ of functors ${}_{F_{\eA}}\eM\to{}_{F_{\eA}}\eN$. As in \S\ref{atm:mor-L-Morita}, we obtain a morphism $[\gamma,\theta]\colon[\eM,F_{\eM}]\to[\eN,F_{\eN}]$ between $2$-categorical classes in $\Tr(\eA,F_{\eA})$.

On the other hand, we can construct a morphism
\begin{equation}
\label{eqn:BG-funct-map}
\BG_{X/G,\phi}(\gamma,\theta)\colon\BG_{X/G,\phi}(\eM,F_{\eM})\to\BG_{X/G,\phi}(\eN,F_{\eN})
\end{equation}
in $\QC(\cL_\phi(X/G))$ associated to the pair $(\gamma,\theta)$. Indeed, for any $m,m'\in\eM$, we have a natural map
\begin{equation*}
\gamma\colon\ucHom_{\eM}(m,m')\to\uHom_{\eN}(\gamma(m),\gamma(m'))
\end{equation*}
in $\QC(X/G)$ (constructed analogously to \eqref{eqn:F-enr-adj-def}), hence a composition
\begin{equation*}
\ucHom_{\eM}(m,F_{\eM}(m'))\xrightarrow{\gamma}\ucHom_{\eN}(\gamma(m),\gamma F_{\eM}(m'))\xrightarrow{\theta\circ-}\ucHom_{\eN}(\gamma(m),F_{\eN}\gamma(m')).
\end{equation*}
It is then not hard to check that the maps
\begin{align*}
\preBG_{X/G,\phi}^{-n}(\gamma,\theta)\colon\preBG_{X/G,\phi}^{-n}(\eM,F_{\eM})&\to\preBG_{X/G,\phi}^{-n}(\eN,F_{\eN})\\
f_0\otimes\cdots\otimes f_{n-1}\otimes f_n\boxtimes r&\mapsto\gamma(f_0)\otimes\cdots\otimes\gamma(f_{n-1})\otimes\theta\circ\gamma(f_n)\boxtimes r
\end{align*}
commute with all face maps \eqref{eqn:prebg-face-maps}, as well as the homotopy \eqref{eqn:preBG-htpy-BG}, and hence extend to a map as in \eqref{eqn:BG-funct-map}.

These two constructions are compatible under the identification of Proposition~\ref{prop:bg-tr-res}:
\end{atom}

\begin{proposition}
\label{prop:2-mor-bg-expl}
In the above setup, the diagram
\begin{equation*}
\begin{tikzcd}[column sep=1in]
\BG_{X/G,\phi}(\eM,F_{\eM})\arrow{r}{\BG_{X/G,\phi}(\gamma,\theta)}\arrow[d,"\vertsim"]&\BG_{X/G,\phi}(\eN,F_{\eN})\arrow[d,"\vertsim"]\\
\Tr(\Res_{\Psi})([\eM,F_{\eM}])\arrow{r}{\Tr(\Res_{\Psi})([\gamma,\theta])}&\Tr(\Res_{\Psi})([\eN,F_{\eN}])
\end{tikzcd}
\end{equation*}
commutes (up to a natural isomorphism).
\end{proposition}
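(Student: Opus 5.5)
The plan is to run the proof of Proposition~\ref{prop:bg-tr-res} a second time, now keeping track of naturality in the pair $(\eM,F_{\eM})$. That proof identified both sides by their Yoneda functors on the generating objects $[\cF]$, for $\cF\in\Perf(X/G)$, via \eqref{eqn:bg-tr-res-funct-iso}. So it suffices to show that for each such $\cF$ the square obtained by applying $\Hom_{\cL_\phi(X/G)}([\cF],-)$ commutes, naturally in $\cF$. Since the objects $[\cF]$ generate $\QC(\cL_\phi(X/G))$ (as in \S\ref{atm:tr-M-subcat}), this gives commutativity of the original square up to natural isomorphism.

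\textbf{The Block--Getzler side.} Chase $\BG_{X/G,\phi}(\gamma,\theta)$ through the identifications
\begin{equation*}
\Hom([\cF],\BG)\simeq\Gamma(\cF^\vee\otimes\pr_*\preBG)\simeq\Gamma(\pr_*\preBG(-,\Psi(\cF^\vee)\otimes F(-)))\simeq\HH(-,\Psi(\cF^\vee)\otimes F(-)).
\end{equation*}
The first two isomorphisms are manifestly compatible with the levelwise map $\preBG^{-n}(\gamma,\theta)$, because $\gamma$ is $\eA$-linear and hence $\QC(X/G)$-linear. Consequently $\gamma$ commutes with tensoring by $\Psi(\cF^\vee)$, and $\theta$ induces the corresponding transformation $\theta_{\cF}$ for the twisted endofunctors. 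For the last isomorphism (Lemma~\ref{lem:bg-hh}), observe that after taking $G$-invariants of global sections, the map $\f_0\otimes\cdots\otimes f_n\mapsto\gamma(f_0)\otimes\cdots\otimes\theta\circ\gamma(f_n)$ is exactly the standard formula for the map $\HH(\gamma,\theta_{\cF})$ on cyclic bar complexes. This uses the bar-resolution description in footnote~\ref{fn:bg-face-maps}: the induced map on $\eC_\Delta\otimes_{\eC\otimes\eC^\op}{}_{F}\eC_\Delta$ applies $\gamma$ to every factor and postcomposes the last factor with $\theta$. I would also need to check that $\gamma$ carries $\eM^c$ into $\eN^c$; this holds because $\gamma$ has a continuous right adjoint. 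Remark~\ref{rem:preBG-cpt-gens} then ensures that the index sets can be compared.

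\textbf{The trace side.} $\Tr(\Res_\Psi)([\gamma,\theta])$ corresponds, under the adjunctions \eqref{eqn:bg-tr-class-ind-res-adj} and \eqref{eqn:bg-tr-class-drinf-cent-adj}, to postcomposition with $[\gamma,\theta_{\cF}]$ in $\Hom([\eA,F_{\eA}],[-,\Psi(\cF^\vee)\otimes F(-)])$. Both adjunctions are natural in the second variable, so this is formal. It also requires that $[\gamma,\theta]$ maps to $[\gamma,\theta_{\cF}]$ under the central twist \eqref{eqn:drinf-cent-tr-adj-2-cat-cl}, which follows from functoriality of the $2$-categorical class map in the appendix.

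\textbf{Main obstacle.} The two computations are matched by the functoriality part of Theorem~\ref{thm:gkrv-main}. That theorem states that the identification $\Hom([\eA,F_{\eA}],[\eM,F_{\eM}])\simeq\HH(\eM,F_{\eM})$ intertwines postcomposition by $[\gamma,\theta]$ with $\HH(\gamma,\theta)$. I expect this compatibility to be the main obstacle: it must be extracted from the GKRV formalism (naturality of the class map for $1$-morphisms of $L(\Morita(\dgCat_k))_\rgd$) and compared with the explicit bar-complex model. My first attempt would be to reduce it to the case $\eA=\Vect_k$ by restriction along the unit, where it is the classical functoriality of Hochschild homology for dg-functors with a twisting transformation.
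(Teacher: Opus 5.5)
Your proposal is correct and follows essentially the same route as the paper: test against $[\cF]$ for $\cF\in\Perf(X/G)$, pass through the common middle row $\HH(\gamma,\id_{\Psi(\cF^\vee)}\otimes\theta)$, obtain the lower square by unwinding \eqref{eqn:bg-tr-class-ind-res-adj}, \eqref{eqn:bg-tr-class-drinf-cent-adj} and Theorem~\ref{thm:gkrv-main}, and reduce the upper square to functoriality of the cyclic bar complex of a dg-category. One recalibration: the naturality you single out as the main obstacle is already part of Theorem~\ref{thm:gkrv-main}, as the equivalence of functors \eqref{eqn:gkrv-funct}; the genuinely non-formal input (which the paper also only sketches, in a footnote) is that your explicit bar-complex formula computes $\HH(\gamma,\theta)$ as defined through the duality data \eqref{eqn:hh-duality-data}.
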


\begin{proof}
Let $\cF\in\Perf(X/G)$. As in the proof of Proposition~\ref{prop:bg-tr-res}, it suffices to show that each of the squares
\begin{equation*}
\begin{tikzcd}[column sep=1in]
\Hom_{\cL_{\phi}(X/G)}([\cF],\BG_{X/G,\phi}(\eM,F_{\eM}))\arrow{r}{\BG_{X/G,\phi}(\gamma,\theta)\circ-}\arrow[d,"\vertsim"]&\Hom_{\cL_{\phi}(X/G)}([\cF],\BG_{X/G,\phi}(\eN,F_{\eN}))\arrow[d,"\vertsim"]\\
\HH(\eM,\Psi(\cF^{\vee})\otimes F_{\eM}(-))\arrow{r}{\HH(\gamma,\id_{\Psi(\cF^{\vee})}\otimes\theta)}&\HH(\eN,\Psi(\cF^{\vee})\otimes F_{\eN}(-))\\
\Hom_{\cL_{\phi}(X/G)}([\cF],\Tr(\Res_{\Psi})([\eM,F_{\eM}]))\arrow{r}{\Tr(\Res_{\Psi})([\gamma,\theta])\circ-}\arrow[u,"\vertsim"']&\Hom_{\cL_{\phi}(X/G)}([\cF],\Tr(\Res_{\Psi})([\eN,F_{\eN}]))\arrow[u,"\vertsim"']
\end{tikzcd}
\end{equation*}
commutes. Commutativity of the lower square is immediate from unwinding the adjunctions \eqref{eqn:bg-tr-class-ind-res-adj} and \eqref{eqn:bg-tr-class-drinf-cent-adj}. Similarly, as in \eqref{eqn:ev-bg-hh}, it suffices to show that the upper square commutes with the top row replaced by the morphism
\begin{equation*}
\Gamma(\preBG_{X/G,\phi}(\gamma,\id_{\Psi(\cF^{\vee})}\otimes\theta))\colon\Gamma(\preBG_{X/G,\phi}(\eM,\Psi(\cF^\vee)\otimes F_{\eM}(-)))\to\Gamma(\preBG_{X/G,\phi}(\eN,\Psi(\cF^\vee)\otimes F_{\eN}(-))).
\end{equation*}
As in the proofs of Lemma~\ref{lem:bg-hh} and \cite[Prop.~2.13]{benzvi}, this amounts to the analogous functoriality of the cyclic bar complex for a dg-category.\footnote{This is well-known, though we could not find a precise reference in the literature (see \cite[Thm.~5.2(a)]{keller} for the case where the endofunctors and natural transformation $\theta$ are trivial). Regardless, one can see this directly by examining the functoriality of \eqref{eqn:hh-tensor-dg-bimod-delta} and \eqref{eqn:hh-duality-data}, using the same general pattern as in Lemma~\ref{lem:tr-ind}.}
\end{proof}

\begin{atom}
Thus, the complex $\BG_{X/G,\phi}(-)$ essentially computes the functor
\begin{equation*}
\Tr(\Res_{\Psi})([-])\colon(\eA,F_{\eA})\dashbmod\to\QC(\cL_{\phi}(X/G))
\end{equation*}
composed from \eqref{eqn:2-cat-class-map} and \eqref{eqn:tr-res-ind-bg}.
\end{atom}

\begin{remark}
We expect that, when applicable, the $S^1$-equivariant structure on the Block--Getzler sheaf may be described by an explicit homotopy, analogous to Connes' boundary operator $B$ on the usual cyclic bar complex. This will not be needed in the sequel, so we leave the details to a future work.
\end{remark}

\section{The exotic t-structure}
\label{sec:exotic-t-str}

\begin{atom}
We now give an ``intrinsic'' reformulation of braid positivity using an exotic t-structure on $\eH_{P,\eS_e}^\rmcoh$ provided by work of Bezrukavnikov--Losev \cite[\S4.2]{bez-losev}. While we would like to identify $\eH_{P,\eS_e}^\rmcoh$ with the category of $\wt{Z}_e$-equivariant $\cA_{P,\eS_e}^\op$-bimodules as in \eqref{eqn:bm-functor}, this would not preserve compact objects (for instance, the regular bimodule is not perfect). Thus, we must imitate the passage from quasi-coherent to ind-coherent sheaves (which, e.g., allows the unit sheaf $\Delta_{\wt{\eN}_{P,\eS_e}/\g,*}\O_{\wt{\eN}_{P,\eS_e}}\in\eH_{P,\eS_e}^{\coh}$ to be compact), and ``renormalize'' this category of bimodules.
\end{atom}

\subsection{Generalities}

\begin{atom}
We begin by recalling the basic properties of t-structures that we shall need. Our presentation follows Preygel's treatment in \cite[\S4]{preygel-indcoh}.
\end{atom}

\begin{definition}
\label{def:t-str}
A t-structure $(\eC^{\le 0},\eC^{\ge 0})$ on a dg-category $\eC$ is
\begin{enumerate}
\item \emph{accessible} if the subcategory $\eC^{\ge 0}$ (or equivalently, $\eC^{\le 0}$) is compactly generated;
\item \emph{compatible with filtered colimits} if $\eC^{\ge 0}$ is closed under filtered colimits in $\eC$;
\item \emph{right-complete} if the inclusion functors induce an equivalence\footnote{Equivalently, by \cite[Ch.~1, Prop.~2.5.7]{gr}, the truncation functors induce an equivalence $\eC\to\lim_n\eC^{\le n}$.} $\colim_n\eC^{\le n}\to\eC$; and
\item \emph{coherent} if it is compatible with filtered colimits, right-complete, and the compact objects $\eC^{\heart,c}$ (in the classical sense) form a generating abelian subcategory of the heart $\eC^{\heart}$.
\end{enumerate}
Moreover, suppose that $(\eC^{\le 0},\eC^{\ge 0})$ is compatible with filtered colimits. We say that an object $X\in\eC$ is \emph{coherent} if
\begin{enumerate}
\item $X$ is bounded below, i.e., $X\in\eC^{\ge n}$ for some $n$; and
\item $\tau^{\ge n}X$ is a compact object of $\eC^{\ge n}$ for all $n$.
\end{enumerate}
We denote the full subcategory of coherent objects of $\eC$ by $\Coh(\eC)$, and refer to the compactly generated category $\eC_{\ren}:=\Ind(\Coh(\eC))$ as the \emph{renormalization}\footnote{In Preygel's terminology, this is the ``regularization'' of $\eC$; however, we shall not need the notion of regularity in this work.} of $\eC$. The latter carries a natural t-structure given by
\begin{equation*}
\eC_{\ren}^{\le 0}:=\Ind(\Coh(\eC)\cap\eC^{\le 0}),\qquad\qquad \eC_{\ren}^{\ge 0}:=\Ind(\Coh(\eC)\cap\eC^{\ge 0}).
\end{equation*}
\end{definition}

\begin{atom}
\label{atm:preygel-gen}
We now recall some of Preygel's general results regarding these constructions. Let $\eC$ be as in the latter part of Definition~\ref{def:t-str}. If $(\eC^{\le 0},\eC^{\ge 0})$ is coherent, then the subcategory $\Coh(\eC)$ consists of cohomologically bounded objects whose cohomologies all lie in $\Coh(\eC)\cap\eC^{\heart}=\eC^{\heart,c}$. Moreover, the natural continuous functor $\eC_{\ren}\to\eC$ obtained by Ind-extension is t-exact, and induces an equivalence on coconnective (or more generally, bounded below) objects, i.e., $\eC_{\ren}^{\ge 0}\to\eC^{\ge 0}$. In particular, the t-structure on $\eC_{\ren}$ is again coherent, and moreover accessible.\footnote{In fact, accessibility is automatic for compactly generated categories (see for instance \cite[Ch.~4, Lem.~1.2.4]{gr}).}

The prototypical example is as follows: if $\eX$ is a QCA stack, then the standard t-structure on $\QC(\eX)$ is coherent and accessible, and $\QC^!(\eX)$ identifies with the renormalization of $\QC(\eX)$ (moreover, $\QC(\eX)$ is the ``left-completion'' of $\QC^!(\eX)$, though we shall not need or define this notion).

The following lemma will allow us to compare renormalizations for different t-structures:
\end{atom}

\begin{lemma}
\label{lem:compare-t-str-coh}
Let $\eC$ be a dg-category equipped with t-structures $(\eC^{\le 0},\eC^{\ge 0})$ and $(\eC^{\le' 0},\eC^{\ge' 0})$ which are both compatible with filtered colimits. Suppose that there exist $x,y\in\Z$ such that $\eC^{\le x}\subset \eC^{\le' 0}\subset\eC^{\le y}$ (or equivalently, $\eC^{\ge y}\subset \eC^{\ge' 0}\subset\eC^{\ge x}$). Then the subcategories of coherent objects with respect to each of these t-structures are identical, i.e., $\Coh(\eC)=\Coh'(\eC)$. In particular, the renormalizations $\eC_{\ren}$ and $\eC_{\ren'}$ are canonically equivalent.
\end{lemma}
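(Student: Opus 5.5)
The plan is to compare the two coherence conditions directly, condition by condition, using only the inclusions $\eC^{\le x}\subset\eC^{\le' 0}\subset\eC^{\le y}$ and $\eC^{\ge y}\subset\eC^{\ge' 0}\subset\eC^{\ge x}$. The equivalence of the two forms of the hypothesis is formal: $\eC^{\ge 1}$ is the right orthogonal of $\eC^{\le 0}$, so passing to orthogonals reverses the inclusions. By symmetry (the hypothesis is symmetric after replacing $x,y$ by $-y,-x$), it suffices to show $\Coh(\eC)\subset\Coh'(\eC)$. Once the two subcategories of coherent objects agree, the equality $\eC_{\ren}=\Ind(\Coh(\eC))=\Ind(\Coh'(\eC))=\eC_{\ren'}$ is tautological.

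Let $X\in\Coh(\eC)$. Bounded-belowness is immediate: $X\in\eC^{\ge n}\subset\eC^{\ge' n-y}$. The content is to show that $\tau'^{\ge m}X$ is compact in $\eC^{\ge' m}$ for every $m$. I would first record two facts that hold for any t-structure compatible with filtered colimits.
\begin{enumerate}
\item If $n\le n'$ and $Y\in\eC^{\ge n}$ is compact in $\eC^{\ge n}$, then $\tau^{\ge n'}Y$ is compact in $\eC^{\ge n'}$. Indeed, $\Hom(\tau^{\ge n'}Y,-)=\Hom(Y,-)$ on $\eC^{\ge n'}$, and filtered colimits in $\eC^{\ge n'}$ agree with those in $\eC^{\ge n}$ and in $\eC$.
\item Compactness of $Y$ in $\eC^{\ge n}$ can be tested by $\Hom(Y,-)$ commuting with filtered colimits of objects of $\eC^{\ge n}$, computed in $\eC$. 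Consequently, for $n\le n''$, compactness in $\eC^{\ge n}$ implies compactness in $\eC^{\ge n''}$ for objects lying there. For $Y\in\eC^{\ge n''}$ one has $\tau^{\ge n}Y=Y$, so $Y$ coherent in the first sense means exactly that $\tau^{\ge k}Y$ is compact in $\eC^{\ge k}$ for all $k$.
\end{enumerate}

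Now fix $m$ and set $Z:=\tau'^{\ge m}X$, so that $Z\in\eC^{\ge' m}\subset\eC^{\ge m+x}$. For $W\in\eC^{\ge' m}$ we have $\Hom(Z,W)=\Hom(X,W)$. Since $\eC^{\ge' m}\subset\eC^{\ge m+x}$, this equals $\Hom(\tau^{\ge m+x}X,W)$. The object $\tau^{\ge m+x}X$ is compact in $\eC^{\ge m+x}$, and filtered colimits in $\eC^{\ge' m}$ are computed in $\eC$, hence also in $\eC^{\ge m+x}$, by compatibility with filtered colimits for both t-structures. So $\Hom(Z,-)$ commutes with filtered colimits in $\eC^{\ge' m}$, and $Z$ is compact there. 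This gives $X\in\Coh'(\eC)$.

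The main subtlety to check is the step asserting that filtered colimits in $\eC^{\ge' m}$ and in $\eC^{\ge m+x}$ are both the ambient colimits in $\eC$. This is precisely the hypothesis that both t-structures are compatible with filtered colimits. I also need compactness to be meant in the sense of countable (or all) filtered colimits consistently with the paper's convention. The argument above is insensitive to which of these conventions is used.
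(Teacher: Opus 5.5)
Your proof is correct and is essentially the paper's argument. The paper observes that the inclusion $i\colon\eC^{\ge' n}\hookrightarrow\eC^{\ge x+n}$ preserves filtered colimits, so its left adjoint $i^L=\tau^{\ge' n}\iota^{\ge x+n}$ preserves compact objects, and then notes $\tau^{\ge' n}X\simeq i^L\tau^{\ge x+n}X$; your identification $\Hom(Z,W)\simeq\Hom(\tau^{\ge m+x}X,W)$ for $W\in\eC^{\ge' m}$ unwinds exactly this. Your preliminary facts (1) and (2) are not needed, since compactness of $\tau^{\ge m+x}X$ in $\eC^{\ge m+x}$ is already part of the definition of coherence.
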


\begin{proof}
We show the inclusion $\Coh(\eC)\subset\Coh'(\eC)$; the opposite inclusion follows by symmetry. Suppose that $X\in\eC$ is coherent with respect to $(\eC^{\le 0},\eC^{\ge 0})$. Then it is clearly bounded below with respect to $(\eC^{\le' 0},\eC^{\ge' 0})$. Moreover, given $n\in\Z$, we have a diagram
\begin{equation}
\label{eqn:compart-t-str-diag}
\begin{tikzcd}[column sep=small]
\eC^{\ge'n}\arrow[dr,hook,"\iota^{\ge'n}"']\arrow[rr,hook,"i"]&&\eC^{\ge x+n}\arrow[dl,hook',"\iota^{\ge x+n}"]\\
&\eC&
\end{tikzcd}
\end{equation}
of fully faithful functors. It suffices to show that $i$ is both continuous and cocontinuous. Indeed, in this case we have $\tau^{\ge'n}X\simeq i^L\tau^{\ge x+n}X$, and the left-adjoint $i^L$ preserves compact objects. Note that the functors $\iota^{\ge'n},\iota^{\ge x+n}$ are both continuous and cocontinuous by assumption. It is now straightforward to check that $i$ admits left and right adjoints given by $\tau^{\ge'n}\iota^{\ge x+n}$ and $\iota^{\ge'n,R}\iota^{\ge x+n}$, respectively.
\end{proof}

\begin{atom}
We now note some properties of t-structures on module categories for connective algebras:
\end{atom}

\begin{lemma}
\label{lem:coh-t-str-mod}
Let $\eC$ be a symmetric monoidal dg-category equipped with an accessible t-structure $(\eC^{\le 0},\eC^{\ge 0})$. Let $A\in\Alg(\eC)$ be an algebra object, let $\Res^{A}_{\1_{\eC}}\colon A\dashmod_{\eC}\to\eC$ denote the forgetful functor, and suppose that the functor $A\otimes -\colon\eC\to\eC$ is right t-exact. Then the pair
\begin{equation}
\label{eqn:A-mod-t-str}
A\dashmod_{\eC}^{\le 0}:=(\Res^{A}_{\1_{\eC}})^{-1}(\eC^{\le 0}),\qquad\qquad A\dashmod_{\eC}^{\ge 0}:=(\Res^{A}_{\1_{\eC}})^{-1}(\eC^{\ge 0}),
\end{equation}
gives a t-structure on $A\dashmod_{\eC}$. Moreover, if the t-structure on $\eC$ is compatible with filtered colimits (resp.\ right-complete), then so is that on $A\dashmod_{\eC}$.

Finally, suppose that $A$ is a compact object of $\eC$, that the tensor product on $\eC$ preserves compact objects, and that the t-structure on $\eC$ is coherent. Then the t-structure on $A\dashmod_{\eC}$ is coherent.
\end{lemma}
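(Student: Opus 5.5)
We prove the three assertions of Lemma~\ref{lem:coh-t-str-mod} in the order stated, using throughout that $\Res^{A}_{\1_{\eC}}$ is conservative, commutes with all limits and colimits, and has left adjoint $A\otimes-$.

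\emph{The t-structure.} Since $\eC^{\le 0}$ is accessible, $A\dashmod_{\eC}^{\le 0}$ is the full subcategory generated under colimits and extensions by the free modules $A\otimes c$ with $c\in\eC^{\le 0}$. Indeed, these free modules are connective because $A\otimes-$ is right t-exact. Conversely, any connective $M$ is the colimit of its bar resolution $A^{\otimes(n+1)}\otimes M$, each term of which is free on a connective object. By \cite[Prop.~1.4.4.11]{lurie-ha}, this presentable generating set defines a t-structure with this connective part. Its coconnective part is the right orthogonal of the generators, and the adjunction identifies it with $\{M:\Res^{A}_{\1_{\eC}}M\in\eC^{\ge 1}\}$, after the usual shift. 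This agrees with \eqref{eqn:A-mod-t-str}, and it also shows that $\Res^{A}_{\1_{\eC}}$ commutes with the truncations.

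\emph{Filtered colimits and right-completeness.} Both properties transfer through $\Res^{A}_{\1_{\eC}}$. Closure of $A\dashmod_{\eC}^{\ge 0}$ under filtered colimits holds because $\Res^{A}_{\1_{\eC}}$ preserves colimits and detects coconnectivity. For right-completeness, take any $M$. Since $\Res^{A}_{\1_{\eC}}$ commutes with truncations, it sends $\colim_n\tau^{\le n}M\to M$ to the corresponding map in $\eC$, which is an equivalence. Conservativity then shows the original map is an equivalence.

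\emph{Coherence.} Under the extra hypotheses, we must show that compact objects of the heart form a generating abelian subcategory. The heart $A\dashmod_{\eC}^{\heart}$ is the category of $\H^0(A)$-modules in $\eC^{\heart}$. The functor $A\otimes-$ preserves compact objects because $A$ is compact and the tensor product preserves compactness. Its right adjoint $\Res^{A}_{\1_{\eC}}$ is continuous, so it preserves compacts of the heart; this also uses that $\H^0$ of a compact connective object is compact in $\eC^{\heart}$, which follows from coherence of $\eC$. It follows that $\H^0(A\otimes c)$, for $c\in\eC^{\heart,c}$, is compact in the heart. Every object of the heart is a quotient of a filtered colimit of such objects, so these generate.

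The main obstacle is showing that $A\dashmod^{\heart,c}_{\eC}$ is abelian, that is, closed under kernels. Compact objects of the heart are exactly the finitely presented $\H^0(A)$-modules, namely cokernels of maps $\H^0(A\otimes c_1)\to\H^0(A\otimes c_0)$ between free modules on compacts of the heart. Closure under kernels therefore amounts to a noetherian-type property of $\H^0(A)$ relative to $\eC^{\heart,c}$. We would derive it from coherence of $\eC$, which makes $\eC^{\heart,c}$ abelian. The key input is that $\H^0(A)\otimes$ applied to a compact object of the heart is again in $\eC^{\heart,c}$, since it is $\H^0$ of a compact object. Given this, a kernel of a map between finitely presented modules is a subobject in $\eC^{\heart,c}$ of a compact object, hence compact in $\eC$. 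As a module it is a quotient of the free module $\H^0(A)\otimes$ of itself, and the kernel of that quotient map is again compact in $\eC$, giving a finite presentation. If this argument runs into trouble, we would fall back on characterizing compacts of the heart by compactness of their underlying object in $\eC^{\heart}$. That characterization follows from conservativity together with the fact that filtered colimits in the heart are computed in $\eC^{\heart}$, and it gives closure under kernels directly from the abelianness of $\eC^{\heart,c}$.
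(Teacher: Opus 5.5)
\textbf{Gap in the kernel step.} The problem is in the step you yourself flag as the main obstacle: closure of $A\dashmod_{\eC}^{\heart,c}$ under kernels. This is exactly where compactness of $A$ has to enter, and neither of your routes uses it correctly.

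Your ``key input'' is that $\H^0(A\otimes c)\in\eC^{\heart,c}$ for $c\in\eC^{\heart,c}$. You justify it by saying it is $\H^0$ of a compact object. But $c$ is only compact in the heart, not in $\eC$: for example, $k$ over $k[\epsilon]/\epsilon^2$ is finitely presented but not perfect. So $A\otimes c$ need not be compact. The same conflation appears when you call a subobject lying in $\eC^{\heart,c}$ ``compact in $\eC$.'' In the paper's application $\eC=\QC(\eS_e/\wt{Z}_e)$, with $\eS_e$ smooth, the two notions happen to agree, but the lemma is stated for an arbitrary coherent $\eC$.

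Your fallback does not close the gap either. Conservativity of $\Res^{A}_{\1_{\eC}}$, together with its commuting with filtered colimits in the heart, does not imply that it preserves compact objects of the heart. Take $\eC=\Vect_k$ and the non-compact algebra $A=k[x]$. Both properties hold, yet $A$ is compact in $A\dashmod_{\eC}^{\heart}$ while its underlying vector space is infinite-dimensional.

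\textbf{The missing idea} is how the paper argues.
\begin{enumerate}
\item Since $A$ is compact, $\Res^{A}_{\1_{\eC}}$ has a continuous right adjoint, and this right adjoint is left t-exact.
\item Composing it with $\tau^{\le 0}$ gives a continuous right adjoint to $\Res^{A}_{\1_{\eC}}\colon A\dashmod_{\eC}^{\heart}\to\eC^{\heart}$. Here $\tau^{\le 0}$ is continuous because the t-structure is accessible and compatible with filtered colimits \cite[Prop.~2.2.8]{chen}.
\item Hence $\Res^{A}_{\1_{\eC}}$ preserves compact objects of the heart.
\item For $f\colon M\to N$ in $A\dashmod_{\eC}^{\heart,c}$, it follows that $\Res^{A}_{\1_{\eC}}\ker(f)\simeq\ker(\Res^{A}_{\1_{\eC}}f)$ is compact, since $\eC^{\heart,c}$ is abelian.
\item The bar sequence $\tau^{\ge0}(A\otimes A\otimes\Res^{A}_{\1_{\eC}}\ker f)\to\tau^{\ge0}(A\otimes\Res^{A}_{\1_{\eC}}\ker f)\to\ker f\to0$ then exhibits $\ker f$ as a cokernel of compact objects of the heart.
\end{enumerate}
Once you have this lemma, your finite-presentation argument also goes through.

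\textbf{The other parts.} These are fine. You construct the t-structure via \cite[Prop.~1.4.4.11]{lurie-ha} and the bar resolution, where the paper cites Polishchuk. You argue right-completeness by transfer along the conservative, t-exact functor $\Res^{A}_{\1_{\eC}}$, where the paper applies Barr--Beck--Lurie to $\colim_n\Res^{A}_{\1_{\eC}}$. In that argument, however, you only verify $\colim_n\tau^{\le n}M\simeq M$, which gives full faithfulness of $A\dashmod_{\eC}\to\lim_nA\dashmod_{\eC}^{\le n}$. You should also check that $X_m\simeq\tau^{\le m}\colim_nX_n$ for compatible systems $(X_n)$. This follows by the same transfer.
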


\begin{proof}
The first assertion is a well-known construction (see for instance \cite[Thm.~2.1.2]{polishchuk}). Compatibility with filtered colimits is clear, as $\Res^{A}_{\1_{\eC}}$ is continuous. For right-completeness, consider the commutative square
\begin{equation*}
\begin{tikzcd}[column sep=huge]
\colim_nA\dashmod_{\eC}^{\le n}\arrow[d,"\colim_n\Res^{A}_{\1_{\eC}}"']\arrow{r}{\colim_n\iota^{\le n}}&A\dashmod_{\eC}\arrow{d}{\Res^{A}_{\1_{\eC}}}\\
\colim_n\eC^{\le n}\arrow[r,"\sim"]&\eC.
\end{tikzcd}
\end{equation*}
By our assumption on $A$, there is a functor $\colim_n\Ind_{\1_{\eC}}^{A}$ left-adjoint to $\colim_n\Res^{A}_{\1_{\eC}}$; moreover, the monad $\colim_n\Res^{A}_{\1_{\eC}}\Ind_{\1_{\eC}}^{A}$ acting on $\colim_n\eC^{\le n}$ evidently identifies with $\Res^{A}_{\1_{\eC}}\Ind_{\1_{\eC}}^{A}$ under the lower equivalence. Thus, it suffices to show that the functor $\colim_n\Res^{A}_{\1_{\eC}}$ is ``monadic'' (in the sense of \cite[Ch.~1,~Def.~3.7.5]{gr}). Indeed, the functors $\Res^{A}_{\1_{\eC}}$ and $\colim_n\iota^{\le n}$ are both continuous and conservative (as each $\iota^{\le n}$ is), so the same holds for $\colim_n\Res^{A}_{\1_{\eC}}$. The conclusion now follows from the Barr--Beck--Lurie theorem (see \cite[Thm.~4.7.0.3]{lurie-ha}).

For the final assertion, we first claim that $A\dashmod_{\eC}^{\heart}$ is compactly generated by objects of the form $\tau^{\ge 0}(A\otimes X)$ for $X\in\eC^{\heart,c}$. To see that these objects are compact, note that as in \eqref{eqn:compart-t-str-diag}, there are adjoint pairs
\begin{equation*}
\begin{tikzcd}
\eC^{\le 0}\arrow[r,shift left,"\Ind_{\1_{\eC}}^A"]&A\dashmod_{\eC}^{\le 0}\arrow[r,shift left,"\tau^{\ge 0}"]\arrow[l,shift left,"\Res_{\1_{\eC}}^A"]&A\dashmod_{\eC}^{\heart},\arrow[l,shift left,"\iota^{\ge 0}"]
\end{tikzcd}
\end{equation*}
where the lower composition is continuous and factors through $\eC^{\heart}$. To see that they generate, let $M\in A\dashmod_{\eC}^{\heart}$. The bar construction expresses $M$ as a simplicial colimit of modules of the form $A^{\otimes i}\otimes\Res_{\1_{\eC}}^AM$, all of which are connective. Applying $\tau^{\ge 0}$, we obtain an expression for $M$ as a simplicial colimit of modules of the form $\tau^{\ge 0}(A^{\otimes i}\otimes\Res_{\1_{\eC}}^AM)$, which may be computed in the abelian category $A\dashmod_{\eC}^{\heart}$. By the Dold--Kan correspondence, $M$ is quasi-isomorphic to the associated ``alternating face maps'' complex; in particular, we have an exact sequence
\begin{equation}
\label{eqn:bar-complex-coker-exact-seq}
\tau^{\ge 0}(A\otimes A\otimes\Res_{\1_{\eC}}^AM)\to\tau^{\ge 0}(A\otimes\Res_{\1_{\eC}}^AM)\to M\to 0
\end{equation}
in $A\dashmod_{\eC}^{\heart}$. Since $A\otimes-$ preserves colimits and compact objects, it suffices to exhibit each $\Res_{\1_{\eC}}^AM$ as a filtered colimit of objects of $\eC^{\heart,c}$. Such a presentation is immediate from our assumption that $\eC^\heart$ is compactly generated.

It remains to show that $A\dashmod_{\eC}^{\heart,c}$ is abelian; we need only establish closure under kernels. So let $f\colon M\to N$ be a morphism in $A\dashmod_{\eC}^{\heart,c}$. Since $A$ is compact, the functor $\Res_{\1_{\eC}}^A$ admits a continuous right adjoint which is left t-exact. Moreover, since the t-structure on $\eC$ is accessible and compatible with filtered colimits, the truncation $\tau^{\le 0}$ is continuous by \cite[Prop.~2.2.8]{chen}. Thus, the restricted functor $\Res_{\1_{\eC}}^A\colon A\dashmod_{\eC}^{\heart}\to\eC^{\heart}$ also admits a continuous right adjoint, hence preserves compact objects. In particular, $\Res_{\1_{\eC}}^A\ker(f)\simeq\ker(\Res_{\1_{\eC}}^A(f))$ is compact. Taking $\ker(f)$ in place of $M$ in the exact sequence \eqref{eqn:bar-complex-coker-exact-seq} now immediately exhibits $\ker(f)$ as compact.
\end{proof}

\subsection{Application to affine Hecke categories}

\begin{atom}
We now apply the results of the previous subsection to the affine Hecke categories $\eH_{P,\eS_e}^{\coh}$. So fix a standard parabolic $P$ and Slodowy slice $\eS_e$, and take $\eC:=\QC(\eS_e/\wt{Z}_e)$ with its standard t-structure. Then the algebra
\begin{equation*}
\cA_{P,\eS_e}\tens{\O(\eS_e)}\cA_{P,\eS_e}^\op\in\Alg(\QC(\eS_e/\wt{Z}_e))
\end{equation*}
is connective and compact, so by Lemma~\ref{lem:coh-t-str-mod} and \S\ref{atm:preygel-gen}, we have a renormalized category
\begin{equation}
\label{eqn:HSe-mod}
\eH_{P,\eS_e}^{\rmmod}:=\cA_{P,\eS_e}\tens{\O(\eS_e)}\cA_{P,\eS_e}^\op\dashmod^{\wt{Z}_e}_{\ren}.
\end{equation}
Note that the unrenormalized category $\cA_{P,\eS_e}\otimes_{\O(\eS_e)}\cA_{P,\eS_e}^\op\dashmod^{\wt{Z}_e}$ is monoidal under the tensor product of bimodules (but not rigid!). Since the algebra $\cA_{P,\eS_e}\otimes_{\O(\eS_e)}\cA_{P,\eS_e}^\op$ is eventually coconnective, any perfect module is coherent. More precisely, we have:
\end{atom}

\begin{lemma}
\begin{enumerate}[leftmargin=*]
\item\label{itm:coh-bimod} The category $\Coh(\cA_{P,\eS_e}\otimes_{\O(\eS_e)}\cA_{P,\eS_e}^\op\dashmod^{\wt{Z}_e})$ is given by cohomologically bounded complexes whose cohomology is finitely generated over $\H^0(\cA_{P,\eS_e}\otimes_{\O(\eS_e)}\cA_{P,\eS_e}^\op)$ (after forgetting the $\wt{Z}_e$-equivariance).
\item\label{itm:tens-coh-bimod} The tensor product on $\cA_{P,\eS_e}\otimes_{\O(\eS_e)}\cA_{P,\eS_e}^\op\dashmod^{\wt{Z}_e}$ preserves coherent objects. In particular, it extends to a monoidal structure on $\eH_{P,\eS_e}^{\rmmod}$.
\end{enumerate}
\end{lemma}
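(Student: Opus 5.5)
For \eqref{itm:coh-bimod}, write $B:=\cA_{P,\eS_e}\otimes_{\O(\eS_e)}\cA_{P,\eS_e}^\op$ and $\eC:=\QC(\eS_e/\wt{Z}_e)$. I would first apply the description of coherent objects recalled in \S\ref{atm:preygel-gen}. By Lemma~\ref{lem:coh-t-str-mod}, the t-structure on $B\dashmod^{\wt{Z}_e}$ is coherent. Hence its coherent objects are exactly the cohomologically bounded complexes whose cohomologies lie in $B\dashmod^{\wt{Z}_e,\heart,c}$. It then remains to identify the compact objects of the heart with the $\wt{Z}_e$-equivariant $\H^0(B)$-modules that are finitely generated after forgetting equivariance.

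The heart is the abelian category of $\wt{Z}_e$-equivariant $\H^0(B)$-modules. The proof of Lemma~\ref{lem:coh-t-str-mod} shows that it is compactly generated by the objects $\tau^{\ge0}(B\otimes X)=\H^0(B)\otimes_k V$ for $V\in\Rep(\wt{Z}_e)^c$. These are finitely generated. Conversely, a finitely generated equivariant module is a quotient of such an object: its generators span a finite-dimensional $\wt{Z}_e$-stable subspace because $\wt{Z}_e$ is reductive and acts locally finitely. Since $\H^0(B)$ is Noetherian (it is finite over the Noetherian ring $\O(\eS_e)^{\otimes2}$), such a module is finitely presented. The finitely presented objects are exactly the compact objects of a locally finitely presented Grothendieck category, which gives the identification.

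For \eqref{itm:tens-coh-bimod}, I would compute the tensor product $M\otimes_{\cA_{P,\eS_e}}N$ of bimodules. By part \eqref{itm:coh-bimod} and dévissage along cohomology, it suffices to treat $M,N$ in the heart and finitely generated. The key input is that $\cA_{P,\eS_e}$ has finite global dimension, because it is derived equivalent to the smooth variety $\wt{\eN}_{P,\eS_e}$ via \eqref{eqn:bm-functor}. Consequently any finitely generated right $\cA_{P,\eS_e}$-module has a finite resolution by finitely generated projectives. Forgetting the right action of $M$, this shows that $M\otimes_{\cA}N$ is cohomologically bounded. Its cohomologies are subquotients of $\mathrm{Tor}$ groups between finitely generated modules over a module-finite $\O(\eS_e)$-algebra, so they are finitely generated over $\O(\eS_e)$ and a fortiori over $\H^0(B)$. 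Equivariance is carried along automatically.

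Finally, the monoidal structure extends to $\eH_{P,\eS_e}^{\rmmod}=\Ind(\Coh)$ by Ind-extension. The unit $\cA_{P,\eS_e}$ is coherent by \eqref{itm:coh-bimod}, and the associativity data restrict from the unrenormalized category. The main obstacle is the boundedness claim in \eqref{itm:tens-coh-bimod}. It relies on finite global dimension of $\cA_{P,\eS_e}$, which I would justify through the tilting equivalence: $\Ext$ groups between finitely generated modules correspond to $\Ext$ groups between coherent sheaves on a smooth variety, and these vanish above $\dim\wt{\eN}_{P,\eS_e}$.
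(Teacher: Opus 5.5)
Your proof is correct and follows essentially the same route as the paper. For part (1), the paper also rests on Noetherianity of $\H^0(\cA_{P,\eS_e}\otimes_{\O(\eS_e)}\cA_{P,\eS_e}^\op)$ (finitely generated equals finitely presented) and on local finiteness of the $\wt{Z}_e$-action. It argues these points by hand where you cite the general theory of locally finitely presented categories: it writes $\H^0\Hom(M,-)$ as the invariants of the internal Hom, and it exhibits a compact $M$ as a summand of one of its finitely generated submodules. For part (2), it uses finite homological dimension of $\cA_{P,\eS_e}$ and a resolution by finitely generated free modules, exactly as you do.

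One small caveat concerns your justification of finite global dimension. Under the tilting equivalence, a finitely generated module corresponds to a coherent \emph{complex}, not a sheaf; its amplitude is bounded by that of $\Hom(\cE_{P,\eS_e},-)$. So the naive argument gives the bound $\dim\wt{\eN}_{P,\eS_e}$ plus that amplitude, not $\dim\wt{\eN}_{P,\eS_e}$ itself. Finiteness is all you need, so the proof stands.
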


\begin{proof}
\eqref{itm:coh-bimod} By \S\ref{atm:preygel-gen}, it suffices to show that an object $M\in\cA_{P,\eS_e}\otimes_{\O(\eS_e)}\cA_{P,\eS_e}^\op\dashmod^{\wt{Z}_e,\heart}$ is compact if and only if it is finitely generated over $\H^0(\cA_{P,\eS_e}\otimes_{\O(\eS_e)}\cA_{P,\eS_e}^\op)$. First suppose the latter. Since $\H^0(\cA_{P,\eS_e}\otimes_{\O(\eS_e)}\cA_{P,\eS_e}^\op)$ is Noetherian (as it is a finite $\O(\eS_e)$-algebra), finite generation is equivalent to finite presentation. It follows that $\H^0\uHom(M,-)$ commutes with (classical) filtered colimits, where we have let $\uHom$ denote the $\Rep(\wt{Z}_e)$-internal $\Hom$. Thus, $\H^0\Hom(M,-)\cong\H^0\uHom(M,-)^{\wt{Z}_e}$ commutes with filtered colimits, as desired.

Conversely, suppose that $M$ is compact. As in the non-equivariant case, we may write $M$ as a direct limit in $\cA_{P,\eS_e}\otimes_{\O(\eS_e)}\cA_{P,\eS_e}^\op\dashmod^{\wt{Z}_e,\heart}$ of its finitely generated submodules (since its underlying vector space decomposes as a direct sum of $\wt{Z}_e$-isotypic components). By compactness, the identity map $\id_M$ factors through some such submodule, so $M$ is a direct summand of a finitely generated module, hence finitely generated.

\eqref{itm:tens-coh-bimod} Since the algebra $\cA_{P,\eS_e}$ has finite homological dimension, the tensor product $\otimes_{\cA_{P,\eS_e}}$ preserves cohomological boundedness. It therefore suffices to show that the tensor product of any finitely generated $\H^0(\cA_{P,\eS_e}\otimes_{\O(\eS_e)}\cA_{P,\eS_e}^\op)$-modules $M,N$ (in degree $0$) has finitely generated cohomology. Since $\cA_{P,\eS_e}$ is Noetherian, and $M,N$ are in particular finitely generated over $\cA_{P,\eS_e}$, we may resolve $N$ by a complex of finitely generated free $\cA_{P,\eS_e}$-modules. It follows that the cohomology of $M\otimes_{\cA_{P,\eS_e}}N$ is also finitely generated over $\cA_{P,\eS_e}$, hence over $\H^0(\cA_{P,\eS_e}\otimes_{\O(\eS_e)}\cA_{P,\eS_e}^\op)$, as desired.
\end{proof}

\begin{atom}
Altogether, we obtain:
\end{atom}

\begin{proposition}
\label{prop:mod-equiv-left-t-exact}
The functor
\begin{equation}
\label{eqn:coh-mod-functor}
(-)_{\rmmod}:=\Hom_{\wt{\eN}_{P,\eS_e}\fibprod{\eS_e}\wt{\eN}_{P,\eS_e}}(\cE^\vee_{P,\eS_e}\boxtimes\cE_{P,\eS_e},-)\colon\eH_{P,\eS_e}^\coh\to\eH_{P,\eS_e}^{\rmmod}
\end{equation}
is a left t-exact equivalence of rigid monoidal categories. Moreover, it is compatible with the equivalence \eqref{eqn:bm-functor} on right module categories, and the analogous equivalence on left module categories.
\end{proposition}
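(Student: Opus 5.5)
We plan to build the equivalence first on quasi-coherent categories and then pass to renormalizations. Write $Y:=\wt{\eN}_{P,\eS_e}\fibprod{\eS_e}\wt{\eN}_{P,\eS_e}$ and $\cA:=\cA_{P,\eS_e}$. The bundle $\cE^\vee_{P,\eS_e}\boxtimes\cE_{P,\eS_e}$ on $\wt{\eN}_{P,\eS_e}\times\wt{\eN}_{P,\eS_e}$ is a tilting generator whose endomorphism algebra is $\cA\otimes_k\cA^\op$. We will obtain the corresponding statement for its restriction to the derived fiber product $Y$ by base change along $\eS_e\to\eS_e\times\eS_e$, which gives the endomorphism algebra $\cA\otimes_{\O(\eS_e)}\cA^\op$. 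Here the tensor product is derived, and it is connective but not necessarily concentrated in degree $0$. Compact generation on $Y/\wt{Z}_e$ then yields an equivalence
\begin{equation*}
\QC(Y/\wt{Z}_e)\xrightarrow{\sim}\cA\tens{\O(\eS_e)}\cA^\op\dashmod^{\wt{Z}_e}.
\end{equation*}
To see this, we note that $\Hom_Y(\cE^\vee\boxtimes\cE,-)$ is $\Rep(\wt{Z}_e)$-linear and continuous, since the object is perfect. It is conservative because $\cE^\vee\boxtimes\cE$ generates $\QC(Y)$ under the $\Rep(\wt{Z}_e)$-action, by pullback from the product. Barr--Beck--Lurie then gives the equivalence, with the monad identified with tensoring by the endomorphism algebra.

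Left t-exactness will be checked directly. Since $\cE^\vee\boxtimes\cE$ is a vector bundle and $Y$ is affine over $\eS_e$, the functor $\Hom(\cE^\vee\boxtimes\cE,-)$ is pushforward along an affine map composed with tensoring by a dual bundle. Both operations preserve coconnectivity, so the functor is left t-exact, and exactness actually holds on $\QC$. Note that the left adjoint need not be t-exact, because $Y$ is derived.

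Next, we identify coherent objects on the two sides. We will show that the functor sends $\Coh(Y/\wt{Z}_e)$ into $\Coh(\cA\otimes_{\O(\eS_e)}\cA^\op\dashmod^{\wt{Z}_e})$ and that it is essentially surjective onto the latter. By the preceding lemma, part~\eqref{itm:coh-bimod}, the coherent bimodules are exactly the bounded complexes with cohomology finitely generated over $\H^0(\cA\otimes_{\O(\eS_e)}\cA^\op)$. Cohomology of the image of a coherent sheaf is finite over $\O(\eS_e)$ by properness of $\pi_{P,\eS_e}$, and it is bounded because the functor is t-exact. Conversely, take a bounded bimodule with finitely generated cohomology. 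Its preimage is bounded, with cohomology finite over $\O(\eS_e)$ and supported on a space proper over $\eS_e$, so the preimage is coherent. This comparison is the step we expect to be the main obstacle. The issue is that $\eH^\coh_{P,\eS_e}=\QC^!$ is the renormalization of $\QC$ with respect to the standard t-structure, while the bimodule side is renormalized with respect to the module t-structure. We plan to handle this with Lemma~\ref{lem:compare-t-str-coh} rather than by comparing the two t-structures directly. The exotic t-structure transported from the module side differs from the standard one by a bounded shift on each side, namely $\eC^{\le x}\subset\eC^{\le'0}\subset\eC^{\le y}$. This holds because the functor $\Hom(\cE^\vee\boxtimes\cE,-)$ and its inverse, the tensor with $\cE^\vee\boxtimes\cE$ over the algebra, have finite cohomological amplitude. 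For the inverse, the finite amplitude follows because $\cA$ has finite homological dimension. Lemma~\ref{lem:compare-t-str-coh} then shows that the coherent objects coincide, and Ind-extension gives the equivalence $\eH^\coh_{P,\eS_e}\simeq\eH^{\rmmod}_{P,\eS_e}$. Left t-exactness is preserved under Ind-extension.

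For the monoidal structure, we will identify convolution with the tensor product of bimodules $M\otimes_{\cA}N$. Convolution is the projection formula along the middle factor of $Y\times_{\wt{\eN}}Y$. Applying $\Hom(\cE^\vee_{P,\eS_e}\boxtimes\cE_{P,\eS_e},-)$ and inserting the decomposition $\cE\otimes_{\cA}\cE^\vee\simeq\O$, which is $\Delta_*\O$ under the tilting equivalence, gives the identification. The same argument applied to $\QC(\wt{\eN}_{P,\eS_e}/\wt{Z}_e)$ acting through $\Hom(\cE,-)$ gives compatibility with \eqref{eqn:bm-functor}, and the left modules are handled similarly via $\cE^\vee$. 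Rigidity of $\eH^{\rmmod}_{P,\eS_e}$ is then inherited through the equivalence from $\eH^\coh_{P,\eS_e}$, which is rigid by the appendix results on convolution categories.
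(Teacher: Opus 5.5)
Your overall architecture matches the paper's: an equivalence at the level of $\QC$, then Lemma~\ref{lem:compare-t-str-coh} to identify the coherent objects and hence the renormalizations, then monoidality and module compatibility via the diagonal and the projection formula (the paper simply cites Bezrukavnikov--Losev for the last part). Your Barr--Beck--Lurie derivation of $\QC(Y/\wt{Z}_e)\simeq\cA\tens{\O(\eS_e)}\cA^\op\dashmod^{\wt{Z}_e}$ is a fine substitute for the paper's route through $\QC(\wt{\eN}_{P,\eS_e}/\wt{Z}_e)\otimes_{\QC(\eS_e/\wt{Z}_e)}\QC(\wt{\eN}_{P,\eS_e}/\wt{Z}_e)$. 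The paper's route builds in compatibility with \eqref{eqn:bm-functor}, while yours makes the endomorphism algebra explicit.

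The gap is in the t-structure step. $Y$ is \emph{not} affine over $\eS_e$. The map $Y\to\eS_e$ is proper, and its fiber over $e$ is the product $\eP_e\times\eP_e$ of partial Springer fibers, which is positive-dimensional in general. So global sections has higher cohomology, and $F:=(-)_{\rmmod}$ is not right t-exact. If it were t-exact on $\QC$, the exotic t-structure would coincide with the standard one. Already for $G=\SL_2$, $e=0$, the functor $\Hom(\cE,-)$ sends $\O(-n)$ on $T^*\mathbb{P}^1$ to a complex with nonzero $\H^1$ for $n\gg0$. Your remark that ``the left adjoint need not be t-exact'' is also inconsistent with your claim, since that left adjoint is the inverse equivalence, and the inverse of a t-exact equivalence is t-exact.

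Left t-exactness does survive, because pushforward preserves coconnectivity and $\cHom$ out of a vector bundle is t-exact. But the other half of the hypothesis of Lemma~\ref{lem:compare-t-str-coh} rested on the false exactness. That half is a uniform bound sending $\QC(Y/\wt{Z}_e)^{\le 0}$ into bimodules in degrees $\le y$. Your boundedness claim for $F$ of a coherent sheaf rested on the same false exactness.

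The missing input is exactly the one the paper uses. $Y/\wt{Z}_e$ is QCA, so by \cite[Thm.~1.4.2]{dg-fin} global sections has cohomological amplitude $\le y$ for some fixed $y$. Alternatively, use a finite affine \v{C}ech cover together with exactness of $\wt{Z}_e$-invariants. Since $\cHom(\cE^\vee_{P,\eS_e}\boxtimes\cE_{P,\eS_e},\cF)$ is connective (resp.\ coconnective) whenever $\cF$ is, $F$ has amplitude $[0,y]$. Writing $\eC:=\QC(Y/\wt{Z}_e)$ and $\eC^{\le'0}$ for the transported module t-structure, this gives $\eC^{\le -y}\subset\eC^{\le'0}\subset\eC^{\le 0}$ directly.

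No separate argument for the inverse is needed: it automatically has amplitude $[-y,0]$, so the appeal to the homological dimension of $\cA$ is superfluous. Your direct ``converse'' argument for coherence is likewise superseded by the lemma. With this replacement, the rest of your proof goes through.
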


\begin{proof}
By \cite[Ch.~1, Prop.~8.5.4; Ch.~3, Prop.~3.5.3]{gr} and \eqref{eqn:bm-functor}, we have a commutative diagram
\begin{equation*}
\begin{tikzcd}[column sep=12em]
\QC(\wt{\eN}_{P,\eS_e}/\wt{Z}_e)\tens{\QC(\eS_e/\wt{Z}_e)}\QC(\wt{\eN}_{P,\eS_e}/\wt{Z}_e)\arrow[d,"\boxtimes","\vertsim"']\arrow[r,"{\Hom(\cE^\vee_{P,\eS_e},-)\otimes\Hom(\cE_{P,\eS_e},-)}","\sim"']&\cA_{P,\eS_e}\dashmod^{\wt{Z}_e}\tens{\QC(\eS_e/\wt{Z}_e)}\cA_{P,\eS_e}^\op\dashmod^{\wt{Z}_e}\arrow[d,"\boxtimes","\vertsim"']\\
\QC(\wt{\eN}_{P,\eS_e}\fibprod{\eS_e}\wt{\eN}_{P,\eS_e}/\wt{Z}_e)\arrow{r}{{\Hom(\cE^\vee_{P,\eS_e}\boxtimes\cE_{P,\eS_e},-)}}&\cA_{P,\eS_e}\tens{\O(\eS_e)}\cA_{P,\eS_e}^\op\dashmod^{\wt{Z}_e},
\end{tikzcd}
\end{equation*}
whose bottom row is therefore an equivalence. Renormalizing the left- and right-hand sides of this equivalence yields $\eH_{P,\eS_e}^\coh$ and $\eH_{P,\eS_e}^{\rmmod}$, respectively, so it suffices to show that the standard t-structures on either side satisfy the hypotheses of Lemma~\ref{lem:compare-t-str-coh}.

First let $\cF\in\eH_{P,\eS_e}^{\coh,\le 0}$. By \cite[Thm.~1.4.2]{dg-fin}, there exists a fixed $y\in\Z$ (depending only on $\wt{\eN}_{P,\eS_e}\times_{\eS_e}\wt{\eN}_{P,\eS_e}/\wt{Z}_e$, and using the fact that it is QCA) such that the global sections functor has cohomological amplitude $\le y$. Since the sheaf $\cHom(\cE^\vee_{P,\eS_e}\boxtimes\cE_{P,\eS_e},\cF)$ is connective (as $\cE^\vee_{P,\eS_e}\boxtimes\cE_{P,\eS_e}$ is a vector bundle), its global sections lie in cohomological degrees $\le y$, which gives one inclusion. Now let $\cF\in\eH_{P,\eS_e}^{\coh,\ge 0}$. Since the sheaf $\cHom(\cE^\vee_{P,\eS_e}\boxtimes\cE_{P,\eS_e},\cF)$ is coconnective, its global sections are as well, and we obtain the other inclusion (with $x=0$). In particular, the functor $(-)_{\rmmod}$ is left t-exact.

Finally, monoidality and compatibility with the module structures on \eqref{eqn:bm-functor} hold as in \cite[Lem.~4.3]{bez-losev} (and are straightforward exercises).
\end{proof}

\begin{atom}
\label{atm:exotic-t-str-aff-hecke}
We refer to the t-structure on $\eH_{P,\eS_e}^\coh$ transported from that on $\eH_{P,\eS_e}^{\rmmod}$ as the \emph{exotic t-structure}, as for \eqref{eqn:bm-functor}. We may at last state our ``intrinsic reformulation'' of braid positivity:
\end{atom}

\begin{lemma}
Right convolution by $\cF\in\eH_{P,\eS_e}^\coh$ on $\QC(\wt{\eN}_{P,\eS_e}/\wt{Z}_e)$ is right t-exact with respect to the exotic t-structure if and only if the image $\cF_{\rmmod}\in\eH_{P,\eS_e}^{\rmmod}$ under \eqref{eqn:coh-mod-functor} is connective, i.e., $\cF$ is connective with respect to the exotic t-structure on $\eH_{P,\eS_e}^\coh$.
\end{lemma}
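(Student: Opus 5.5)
The plan is to transport everything through the monoidal equivalence $(-)_{\rmmod}$ of Proposition~\ref{prop:mod-equiv-left-t-exact}, which intertwines the right convolution action of $\eH_{P,\eS_e}^\coh$ on $\QC(\wt{\eN}_{P,\eS_e}/\wt{Z}_e)$ with the action of $\eH_{P,\eS_e}^{\rmmod}$ on $\cA_{P,\eS_e}^\op\dashmod^{\wt{Z}_e}$ by relative tensor product. Under \eqref{eqn:bm-functor} the exotic t-structure becomes the standard one. So the claim reduces to a purely algebraic statement. For a $\wt{Z}_e$-equivariant coherent bimodule $M$, the functor $N\mapsto N\otimes_{\cA_{P,\eS_e}}M$ on right $\cA_{P,\eS_e}$-modules should be right t-exact if and only if $M$ is connective.

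For the ``if'' direction, I would argue as follows. If $M$ is connective and $N$ is connective, then $N\otimes_{\cA_{P,\eS_e}}M$ is computed by the bar complex $N\otimes\cA^{\otimes n}\otimes M$. Every term of this complex is connective, since $\cA_{P,\eS_e}$ sits in degree $0$ and the tensor product over $\O(\eS_e)$ (or over $k$) is right t-exact. Colimits of connective objects are connective, so the result is connective. One bookkeeping point: in the renormalized category, connectivity of $M$ means it lies in $\Ind(\Coh\cap\le 0)$. Since the action is continuous and the connective part is closed under colimits, it suffices to treat coherent connective $M$.

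For the ``only if'' direction, I would apply the functor to the free module $N=\cA_{P,\eS_e}$, which is connective (in degree $0$). This gives $\cA_{P,\eS_e}\otimes_{\cA_{P,\eS_e}}M\simeq M$ as a right module. Its underlying object in $\QC(\eS_e/\wt{Z}_e)$ agrees with the underlying object of $M$ as a bimodule. Connectivity in $\eH^{\rmmod}$ is detected after forgetting to $\eC=\QC(\eS_e/\wt{Z}_e)$ by \eqref{eqn:A-mod-t-str}, so $M$ is connective. There is a subtlety here: connectivity in the renormalized t-structure versus the unrenormalized one. By \S\ref{atm:preygel-gen} the functor $\eC_\ren\to\eC$ is t-exact, but I still need to check that it reflects connectivity for the relevant objects. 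I would handle this by noting that right t-exactness must be tested against all connective $N$, including non-compact ones. I would then work with the Ind-extension and use that a compact $\cF$ has $\cF_{\rmmod}$ coherent, where the two notions agree. For general $\cF$, I would write it as a filtered colimit and use the definition $\eC_{\ren}^{\le 0}=\Ind(\Coh\cap\eC^{\le 0})$ together with truncation compatibility.

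The main obstacle I expect is precisely this renormalization issue in the ``only if'' direction for non-compact $\cF$. The other potential difficulty is verifying that the module compatibilities in Proposition~\ref{prop:mod-equiv-left-t-exact} really identify right convolution with $-\otimes_{\cA}\cF_{\rmmod}$, rather than with some twisted version (for instance, involving $\cE^\vee$ versus $\cE$, or left versus right). I would first check that identification directly, via the projection formula for $\Hom(\cE\boxtimes\cE^\vee,-)$.
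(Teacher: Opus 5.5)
Your proposal is correct and is essentially the paper's argument. The paper reduces right t-exactness of $-\star\cF$ to connectivity of $\cE_{P,\eS_e}\star\cF$, whose image under \eqref{eqn:bm-functor} is $\cF_{\rmmod}$ viewed as a right $\cA_{P,\eS_e}$-module; that is exactly your test on $N=\cA_{P,\eS_e}$, and your bar-complex argument makes explicit the generation step the paper leaves implicit.

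The renormalization subtlety you flag, which the paper passes over, has a more direct fix than your colimit sketch. The natural functor $\eH_{P,\eS_e}^{\rmmod}\to\cA_{P,\eS_e}\tens{\O(\eS_e)}\cA_{P,\eS_e}^\op\dashmod^{\wt{Z}_e}$ is t-exact and an equivalence on coconnective objects (\S\ref{atm:preygel-gen}). So if the image of $\cF_{\rmmod}$ is connective, then $\tau^{\ge 1}\cF_{\rmmod}$ has vanishing image and is therefore zero, and this holds for arbitrary, not only compact, $\cF$.
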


\begin{proof}
It suffices to show that the convolution $\cE_{P,\eS_e}\star\cF$ is connective, and this object is exactly given by the $\cA_{P,\eS_e}^\op$-module structure on $\cF_{\rmmod}$.
\end{proof}

\begin{atom}
\label{atm:iSe-t-ex-pullback}
In particular, for $a\in B^\ext_+$, the sheaf $\cK_a$ of \S\ref{atm:braid-action} is connective for the exotic t-structure, and similarly for the sheaves $\Delta_{\wt{\eN}_{P}/\g,*}\O_{\wt{\eN}_{P}}(\lambda)$ with $\lambda\in X^*(P)^+$ (see \S\ref{atm:braid-positivity}).

Finally, we denote by $i_{\eS_e}^*\colon\eH^{\rmmod}_P\to\eH_{P,\eS_e}^{\rmmod}$ the extension of scalars $-\otimes_{\O(\g)}\O(\eS_e)$. This functor is evidently intertwined with $i_{\eS_e}^*\colon\eH^{\coh}_P\to\eH_{P,\eS_e}^\coh$ under Proposition~\ref{prop:mod-equiv-left-t-exact}, justifying our duplicate notation. Note that by Lemma~\ref{lem:LiSe-event-coconn}, these functors are both t-exact.
\end{atom}

\section{Bounding the universal trace functor}
\label{sec:comp-coh-spr}

\begin{atom}
In this section, we prove our main result, which uses the exotic t-structure of \S\ref{sec:exotic-t-str} to give conditions for the universal trace of a compact object of a partial affine Hecke category to be either connective or coconnective. We then discuss several applications, such as to the sheaves $\cK_a$ giving the affine braid group action on $\QC(\wt{\eN}/\wt{G})$ and the twisted partial coherent Springer sheaves. In particular, we show that the coherent Springer sheaf lies in cohomological degree $0$, and has a summand corresponding to each parabolic subgroup away from roots of unity.

In brief, our strategy is to use the Block--Getzler sheaf from \S\ref{sec:bg-sheaf} to compute these traces, and their restrictions to any Slodowy slice, in terms of the noncommutative partial Springer resolution. Base-changing to $\B Z_e^\cov$ for each nilpotent $e$ (see \S\ref{sec:Ze-cov}), we may then use the Koszul resolution of Appendix~\ref{sec:koszul-res} to obtain cohomological bounds.
\end{atom}

\subsection{The main result}

\begin{atom}
We begin in the ``mixed'' or ``quantum'' setting. The precise statement is as follows; its proof will occupy most of the remainder of this subsection.
\end{atom}

\begin{theorem}
\label{thm:Baff-t-str}
Let $P$ be a standard parabolic subgroup.
\begin{enumerate}
\item\label{itm:exot-rt-t-class-conn} For any Slodowy slice $\eS_e$, the universal trace functor
\begin{equation}
\label{eqn:univ-tr-Se-thm}
[-]\colon\eH_{P,\eS_e}^\coh\to\QC^!(\cL(\wh{\eS}_{e,\eN_P}/\wt{Z}_e))
\end{equation}
is right t-exact with respect to the exotic t-structure on $\eH_{P,\eS_e}^\coh$ and the standard t-structure on $\QC^!(\cL(\wh{\eS}_{e,\eN_P}/\wt{Z}_e))$.

\item\label{itm:exot-rt-t-class-coconn} Let $\cF$ be a compact object of $\eH_P^\coh$, and suppose that its right monoidal dual $\cF^{\vee,R}$ is connective for the exotic t-structure.\footnote{Note that by Remark~\ref{rem:conv-cat-pivotal}, we may equivalently consider the left monoidal dual $\cF^{\vee,L}$ in place of $\cF^{\vee,R}$.} Then $[\cF]$ is coconnective for the standard t-structure. In particular, for $e=0$, the functor \eqref{eqn:univ-tr-Se-thm} has cohomological amplitude in $[-\dim\wt{\eN}_P,0]$.
\end{enumerate}
\end{theorem}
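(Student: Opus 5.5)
For part~\eqref{itm:exot-rt-t-class-conn}, I will compute $[\cF]$ with the Block--Getzler sheaf. Corollary~\ref{cor:push-str-bg}\eqref{itm:class-str-dg-conv} gives
\begin{equation*}
[\cF]\simeq\BG_{\eS_e/\wt{Z}_e}(\QC(\wt{\eN}_{P,\eS_e}/\wt{Z}_e),-\star\cF),
\end{equation*}
with the three hypotheses of \S\ref{atm:tr-conv-cat-setup} checked as in Corollary~\ref{cor:tr-He-coh-ident}. By Remark~\ref{rem:preBG-cpt-gens}, I may restrict the objects $c_i$ to the single generator $\cE_{P,\eS_e}$, which generates under $\Rep(\wt{Z}_e)$. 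The $n$-simplices then become $\cA_{P,\eS_e}^{\otimes_k n}\otimes_k\cF_{\rmmod}\boxtimes\O_{\wt{Z}_e}$. Here I use the identification $\ucHom(\cE,\cE\star\cF)\simeq\cF_{\rmmod}$ from the last lemma of \S\ref{sec:exotic-t-str}, together with the fact that $\cA_{P,\eS_e}$ sits in degree~$0$. If $\cF$ is exotically connective, then every term is connective. A sum totalization of connective terms placed in degrees $-n\le 0$ is connective, so right t-exactness follows. Passing to $\QC^!$ is harmless on compact objects, because coherent objects are bounded below.

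For part~\eqref{itm:exot-rt-t-class-coconn}, I first reduce to local statements. I stratify $\eN_P$ by nilpotent orbits and use the open--closed triangles $\Gamma_Z\to\mathrm{id}\to j_*j^*$. This reduces the claim to coconnectivity of the local cohomology of $[\cF]$ along $\cL$ of each orbit, and I induct from the open orbits downward. Along an orbit $G\cdot e$, I pass to the slice using Lemma~\ref{lem:LiSe-event-coconn} and Corollary~\ref{cor:tr-iSe-star-loop-pullback}. The map $\cL i_{\eS_e}$ is then transverse to the orbit, so $\cL i_{\eS_e}^*$ commutes with local cohomology up to a shift by the codimension of the orbit; I plan to control this shift with the cotangent complex computation of that lemma. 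I also use that $i_{\eS_e}^*$ is exotically t-exact (\S\ref{atm:iSe-t-ex-pullback}) and preserves duals. After these steps it remains to show that $\Gamma_{\cL(e/\wt{Z}_e)}[i_{\eS_e}^*\cF]$ is coconnective. Because $Z_e^\cov\to Z_e$ is faithfully flat, I may pull back to $\cL(\eS_e/\wt{Z}_e^\cov)$. There Corollary~\ref{cor:bm-cov-equiv} lets me replace $\cE$ by $\cE^\cov$ and $\cA$ by the Koszul algebra $\cA^\cov_{P,\eS_e}$.

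Next I will produce a bounded model. Writing the Block--Getzler complex as a derived tensor product of bimodules, I replace the bar resolution by the Koszul bimodule resolution of Appendix~\ref{sec:koszul-res}. By \S\ref{atm:tilt-bund-simp-mods}, its terms are $\cA^\cov\otimes_{R}K_n\otimes_R\cA^\cov$ with $K_n$ nonzero only for $0\le n\le d:=\dim\wt{\eN}_{P,\eS_e}$, where $R$ is the semisimple degree-$0$ part. This yields a complex of length $d$ built from $K_n\otimes\cF_{\rmmod}\boxtimes\O_{\wt{Z}_e^\cov}$, twisted by the coaction, and it gives the amplitude $[-d,0]$ when $e=0$. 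For the lower bound, I rewrite local cohomology at $e$ via Grothendieck local duality as the dual of a derived Hom into $\cF^{\vee,R}$. Exotic connectivity of $\cF^{\vee,R}$, the weight concentration of $\Ext^n$ between simples in weight $n$, and $\omega\simeq\O[d]\angles{d}$ from Proposition~\ref{prop:parab-A-kosz-grad} should place the relevant local cohomology in degrees $\ge 0$, with the codimension shift cancelling the $d$ coming from duality.

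The main obstacle is this final step: making local duality on the derived loop space $\cL(\eS_e/\wt{Z}_e^\cov)$ compatible with the Koszul model. The delicate points are tracking how the monoidal dual $\cF^{\vee,R}$ appears after dualizing the Koszul complex, and checking that the shifts from the codimension of $e/\wt{Z}_e$ inside $\cL(\eS_e/\wt{Z}_e)$ exactly cancel. My first attempt will be a direct computation on the fiber at $e$ using the repelling $\bG_m$-weights, where Koszulity forces the weights and degrees to align.
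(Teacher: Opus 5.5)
Your part~(1) is the paper's argument. Strictly, Corollary~\ref{cor:push-str-bg} computes $[\cF]^{\QC}$ rather than $[\cF]$, and Corollary~\ref{cor:reg-mod-rt-left-dual} is what transfers connectivity back to $\QC^!$. The skeleton of part~(2) also matches the paper: stratification, slice, cover, Koszul model, and local duality producing $\cF^{\vee,R}$.

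However, the passage from an orbit to its Slodowy slice has a genuine gap. You need coconnectivity of $\Gamma_{\cL(\bO_r/\wt{G})}\cL j_r^*[\cF]^{\QC}$ to follow from that of $\Gamma_{\cL(e/\wt{Z}_e)}\cL i_{\eS_e}^*[\cF]^{\QC}$. The $*$-pullback along $\cL i_{\eS_e}$ does commute with local cohomology, with no shift, since the derived square over the orbit is Cartesian. That is not the issue. The issue is that $\cL i_{\eS_e}$ is only quasi-smooth. Its base change to the orbit stratum is $\wt{Z}_e/\wt{Z}_e\to\wt{G}^e/\wt{G}^e$, which is not flat and not even surjective on $k$-points: no element of $\rmR_uG^e\setminus\{1\}$ is conjugate into $\wt{Z}_e$. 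So restriction to the slice cannot detect coconnectivity. Faithful flatness of $Z_e^\cov\to Z_e$ only disposes of the finite cover.

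The missing idea is the paper's detection argument, in three steps:
\begin{enumerate}
\item By \cite[Lem.~5.2]{halpern2}, the local cohomology carries a bounded-below filtration whose associated graded is $\cL i_{r,*}\big(\Sym\bL_{\cL i_r}^\vee[1]\otimes\cL i_r^!\O_{\cL(\g_r/\wt{G})}\otimes\cL i_r^*\cL j_r^*[\cF]^{\QC}\big)$. The sheaf inside is a sum of bounded-above coherent sheaves on the smooth classical stack $\wt{G}^e/\wt{G}^e$.
\item For such sheaves, Lemma~\ref{lem:Licov-pullback-coconn} shows that coconnectivity is detected after restriction to $\wt{Z}_e^\cov/\wt{Z}_e^\cov$. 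The proof is a graded Nakayama induction using that the $\bG_m$-factor of $\wt{Z}_e$ acts on $\rmR_uG^e\cong\A^n$ with strictly positive weights.
\item The base change \eqref{eqn:rel-cotan-Se-pullback}, together with $\cL i^!\O\simeq\det\bL[\rank\bL]$, identifies the restricted pieces with the graded pieces of the analogous filtration of $\Gamma_{\cL(e/\wt{Z}_e^\cov)}\cL i_{\eS_e}^*[\cF]^{\QC}$.
\end{enumerate}
Your ``shift by the codimension'' is really the factor $\cL i_r^!\O\simeq\det\bL_{\cL i_r}[\rank\bL_{\cL i_r}]$. It is matched with its slice counterpart by this base change, not cancelled against anything.

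The final step, which you leave open, does not require local duality on the derived loop space. The paper pushes forward along the affine map $\ev_{\wt{Z}_e^\cov}$ to $(\eS_e\times\wt{Z}_e^\cov)/\wt{Z}_e^\cov$. There, local cohomology along $\cL(e/\wt{Z}_e^\cov)$ becomes local cohomology along $\{e\}\times\wt{Z}_e^\cov$, and the trace becomes the pre-Block--Getzler complex. This local cohomology has amplitude $[0,\dim\eS_e]$, so it can be applied termwise to the bounded Koszul model.

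It then suffices to check one thing. Each summand $\uHom_{\wt{\eN}_{P,\eS_e}}(\cE^b_{P,\eS_e},\cE^{b'}_{P,\eS_e}\star i_{\eS_e}^*\cF)$ of the $(-n)$th Koszul term, for $0\le n\le d$, must have local cohomology at $e$ in degrees $\ge d$. The Matlis dual of this local cohomology is identified as follows:
\begin{enumerate}
\item local duality on the smooth affine $\eS_e$;
\item Grothendieck duality for $\pi_{P,\eS_e}$;
\item $\omega_{\wt{\eN}_{P,\eS_e}}\simeq\O_{\wt{\eN}_{P,\eS_e}}[d]$;
\item the adjunction for right convolution by $\cF^{\vee,R}$.
\end{enumerate}
Together these give $\uHom(\cE^{b'}_{P,\eS_e},\cE^b_{P,\eS_e}\star i_{\eS_e}^*\cF^{\vee,R})[d]$. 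This is a summand of $i_{\eS_e}^*\cF^{\vee,R}_{\rmmod}[d]$, hence lies in degrees $\le -d$.

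What cancels is the Koszul degree $-n\ge -d$ against this $d$. The weight concentration of $\Ext^n$ between simples enters only through the existence of the length-$d$ Koszul resolution, not in the estimate itself.
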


\begin{proof}

\eqref{itm:exot-rt-t-class-conn} It suffices to establish right t-exactness for compact objects of $\eH_{P,\eS_e}^\coh$ (see for instance \cite[Ch.~4, Lem.~1.2.4(2)]{gr}). So let $\cF\in\eH_{P,\eS_e}^{\coh,c}$ be connective for the exotic t-structure. By Corollary~\ref{cor:reg-mod-rt-left-dual}, we reduce to showing that
\begin{equation}
\label{eqn:F-QC-class-funct}
[\cF]^{\QC}\simeq[\QC(\wt{\eN}_{P,\eS_e}/\wt{Z}_e),-\star\cF]\in\QC(\cL(\eS_e/\wt{Z}_e))
\end{equation}
is connective. By Corollary~\ref{cor:push-str-bg}, we have
\begin{equation*}
[\cF]^{\QC}\simeq\BG_{\eS_e/\wt{Z}_e}(\QC(\wt{\eN}_{P,\eS_e}/\wt{Z}_e),-\star\cF),
\end{equation*}
so by \eqref{eqn:ev-push-BG-preBG}, it suffices to show that $\preBG_{\g/\wt{G}}(\QC(\wt{\eN}_{P,\eS_e}/\wt{Z}_e),-\star\cF)$ is connective. As in Remark~\ref{rem:preBG-cpt-gens}, we may compute this pre-Block--Getzler sheaf using only the compact generator $\cE_{P,\eS_e}$ (i.e., the regular right $\cA_{P,\eS_e}$-module under the equivalence \eqref{eqn:bm-functor}), which yields a complex of the form
\begin{equation}
\label{eqn:coh-spr-shf-complex-g}
\cdots\to\cA_{P,\eS_e}\tens{k}\cA_{P,\eS_e}\tens{k}\cF_{\rmmod}\boxtimes\O_{\wt{Z}_e}\xrightarrow{d_0-d_1+d_2}\cA_{P,\eS_e}\tens{k}\cF_{\rmmod}\boxtimes\O_{\wt{Z}_e}\xrightarrow{d_0-d_1}\cF_{\rmmod}\boxtimes\O_{\wt{Z}_e}.
\end{equation}
Indeed, we have
\begin{equation*}
\cF_{\rmmod}\simeq\Hom_{\wt{\eN}_{P,\eS_e}}(\cE_{P,\eS_e},\cE_{P,\eS_e}\star\cF)
\end{equation*}
by definition. The conclusion is now immediate from connectivity of $\cF_{\rmmod}$.

\eqref{itm:exot-rt-t-class-coconn} Now let $\cF\in\eH_P^{\coh,c}$, and suppose that $\cF^{\vee,R}$ is connective for the exotic t-structure. As in \eqref{eqn:F-QC-class-funct}, it suffices to show that $[\cF]^{\QC}$ is coconnective. The proof procedes in several steps.

\begin{atom}
We begin by reducing to a local cohomology calculation on each nilpotent orbit. Define a stratification of $\g/G$ as follows: choose a total order
\begin{equation*}
\{0\}=\bO_0\le\bO_1\le\cdots\le\bO_m=\bO_{\reg}
\end{equation*}
on the set of nilpotent orbits of $\g$ refining the usual partial order of \S\ref{atm:springer-theory}. For each $r=0,\ldots,m+1$, set $\g_r:=\g\setminus\bigcup_{0\le r'<r}\bO_{r'}$, so that
\begin{equation*}
\g=\g_0\supseteq\g_1\supseteq\cdots\supseteq\g_{m+1}=\g\setminus\eN,
\end{equation*}
and let
\begin{equation*}
\begin{tikzcd}[column sep=large]
\bO_r/\wt{G}\arrow[r,hook,"i_r"]&\g_r/\wt{G}\arrow[d,hook,"j_r"]\arrow[r,hookleftarrow,"j^{r+1}"]&\g_{r+1}/\wt{G}\\
&\g/\wt{G}.&
\end{tikzcd}
\end{equation*}
be the closed and open inclusions, respectively. These remain closed and open after applying the loop space functor by Lemma~\ref{lem:open-closed-loops}; moreover, the top row remains complementary. We therefore have a distinguished triangle
\begin{equation*}
\Gamma_{\cL(\bO_r/\wt{G})}\to\id_{\QC(\cL(\g_r/\wt{G}))}\to\cL j^{r+1}_*\cL j^{r+1,*}
\end{equation*}
for each $r$, where the left-most functor is local cohomology with support in $\cL(\bO_r/\wt{G})$.\footnote{\label{fn:loc-cohom-equiv}We comment briefly on our conventions for the local cohomology functor. In \cite{dg-indschemes}, this functor was only defined for Zariski-closed subsets of derived schemes, rather than for arbitrary closed immersions of stacks. However, we will only be concerned with closed immersions of the form described in the hypotheses of Lemma~\ref{lem:open-closed-loops}; in this setup, we write $\Gamma_Z$ for $\Gamma_{Z^{\cl}}$ (in fact, $Z$ will always be classical in our applications), and the functor $\Gamma_Z\colon\QC(X)\to\QC(X)$ clearly upgrades to a functor $\Gamma_{Z/G}\colon\QC(X/G)\to\QC(X/G)$ using the same distinguished triangle. Regardless, it suffices to check any claim about t-structures after forgetting equivariance, so alternatively, we may simply take local cohomology with respect to the derived scheme underlying each loop space as in \eqref{eqn:loop-fiber-prod} (this does not alter any of the arguments).} Since $\cL j_{m+1}^*[\cF]^{\QC}\simeq 0$ and each $\cL j^{r+1}_*$ is left t-exact, it suffices to show that
\begin{equation}
\label{eqn:loc-cohom-res-pos-degs}
\Gamma_{\cL(\bO_r/\wt{G})}\cL j_r^*[\cF]^{\QC}\in\QC_{\cL(\bO_r/\wt{G})}(\cL(\g_r/\wt{G}))^{\ge 0}
\end{equation}
for each $r$.
\end{atom}

\begin{atom}
\label{atm:step-slod-cov-red}
Next, we reduce \eqref{eqn:loc-cohom-res-pos-degs} to a computation on a Slodowy slice. Let $e\in\bO_r$ and let $\eS_e$ be a Slodowy slice at $e$. Let $Z_e^\cov\twoheadrightarrow Z_e$ be as in Theorem~\ref{thm:Ze-cov} (with a view towards applying Corollary~\ref{cor:bm-cov-equiv}). Consider the diagram\footnote{Our notation conflicts slightly with that of \eqref{eqn:slodowy-pullback}, but this should not pose any confusion.}
\begin{equation*}
\begin{tikzcd}
e/\wt{Z}_e^\cov\arrow{r}{i_e}\arrow{d}{i_{\cov}}&\eS_e/\wt{Z}_e^\cov\arrow{d}{i_{\eS_e}}\\
\bO_r/\wt{G}\arrow{r}{i_r}&\g_r/\wt{G}.
\end{tikzcd}
\end{equation*}
Since $\eS_e$ and $\bO_r$ intersect transversally at $e$, it is (derived) cartesian, so applying the loop space functor gives a pullback square
\begin{equation*}
\begin{tikzcd}
\cL(e/\wt{Z}_e^\cov)\arrow{r}{\cL i_e}\arrow{d}{\cL i_{\cov}}&\cL(\eS_e/\wt{Z}_e^\cov)\arrow{d}{\cL i_{\eS_e}}\\
\cL(\bO_r/\wt{G})\arrow{r}{\cL i_r}&\cL(\g_r/\wt{G}).
\end{tikzcd}
\end{equation*}
In particular, the relative cotangent complexes for the horizontal maps satisfy
\begin{equation}
\label{eqn:rel-cotan-Se-pullback}
\cL i_{\cov}^*\bL_{\cL i_r}\simeq\bL_{\cL i_e}.
\end{equation}
Observe that $\bL_{\cL i_r}$ is perfect: indeed, $\cL(\bO_r/\wt{G})\simeq \wt{G}^e/\wt{G}^e$ is smooth, so it suffices to show that $\bL_{\cL i_r}$ is coherent; the latter follows from the exact triangle
\begin{equation*}
\cL i_r^*\bL_{\cL(\g_r/\wt{G})}\to\bL_{\cL(\bO_r/\wt{G})}\to\bL_{\cL i_r}
\end{equation*}
and Lemma~\ref{lem:loops-props}\eqref{itm:quasi-smooth-loops}. Thus, by \cite[Lem.~5.2]{halpern2}, the sheaf $\Gamma_{\cL(\bO_r/\wt{G})}\cL j_r^*[\cF]^{\QC}$ has a bounded-below increasing filtration whose associated graded is equivalent to
\begin{equation*}
\cL i_{r,*}\big(\Sym\bL_{\cL i_r}^\vee[1]\otimes\cL i_r^!\O_{\cL(\g_r/\wt{G})}\otimes\cL i_r^*\cL j_r^*[\cF]^{\QC}\big).
\end{equation*}
Since $\cL i_{r,*}$ is left t-exact, it suffices to show that
\begin{equation}
\label{eqn:loc-cohom-pullback-assoc-gr}
\Sym\bL_{\cL i_r}^\vee[1]\otimes\cL i_r^!\O_{\cL(\g_r/\wt{G})}\otimes\cL i_r^*\cL j_r^*[\cF]^{\QC}\in\QC(\cL(\bO_r/\wt{G}))^{\ge 0}.
\end{equation}

We claim that \eqref{eqn:loc-cohom-pullback-assoc-gr} is a direct sum of sheaves lying in $\Coh^-(\cL(\bO_r/\wt{G}))$. Indeed, it suffices to show that $\cL i_r^!\O_{\cL(\g_r/\wt{G})}$ and $\cL i_r^*\cL j_r^*[\cF]^{\QC}$ both lie in $\Coh^-(\cL(\bO_r/\wt{G}))$. For the former, we have
\begin{equation}
\label{eqn:!-pullback-det-L}
i_r^!\O_{\cL(\g_r/\wt{G})}\simeq\det\bL_{\cL i_r}[\rank\bL_{\cL i_r}]
\end{equation}
by \cite[Lem.~3.8]{halpern2} and Lemma~\ref{lem:loops-props}\eqref{itm:quasi-smooth-loops}. The latter is clear as $[\cF]^{\QC}$ is coherent. Thus, the following lemma reduces us to showing that
\begin{equation}
\label{eqn:i-red-pullback-assoc-gr}
\cL i_{\cov}^*\big(\Sym\bL_{\cL i_r}^\vee[1]\otimes\cL i_r^!\O_{\cL(\g_r/\wt{G})}\otimes\cL i_r^*\cL j_r^*[\cF]^{\QC}\big)\in\QC(\cL(e/\wt{Z}_e))^{\ge 0}.
\end{equation}
\end{atom}

\begin{lemma}
\label{lem:Licov-pullback-coconn}
Let $\cG\in\Coh^-(\wt{G}^e/\wt{G}^e)$, and suppose that $\cL i_{\cov}^*\cG\in\Coh^-(\wt{Z}_e^\cov/\wt{Z}_e^\cov)$ is coconnective. Then $\cG$ is coconnective.
\end{lemma}

\begin{proof}
We have a factorization
\begin{equation*}
\wt{Z}_e^\cov/\wt{Z}_e^\cov\to\wt{Z}_e/\wt{Z}_e\to\wt{G}^e/\wt{G}^e
\end{equation*}
of $\cL i_{\cov}$; since the first map is faithfully flat, it suffices to establish the claim with $\cL i_{\cov}$ replaced by the latter map.

Recall from \S\ref{atm:jacobson-morozov} that $\wt{G}^e\simeq\rmR_uG^e\rtimes\wt{Z}_e$, where $\rmR_uG^e$ denotes the unipotent radical of $G^e$, and the factor of $\bG_m$ in $\wt{Z}_e$ acts on $\rmR_uG^e$ with strictly positive weights. It clearly suffices to verify the claim after forgetting all but the $\bG_m$-equivariance; choosing an isomorphism $\rmR_uG^e/\bG_m\simeq\A^n/\bG_m$ for some $n\ge 0$, we reduce to the following claim:
\begin{enumerate}[leftmargin=*]
\item[($*$)] Let $n\ge 0$, and suppose we have an attracting action of $\bG_m$ on $\A^n$. Let $\cG\in\Coh^-(\wt{Z}_e\times\A^n/\bG_m)$, let $i_0\colon\{0\}\hookrightarrow\A^n$ denote the inclusion, and suppose that $(\id\times i_0)^*\cG\in\Coh^-(\wt{Z}_e\times\B\bG_m)$ is coconnective. Then $\cG$ is coconnective.
\end{enumerate}
We proceed by induction on $n$. The claim is trivial for $n=0$, so suppose $n\ge 1$ and the claim holds for $n-1$. Choose a factorization
\begin{equation*}
\{0\}\xhookrightarrow{i'_0}\A^{n-1}\xhookrightarrow{i}\A^n
\end{equation*}
of $i_0$ through some $\bG_m$-stable hyperplane. Since $(\id\times i_0)^*\cG\simeq(\id\times i'_0)^*((\id\times i)^*\cG)$, the inductive hypothesis gives
\begin{equation*}
(\id\times i)^*\cG\in\Coh(\wt{Z}_e\times\A^{n-1}/\bG_m)^{\ge 0}.
\end{equation*}
The morphism $\id\times i$ has Tor-dimension $\le 1$, so the convergent spectral sequence
\begin{equation*}
E_2^{s,t}=\H^s((\id\times i)^*\H^t(\cG))\Longrightarrow\H^{s+t}((\id\times i)^*\cG)
\end{equation*}
degenerates. Thus, it suffices to show that if $\H^t(\cG)$ is nonzero, then so is $\H^0((\id\times i)^*\H^t(\cG))$; this follows from Nakayama's lemma and $\bG_m$-equivariance of $\H^t(\cG)$.
\end{proof}

\begin{atom}
To compute \eqref{eqn:i-red-pullback-assoc-gr}, we first observe that
\begin{equation*}
\cL i_{\cov}^*\cL i_r^!\O_{\cL(\g_r/\wt{G})}\simeq\det\bL_{\cL i_e}[\rank\bL_{\cL i_e}]\simeq\cL i_e^!\O_{\cL(\eS_e/\wt{Z}_e^\cov)}
\end{equation*}
by \eqref{eqn:rel-cotan-Se-pullback}, \eqref{eqn:!-pullback-det-L}, and the corresponding statement for $\cL i_e$. It then follows from \eqref{eqn:rel-cotan-Se-pullback} and perfectness of the cotangent complexes that
\begin{equation*}
\cL i_{\cov}^*\big(\Sym\bL_{\cL i_r}^\vee[1]\otimes\cL i_r^!\O_{\cL(\g_r/\wt{G})}\otimes\cL i_r^*\cL j_r^*[\cF]^{\QC}\big)\simeq\Sym\bL_{\cL i_e}^\vee[1]\otimes\cL i_e^!\O_{\cL(\eS_e/\wt{Z}_e)}\otimes\cL i_e^*\cL i_{\eS_e}^*[\cF]^{\QC}.
\end{equation*}
Since $\cL i_e$ is affine, it is equivalent to show that
\begin{equation*}
\cL i_{e,*}\big(\Sym\bL_{\cL i_e}^\vee[1]\otimes\cL i_e^!\O_{\cL(\eS_e/\wt{Z}_e)}\otimes\cL i_e^*\cL i_{\eS_e}^*[\cF]^{\QC}\big)\in\QC(\cL(\eS_e/\wt{Z}_e)^{\ge 0}.
\end{equation*}
But by a further application of \cite[Lem.~5.2]{halpern2}, this is equivalent to the associated graded of a bounded below increasing filtration on $\Gamma_{\cL(e/\wt{Z}_e^\cov)}\cL i_{\eS_e}^*[\cF]^{\QC}$, and hence it suffices to show that the latter is coconnective.

We make one further reduction: consider the pullback square
\begin{equation*}
\begin{tikzcd}[column sep=large]
\cL((\eS_e\setminus e)/\wt{Z}_e^\cov)\arrow[r,hook,"\cL j_e"]\arrow{d}{\overset{\circ}{\ev}_{\wt{Z}_e^\cov}}&\cL(\eS_e/\wt{Z}_e^\cov)\arrow{d}{\ev_{\wt{Z}_e^\cov}}\\
((\eS_e\setminus e)\times\wt{Z}_e^\cov)/\wt{Z}_e^\cov\arrow[r,hook,"j_e\times\id"]&(\eS_e\times\wt{Z}_e^\cov)/\wt{Z}_e^\cov
\end{tikzcd}
\end{equation*}
as in \eqref{eqn:open-loops-fiber-prod}. It suffices to show that $\ev_{\wt{Z}_e^\cov,*}\Gamma_{\cL(e/\wt{Z}_e^\cov)}\cL i_{\eS_e}^*[\cF]^{\QC}$ is coconnective. The exact triangle
\begin{equation*}
\ev_{\wt{Z}_e^\cov,*}\Gamma_{\cL(e/\wt{Z}_e^\cov)}\to\ev_{\wt{Z}_e^\cov,*}\to\ev_{\wt{Z}_e^\cov,*}\cL j_{e,*}\cL j_e^*
\end{equation*}
and the equivalences
\begin{equation*}
\ev_{\wt{Z}_e^\cov,*}\cL j_{e,*}\cL j_e^*\simeq(j_e\times\id)_*\overset{\circ}{\ev}_{\wt{Z}_e^\cov,*}\cL j_e^*\simeq(j_e\times\id)_*(j_e\times\id)^*\ev_{\wt{Z}_e^\cov,*}
\end{equation*}
then imply that
\begin{equation}
\label{eqn:loc-cohom-ev-slodowy-S}
\ev_{\wt{Z}_e^\cov,*}\Gamma_{\cL(e/\wt{Z}_e^\cov)}\cL i_{\eS_e}^*[\cF]^{\QC}\simeq\Gamma_{\{e\}\times\wt{Z}_e^\cov/\wt{Z}_e^\cov}\ev_{\wt{Z}_e^\cov,*}\cL i_{\eS_e}^*[\cF]^{\QC},
\end{equation}
so we reduce to showing that the latter is coconnective.
\end{atom}

\begin{atom}
\label{atm:comp-iSe-res-S-BG-ASe}
We now use our Block--Getzler sheaf to compute \eqref{eqn:loc-cohom-ev-slodowy-S}. By Corollary~\ref{cor:tr-iSe-star-loop-pullback} (or rather, its obvious analog for $Z_e^\cov$) and Proposition~\ref{prop:bg-tr-res}, we have
\begin{equation}
\label{eqn:coh-spr-cov-bg}
\begin{split}
\ev_{\wt{Z}_e^\cov,*}\cL i_{\eS_e}^*[\cF]^{\QC}&\simeq\ev_{\wt{Z}_e^\cov,*}[\QC(\wt{\eN}_{P,\eS_e}/\wt{Z}_e^\cov),-\star i_{\eS_e}^*\cF]\\
&\simeq\ev_{\wt{Z}_e^\cov,*}\BG_{\eS_e/\wt{Z}_e^\cov}(\QC(\wt{\eN}_{P,\eS_e}/\wt{Z}_e^\cov),-\star i_{\eS_e}^*\cF)\\
&\simeq\preBG_{\eS_e/\wt{Z}_e^\cov}(\QC(\wt{\eN}_{P,\eS_e}/\wt{Z}_e^\cov),-\star i_{\eS_e}^*\cF).
\end{split}
\end{equation}
Note that by Corollary~\ref{cor:bm-cov-equiv} and Remark~\ref{rem:preBG-cpt-gens}, we may compute this pre-Block--Getzler sheaf using only the compact generator $\cE_{P,\eS_e}^{\cov}$ (i.e., the regular right $\cA_{P,\eS_e}^{\cov}$-module), which yields a complex $C^\bullet$ of the form
\begin{equation}
\label{eqn:coh-spr-shf-complex}
\cdots\to\cA_{P,\eS_e}^\cov\tens{k}\cA_{P,\eS_e}^\cov\tens{k}i_{\eS_e}^*\cF_{\rmmod}\boxtimes\O_{\wt{Z}_e^\cov}\xrightarrow{d_0-d_1+d_2}\cA_{P,\eS_e}^\cov\tens{k}i_{\eS_e}^*\cF_{\rmmod}\boxtimes\O_{\wt{Z}_e^\cov}\xrightarrow{d_0-d_1}i_{\eS_e}^*\cF_{\rmmod}\boxtimes\O_{\wt{Z}_e^\cov}
\end{equation}
as in \eqref{eqn:coh-spr-shf-complex-g}. Here we misuse notation slightly by writing $(-)_{\rmmod}$ for the equivalence given by $\cE_{P,\eS_e}^{\cov}$ in place of $\cE_{P,\eS_e}$; however, the notation $i_{\eS_e}^*\cF_{\rmmod}$ is then unambiguous, as $i_{\eS_e}^*$ and $(-)_{\rmmod}$ commute. Our goal now is to show that $\Gamma_{\{e\}\times\wt{Z}_e^\cov/\wt{Z}_e^\cov}C^\bullet$ is coconnective; we henceforth explicitly forget all $\wt{Z}_e^\cov$-equivariance (see footnote~\ref{fn:loc-cohom-equiv}).
\end{atom}

\begin{atom}
We begin by using the Koszul property of $\cA_{P,\eS_e}^\cov$ to replace \eqref{eqn:coh-spr-shf-complex} with a quasi-isomorphic bounded complex. Observe that we may write
\begin{equation*}
C^\bullet\simeq\cA_{P,\eS_e}^\cov\tens{{\cA_{P,\eS_e}^{\cov}\tens{k}\cA_{P,\eS_e}^{\cov,\op}}}i_{\eS_e}^*\cF_{\rmmod}\tens{k}\O(\wt{Z}_e^\cov),
\end{equation*}
where the $\cA_{P,\eS_e}^\cov$-action on $i_{\eS_e}^*\cF_{\rmmod}\otimes_k\O(\wt{Z}_e^\cov)$ is given by the left-multiplication on $i_{\eS_e}^*\cF_{\rmmod}$, and the $\cA_{P,\eS_e}^{\cov,\op}$-action is given by the algebra homomorphism $\varrho\colon\cA_{P,\eS_e}^\cov\to\cA_{P,\eS_e}^\cov\otimes_k\O(\wt{Z}_e^\cov)$ (i.e., the coaction map) and
 right-multiplication. Thus, Proposition~\ref{prop:kos-acyclic} and \eqref{eqn:ASe-cov} give
\begin{equation}
\label{eqn:ASe-cov-kos-tensor}
C^\bullet\simeq\Kos_{\cA_{P,\eS_e}^\cov}^{\le 0}\tens{\cA_{P,\eS_e}^\cov\tens{k}\cA_{P,\eS_e}^{\cov,\op}}i_{\eS_e}^*\cF_{\rmmod}\tens{k}\O(\wt{Z}_e^\cov).
\end{equation}
Moreover, Corollary~\ref{cor:kos-dual-ext} and \S\ref{atm:tilt-bund-simp-mods} show that the projective bimodule resolution $\Kos_{\cA_{P,\eS_e}^\cov}^{\le 0}$ has length exactly $\dim\wt{\eN}_{P,\eS_e}$.

Note that $\Gamma_{\{e\}\times\wt{Z}_e^\cov}$ has cohomological amplitude $[0,\dim \eS_e]$ (using the standard ``local Koszul complex'' on a choice of coordinate functions of the affine space $\eS_e$). Thus, the spectral sequence of the double complex obtained by applying $\Gamma_{\{e\}\times\wt{Z}_e^\cov}$ term-by-term to \eqref{eqn:ASe-cov-kos-tensor} reduces us to showing that
\begin{equation}
\label{eqn:loc-cohom-kos-n-tens}
\Gamma_{\{e\}\times\wt{Z}_e^\cov}\big(\Kos_{\cA_{P,\eS_e}^\cov}^{-n}\tens{\cA_{P,\eS_e}^\cov\tens{k}\cA_{P,\eS_e}^{\cov,\op}}i_{\eS_e}^*\cF_{\rmmod}\tens{k}\O(\wt{Z}_e^\cov)\big)
\end{equation}
is concentrated in degrees $\ge\dim\wt{\eN}_{P,\eS_e}$ for each $0\le n\le\dim\wt{\eN}_{P,\eS_e}$. Furthermore, by \eqref{eqn:kos-proj-left-right} and \eqref{eqn:indec-proj-left-ASe-mod}, it suffices to show that
\begin{equation}
\label{eqn:tens-proj-hom-ASe-cov}
E^{b'}_{P,\eS_e}\otimes_{\cA_{P,\eS_e}^\cov}i_{\eS_e}^*\cF_{\rmmod}\otimes_{\cA_{P,\eS_e}^\cov}E^{b,\ell}_{P,\eS_e}\simeq\uHom_{\wt{\eN}_{P,\eS_e}}(\cE^b_{P,\eS_e},\cE^{b'}_{P,\eS_e}\star i_{\eS_e}^*\cF)
\end{equation}
has this property after applying $\Gamma_e$ for each $b,b'\in\bfB_{e,P}$. By Grothendieck local duality (see for instance \cite[\href{https://stacks.math.columbia.edu/tag/0A84}{Thm.~0A84}]{stacks-project}), the $\O(\eS_e)$-module
\begin{equation*}
\Gamma_e\uHom_{\wt{\eN}_{P,\eS_e}}(\cE^b_{P,\eS_e},\cE^{b'}_{P,\eS_e}\star i_{\eS_e}^*\cF)
\end{equation*}
is Matlis dual to
\begin{equation}
\label{eqn:matlis-dual-hom}
\Hom_{\eS_e}(\uHom_{\wt{\eN}_{P,\eS_e}}(\cE^b_{P,\eS_e},\cE^{b'}_{P,\eS_e}\star i_{\eS_e}^*\cF),\omega_{\eS_e}),
\end{equation}
so it suffices to show that the latter is concentrated in cohomological degrees $\le-\dim\wt{\eN}_{P,\eS_e}$. Finally, since $\wt{\eN}_{P,\eS_e}$ is Calabi--Yau as in \eqref{eqn:res-Se-calabi-yau}, we have
\begin{align*}
\Hom_{\eS_e}(\uHom_{\wt{\eN}_{P,\eS_e}}(\cE^b_{P,\eS_e},\cE^{b'}_{P,\eS_e}\star i_{\eS_e}^*\cF),\omega_{\eS_e})&\simeq\pi_{P,\eS_e,*}\cHom_{\wt{\eN}_{P,\eS_e}}(\cHom_{\wt{\eN}_{P,\eS_e}}(\cE^b_{P,\eS_e},\cE^{b'}_{P,\eS_e}\star i_{\eS_e}^*\cF),\pi_{P,\eS_e}^!\omega_{\eS_e})\\
&\simeq\pi_{P,\eS_e,*}\cHom_{\wt{\eN}_{P,\eS_e}}(\cHom_{\wt{\eN}_{P,\eS_e}}(\cE^b_{P,\eS_e},\cE^{b'}_{P,\eS_e}\star i_{\eS_e}^*\cF),\omega_{\wt{\eN}_{P,\eS_e}})\\
&\simeq\uHom_{\wt{\eN}_{\eS_e}}(\cE^{b'}_{P,\eS_e}\star i_{\eS_e}^*\cF,\cE^b_{P,\eS_e})[\dim\wt{\eN}_{P,\eS_e}]\\
&\simeq\uHom_{\wt{\eN}_{P,\eS_e}}(\cE^{b'}_{P,\eS_e},\cE^b_{P,\eS_e}\star i_{\eS_e}^*\cF^{\vee,R})[\dim\wt{\eN}_{P,\eS_e}].
\end{align*}
The latter is a direct summand of $i_{\eS_e}^*\cF_{\rmmod}^{\vee,R}[\dim\wt{\eN}_{P,\eS_e}]$, which lies in cohomological degrees $\le-\dim\wt{\eN}_{P,\eS_e}$ by our assumption on $\cF^{\vee,R}$ and t-exactness of $i_{\eS_e}^*$ (see \S\ref{atm:iSe-t-ex-pullback}). This proves the first statement of \eqref{itm:exot-rt-t-class-coconn}. The second statement is immediate from \eqref{itm:exot-rt-t-class-conn} and the claim regarding \eqref{eqn:loc-cohom-kos-n-tens} after noting the inequality $\dim\wt{\eN}_{P,\eS_e}\le\dim\wt{\eN}_P$.
\end{atom}

This concludes the proof of Theorem~\ref{thm:Baff-t-str}.
\end{proof}

\begin{atom}
We now discuss specializations to $q\in\bG_m$. Consider the \emph{un-mixed} (or for us, $\bG_m$-deequivariantized) partial affine Hecke categories
\begin{equation*}
\eH_{P,\eS_e}^{\coh,\dq}:=\QC^!(\wt{\eN}_{P,\eS_e}\fibprod{\eS_e}\wt{\eN}_{P,\eS_e}/Z_e),
\end{equation*}
along with their monoidal endofunctors $\check{\lambda}_e(q)^*$ (for $e=0$, this is just the usual action of $q$, i.e., fiberwise scaling by $q^{-2}$). Then
\begin{equation*}
\Tr(\eH_{P,\eS_e}^{\coh,\dq},\check{\lambda}_e(q)^*)\simeq\QC^!(\cL_{\check{\lambda}_e(q)}(\wh{\eS}_{e,\eN_P}/Z_e))
\end{equation*}
by Proposition~\ref{prop:tr-conv} and an analogous argument to Corollary~\ref{cor:tr-He-coh-ident} (the only difference is that the condition $\angles{n,x}=0$ is omitted).

Write $\delta\colon *\to\B\bG_m$ for the projection and any maps base-changed from it, and likewise for the inclusion $\iota_q\colon\{q\}\hookrightarrow\bG_m$. Then $\delta\colon\eS_e/Z_e\to\eS_e/\wt{Z}_e$ intertwines the automorphisms $\check{\lambda}_e(q)$ and $\id_{\eS_e/\wt{Z}_e}$, so as in \S\ref{atm:conv-cat-funct}, we obtain a monoidal functor (i.e., $\bG_m$-deequivariantization)
\begin{equation*}
(-)^\dq=\delta^*\colon\eH_{P,\eS_e}^{\coh}\to\eH_{P,\eS_e}^{\coh,\dq}
\end{equation*}
intertwining the identity automorphism with $\check{\lambda}_e(q)^*$. Lemma~\ref{lem:tr-ind} and Corollary~\ref{cor:tr-conv-funct} then yield a commutative diagram
\begin{equation}
\label{eqn:q-hecke-tr-res}
\begin{tikzcd}
\eH_{P,\eS_e}^{\coh}\arrow{d}{\delta^*}\arrow{r}{[-]}&\Tr(\eH_{P,\eS_e}^{\coh})\arrow{d}{\Tr(\Ind_{\delta^*})}\arrow{r}{\sim}&\QC^!(\cL(\wh{\eS}_{e,\eN_P}/\wt{Z}_e))\arrow{d}{\cL_q\delta^*}\\
\eH_{P,\eS_e}^{\coh,\dq}\arrow{r}{[-]_q}&\Tr(\eH_{P,\eS_e}^{\coh,\dq},\check{\lambda}_e(q)^*)\arrow{r}{\sim}&\QC^!(\cL_{\check{\lambda}_e(q)}(\wh{\eS}_{e,\eN_P}/Z_e)),
\end{tikzcd}
\end{equation}
where we have written $[-]_q$ for the universal trace functor with respect to $\check{\lambda}_e(q)^*$ for notational clarity, and similarly for $\cL_q\delta$.

By \eqref{eqn:fact-loop-morphism}, the map $\cL_q\delta$ factors as the composition
\begin{equation*}
\cL_{\check{\lambda}_e(q)}(\wh{\eS}_{e,\eN_P}/Z_e)\xhookrightarrow{\iota_q}\cL(\wh{\eS}_{e,\eN_P}/\wt{Z}_e)\fibprod{\B\bG_m}*\xrightarrow{\delta}\cL(\wh{\eS}_{e,\eN_P}/\wt{Z}_e).
\end{equation*}
Thus, $\cL_q\delta_{*}$ is t-exact, and its left adjoint $\cL_q\delta^*$ is right t-exact. This already implies an analogous statement to Theorem~\ref{thm:Baff-t-str}\eqref{itm:exot-rt-t-class-conn} for universal traces of objects of $\eH_{P,\eS_e}^{\coh,\dq}$ admitting $\bG_m$-equivariant (or ``graded'') lifts. Moreover, by \cite[Lem.~3.12]{benzvi}, there is a canonical equivalence $\cL_q\delta^*\simeq\cL_q\delta^!$. Since $\delta^!\simeq\delta^*[1]$, the functor $\cL_q\delta^*$ has cohomological amplitude in $[-1,0]$.

Nonetheless, we have the following ``specialized version'' of Theorem~\ref{thm:Baff-t-str}:
\end{atom}

\begin{theorem}
\label{thm:Baff-t-str-q}
Let $P$ be a standard parabolic subgroup, and let $q\in\bG_m$.
\begin{enumerate}
\item\label{itm:exot-rt-t-class-conn-q} For any Slodowy slice $\eS_e$, the universal trace functor
\begin{equation*}
[-]_q\colon\eH_{P,\eS_e}^{\coh,\dq}\to\QC^!(\cL_{\check{\lambda}_e(q)}(\wh{\eS}_{e,\eN_P}/Z_e))
\end{equation*}
is right t-exact with respect to the exotic t-structure on $\eH_{P,\eS_e}^{\coh,\dq}$ and the standard t-structure on $\QC^!(\cL_{\check{\lambda}_e(q)}(\wh{\eS}_{e,\eN_P}/Z_e))$.

\item\label{itm:exot-rt-t-class-coconn-q} Let $\cF$ be a compact object of $\eH_P^{\coh}$, and suppose that its right monoidal dual $\cF^{\vee,R}$ is connective for the exotic t-structure.\footnote{Again, we may equivalently consider the left monoidal dual $\cF^{\vee,L}$, and we may also replace $\cF^{\vee,R}$ by $(\cF^{\dq})^{\vee,R}$.} Then $[\cF^{\dq}]_q$ is coconnective. In particular, for $e=0$, the functor $[(-)^{\dq}]_q$ has cohomological amplitude in $[-\dim\wt{\eN}_P,0]$.
\end{enumerate}
\end{theorem}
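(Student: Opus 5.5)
We follow the proof of Theorem~\ref{thm:Baff-t-str} step by step, replacing $\wt{Z}_e$-equivariant loop spaces by the $\check{\lambda}_e(q)$-twisted loop spaces of $Z_e$. All the tools used there are stated for a general twisting self-map $\phi$: the Block--Getzler sheaf $\BG_{X/G,\phi}$, Proposition~\ref{prop:bg-tr-res}, and Corollary~\ref{cor:push-str-bg}. We take $\phi=\check{\lambda}_e(q)$ acting on $\eS_e$, and $F=\check{\lambda}_e(q)^*$ on $\QC(\wt{\eN}_{P,\eS_e}/Z_e)$.

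For \eqref{itm:exot-rt-t-class-conn-q}, we reduce to compact exotically connective $\cF\in\eH_{P,\eS_e}^{\coh,\dq}$. By Corollary~\ref{cor:push-str-bg}, $[\cF]_q^{\QC}\simeq\BG_{\eS_e/Z_e,\check{\lambda}_e(q)}(\QC(\wt{\eN}_{P,\eS_e}/Z_e),\check{\lambda}_e(q)^*(-)\star\cF)$. We compute its pre-Block--Getzler complex using the generator $\cE_{P,\eS_e}$. Since $\cE_{P,\eS_e}$ is $\wt{Z}_e$-equivariant, there is an isomorphism $\check{\lambda}_e(q)^*\cE_{P,\eS_e}\simeq\cE_{P,\eS_e}$, and through it the twisting only modifies the face map $d_0$ by the automorphism of $\cA_{P,\eS_e}$ induced by $\check{\lambda}_e(q)$. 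The terms are therefore again $\cA_{P,\eS_e}^{\otimes n}\otimes_k\cF_{\rmmod}\boxtimes\O_{Z_e}$, and connectivity follows as before.

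For \eqref{itm:exot-rt-t-class-coconn-q}, we repeat the stratification argument. We use the open--closed triangles over nilpotent orbits and Lemma~\ref{lem:open-closed-loops}, now applied to twisted loops. We then use the cotangent-complex filtration of \cite[Lem.~5.2]{halpern2} and base change along the transversal slice $\eS_e/Z_e^\cov$. Lemma~\ref{lem:Licov-pullback-coconn} must be adapted: we replace the contracting $\bG_m$-equivariance by the grading provided by $\check{\lambda}_e$. This grading survives because the twisted loop space $\cL_{\check{\lambda}_e(q)}(\bO_r/G)$ is still a quotient by $G^e$ on which $\check{\lambda}_e$ acts, and Nakayama's lemma only needs the positive weights on $\rmR_uG^e$. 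After these reductions we compute local cohomology at $e$ of the twisted complex $C^\bullet_q$ built from $\cA^\cov_{P,\eS_e}$. It has the form $\cA^\cov\otimes_{\cA^\cov\otimes\cA^{\cov,\op}}(i_{\eS_e}^*\cF_{\rmmod}\otimes\O(Z_e^\cov))$, where the right action is twisted by the coaction composed with $\check{\lambda}_e(q)$. The Koszul resolution $\Kos^{\le0}_{\cA^\cov}$ (Proposition~\ref{prop:kos-acyclic}) gives a bounded model of length $\dim\wt{\eN}_{P,\eS_e}$. Grothendieck local duality and the Calabi--Yau property then bound each term exactly as before, since twisting by an automorphism does not change cohomological degrees. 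The amplitude bound for $e=0$ follows by combining the two parts with $\dim\wt{\eN}_{P,\eS_e}\le\dim\wt{\eN}_P$.

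A shortcut for the coconnective part is the diagram \eqref{eqn:q-hecke-tr-res}: $[\cF^\dq]_q\simeq\cL_q\delta^*[\cF]$, and Theorem~\ref{thm:Baff-t-str} gives $[\cF]\in\QC^{!,\ge0}$. However, $\cL_q\delta^*$ has amplitude $[-1,0]$, so this only yields $[\cF^\dq]_q\in\ge -1$. Closing this gap is the main obstacle, and it is why the direct argument above is needed. Within the direct argument, the step I expect to be most delicate is the adapted Lemma~\ref{lem:Licov-pullback-coconn}: because $\bG_m$-equivariance has been lost, one must check that $\cL_{\check{\lambda}_e(q)}(\bO_r/G)\simeq (G^e)_{\check{\lambda}_e(q)}/G^e$ retains enough grading to run the Nakayama induction. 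If this fails, I would instead try to show that $\cL_q\delta^*=\iota_q^*\delta^*$ is t-exact on $[\cF]$, by proving that $[\cF]$ is flat over $\bG_m$ near $q$, i.e.\ that $v-q$ is a nonzerodivisor on $\H^0[\cF]$. This flatness would follow from the Koszul model, which shows the complex is a finite complex of flat $\O(\bG_m)$-modules.
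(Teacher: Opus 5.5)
Your route is the paper's: rerun the proof of Theorem~\ref{thm:Baff-t-str} on $\check{\lambda}_e(q)$-twisted loop spaces. Your part~(1) is exactly the paper's argument.

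The gap is in the step you yourself flag, the adaptation of Lemma~\ref{lem:Licov-pullback-coconn}. The Nakayama induction in claim~$(*)$ uses that the cohomology sheaves $\H^t(\cG)$ are \emph{themselves} $\bG_m$-equivariant for the contracting action. It is not enough that $\check{\lambda}_e$ still acts on the twisted loop space with positive weights on $\rmR_uG^e$: that says nothing about an arbitrary $G^e$-equivariant sheaf on it.

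Without equivariance of the sheaf, the adapted lemma is false. Take $q=1$, $G=\SL_2$ and $e$ regular. The relevant stack is $G^e/G^e$, where $G^e=\{\pm1\}\times\bG_a$ acts trivially on itself. The shifted skyscraper $\O_x[1]$ at a point $x=(1,u)$ with $u\neq0$ is not coconnective, yet its pullback to $Z_e/Z_e$ (which lies over $u=0$) vanishes.

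Your direct argument also never uses the hypothesis that $\cF$ lies in the \emph{mixed} category $\eH_P^{\coh}$ rather than merely in $\eH_P^{\coh,\dq}$. That hypothesis is precisely the missing input. The paper explicitly does not claim the ungraded statement; see the remark following the theorem.

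The missing idea is to keep the grading that $\cF$ carries, as follows.
\begin{enumerate}
\item Factor $\cL_q\delta^*\simeq\delta^*\iota_q^*$, where $\iota_q$ now denotes the closed embedding of $(-)\times_{\bG_m/\bG_m}\{q\}/\bG_m$.
\item Run the entire stratification, slice and Koszul argument for $\iota_q^*[\cF]$ on these stacks. They still carry the $\bG_m$-equivariance coming from $\cF$.
\item Only at the end apply $\delta^*$, which is t-exact and conservative.
\end{enumerate}
The proof of Lemma~\ref{lem:Licov-pullback-coconn} then runs verbatim after base-changing $\wt{Z}_e/\wt{Z}_e\to\wt{G}^e/\wt{G}^e$ along $\{q\}/\bG_m\to\bG_m/\bG_m$; this is exactly Lemma~\ref{lem:Licov-pullback-coconn-q}.

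Your fallback does not repair the gap. A bounded complex of flat $\O(\bG_m)$-modules can have non-flat cohomology, e.g.\ $\O(\bG_m)\xrightarrow{v-q}\O(\bG_m)$. So the Koszul model alone does not show that $v-q$ is a nonzerodivisor on $\H^0[\cF]$.
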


\begin{proof}
The proof runs identically to that of Theorem~\ref{thm:Baff-t-str}, so we only indicate the necessary modifications. For \eqref{itm:exot-rt-t-class-conn-q}, observe that for any $\cF\in\eH_{P,\eS_e}^{\coh,\dq,c}$, we have
\begin{equation*}
\Hom_{\wt{\eN}_{P,\eS_e}}(\cE_{P,\eS_e},\check{\lambda}_e(q)^*\cE_{P,\eS_e}\star\cF))\simeq\Hom_{\wt{\eN}_{P,\eS_e}}(\cE_{P,\eS_e},\cE_{P,\eS_e}\star\cF))\simeq\cF_{\rmmod}
\end{equation*}
by $\bG_m$-equivariance of $\cE_{P,\eS_e}$. Thus, replacing $\BG_{\eS_e/\wt{Z}_e}(\QC(\wt{\eN}_{P,\eS_e}/\wt{Z}_e),-\star\cF)$ by
\begin{equation*}
\BG_{\eS_e/Z_e,\check{\lambda}_e(q)}(\QC(\wt{\eN}_{P,\eS_e}/Z_e),\check{\lambda}_e(q)^*(-)\star\cF)
\end{equation*}
yields an essentially identical complex \eqref{eqn:coh-spr-shf-complex-g}.

For \eqref{itm:exot-rt-t-class-coconn-q}, we simply base-change all loop spaces and sheaves along $\cL_q\delta\colon\{q\}\to\bG_m/\bG_m$. Lemma~\ref{lem:Licov-pullback-coconn} is then replaced by:

\begin{lemma}
\label{lem:Licov-pullback-coconn-q}
Let $\cG\in\Coh^-(\wt{G}^e/\wt{G}^e)$, and suppose that $\cL_q\delta^*\cL i_{\cov}^*\cG\in\Coh^-(Z_e^\cov/Z_e^\cov)$ is coconnective. Then $\cL_q\delta^*\cG\in\Coh^-(\wt{G}^e/\wt{G}^e\times_{\bG_m/\bG_m}\{q\})$ is coconnective.
\end{lemma}

\begin{proof}
Equivalently, we may show that $\iota_q^*\cG\in\Coh^-(\wt{G}^e/\wt{G}^e\times_{\bG_m/\bG_m}\{q\}/\bG_m)$ is coconnective, assuming the same for $\iota_q^*\cL i_{\cov}^*\cG$. The same proof then applies after replacing $\wt{Z}_e/\wt{Z}_e\to\wt{G}^e/\wt{G}^e$ by its base-change along $\iota_q\colon\{q\}/\bG_m\to\bG_m/\bG_m$.
\end{proof}

The rest of the proof follows as above for \eqref{itm:exot-rt-t-class-conn-q}.
\end{proof}

\begin{remark}
When $q$ is not a root of unity, we believe that the requirement in Theorem~\ref{thm:Baff-t-str}\eqref{itm:exot-rt-t-class-coconn-q} that $\cF^{\dq}$ admits a graded lift can likely be removed as in \eqref{itm:exot-rt-t-class-conn-q}. However, we have not worked through all the details. Our statement seems to suffice for most applications (see for instance \cite[Prop.~2.19]{benzvi}).
\end{remark}

\subsection{Twisted coherent Springer sheaves}

\begin{atom}
We now note some elementary consequences of Theorems~\ref{thm:Baff-t-str} and \ref{thm:Baff-t-str-q}.
\end{atom}

\begin{definition}
Given a standard parabolic $P$ and $V\in\Rep P$, write $\O_{\wt{\eN}_P}(V)\in\QC(\wt{\eN}_P/\wt{G})$ for the $*$-pullback of $V$ along the projection $\wt{\eN}_P/\wt{G}\to\B P$. The \emph{$V$-twisted partial coherent Springer sheaf} is the class
\begin{equation*}
\label{eqn:twisted-coh-spr}
\cS_P(V):=[\Delta_{\wt{\eN}_P/\g,*}\O_{\wt{\eN}_P}(V)]\simeq\cL\pi_{P,*}\ev^*\O_{\wt{\eN}_P}(V)\in\Coh(\cL(\wh{\eN}_P/\wt{G})).
\end{equation*}
Similarly, for any $q\in\bG_m$, the \emph{$V$-twisted partial coherent $q$-Springer sheaf} is the class
\begin{equation*}
\label{eqn:twisted-coh-spr-q}
\cS_{P,q}(V):=[\Delta_{\wt{\eN}_P/\g,*}\O_{\wt{\eN}_P}(V)]_q\simeq\cL_q\pi_{P,*}\ev^*\O_{\wt{\eN}_P}(V)\in\Coh(\cL_q(\wh{\eN}_P/\wt{G})).
\end{equation*}
\end{definition}

\begin{atom}
For $P=B$, these are the classes $[\cK_a]$ and $[\cK_a^{\dq}]_q$ for $a$ contained in the translation subgroup of $B^\ext$ (see \S\ref{atm:braid-positivity}). By \eqref{eqn:q-hecke-tr-res}, the two sheaves are related by the restriction functor $\cL_q\delta^*$.

The following corollary contains the conjectures listed in \S\ref{sec:intro-coh-spr-shf} as a special case; we hope to improve this result in a future article.
\end{atom}

\begin{corollary}
\label{cor:twist-coh-Spr-t-str}
\begin{enumerate}[leftmargin=*]
\item\label{itm:braid-conn-coconn} For any $a\in B^\ext_+$, the class $[\cK_a]$ is connective, and the class $[\cK_{a^{-1}}]$ is coconnective. Similarly, for any $q\in\bG_m$, the class $[\cK_a^{\dq}]_q$ is connective, and the class $[\cK_{a^{-1}}^{\dq}]_q$ is coconnective.
\item\label{itm:weight-conn-coconn} Let $\lambda\in X^*(P)^+$ for any standard parabolic $P$, and let $q\in\bG_m$. Then $\cS_P(\lambda),\cS_{P,q}(\lambda)$ are connective and $\cS_P(-\lambda),\cS_{P,q}(-\lambda)$ are coconnective. In particular, the partial coherent Springer sheaf $\cS_P=\cS_P(0)$ and its $q$-specialization $\cS_{P,q}=\cS_{P,q}(0)$ lie in cohomological degree $0$.
\end{enumerate}
\end{corollary}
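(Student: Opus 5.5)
The plan is to deduce everything from Theorems~\ref{thm:Baff-t-str} and \ref{thm:Baff-t-str-q} at $e=0$, using braid positivity (\S\ref{atm:braid-positivity}, \S\ref{atm:iSe-t-ex-pullback}) to check their hypotheses.

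\textbf{Part (1), connectivity.} For $a\in B^\ext_+$, the sheaf $\cK_a$ is connective for the exotic t-structure by \S\ref{atm:iSe-t-ex-pullback}. The sheaves $\cK_a$ come from the right-convolution version of braid positivity, transported through the swap $\sigma^*$. Theorem~\ref{thm:Baff-t-str}\eqref{itm:exot-rt-t-class-conn} then gives that $[\cK_a]$ is connective.

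\textbf{Part (1), coconnectivity.} For $[\cK_{a^{-1}}]$, I will identify the right monoidal dual. Since $\cK_a\star\cK_{a^{-1}}\simeq\cK_1$ is the monoidal unit, $\cK_{a^{-1}}$ is invertible with inverse $\cK_a$. For an invertible object the left and right duals both coincide with the inverse, so $\cK_{a^{-1}}^{\vee,R}\simeq\cK_a$. This object is exotically connective, so Theorem~\ref{thm:Baff-t-str}\eqref{itm:exot-rt-t-class-coconn} gives that $[\cK_{a^{-1}}]$ is coconnective.

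\textbf{Part (1), specialization at $q$.} The same two arguments apply verbatim using Theorem~\ref{thm:Baff-t-str-q}. The objects $\cK_a^{\dq}=\delta^*\cK_a$ have graded lifts $\cK_a$. The hypothesis of Theorem~\ref{thm:Baff-t-str-q}\eqref{itm:exot-rt-t-class-conn-q} needs connectivity of $\cK_a^{\dq}$ in the exotic t-structure of $\eH^{\coh,\dq}$. This holds because $\delta^*$ on bimodules only forgets the $\bG_m$-equivariance, which is t-exact.

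\textbf{Part (2).} I specialize to the objects $\Delta_{\wt{\eN}_P/\g,*}\O_{\wt{\eN}_P}(\lambda)$. For $\lambda\in X^*(P)^+$, this object acts by right convolution as $-\otimes\O_{\wt{\eN}_P}(\lambda)$. That action is exotically right t-exact by \S\ref{atm:braid-positivity}, so the object is exotically connective by the intrinsic reformulation lemma in \S\ref{atm:exotic-t-str-aff-hecke}. Hence $\cS_P(\lambda)$ and $\cS_{P,q}(\lambda)$ are connective.

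For $-\lambda$, note that $\Delta_*\O(-\lambda)$ is monoidally invertible with inverse $\Delta_*\O(\lambda)$, because convolution of diagonal line bundles is tensor product. So its right dual is $\Delta_*\O(\lambda)$, which is exotically connective. Theorem~\ref{thm:Baff-t-str}\eqref{itm:exot-rt-t-class-coconn} and Theorem~\ref{thm:Baff-t-str-q}\eqref{itm:exot-rt-t-class-coconn-q} then give that $\cS_P(-\lambda)$ and $\cS_{P,q}(-\lambda)$ are coconnective. Taking $\lambda=0$, which lies in $X^*(P)^+$, shows that $\cS_P$ and $\cS_{P,q}$ are both connective and coconnective, hence lie in degree $0$.

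\textbf{Main obstacle.} The only non-formal point is justifying that the duals are as claimed inside the rigid monoidal category $\eH_P^\coh$. For invertible objects this is standard: an inverse is simultaneously a left and a right dual. I also need to be careful that the right-convolution versus left-convolution conventions agree with the exotic t-structure being transported via $(-)_{\rmmod}$. This is handled by the swap anti-involution $\sigma^*$, which preserves the sheaves $\Delta_*\O(\lambda)$ and $\O_{\Gamma_{s_\alpha}}$.
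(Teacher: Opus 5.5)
Your proposal is correct and follows the paper's proof. Connectivity of $[\cK_a]$ and $\cS_P(\lambda)$, and of their $q$-specializations, comes from braid positivity as recorded in \S\ref{atm:iSe-t-ex-pullback} together with part (1) of Theorems~\ref{thm:Baff-t-str} and \ref{thm:Baff-t-str-q}, while coconnectivity of $[\cK_{a^{-1}}]$ and $\cS_P(-\lambda)$ comes from identifying the monoidal duals of the invertible objects $\cK_{a^{-1}}$ and $\Delta_{\wt{\eN}_P/\g,*}\O_{\wt{\eN}_P}(-\lambda)$ with their exotically connective inverses and applying part (2), with $\lambda=0$ giving the degree-$0$ statement.
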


\begin{proof}
Assertion \eqref{itm:braid-conn-coconn} follows from \S\ref{atm:iSe-t-ex-pullback} and the fact that $\cK_{a}$ and $\cK_{a^{-1}}$ are monoidal inverses, hence mutually dual. Assertion \eqref{itm:weight-conn-coconn} follows similarly from \S\ref{atm:braid-positivity} and \S\ref{atm:iSe-t-ex-pullback}.
\end{proof}

\begin{remark}
Let $F$ be a local field with residue field $\Fq$. In Theorem~1.12 of \cite{benzvi}, the authors define a sheaf on the stack $\Par_{\GL_n}$ of Langlands parameters for $\GL_n$ inducing a categorical local Langlands correspondence for $\GL_n(F)$ as in \eqref{eqn:cat-lang-conj}. It follows immediately from the construction of this sheaf (in terms of the coherent $q$-Springer sheaf) that it also lies in cohomological degree $0$.
\end{remark}

\begin{atom}
We now establish a simple splitting result for the coherent partial $q$-Springer sheaves, analogous to Proposition~3.38 of \loccit:
\end{atom}

\begin{proposition}
\label{prop:coh-spr-decomp}
Let $Q\supset P$ be parabolics, and let $V\in\Rep Q$. Let $\rmP_{Q/P}$ denote the Poincar\'e polynomial of the partial flag variety $Q/P$, and let $q\in\bG_m$ with $\rmP_{Q/P}(q)\ne 0$. Then $\cS_{Q,q}(V)$ is a summand of $\cS_{P,q}(\Res^Q_PV)$.
\end{proposition}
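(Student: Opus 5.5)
The idea is to realize $\cS_{Q,q}(V)$ and $\cS_{P,q}(\Res^Q_PV)$ as $q$-twisted traces of objects related by the Lagrangian correspondence functors of \S\ref{atm:Lag-functs}. Then the splitting comes from a composite $\cS_{Q,q}(V)\to\cS_{P,q}(\Res^Q_PV)\to\cS_{Q,q}(V)$ that equals multiplication by the scalar $\rmP_{Q/P}(q)$.

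First, I would work with the $\QC$-model of the class. By Corollary~\ref{cor:push-str-bg}\eqref{itm:push-str-dg-conv}, $\cS_{P,q}(W)\simeq\cL_q\pi_{P,*}\ev^*\O_{\wt{\eN}_P}(W)$ is the Block--Getzler sheaf of $(\QC(\wt{\eN}_P/G),q^*(-)\otimes\O_{\wt{\eN}_P}(W))$ over $\g/G$, and similarly for $Q$. The correspondence \eqref{eqn:Lag-corresp} gives $\QC(\g/G)$-linear functors $\gamma=\Lag^{P,*}_Q$ and $\gamma'=\Lag^{P}_{Q,*}$ between these module categories. These functors commute with $q^*$ because the correspondence is $\wt{G}$-equivariant. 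Using the projection formula along $p^P_Q$ and $i^P_Q$, I would check that they intertwine twisting by $\O(V)$ with twisting by $\O(\Res^Q_PV)$, up to the line bundle and shift corrections recorded before \eqref{eqn:Lag-extra-adjns}. Since both functors have $\QC(\g/G)$-linear continuous right adjoints, by \eqref{eqn:Lag-extra-adjns}, they define $1$-morphisms in $(\QC(\g/G),q^*)\dashbmod$. Proposition~\ref{prop:2-mor-bg-expl}, or directly \eqref{eqn:BG-funct-map}, then yields maps $\alpha\colon\cS_{Q,q}(V)\to\cS_{P,q}(\Res V)$ and $\beta$ in the other direction. The twists need to be inserted so that $\beta\circ\alpha$ corresponds to the endomorphism of the pair $(\QC(\wt{\eN}_Q/G),q^*(-)\otimes\O(V))$ given by the composite $\Lag^P_{Q,*}\Lag^{P,*}_Q$.

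Second, I would compute this composite. Geometrically it is pull-push along the $Q/P$-bundle $\wt{\eN}_Q\times_{\eQ}\eP\to\wt{\eN}_Q$, composed with the closed immersion into $\wt{\eN}_P$ and back. It is therefore tensoring with $p_*\O$ of the relative Koszul-type complex, a $q$-equivariant graded object whose associated graded is $\bigoplus_j\H^*(Q/P,\Omega^j)$ with $\bG_m$ acting on $\Omega^j$ with weight $2j$, or $-2j$ with our conventions. By functoriality of $2$-categorical classes (the composition of $[\gamma,\theta]$ maps), $\beta\circ\alpha$ is the class of the endomorphism $\cF\mapsto\cF\otimes p_*\O$. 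Its effect on the trace is its $q$-twisted trace, which should be the scalar $\sum_j(-1)^j\dim\H^j(Q/P,\Omega^j)\,q^{j}$, that is, $\rmP_{Q/P}(q)$ up to normalization of $q$ versus $q^2$ and signs from the shifts $[2d_{Q/P}]$. This uses that $Q/P$ has only $(j,j)$ Hodge classes. If this scalar is nonzero, $\alpha$ is split injective with retraction $\rmP_{Q/P}(q)^{-1}\beta$.

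The main obstacle is justifying that the effect of $\beta\circ\alpha$ on classes is a scalar rather than merely a filtered endomorphism with scalar associated graded. I would handle this by exhibiting $p_*\O$ as an object of $\QC(\wt{\eN}_Q/\wt{G})$ pulled back from $\Rep(\bG_m)$, after the standard formality/degeneration of the Hodge spectral sequence for the flag variety $Q/P$. The trace of tensoring by a graded vector space $W$ under $q^*$ is then multiplication by $\mathrm{tr}(q\,|\,W)$, which is the graded Euler characteristic. A fallback is to verify the identity only after $\ev_{G,*}$, on the pre-Block--Getzler complex \eqref{eqn:coh-spr-shf-complex-g}, where it is the classical statement that the Hochschild map of tensoring with a constant object is multiplication by its supertrace.
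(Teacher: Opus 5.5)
Your proposal follows the same route as the paper. In both, the twisted Springer sheaves are realized as $2$-categorical classes of $(\QC(\wt{\eN}_\bullet),-\otimes\O(\cdot))$ over $\g$, and Lagrangian correspondence functors give maps between them. The composite is identified with tensoring by $\H^*(Q/P,k)$, and one takes classes.

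The differences are minor. The paper uses the pair $\Lag^{P,!}_Q,\Lag^P_{Q,!}$; yours differs by twists by $2\rho_{Q/P}$ and shifts by $\pm d_{Q/P}$. So your composite is the paper's $\H^*(Q/P,k)[2d_{Q/P}]\otimes_k-$ shifted by $[-2d_{Q/P}]$. The paper also works in the mixed setting over $\g/\wt{G}$: it obtains $\rmP_{Q/P}(v)\cdot\id_{\cS_Q(V)}$, with $v$ the coordinate on $\bG_m$, and only then specializes to $q$. You instead use the $q$-twisted trace throughout.

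However, your scalar is miscomputed, and this affects the stated hypothesis. Under $i^!i_*\simeq-\otimes\Sym(N[-1])$, with $N\cong\Omega_{Q/P}$ up to a $\bG_m$-weight, the piece $\Omega^j_{Q/P}$ sits in cohomological degree $j$. Also $R\Gamma(Q/P,\Omega^j_{Q/P})\simeq\H^j(Q/P,\Omega^j_{Q/P})[-j]$. So every contribution lies in even degree $2j$ with weight $\pm 2j$, and the supertrace is $\sum_j\dim\H^j(Q/P,\Omega^j_{Q/P})\,q^{\pm 2j}=\rmP_{Q/P}(q^{\pm1})$, with no $(-1)^j$. The sign $\pm$ in the exponent is harmless, since $\rmP_{Q/P}$ is palindromic.

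Your expression $\sum_j(-1)^j\dim\H^j(Q/P,\Omega^j)\,q^j$ has a different zero locus: for $Q/P\cong\mathbb{P}^1$ it gives $1-q$ rather than $1+q^2$. The shifts $[2d_{Q/P}]$ you invoke are even, so they cannot absorb the sign.

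On your ``main obstacle'': the paper simply asserts the isomorphism of functors, and your proposed justification is the right one. Note that $p_*$ of the Koszul term is a priori pulled back from $\Rep(\wt{Q})$, not $\Rep(\bG_m)$. So you need three things:
\begin{enumerate}
\item the $\wt{P}$-equivariant Koszul formality of $i^!i_*$ for the linear embedding $\n_Q/\wt{P}\hookrightarrow\n_P/\wt{P}$;
\item the concentration of $R\Gamma(Q/P,\Omega^j_{Q/P})$ in a single degree;
\item the fact that the connected group $Q$ acts trivially on $\H^j(Q/P,\Omega^j_{Q/P})$.
\end{enumerate}
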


\begin{proof}
The adjoint functors
\begin{equation*}
\begin{tikzcd}[column sep=large]
\QC(\wt{\eN}_Q/\wt{G})\arrow[r,shift right,"\Lag^{P,!}_Q"']&\QC(\wt{\eN}_P/\wt{G}),\arrow[l,shift right,"\Lag^{P}_{Q,!}"']
\end{tikzcd}
\end{equation*}
are $\QC(\g/\wt{G})$-linear, and hence induce morphisms
\begin{equation*}
[\QC(\wt{\eN}_Q/\wt{G}),-\otimes\O_{\wt{\eN}_Q}(V)]\xrightarrow{[\Lag^{P,!}_Q]}[\QC(\wt{\eN}_P/\wt{G}),-\otimes\O_{\wt{\eN}_P}(\Res_P^QV)]\xrightarrow{[\Lag^{P}_{Q,!}]}[\QC(\wt{\eN}_Q/\wt{G}),-\otimes\O_{\wt{\eN}_Q}(V)]
\end{equation*}
in $\Tr(\QC(\g/\wt{G}))\simeq\QC(\cL(\g/\wt{G}))$ by \S\ref{atm:mor-L-Morita} and \eqref{eqn:Lag-extra-adjns}. By Corollary~\ref{cor:reg-mod-rt-left-dual}, these identify with morphisms
\begin{equation}
\label{eqn:SQ-SP-comp}
\cS_Q(V)\xrightarrow{[\Lag^{P,!}_Q]}\cS_P(\Res^Q_PV)\xrightarrow{[\Lag^{P}_{Q,!}]}\cS_Q(V),
\end{equation}
whose composition we must show is invertible.

Let $\cF\in\QC(\wt{\eN}_Q/\wt{G})$, and write
\begin{equation*}
\H^*(Q/P,k):=\bigoplus_{n\ge 0}\H^n(Q/P,k)\angles{n}[-n]
\end{equation*}
for the weight-sheared singular cohomology of the partial flag variety $Q/P$. The base-change diagram
\begin{equation*}
\begin{tikzcd}
&&\n_Q\fibprod{\n_P}\n_Q/\wt{P}\arrow[dl,"\pr_1"']\arrow{dr}{\pr_2}&&\\
&\n_Q/\wt{P}\arrow[dl,"p^P_Q"']\arrow{dr}{i^P_Q}&&\n_Q/\wt{P}\arrow[dl,"i^P_Q"']\arrow{dr}{p^P_Q}&\\
\n_Q/\wt{Q}&&\n_P/\wt{P}&&\n_Q/\wt{Q}
\end{tikzcd}
\end{equation*}
then yields
\begin{align*}
\Lag_{Q,!}^{P}\Lag_Q^{P,!}\cF&\simeq p^P_{Q,*}i_{Q}^{P,*}i_{Q,*}^P p^{P,!}_Q\cF\\
&\simeq\Gamma(\Sym_{\O_{Q/P}}(\Omega_{Q/P}^\vee\angles{2}[1])\otimes\omega_{Q/P})\tens{k} p^P_{Q,*}p^{P,*}_Q\cF\\
&\simeq\Gamma(\Sym_{\O_{Q/P}}(\Omega_{Q/P}\angles{-2}[-1]))^\vee\tens{k}\cF\\
&\simeq\H^*(Q/P,k)^\vee\tens{k}\cF\\
&\simeq\H^*(Q/P,k)[2d_{Q/P}]\tens{k}\cF,
\end{align*}
where the third isomorphism follows by Serre duality, the fourth by standard results on the Hodge cohomology of partial flag varieties (see for instance \cite{virk}), and the fifth by Poincar\'e duality. Taking Hochschild classes, we obtain
\begin{equation*}
[\Lag^{P}_{Q,!}]\circ[\Lag^{P,!}_Q]\simeq\rmP_{Q/P}(v)\cdot\id_{\cS_Q(V)},
\end{equation*}
where $v$ denotes the coordinate function on $\bG_m$. Specializing \eqref{eqn:SQ-SP-comp} to $q$ then yields the desired splitting.
\end{proof}

\begin{remark}
\begin{enumerate}[wide, labelwidth=!, labelindent=0pt]
\item Note that by \cite[Thm.~1]{brion-peyre}, the polynomial $\rmP_{Q/P}(q)$ divides $\rmP_{G/B}(q)=\rmP_W(q^2)$, where $\rmP_W$ denotes the Poincar\'e polynomial of $W$. In particular, these splittings all hold away from roots of unity.

\item These splittings are, in a sense, Koszul-dual to those in \cite[Rmk.~4.13(4)]{benzvi}. For instance, when $\rmP_{G/B}(q)\ne 0$, Proposition~\ref{prop:coh-spr-decomp} shows that $\cS_{G,q}\simeq\O_{\cL_q(\{0\}/G)}\simeq\O_{G/G}$ is a summand of $\cS_q$, whereas the corresponding summand in \loccit\ is the anti-spherical module $\O_{\cL_q(\cN/G)}$.

\item If we only knew that $\cS_q$ was concentrated in cohomological degree $0$, the proposition would imply the same for each $\cS_{P,q}$ whenever $\rmP_{P/B}(q)\ne 0$ (obviating the need for Appendix~\ref{sec:parab-ark-bez-equiv}). However, Corollary~\ref{cor:twist-coh-Spr-t-str} applies over all of $\bG_m$.
\end{enumerate}
\end{remark}

\appendix

\section{Parabolic Arkhipov--Bezrukavnikov equivalences}
\label{sec:parab-ark-bez-equiv}

\begin{atom}
In this appendix, we generalize several important properties of Arkhipov--Bezrukavnikov's equivalence \cite{arkh-bez} to its parabolic analogues. This is used in Proposition~\ref{prop:parab-A-kosz-grad} to show that the noncommutative partial Springer resolution admits a Koszul grading. This material is likely well-known to experts, but we could not locate a reference.
\end{atom}

\begin{atom}
We begin by fixing notation for the automorphic side of Arkhipov--Bezrukavnikov's equivalence. Assume that $k=\Qlbar$ for some prime $\ell$, and that $q$ is a power of a prime distinct from $\ell$. Given a prestack $\eX$ over $\Fqbar$, we let $\Shv(\eX)$ denote the dg-category of ind-constructible $\Qlbar$-adic \'etale sheaves on $\eX$, defined by right Kan extension from the category of schemes along $!$-pullbacks. We refer the reader to \cite[\S1]{gaitsgory-ab} for further details.

We reserve the cohomological superscripts $\heart$, $\le n$, and $\ge n$ for the standard t-structure on $\Shv(\eX)$, and instead use $\heart_p$, $\le_p n$, and $\ge_p n$ for the perverse t-structure, when applicable (e.g., if $\eX$ is a ``placidly stratified'' stack as in \cite[Prop.~6.3.3]{bkv}, which will always be the case). Thus, $\Shv(\eX)^{\heart_p}$ denotes the abelian category of perverse sheaves on $\eX$.

In general (when $\eX$ is suitably stacky), the subcategory of compact objects of $\Shv(\eX)$ is properly contained in the subcategory of constructible sheaves, analogously to the subcategories of safe and coherent D-modules in characteristic $0$ (see \cite[Thm.~B]{chen-dhillon}). Following Definition~\ref{def:t-str} and \S1.1.11 of \loccit, we write $\Shv_{\ren}(\eX)$ for the renormalization of $\Shv(\eX)$ whose compact objects are the constructible sheaves.

Let $G,T,B,P,\eB,\eP$, and so forth be as in the main text (see \S\ref{atm:springer-theory}). Let $F:=\Fqbar\laurent{t}$ be the field of Laurent series over $\Fqbar$, let $O:=\Fqbar\doubles{t}$ be its ring of integers, and regard the loop group $\check{G}_F$ and arc group $\check{G}_O$ as $\Fqbar$-ind-schemes. We write $I_{\check{P}}\subset\check{G}_F$ for the Iwahori subgroup associated to the Langlands-dual parabolic, i.e., the preimage of $\check{P}$ under the evaluation map $\check{G}_O\to\check{G}$. We also set $I:=I_{\check{B}}$ and $I^-:=I_{\check{B}^-}$ for a choice of opposite Borel subgroup $B^-$. In general, we continue our convention of omitting all subscripts $\check{P}$ when $\check{P}=\check{B}$.

Let $\mathring{I},\mathring{I}^-$ denote the pro-unipotent radicals of $I,I^-$, respectively, i.e., the preimages of the unipotent radicals $\check{N},\check{N}^-$ under the evaluation maps, and let $\psi\colon\mathring{I}^-\to\bG_a$ denote the composition of the evaluation map $\mathring{I}^-\to\check{N}^-$ with a non-degenerate additive character $\check{N}^-\to\bG_a$. Fix an Artin--Schreier local system $\sL_{\AS}$ on $\bG_a$, and let $\mathring{I}^-,\psi\backslash-$ denote the Whittaker equivariance condition, i.e., the categorical $\mathring{I}^-$-invariants with respect to the induced character sheaf $\psi^*\sL_{\AS}$.

Finally, we define the usual $\Fqbar$-ind-schemes: the \emph{(partial) affine flag varieties} $\Fl_{\check{P}}:=\check{G}_F/I_{\check{P}}$, and the \emph{affine Grassmannian} $\Gr:=\Fl_{\check{G}}=\check{G}_F/\check{G}_O$. Given another parabolic subgroup $Q\supset P$, we write $\varpi^{\check{P}}_{\check{Q}}\colon\Fl_{\check{P}}\to\Fl_{\check{Q}}$ for the natural proper map. We adopt the same notational conventions as in \S\ref{atm:Lag-functs}: when $P=B$ or $Q=G$, we omit the relevant super- or sub-script, and when the quotient is taken on the left-hand side, we instead write $\prescript{\check{P}}{\check{Q}}{\varpi}$.
\end{atom}

\begin{atom}
We will require parabolic analogues of Gaitsgory's central sheaves on $\Fl$ \cite{central-sheaves}. Let $X/\Fqbar$ be a smooth curve with distinguished point $x\in X(\Fq)$, and fix an identification $O\simeq\wh{\O}_{X,x}$ with the completed local ring of $X$ at $x$. Let $\Bun_{\check{G}}$ denote the stack classifying principal $\check{G}$-bundles on $X$, and let $\eF^0$ denote the trivial $\check{G}$-bundle (on any scheme over $X$).

Recall that the \emph{Beilinson--Drinfeld Grassmannian} $\Gr_X$ is the $\Fqbar$-ind-scheme over $X$ representing the functor
\begin{equation*}
\Gr_X(S)=\{(y,\eF,\beta): y\in X(S),\eF\in\Bun_{\check{G}}(S),\beta\colon\eF|_{X\times S\setminus\Gamma_y}\xrightarrow{\sim}\eF^0|_{X\times S\setminus\Gamma_y}\}
\end{equation*}
for any affine scheme $S$ (here $\Gamma_y\subset X\times S$ denotes the graph of $y$). In particular, our chosen identification $O\simeq\wh{\O}_{X,x}$ yields an isomorphism $\Gr\simeq\Gr_X|_x$ with the fiber over $x$. Moreover, given any perverse sheaf $\sS\in\Shv(\check{G}_O\backslash\Gr)^{\heart_p}$, we may canonically attach a perverse sheaf $\sS_X\in\Shv(\Gr_X)^{\heart_p}$ as in \cite[\S2.1.2]{central-sheaves}. We write $\sS_{X\setminus x}:=\sS_X|_{X\setminus x}$ and $\sS_x:=\sS_X|_x$ for its $*$-restrictions to $\Gr_{X\setminus x}:=\Gr_X|_{X\setminus x}$ and $\Gr_x:=\Gr_X|_x$, respectively.

More generally, we define an $\Fqbar$-ind-scheme $\Fl_{\check{P},X}$ over $X$ by
\begin{equation*}
\Fl_{\check{P},X}(S)=\{(y,\eF,\beta,\epsilon): (y,\eF,\beta)\in\Gr_X(S)\text{, and }\epsilon\text{ is the data of a reduction of }\eF|_{\{x\}\times S}\text{ to }\check{P}\}
\end{equation*}
for any affine scheme $S$. As in \cite[Prop.~3]{central-sheaves}, we have canonical isomorphisms $\Fl_{\check{P},x}\simeq\Fl_{\check{P}}$ and $\Fl_{\check{P},X\setminus x}\simeq\Gr_{X\setminus x}\times\check{G}/\check{P}$ (using the analogous notations for $\Fl_{\check{P},X}$). Denote by
\begin{equation*}
\Psi_{\Fl_{\check{P},X}}\colon\Shv(\Fl_{\check{P},X}|_{X\setminus x})\to\Shv(\Fl_{\check{P},X}|_x)
\end{equation*}
the nearby cycles functor, and define
\begin{align*}
\rmZ_{\check{P}}\colon\Shv_{\ren}(\check{G}_O\backslash\Gr)^{\heart_p}&\to\Shv(\Fl_{\check{P}}),\\
\sS&\mapsto\Psi_{\Fl_{\check{P},X}}(\sS_{X\setminus x}\boxtimes\delta_{1_{\check{G}/\check{P}}})[2d_{G/P}],
\end{align*}
where $\delta_{1_{\check{G}/\check{P}}}$ denotes the skyscraper sheaf at the point $1_{\check{G}/\check{P}}\in\check{G}/\check{P}$ corresponding to the parabolic $\check{P}$. Thus, $\rmZ:=\rmZ_{\check{B}}$ is the usual central functor.\footnote{The shift by $2d_{G/P}$ is to account for our use of the $!$-convolution, rather than the $*$-convolution as in \cite{central-sheaves}. These are intertwined by $*$-convolution with the $!$-monoidal unit, i.e., the dualizing sheaf $\omega_{I_{\check{P}}\backslash I_{\check{P}}/I_{\check{P}}}\simeq\underline{\Qlbar}_{I_{\check{P}}\backslash I_{\check{P}}/I_{\check{P}}}[-2\dim\check{L}]$, and similarly for $\check{G}$ (here $\check{L}$ denotes the Levi factor of $\check{P}$).} By a basic property of the nearby cycles functor, the sheaves $\rmZ_{\check{P}}(\sS)$ all lie in perverse degree $-2d_{G/P}$.
\end{atom}

\begin{atom}
\label{atm:perv-coh-t-str}

We require one additional tool, owed to \cite[\S6.5]{bez-losev}. Recall from \S\ref{atm:springer-theory} that $\eN_P$ denotes the image of $\wt{\eN}_P$ in $\g$. Given any $G$-equivariant Gorenstein $\O(\eN_P)$-algebra $A$, there is a unique t-structure $(A\dashmod^{G,\le_p0},A\dashmod^{G,\ge_p0})$ on $A\dashmod^G$ with
\begin{equation}
\label{eqn:perv-coh-t-str}
\begin{split}
A\dashmod^{G,\le_p0}&:=\{M\in A\dashmod^G : \codim_{\eN_P}\supp(\H^n(M))\ge 2n\text{ for all }n\in\Z\},\\
A\dashmod^{G,\ge_p0}&:=\{\Hom_{A^{\op}}(M,A) : M\in A^{\op}\dashmod^{G,\le_p0}\}.
\end{split}
\end{equation}
In particular, for $A=\O(\eN_P)$, we obtain the perverse coherent t-structure (of middle perversity) on $\QC(\eN_P/G)$, and the forgetful functor $A\dashmod^G\to\QC(\eN_P/G)$ is t-exact. We therefore also refer to the t-structure \eqref{eqn:perv-coh-t-str} as the \emph{perverse coherent t-structure (of middle perversity)}.

Recall from \S\ref{atm:nc-spr-res} that $\hcE$ denotes the Bezrukavnikov--Mirkovi\'c tilting bundle on $\wt{\g}$. We set $\hcA:=\End_{\wt{\g}}(\hcE)$, and term it the \emph{noncommutative Grothendieck simultaneous resolution}. As for $\cA$, it is a $\wt{G}$-equivariant $\O(\g)$-algebra in cohomological degree $0$, and yields an equivalence as in \eqref{eqn:bm-functor}. The perverse coherent t-structure \eqref{eqn:perv-coh-t-str} then applies for $A=\cA_P^\op,\hcA\otimes_{\O(\g)}\cA_P^\op$, and so forth.
\end{atom}

\begin{atom}
Finally, equip the automorphic category $\Shv_{\ren}(I_{\check{P}}\backslash\Fl_{\check{P}})$ with its $!$-convolution product; we equip the spectral category $\QC^!(\wt{\eN}_{P}\times_{\g}\wt{\eN}_{P}/G)$ with its $*$-convolution product as usual. We may now state the parabolic version of Gaitsgory's results:
\end{atom}

\begin{proposition}
\label{prop:Z-parab}
We have a commutative diagram\footnote{Our use of $\Shv_{\ren}$ rather than $\Shv$ is to align with the convention appearing throughout Bezrukavnikov's work, see e.g.\ \cite{beztwo}.} of monoidal functors
\begin{equation}
\label{eqn:ZP-commdiag}
\begin{tikzcd}
\Shv_{\ren}(\check{G}_O\backslash\Gr)^{\heart_p}\arrow{d}{\rmZ_{\check{P}}}\arrow{r}{\sim}&\Rep(G)^\heart\arrow{d}{-\otimes\Delta_{\wt{\eN}_P/\g,*}\O_{\wt{\eN}_P/G}}\\
\Shv_{\ren}(I_{\check{P}}\backslash\Fl_{\check{P}})\arrow{r}{\sim}&\QC^!(\wt{\eN}_{P}\fibprod{\g}\wt{\eN}_{P}/G).
\end{tikzcd}
\end{equation}
In particular, the functor $\rmZ_{\check{P}}$ carries canonical tensor and central structures, and canonically lifts to the $I_{\check{P}}$-equivariant category. Moreover,
\begin{enumerate}
\item\label{itm:Z-push} we have $\varpi^{\check{P}}_!\rmZ_{\check{P}}[-2d_{G/P}]\simeq\id$;
\item\label{itm:Z-conv-t-ex} convolution with the image of $\rmZ_{\check{P}}$ is t-exact with respect to the perverse t-structure; and
\item\label{itm:Z-mon-tens} the monodromy endomorphism of $\rmZ_{\check{P}}$ is nilpotent and compatible with its tensor structure.
\end{enumerate}
\end{proposition}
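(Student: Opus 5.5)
The plan is to deduce everything from the Borel case, i.e.\ Gaitsgory's theorem for $\rmZ=\rmZ_{\check{B}}$ together with Bezrukavnikov's equivalence \eqref{eqn:bez-chen-dhillon} and its compatibility with $\rmZ$ (as in \cite{beztwo,chen-dhillon}). We transport along the proper map $\varpi^{\check{P}}\colon\Fl\to\Fl_{\check{P}}$ and, spectrally, along the Lagrangian correspondence \eqref{eqn:Lag-corresp}.

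The first step is to construct the bottom horizontal equivalence of \eqref{eqn:ZP-commdiag}. Following \cite[\S6]{chen-dhillon}, one realizes $\Shv_{\ren}(I_{\check{P}}\backslash\Fl_{\check{P}})$ as the idempotent-type subcategory of $\Shv_{\ren}(I\backslash\Fl)$ cut out by pullback and pushforward along $\varpi^{\check{P}}$ on both sides. One then matches $\prescript{\check{P}}{}{\varpi}_!\varpi^{\check{P}}_!$ (and the corresponding pullbacks) with $\prescript{P}{}{\Lag}_{,!}\Lag_{P,!}$ (and $\Lag^{!}_{P}$), so that the Bezrukavnikov equivalence descends.

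The second step is to compare nearby cycles. The family $\Fl_{X}\to\Fl_{\check{P},X}$ is proper, being the relative flag bundle $\check{P}/\check{B}$ over the point $x$ and $\check{G}/\check{B}\to\check{G}/\check{P}$ away from it. Proper base change for nearby cycles gives
\begin{equation*}
\varpi^{\check{P}}_!\rmZ(\sS)\simeq\Psi_{\Fl_{\check{P},X}}\big(\sS_{X\setminus x}\boxtimes\varpi_!\delta_{1_{\check{G}/\check{B}}}\big)[2d_{G/B}]\simeq\rmZ_{\check{P}}(\sS)[2d_{G/B}-2d_{G/P}].
\end{equation*}
Spectrally, $\Lag_{P,!}$ applied to $V\otimes\Delta_*\O_{\wt{\eN}}$ yields $V\otimes\Delta_*\O_{\wt{\eN}_P}$ up to the same shift, since both functors are $\Rep(G)$-linear and $\Lag$ of the unit is the unit. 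Combining this with Bezrukavnikov's commutative square for $\rmZ$ gives \eqref{eqn:ZP-commdiag}. Monoidality, the central structure and the $I_{\check{P}}$-equivariant lift are then read off the spectral side, where $-\otimes\Delta_*\O$ is visibly central.

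The third step establishes the three listed properties. For (1), apply the same base change to $\varpi^{\check{P}}_{\check{G}}$: nearby cycles of a sheaf pulled back from $\Gr_X$ along the smooth map away from $x$ commute with proper pushforward, which gives $\varpi^{\check{P}}_!\rmZ_{\check{P}}\simeq\id[2d_{G/P}]$. For (2), note that convolution with $\rmZ_{\check{P}}(\sS)$ is computed by $\varpi^{\check{P}}_!$ of convolution with $\rmZ(\sS)$ on $\Fl$ after pullback, and $\varpi^{\check{P},*}$ is perverse t-exact up to the shift $d_{P/B}$ because $\varpi^{\check{P}}$ is smooth. Gaitsgory's t-exactness for $\rmZ$ then reduces the claim to the exactness of $\varpi^{\check{P}}_!$ on the relevant sheaves. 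I expect this exactness to be the main obstacle, since proper pushforward is not perverse exact in general. My first attempt would be to argue spectrally, via the perverse coherent t-structure \eqref{eqn:perv-coh-t-str} on $\hcA\otimes\cA_P^\op$-modules as in \cite[\S6.5]{bez-losev}, using that $-\otimes V$ is exact there. For (3), the monodromy of $\rmZ_{\check{P}}$ is the image of that of $\rmZ$ under $\varpi^{\check{P}}_!$ by functoriality of nearby cycles, so nilpotence and compatibility with the tensor structure are inherited.
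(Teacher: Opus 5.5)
Your overall plan (deduce everything from the Borel case via $\varpi_{\check{P}}\colon\Fl\to\Fl_{\check{P}}$ and the Lagrangian correspondence, using that nearby cycles commute with proper pushforward) is the paper's. Note that you call this map $\varpi^{\check{P}}$, a name the paper reserves for $\Fl_{\check{P}}\to\Gr$. However, the step that produces the $I_{\check{P}}$-equivariant object does not work as written.

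Your nearby-cycles computation pushes forward only on the right. So $\varpi_{\check{P},!}\rmZ(\sS)\simeq\rmZ_{\check{P}}(\sS)[2d_{P/B}]$ is only an object of $\Shv_{\ren}(I\backslash\Fl_{\check{P}})$. Its spectral counterpart, the one-sided Lagrangian pushforward of $V\otimes\Delta_{\wt{\eN}/\g,*}\O_{\wt{\eN}}$, lands in $\QC^!(\wt{\eN}\fibprod{\g}\wt{\eN}_P/G)$. By base change it is $V\otimes(i_P\times p_P)_*\O_{\wt{\eN}_P\fibprod{\eP}\eB}$ up to twist. That is not $V\otimes\Delta_{\wt{\eN}_P/\g,*}\O_{\wt{\eN}_P}$, which lives on a different space.

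Pushing forward on both sides, as your first step suggests, does not rescue this: ``$\Lag$ of the unit is the unit'' is false. The embedding $\wt{\eN}_P\fibprod{\eP}\eB\hookrightarrow\wt{\eN}$ has nontrivial derived self-intersection, so pushing along $p_P$ produces extra graded multiplicity rather than the unit. Compare $\Lag^{P}_{Q,!}\Lag^{P,!}_{Q}\simeq\H^*(Q/P,k)[2d_{Q/P}]\otimes-$ in the proof of Proposition~\ref{prop:coh-spr-decomp}. Automorphically, averaging from $I$- to $I_{\check{P}}$-equivariance is likewise a pushforward along a $\check{P}/\check{B}$-fibration. For the same reason, $\Shv_{\ren}(I_{\check{P}}\backslash\Fl_{\check{P}})$ is not a full subcategory of $\Shv_{\ren}(I\backslash\Fl)$. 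In any case the bottom equivalence is simply the strict--strict equivalence of \cite{chen-dhillon} and need not be constructed.

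The missing idea is to descend on the left by \emph{pullback}. First, you need the Chen--Dhillon equivalences to intertwine $\varpi_{\check{P},*}$ with $\Lag_{P,*}(-\rho-2\rho_{P/B})$, and $\prescript{}{\check{P}}{\varpi}^!$ with $\prescript{}{P}{\Lag}^!(-\rho-2\rho_{P/B})$. The paper gets this from the monodromic squares of \cite[Thm.~6.0.4]{chen-dhillon}, by tensoring with $\Shv(I\backslash\check{G}_F/\mathring{I}\dashmon)$ over the monodromic Hecke category and renormalizing; your ``matching'' asserts something of this kind without argument. Second, the cartesian diamonds in \eqref{eqn:Lag-bc-diag} identify $V\otimes\eta_{\wt{\eN}\fibprod{\g}\wt{\eN}_P/G}$ (the image of $V\otimes\Delta_{\wt{\eN}/\g,*}\omega_{\wt{\eN}/G}$ under $\Lag_{P,*}(-\rho-2\rho_{P/B})$) with the image of $V\otimes\Delta_{\wt{\eN}_P/\g,*}\omega_{\wt{\eN}_P/G}$ under $\prescript{}{P}{\Lag}^!(-\rho-2\rho_{P/B})$. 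That factorization is what \emph{defines} the $I_{\check{P}}$-equivariant lift of $\rmZ_{\check{P}}(\sS)$. It gives \eqref{eqn:ZP-commdiag} after twisting by $\pr_2^*\omega_{\wt{\eN}_P/G}^\vee$ to pass from $!$- to $*$-convolution.

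This affects the three properties as follows:
\begin{enumerate}
\item Your direct argument, via proper base change to $\Gr_X$ and $\Psi_{\Gr_X}(\sS_{X\setminus x})\simeq\sS$, is a valid alternative to the paper's $\varpi^{\check{P}}_!\varpi_{\check{P},!}\simeq\varpi_!$.
\item Your spectral fallback is exactly the paper's route: reduce to $\mathring{I}$-monodromic sheaves, apply \cite[Prop.~6.11(2)]{bez-losev}, and use that $\rmZ_{\check{P}}$ goes to bimodules $V\otimes\cA_P$. It too presupposes \eqref{eqn:ZP-commdiag}.
\item Since the tensor structure on $\rmZ_{\check{P}}$ is imported from the spectral side, tensor-compatibility of the monodromy is not merely inherited through $\varpi_{\check{P},!}$; only nilpotence is. The paper uses that $\H^0\!{\prescript{}{P}{\Lag}}^!$ is an isomorphism on $\H^0\End(V\otimes\Delta_{\wt{\eN}_P/\g,*}\omega_{\wt{\eN}_P/G})$. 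This forces the monodromy to be the tautological endomorphism.
\end{enumerate}
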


\begin{proof}
Rather than repeat Gaitsgory's proofs, we deduce the result from the usual Borel case \cite{central-sheaves,beztwo}. Set
\begin{equation*}
\eta_{\wt{\eN}\fibprod{\g}\wt{\eN}_P/G}:=(i_P\times p_P)_*\omega_{\wt{\eN}_P\fibprod{\eP}\eB/G}(-\rho-2\rho_{P/B})\in\QC^!(\wt{\eN}\fibprod{\g}\wt{\eN}_P/G),
\end{equation*}
and consider the diagram
\begin{equation}
\label{eqn:big-z-comm-diag}
\begin{tikzcd}[column sep=huge]
\Shv_{\ren}(I\backslash\Fl)\arrow[dd,"\varpi_{\check{P},*}"']\arrow{rrr}{\sim}&&&\QC^!(\wt{\eN}\fibprod{\g}\wt{\eN}/G)\arrow{dd}{\Lag_{P,*}(-\rho-2\rho_{P/B})}\\
&\Shv_{\ren}(\check{G}_O\backslash\Gr)^{\heart_p}\arrow[ul,"\rmZ"']\arrow[dl,dashed,near start,"{\rmZ_{\check{P}}[2d_{P/B}]}"']\arrow{r}{\sim}&\Rep(G)^\heart\arrow{ur}{-\otimes\Delta_{\wt{\eN}/\g,*}\omega_{\wt{\eN}/G}}\arrow[dr,near start,"-\otimes\eta_{\wt{\eN}\fibprod{\g}\wt{\eN}_P/G}"]&\\[-1.5em]
\Shv_{\ren}(I\backslash\Fl_{\check{P}})\arrow{rrr}{\sim}&&&\QC^!(\wt{\eN}\fibprod{\g}\wt{\eN}_P/G)\\
&&&\\
\Shv_{\ren}(I_{\check{P}}\backslash\Fl_{\check{P}})\arrow{uu}{\prescript{}{\check{P}}{\varpi}^!}\arrow[from=uuur,dotted,crossing over,"{\rmZ_{\check{P}}}"]\arrow{rrr}{\sim}&&&\QC^!(\wt{\eN}_P\fibprod{\g}\wt{\eN}_P/G)\arrow[from=uuul,crossing over,"-\otimes\Delta_{\wt{\eN}_P/\g,*}\omega_{\wt{\eN}_P/G}"']\arrow[uu,"\prescript{}{P}{\Lag}^!(-\rho-2\rho_{P/B})"'].
\end{tikzcd}
\end{equation}
Here the dashed and dotted arrows have not yet been defined, the short horizontal equivalence is the (underived) geometric Satake equivalence, and the three long horizontal equivalences are as in the ``strict-strict'' case of \cite[Thm.~A]{chen-dhillon}. Note that the topmost trapezoid commutes by the Borel case.

We first argue that the top and bottom rectangles commute. By Theorem~6.0.4 of \loccit, we have commutative squares
\begin{equation}
\label{lang-equiv-com-square}
\begin{tikzcd}
\Shv(\mathring{I}\dashmon\backslash\Fl)\arrow[r,"\sim"]\arrow[d,shift left,"\varpi_{P,!}"]&\QC^!(\wt{\g}\fibprod{\g}\wt{\eN}/G)\arrow[d,shift left,"{\Lag_{P,!}(-\rho)[-d_{P/B}]}"]\\
\Shv(\mathring{I}\dashmon\backslash\Fl_{\check{P}})\arrow[r,"\sim"]\arrow[u,shift left,"\varpi_{P}^!"]&\QC^!(\wt{\g}\fibprod{\g}\wt{\eN}_P/G),\arrow[u,shift left,"{\Lag_{P}^!(\rho)[d_{P/B}]}"]
\end{tikzcd}
\quad
\begin{tikzcd}
\Shv(\mathring{I}\dashmon\backslash\Fl)\arrow[r,"\sim"]\arrow[d,shift right,"\varpi_{P,*}"']&\QC^!(\wt{\g}\fibprod{\g}\wt{\eN}/G)\arrow[d,shift right,"\Lag_{P,*}(-\rho-2\rho_{P/B})"']\\
\Shv(\mathring{I}\dashmon\backslash\Fl_{\check{P}})\arrow[r,"\sim"]\arrow[u,shift right,"\varpi_{P}^*"']&\QC^!(\wt{\g}\fibprod{\g}\wt{\eN}_P/G),\arrow[u,shift right,"\Lag_{P}^*(\rho+2\rho_{P/B})"']
\end{tikzcd}
\end{equation}
which are compatible with the left actions of the monoidally equivalent\footnote{Again, the former category is equipped with its $!$-convolution product, and the latter with its $*$-convolution product.}
\begin{equation}
\label{eqn:hecke-mon-cat-lang-equiv}
\Shv(\mathring{I}\dashmon\backslash\check{G}_F/\mathring{I}\dashmon)\simeq\QC^!_{\eN}(\wt{\g}\fibprod{\g}\wt{\g}/G).
\end{equation}
Here the notation $\mathring{I}\dashmon\backslash-$ refers to $\mathring{I}$-monodromicity, rather than equivariance; i.e.,  we take the full subcategory generated by the strictly equivariant sheaves. Likewise (dually), we have an equivalence
\begin{equation*}
\Shv(I\backslash\check{G}_F/\mathring{I}\dashmon)\simeq \QC^!(\wt{\eN}\fibprod{\g}\wt{\g}/G),
\end{equation*}
which is compatible with the right actions of \eqref{eqn:hecke-mon-cat-lang-equiv}. Under this duality, we have identifications\footnote{On the automorphic side, the duality data (as $\Shv_{\ren}(I_{\check{P}}\backslash\Fl_{\check{P}})$-modules) are given by the $!$-convolutions
\begin{align*}
\Shv_{\ren}(I_{\check{P}}\backslash\Fl)\otimes\Shv_{\ren}(I\backslash\Fl_{\check{P}})&\xrightarrow{\star}\Shv_{\ren}(I_{\check{P}}\backslash\Fl_{\check{P}}),\\
\Shv_{\ren}(I_{\check{P}}\backslash\Fl_{\check{P}})\otimes\Shv_{\ren}(I_{\check{P}}\backslash\Fl_{\check{P}})&\xrightarrow{\star}\Shv_{\ren}(I_{\check{P}}\backslash\Fl_{\check{P}}).
\end{align*}
One then verifies that $\varpi_{\check{P},*}\sF\star\sG\simeq\sF\star\prescript{}{\check{P}}{\varpi}^!\sG$. An analogous statement holds for the duality data on the spectral side.} $(\Lag_P^!)^\vee\simeq\prescript{}{P}{\Lag}_{*}$ and $(\varpi_{\check{P},*})^\vee\simeq\prescript{}{\check{P}}{\varpi}^!$. Thus, by Corollary~5.5.4 and Proposition~3.3.7 in \loccit, applying 
\begin{equation*}
\Shv(I\backslash\check{G}_F/\mathring{I}\dashmon)\tens{\Shv(\mathring{I}\dashmon\backslash\check{G}_F/\mathring{I}\dashmon)}-\,\simeq\,\QC^!(\wt{\eN}\fibprod{\g}\wt{\g}/G)\tens{\QC^!_{\eN}(\wt{\g}\fibprod{\g}\wt{\g}/G)}-
\end{equation*}
to the right-hand square of \eqref{lang-equiv-com-square} and renormalizing gives commutativity of the top rectangle in \eqref{eqn:big-z-comm-diag}. An analogous argument gives commutativity of the bottom rectangle.

Next, we claim that the two triangles involving $\Rep(G)^\heart$ commute. We have a commutative diagram of stacks:
\begin{equation}
\label{eqn:Lag-bc-diag}
\begin{tikzcd}[column sep=small]
&&\wt{\eN}/G\arrow{dr}{\Delta_{\wt{\eN}/\g}}&\\
&\wt{\eN}_P\fibprod{\eP}\eB/G\arrow{ur}{i_P}\arrow{dr}{i_P\times\id_{\wt{\eN}_P\fibprod{\eP}\eB}}\arrow[dddd,equal]&&\wt{\eN}\fibprod{\g}\wt{\eN}/G\\
&&\wt{\eN}\fibprod{\g}(\wt{\eN}_P\fibprod{\eP}\eB)/G\arrow[dr,near start,"\id_{\wt{\eN}}\times p_P"]\arrow[ur,"\id_{\wt{\eN}}\times i_P"']&\\
\B G\arrow[from=uur]\arrow[from=ddr]\arrow[from=uuurr,bend right=60]\arrow[from=dddrr,bend left=60]&&&\wt{\eN}\fibprod{\g}\wt{\eN}_P/G\\
&&(\wt{\eN}_P\fibprod{\eP}\eB)\fibprod{\g}\wt{\eN}_P/G\arrow{ur}{i_P\times\id_{\wt{\eN}_P}}\arrow[dr,"p_P\times\id_{\wt{\eN}_P}"]&\\
&\wt{\eN}_P\fibprod{\eP}\eB/G\arrow[ur,"\id_{\wt{\eN}_P\fibprod{\eP}\eB}\times p_P"']\arrow{dr}{\pr_1}&&\wt{\eN}_P\fibprod{\g}\wt{\eN}_P/G\\
&&\wt{\eN}_P/G.\arrow{ur}{\Delta_{\wt{\eN}_P/\g}}&
\end{tikzcd}
\end{equation}
Moreover, a standard lemma on pullback diagrams implies that both diamonds are cartesian. The claim now follows by a diagram chase.

Finally, since nearby cycles commute with proper pushforwards, we have
\begin{align*}
\varpi_{\check{P},!}\rmZ(\sS)&\simeq\varpi_{\check{P},!}\Psi_{\Fl_{X}}(\sS_{X\setminus x}\boxtimes\delta_{1_{\check{G}/\check{B}}})[2d_{G/B}]\\
&\simeq\Psi_{\Fl_{\check{P},X}}(\varpi_{\check{P},!}(\sS_{X\setminus x}\boxtimes\delta_{1_{\check{G}/\check{B}}}))[2d_{G/B}]\\
&\simeq\Psi_{\Fl_{\check{P},X}}(\sS_{X\setminus x}\boxtimes\delta_{1_{\check{G}/\check{P}}})[2d_{G/B}]\\
&\simeq \rmZ_{\check{P}}(\sS)[-2d_{G/P}+2d_{G/B}]\\
&\simeq \rmZ_{\check{P}}(\sS)[2d_{P/B}]
\end{align*}
using the evident commutative diagram
\begin{equation*}
\begin{tikzcd}
\Fl_{X\setminus x}\arrow{r}{\sim}\arrow{d}{\varpi_{\check{P}}}&\Gr_{X\setminus x}\times\check{G}/\check{B}\arrow{d}{\id_{\Gr_{X\setminus x}}\times p_{\check{P}}}\\
\Fl_{\check{P},X\setminus x}\arrow{r}{\sim}&\Gr_{X\setminus x}\times\check{G}/\check{P}.
\end{tikzcd}
\end{equation*}
This endows $\rmZ_{\check{P}}(\sS)$ with a canonical $I$-equivariant structure, providing the dashed arrow in \eqref{eqn:big-z-comm-diag} and commutativity of the top triangle involving $\Shv(\check{G}_O\backslash\Gr)^{\heart_p}$ (note that $\varpi_{\check{P},*}=\varpi_{\check{P},!}$). Commutativity of the trapezoid involving $\rmZ_{\check{P}}[2d_{P/B}]$ and $-\otimes\eta_{\wt{\eN}\times_{\g}\wt{\eN}_P/G}$ then follows. Thus, to define the dotted arrow (i.e., a canonical $I_{\check{P}}$-equivariant structure on $\rmZ_{\check{P}}(\sS)$, noting that $\prescript{}{\check{P}}{\varpi}^!\simeq\prescript{}{\check{P}}{\varpi}^*[2d_{P/B}]$), it suffices to provide a canonical factorization of $-\otimes\eta_{\wt{\eN}\times_{\g}\wt{\eN}_P/G}$ through $\QC^!(\wt{\eN}_P\times_{\g}\wt{\eN}_P/G)$, but this is simply $-\otimes\Delta_{\wt{\eN}_P/\g,*}\omega_{\wt{\eN}_P/G}$. The entirety of \eqref{eqn:big-z-comm-diag} then commutes.

Finally, we must account for the fact that \cite{chen-dhillon} uses the $!$-convolution, whereas our convention is to use the $*$-convolution. As in \cite[Rmk.~1.2.2]{bznp}, the endofunctor
\begin{equation}
\label{eqn:endofunct-conv-inter}
-\otimes\pr_2^*\omega_{\wt{\eN}_P/G}^\vee\colon\QC^!(\wt{\eN}_P\fibprod{\g}\wt{\eN}_P/G)\xrightarrow{\sim}\QC^!(\wt{\eN}_P\fibprod{\g}\wt{\eN}_P/G)
\end{equation}
intertwines these two monoidal structures, hence also the central functors $-\otimes\Delta_{\wt{\eN}_P/\g,*}\omega_{\wt{\eN}_P/G}$ and $-\otimes\Delta_{\wt{\eN}_P/\g,*}\O_{\wt{\eN}_P/G}$ (note that \eqref{eqn:endofunct-conv-inter} is isomorphic to a shift).

For \eqref{itm:Z-push}, it is now immediate that
\begin{equation*}
\varpi^{\check{P}}_!\rmZ_{\check{P}}[-2d_{G/P}]\simeq\varpi^{\check{P}}_!\varpi_{\check{P},!}\rmZ[-2d_{G/P}-2d_{P/B}]\simeq\varpi_!\rmZ[-2d_{G/B}]\simeq\id.
\end{equation*}
For \eqref{itm:Z-conv-t-ex}, consider the diagram
\begin{equation}
\label{eqn:N-g-tilde-compact}
\begin{tikzcd}
\Shv_{\ren}(I\backslash\Fl_{\check{P}})\arrow{r}{\sim}\arrow{d}{\prescript{\check{N}}{\check{B}}{\varpi}^*}&\QC^!(\wt{\eN}\fibprod{\g}\wt{\eN}_P/G)\arrow{d}{(i_{\wt{\eN}}\times\id_{\wt{\eN}_P})_*}\\
\Shv_{\ren}(\mathring{I}\dashmon\backslash\Fl_{\check{P}})\arrow{r}{\sim}&\QC^!(\wt{\g}\fibprod{\g}\wt{\eN}_P/G)
\end{tikzcd}
\end{equation}
which commutes by Theorem~6.0.2 of \cite{chen-dhillon}; here $\prescript{\check{N}}{\check{B}}{\varpi}^*$ denotes the evident restriction functor, in keeping with our earlier conventions. It suffices to show that right-convolution with the image of $\rmZ_{\check{P}}$ is t-exact in $\Shv_{\ren}(\mathring{I}\dashmon\backslash\Fl_{\check{P}})$. By \cite[Prop.~6.11(2)]{bez-losev}, the composite equivalence
\begin{equation}
\label{eqn:bimod-equiv-B-P}
\Shv_{\ren}(\mathring{I}\dashmon\backslash\Fl_{\check{P}})\simeq\QC^!(\wt{\g}\fibprod{\g}\wt{\eN}_P/G)\simeq\hcA\tens{\O(\g)}\cA_P^{\op}\dashmod^G
\end{equation}
matches $\Shv_{\ren}(\mathring{I}\dashmon\backslash\Fl_{\check{P}})^{\heart_p}$ and the shifted heart $\hcA\otimes_{\O(\g)}\cA_P^{\op}\dashmod^{G,\heart_p}[-d_{P/B}]$ of the perverse coherent t-structure of \S\ref{atm:perv-coh-t-str}.\footnote{\label{fn:cd-bl-conventions}We comment on the relationship between our conventions and those of \cite{chen-dhillon,bez-losev}. The relationship between the two is explained in the proof of \cite[Thm.~6.0.4]{chen-dhillon}, though note that there $\rho_P$ is written in place of $-\rho_{P/B}$. Moreover, it is impossible to twist the right-hand factor of Chen--Dhillon's equivalence $\Phi_{B,P}$ by $\pm\rho$ (as it is not a character of $P$), and the equivalence $\Phi'$ is already predetermined. Thus, we compose Bezrukavnikov--Losev's equivalence for $P$ with $-\otimes\O_{\wt{\g}\times_{\g}\wt{\eN}_P}(-\rho,0)[-d_{P/B}]$ to ensure equivariance as left-modules over \eqref{eqn:hecke-mon-cat-lang-equiv} (rather than $\rho$-twisted equivariance), and we take $\Lag_P^*(\rho+2\rho_{P/B})$ in place of $\Lag_P^*$ as in \eqref{lang-equiv-com-square} (and similarly for $\Lag_{P,*}$). Note that this preserves the equivariance as right modules over \eqref{eqn:two-geom-real-for-P}, since the latter equivalence is defined by taking categorical endomorphisms of the former equivalence (see the proof of \cite[Thm.~6.0.13]{chen-dhillon}). Finally, the right-most equivalence in \eqref{eqn:bimod-equiv-B-P} is given in \cite{bez-losev} by the functor $\Hom_{\wt{\g}\times_{\g}\wt{\eN}_P}(\hcE^\vee(\rho)\boxtimes\cE_P,-)$ (note that $\hcE^\vee(\rho)\boxtimes\cE_P(-2\rho)$ is written in \loccit, but this cannot make sense as written; tracing the proof shows that $\cE_P(-2\rho)$ arises as $(\Lag_P^!(\rho))^L\cE\simeq\Lag_{P,!}\cE(-\rho)=\cE_P$). However, after applying $-\otimes\O_{\wt{\g}\times_{\g}\wt{\eN}_P}(-\rho,0)[-d_{P/B}]$, this agrees (up to shift) with our convention of using $\Hom_{\wt{\g}\times_{\g}\wt{\eN}_P}(\hcE^\vee\boxtimes\cE_P,-)$.} Moreover, it is compatible with the right actions of
\begin{equation}
\label{eqn:two-geom-real-for-P}
\Shv_{\ren}(I_{\check{P}}\backslash\Fl_{\check{P}})\simeq\QC^!(\wt{\eN}_P\fibprod{\g}\wt{\eN}_P/G)\simeq\cA_P\tens{\O(\g)}\cA_P^{\op}\dashmod^G_{\ren}
\end{equation}
as in Proposition~\ref{prop:mod-equiv-left-t-exact}. By \eqref{eqn:ZP-commdiag}, the image of $\rmZ_{\check{P}}$ corresponds to $\cA_P$-bimodules of the form $V\otimes\cA_P$, for $V\in\Rep(G)^{\heart}$. The claim is now immediate from the definition of the perverse coherent t-structure.

Finally, for \eqref{itm:Z-mon-tens}, it is a straightforward exercise from the definition of the nearby cycles functor to show that the monodromy endomorphism is compatible with proper pushforwards. Thus, the monodromy endomorphism of the dashed $\rmZ_{\check{P}}[2d_{P/B}]$ in \eqref{eqn:big-z-comm-diag} is given by applying $\varpi_{\check{P},!}$ to the monodromy endomorphism of $\rmZ$. On the spectral side, the monodromy endomorphism of $\rmZ$ corresponds to the ``tautological'' endomorphism of $-\otimes\Delta_{\wt{\eN}/\g,*}\omega_{\wt{\eN}/G}$. In more detail, there is a tautological endomorphism of $\id_{\QC^!(\g/G)}$ which ``acts by $x$'' on the fiber over each $x\in\g$ (see \cite[\S4.1.4]{beztwo}). In particular, for any $V\in\Rep(G)^{\heart}$, this gives an endomorphism of $V\otimes\omega_{\g/G}$, and applying $\Delta_{\wt{\eN}/\g,*}\pi^!$ then yields an endomorphism of $V\otimes\Delta_{\wt{\eN}/\g,*}\omega_{\wt{\eN}/G}$. By \eqref{eqn:Lag-bc-diag}, applying $\Lag_{P,*}(-\rho-2\rho_{P/B})$ to the latter endomorphism then gives the analogously defined endomorphism of $V\otimes\eta_{\wt{\eN}\times_{\g}\wt{\eN}_P/G}$.

We wish to identify the monodromy endomorphism of the dotted $\rmZ_{\check{P}}$ in \eqref{eqn:big-z-comm-diag}. This is evidently sent to the monodromy endomorphism of the dashed $\rmZ_{\check{P}}[2d_{P/B}]$ via $\prescript{}{\check{P}}{\varpi}^!$. But for any $V\in\Rep(G)^{\heart}$, the $\Qlbar$-algebra map
\begin{equation*}
\H^0\!{\prescript{}{P}{\Lag}}^!(-\rho-2\rho_{P/B})\colon\H^0\End_{\wt{\eN}_P\fibprod{\g}\wt{\eN}_P/G}(V\otimes\Delta_{\wt{\eN}_P/\g,*}\omega_{\wt{\eN}_P/G})\to\H^0\End_{\wt{\eN}\fibprod{\g}\wt{\eN}_P/G}(V\otimes\eta_{\wt{\eN}\fibprod{\g}\wt{\eN}_P/G})
\end{equation*}
is an isomorphism. Indeed, by \eqref{eqn:Lag-bc-diag}, we may identify it with the $\Qlbar$-algebra map
\begin{equation*}
p^{!}_P\colon\H^0\End_{\wt{\eN}_P/G}(V\otimes\omega_{\wt{\eN}_P/G})\to\H^0\End_{\wt{\eN}_P\fibprod{\eP}\eB/G}(V\otimes\omega_{\wt{\eN}_P\fibprod{\eP}\eB/G}),
\end{equation*}
where the claim is clear. Examining \eqref{eqn:Lag-bc-diag} as before, this shows that the monodromy endomorphism of $\rmZ_{\check{P}}$ \emph{must} correspond to the tautological endomorphism of $-\otimes\Delta_{\wt{\eN}_P/\g,*}\omega_{\wt{\eN}_P/G}$. Since the latter is nilpotent (as $\wt{\eN}_P\times_{\g}\wt{\eN}_P$ is supported over $\eN$) and compatible with the tensor structure by construction, the same must be true for the former, completing the proof.
\end{proof}

\begin{atom}
Finally, we give the desired generalization of Arkhipov--Bezrukavnikov's equivalence to parabolic subgroups:
\end{atom}

\begin{proposition}
\label{prop:arkh-bez-parab}
There is an equivalence of categories
\begin{equation}
\label{eqn:lang-parab-equiv}
\Shv(\mathring{I}^-,\psi\backslash\Fl_{\check{P}})\simeq\QC^!(\wt{\eN}_P/G)
\end{equation}
such that
\begin{enumerate}
\item\label{itm:whit-mod-str} the module structures over $\Shv_{\ren}(I_{\check{P}}\backslash\Fl_{\check{P}})\simeq\QC^!(\wt{\eN}_{P}\times_{\g}\wt{\eN}_{P}/G)$ are intertwined;
\item\label{itm:small-subcat} the small subcategories of constructible and coherent sheaves are exchanged; 
\item\label{itm:frob} pullback along geometric Frobenius is exchanged with pullback along scaling by $q$;
\item\label{itm:satake-action} the action of $\rmZ_{\check{P}}$ is exchanged with the action of $\Rep(G)^{\heart}$; and
\item\label{itm:perv-t-strs} the perverse t-structure is exchanged with the perverse coherent (exotic) t-structure arising from the equivalence $\QC^!(\wt{\eN}_P/G)\simeq\cA_P^{\op}\dashmod^{G}$.
\end{enumerate}
\end{proposition}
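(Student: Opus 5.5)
The plan is to deduce the equivalence from the Borel case (Arkhipov--Bezrukavnikov, in the dg form of Chen--Dhillon \cite{chen-dhillon}) by a module-theoretic argument, rather than re-running the original proof. The Borel case gives $\Shv(\mathring{I}^-,\psi\backslash\Fl)\simeq\QC^!(\wt{\eN}/G)$, compatible with the right actions of $\Shv_{\ren}(I\backslash\Fl)\simeq\QC^!(\wt{\eN}\times_{\g}\wt{\eN}/G)$. I would write both sides of \eqref{eqn:lang-parab-equiv} as relative tensor products over these Borel Hecke categories:
\begin{equation*}
\Shv(\mathring{I}^-,\psi\backslash\Fl)\tens{\Shv_{\ren}(I\backslash\Fl)}\Shv_{\ren}(I\backslash\Fl_{\check{P}})\qquad\text{and}\qquad\QC^!(\wt{\eN}/G)\tens{\QC^!(\wt{\eN}\times_\g\wt{\eN}/G)}\QC^!(\wt{\eN}\times_\g\wt{\eN}_P/G).
\end{equation*}
The mixed bimodule categories are identified by the top rectangle of \eqref{eqn:big-z-comm-diag}, i.e.\ by the strict-strict equivalences of \cite[Thm.~A]{chen-dhillon}. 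The two tensor products should compute $\Shv(\mathring{I}^-,\psi\backslash\Fl_{\check{P}})$ and $\QC^!(\wt{\eN}_P/G)$ respectively. The automorphic identification is the standard statement that invariants commute with module tensor products, as in \cite[Cor.~5.5.4, Prop.~3.3.7]{chen-dhillon}. The spectral one follows from the convolution-category formalism of Appendix~\ref{sec:tr-form-chap}, using that $\wt{\eN}\to\g$ is proper. Compatibility with the right $I_{\check{P}}$-Hecke actions, which is \eqref{itm:whit-mod-str}, is then formal, because the right action only enters through the second tensor factor. This matches the argument via \eqref{eqn:two-geom-real-for-P}.

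The remaining properties I would check as follows.
\begin{itemize}
\item For \eqref{itm:small-subcat}: compact generators correspond, since $\varpi_{\check{P},*}$ and $\Lag_{P,*}$ match up to twist by \eqref{lang-equiv-com-square}, and both preserve the small subcategories. Essential surjectivity onto the small subcategories then follows because those subcategories are generated by the images of the generators.
\item For \eqref{itm:frob}: the Borel-case equivalences intertwine Frobenius with $q$-scaling, and $\varpi$ and $\Lag$ are compatible with both.
\item For \eqref{itm:satake-action}: this follows from \eqref{itm:whit-mod-str} together with the commutative diagram \eqref{eqn:ZP-commdiag} of Proposition~\ref{prop:Z-parab}.
\end{itemize}

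For \eqref{itm:perv-t-strs}, I would transport the Borel result of \cite{bez-losev}, which matches the perverse t-structure on Whittaker sheaves with the exotic t-structure on $\cA^\op\dashmod^G$. The functor $\varpi_{\check{P}}^!$ (up to shift) is t-exact for the perverse t-structures, since $\varpi_{\check{P}}$ is smooth, and it is conservative on the Whittaker category. Under the equivalence it corresponds to $\Lag_P^!(\rho)$ up to shift, which by \eqref{eqn:A-AP-res-ind-commdiag} is restriction of scalars $\Res^{\cA_P^\op}_{\cA^\op}$. That functor is t-exact and conservative for the module t-structures. A t-exact conservative functor detects both halves of a t-structure, so the t-structures on the parabolic side coincide, once the shifts are normalized as in footnote~\ref{fn:cd-bl-conventions}.

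I expect the main obstacle to be exactly this normalization in \eqref{itm:perv-t-strs}. One has to show that the $\rho$-twists and shifts $[d_{P/B}]$ in \eqref{lang-equiv-com-square} match so that the perverse heart goes to the exotic heart itself, not to a shift of it. I would settle this by checking a single object, namely the image of the Whittaker averaging of $\delta_1$. On one side this is a perverse clean extension; on the other it is the projective $\cA_P^\op$-module corresponding to $\cE_P$, which lies in the heart.
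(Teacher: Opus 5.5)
The gap is in (5). Your route to the equivalence and to (1)--(4), tensoring the Borel equivalence over the Iwahori--Hecke categories, is longer than the paper's, which simply cites the Whittaker--strict case of \cite[Thm.~A]{chen-dhillon} for the equivalence, (1) and (3), \cite[Thm.~B]{chen-dhillon} for (2), and Proposition~\ref{prop:Z-parab} for (4); that part is fine in outline.

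In (5) the relevant t-structure is the perverse coherent t-structure of middle perversity of \S\ref{atm:perv-coh-t-str}. On $\cA_P^{\op}\dashmod^G$ it is defined by codimension of supports \emph{in $\eN_P$} together with the duality $\Hom(-,\cA_P)$. The Borel input \cite[Thm.~6.2.1]{bez-mirk} matches the perverse t-structure with the perverse coherent t-structure on $\cA^{\op}\dashmod^G$, where codimension is measured in $\eN$. It does not match it with the standard module t-structure, and the two differ: an object of the standard heart supported at $0\in\eN$ lies in the perverse coherent heart only after the shift $[-\dim\eB]$. So your claim that $\Res^{\cA_P^{\op}}_{\cA^{\op}}$ is t-exact and conservative ``for the module t-structures'' fails under either reading. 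If you mean the standard t-structures, the Borel input is misquoted and the argument would prove a false statement. If you mean the perverse coherent ones, the claim is false as stated, because source and target are normalized relative to $\eN_P$ and $\eN$, which have different dimensions.

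What is true, and is the real content of the paper's proof of (5), is that $\Res^{\cA_P^{\op}}_{\cA^{\op}}[-d_{P/B}]$ is t-exact for the perverse coherent t-structures.
\begin{itemize}
\item Right t-exactness uses $\codim_{\eN}\eN_P=2d_{P/B}$, since $\dim\eN_P=\dim\wt{\eN}_P=2\dim\eP$. If $\codim_{\eN_P}\supp\H^{n}(M)\ge 2n$ for all $n$, then $\codim_{\eN}\supp\H^{n}(M[-d_{P/B}])\ge 2n$.
\item Left t-exactness needs a separate duality computation. Combining \eqref{eqn:A-AP-res-ind-commdiag} with the adjunction $\Lag^!_P\dashv\Lag_{P,!}[-2d_{P/B}]$ of \eqref{eqn:Lag-extra-adjns} gives $\Hom_{\cA^{\op}}(\cA_P,\cA)\simeq\cA_P[-2d_{P/B}]$. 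Hence
\begin{equation*}
\Res^{\cA_P^{\op}}_{\cA^{\op}}\Hom_{\cA_P}(M,\cA_P)[-d_{P/B}]\simeq\Hom_{\cA}(\Res^{\cA_P}_{\cA}M[-d_{P/B}],\cA).
\end{equation*}
Right t-exactness of $\Res^{\cA_P}_{\cA}[-d_{P/B}]$ on left modules then gives coconnectivity by the definition \eqref{eqn:perv-coh-t-str}.
\end{itemize}
Only with this lemma does your conservativity argument go through. Apply it to the square in which $\varpi_{\check{P}}^![-d_{P/B}]$ corresponds to $\Res^{\cA_P^{\op}}_{\cA^{\op}}$; it shows that the parabolic equivalence becomes t-exact after the shift $[d_{P/B}]$.

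Testing a single object cannot replace this lemma. One object can fix a global shift only once t-exactness up to a shift is already known. Moreover, the shift here is forced by the codimension of $\eN_P$ in $\eN$, not by the conventions of footnote~\ref{fn:cd-bl-conventions}.
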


\begin{proof}
The equivalence \eqref{eqn:lang-parab-equiv} and properties \eqref{itm:whit-mod-str} and \eqref{itm:frob} are included in the ``Whittaker-strict'' case of \cite[Thm.~A]{chen-dhillon}. Property \eqref{itm:small-subcat} is (essentially) included in Theorem~B of \loccit\, and property \eqref{itm:satake-action} holds by Proposition~\ref{prop:Z-parab}.

For the final property, we use an argument analogous to one in the proof of Proposition~\ref{prop:Z-parab}. Applying\footnote{See Theorem~6.0.8 of \loccit}
\begin{equation*}
\Shv(\mathring{I}^-,\psi\backslash\check{G}_F/\mathring{I}\dashmon)\tens{\Shv(\mathring{I}\dashmon\backslash\check{G}_F/\mathring{I}\dashmon)}-\,\simeq\,\QC^!_{\eN}(\wt{\g}/G)\tens{\QC^!_{\eN}(\wt{\g}\fibprod{\g}\wt{\g}/G)}-
\end{equation*}
to the left-hand square of \eqref{lang-equiv-com-square} and taking shifts yields a commutative square
\begin{equation}
\label{eqn:anti-sph-LagP-comm-diag}
\begin{tikzcd}
\Shv(\mathring{I}^-,\psi\backslash\Fl)\arrow[r,"\sim"]&\QC(\wt{\eN}/G)\\
\Shv(\mathring{I}^-,\psi\backslash\Fl_{\check{P}})\arrow[r,"\sim"]\arrow{u}{\varpi_{\check{P}}^![-d_{P/B}]}&\QC(\wt{\eN}_P/G).\arrow[u,"{\Lag_{P}^!(\rho)}"']
\end{tikzcd}
\end{equation}
Combining this with \eqref{eqn:A-AP-res-ind-commdiag} yields the commutative square
\begin{equation*}
\begin{tikzcd}
\Shv(\mathring{I}^-,\psi\backslash\Fl)\arrow[r,"\Phi","\sim"']&\cA^{\op}\dashmod^G\\
\Shv(\mathring{I}^-,\psi\backslash\Fl_{\check{P}})\arrow[r,"{\Phi_P}","\sim"']\arrow{u}{\varpi_{\check{P}}^![-d_{P/B}]}&\cA_P^{\op}\dashmod^G.\arrow[u,"\Res^{\cA_P^{\op}}_{\cA^{\op}}"']
\end{tikzcd}
\end{equation*}
By \cite[Thm.~6.2.1]{bez-mirk}, the top horizontal equivalence exchanges the perverse and perverse coherent t-structures on either side (i.e., \eqref{itm:perv-t-strs} holds in the Borel case). Moreover, the functor $\varpi_{\check{P}}^![-d_{P/B}]$ is t-exact, and both vertical functors are conservative.

We claim that $\Res^{\cA_P^{\op}}_{\cA^{\op}}[-d_{P/B}]$ is t-exact with respect to the perverse coherent t-structures. Since $\eN_P$ is the closure of a nilpotent orbit, and the morphism $\pi_P\colon\wt{\eN}_P\to\eN_P$ is generically finite, we have $\dim\eN_P=\dim\wt{\eN}_P=2\dim\eP$. Thus, $\codim_{\eN}\eN_P=2d_{P/B}$, and $\Res^{\cA_P^{\op}}_{\cA^{\op}}[-d_{P/B}]$ is right t-exact by \eqref{eqn:perv-coh-t-str}. For left t-exactness, let $M\in\cA_P\dashmod^{G,\le_p 0}$. We must show that
\begin{equation}
\label{eqn:Ark-Bez-left-t-ex-goal}
\Res^{\cA_P^{\op}}_{\cA^{\op}}\Hom_{\cA_P}(M,\cA_P)[-d_{P/B}]\in\cA^{\op}\dashmod^{G,\ge_p0}.
\end{equation}
Observe that by \eqref{eqn:A-AP-res-ind-commdiag} and \eqref{eqn:Lag-extra-adjns}, we have
\begin{align*}
\Hom_{\cA^{\op}}(\cA_P,\cA)&\simeq\Hom_{\wt{\eN}}((\Lag_P^!\cE_P)(\rho),\cE)\\
&\simeq\Hom_{\wt{\eN}_P}(\cE_P,\Lag_{P,!}\cE(-\rho)[-2d_{P/B}])\\
&\simeq\Hom_{\wt{\eN}_P}(\cE_P,\cE_P)[-2d_{P/B}]\\
&\simeq\cA_P[-2d_{P/B}].
\end{align*}
Thus,
\begin{align*}
\Res^{\cA_P^{\op}}_{\cA^{\op}}\Hom_{\cA_P}(M,\cA_P)[-d_{P/B}]&\simeq\Res^{\cA_P^{\op}}_{\cA^{\op}}\Hom_{\cA_P}(M,\Hom_{\cA^{\op}}(\cA_P,\cA)[2d_{P/B}])[-d_{P/B}]\\
&\simeq\Hom_{\cA}(\Res^{\cA_P}_{\cA}M[-d_{P/B}],\cA).
\end{align*}
As above, the functor $\Res^{\cA_P}_{\cA}[-d_{P/B}]$ is right t-exact with respect to the perverse coherent t-structures, so \eqref{eqn:Ark-Bez-left-t-ex-goal} is coconnective by definition.

Finally, to show \eqref{itm:perv-t-strs}, suppose that $\sF\in\Shv(\mathring{I},\psi\backslash\Fl_{\check{P}})^{\le_p 0}$. Then
\begin{align*}
\Phi(\varpi_{\check{P}}^!\sF[-d_{P/B}])&\simeq\Res^{\cA_P^{\op}}_{\cA^{\op}}\Phi_P(\sF)\\
&\simeq\tau^{\le_p 0}\Res^{\cA_P^{\op}}_{\cA^{\op}}\Phi_P(\sF)\\
&\simeq\Res^{\cA_P^{\op}}_{\cA^{\op}}(\tau^{\le_p d_{P/B}}\Phi_P(\sF))
\end{align*}
by t-exactness of $\Res^{\cA_P^{\op}}_{\cA^{\op}}[-d_{P/B}]$. It follows that $\Res^{\cA_P^{\op}}_{\cA^{\op}}(\tau^{\ge_p d_{P/B}+1}\Phi_P(\sF))\simeq 0$, hence $\tau^{\ge d_{P/B}+1}\Phi_P(\sF)\simeq 0$ by conservativeness, and so $\Phi_P(\sF)$ lies in degrees $\le_p d_{P/B}$. An analogous argument shows that $\Phi_P[d_{P/B}]$ is left t-exact, completing the proof.
\end{proof}

\section{Schur multipliers}
\label{sec:schur-mult}

\begin{atom}
In this appendix, we collect various results on the Schur multiplier of a linear algebraic group $G$, for use in \S\ref{sec:noncomm-spr} and \S\ref{sec:Ze-cov}. Other discussions of this topic may be found in \cite{elagin}, and in \cite{rosengarten} for the case of connected groups. Most statements in this appendix are well-known for finite groups; our task is only to show that they carry over to linear algebraic groups. In particular, we show that $G$ admits a Schur covering, and give various criteria for computing its Schur multiplier.
\end{atom}

\subsection{Cocycles on linear algebraic groups}

\begin{atom}
We first review some general properties of cocycles\footnote{These are sometimes also referred to as ``multiplicative'' or ``translation-invariant'' line bundles.} on $G$ that will be used in the sequel. We begin by recalling the definition, owed to Elagin:
\end{atom}

\begin{definition}[{\cite[Def.~1.4]{elagin}}]
\label{def:coc-elagin}
Let $m\colon G\times G\to G$ denote the multiplication map. A \emph{cocycle on $G$} is the data of a line bundle $\cC$ on $G$ and an isomorphism $\alpha\colon\cC\boxtimes\cC\simeq m^*\cC$ satisfying the following associativity condition: the isomorphisms
\begin{equation}
\label{eqn:coc-assoc}
(\id\times m)^*\alpha\circ(\id\boxtimes\alpha),(m\times\id)^*\alpha\circ(\alpha\boxtimes\id)\colon\cC\boxtimes\cC\boxtimes\cC\simeq(m\circ(\id\times m))^*\cC
\end{equation}
of line bundles on $G\times G\times G$ are equal. A \emph{morphism of cocycles} $(\cC,\alpha)\to(\cC',\alpha')$ is a morphism of line bundles $\cC\to\cC'$ commuting with $\alpha,\alpha'$.
\end{definition}

\begin{notation}
\label{not:schur-mult}
We denote the resulting $(1,1)$-category of cocycles on $G$ by $\Coc(G)$; it carries a natural rigid symmetric monoidal structure under the tensor product of line bundles. We let $\M(G)$ denote the abelian group of isomorphism classes of $\Coc(G)$, and refer to it as the \emph{Schur multiplier} of $G$. Moreover, given a group homomorphism $\varphi\colon G'\to G$, we have a natural monoidal functor $\varphi^*\colon\Coc(G)\to\Coc(G')$ and corresponding restriction homomorphism $\varphi^*\colon\M(G)\to\M(G')$.
\end{notation}

\begin{atom}
We now recall the relationship between Definition~\ref{def:coc-elagin} and the classical notion of Schur multiplier:
\end{atom}

\begin{proposition}
\label{prop:schur-mult-bij-cent-ext}
The $(1,1)$-category $\Coc(G)$ is naturally monoidally equivalent to the $2$-group of central extensions\footnote{Here, morphisms are commutative diagrams of group homomorphisms
\begin{equation*}
\begin{tikzcd}[ampersand replacement=\&]
1\arrow{r}\&\bG_m\arrow{r}\arrow[d,equals]\&G'\arrow{r}\arrow{d}\&G\arrow{r}\arrow[d,equals]\&1\\
1\arrow{r}\&\bG_m\arrow{r}\&G''\arrow{r}\&G\arrow{r}\&1,
\end{tikzcd}
\end{equation*}
which are automatically isomorphisms. Moreover, the monoidal structure is provided by the ``Baer sum'' of extensions, and monoidal inverses are given by inversion in $\bG_m$.} of $G$ by $\bG_m$ (in the category of linear algebraic groups). Moreover, let $(\cC,\alpha)\in\Coc(G)$, and let
\begin{equation}
\label{eqn:coc-cent-ext}
1\to\bG_m\to G_{(\cC,\alpha)}\xrightarrow{p_{(\cC,\alpha)}}G\to 1
\end{equation}
be the associated central extension. Then the isomorphism class $[(\cC,\alpha)]$ lies in the kernel of the restriction map
\begin{equation*}
p_{(\cC,\alpha)}^*\colon\M(G)\to\M(G_{(\cC,\alpha)}).
\end{equation*}
Finally, if $[(\cC,\alpha)]$ has finite order, then \eqref{eqn:coc-cent-ext} canonically descends to a central extension
\begin{equation}
\label{eqn:coc-cent-ext-fin}
1\to X^*(\angles{[(\cC,\alpha)]})\to\overline{G}_{(\cC,\alpha)}\xrightarrow{\overline{p}_{(\cC,\alpha)}} G\to 1.
\end{equation}
\end{proposition}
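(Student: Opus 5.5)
The plan is to pass back and forth between cocycles and central extensions via the total space of the line bundle with its zero section removed. Given $(\cC,\alpha)\in\Coc(G)$, set $G_{(\cC,\alpha)}:=\cC^\times$, the $\bG_m$-torsor of nonvanishing vectors in $\cC$. The isomorphism $\alpha\colon\cC\boxtimes\cC\simeq m^*\cC$ induces a fiberwise multiplication $\cC_g^\times\times\cC_h^\times\to\cC_{gh}^\times$, and this is a morphism of varieties $\cC^\times\times\cC^\times\to\cC^\times$ lying over $m$. The associativity condition \eqref{eqn:coc-assoc} is precisely associativity of this product. Restricting $\alpha$ to $(1,1)$ gives $\cC_1\otimes\cC_1\simeq\cC_1$, which picks out a canonical unit. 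Inverses come from the restriction of $\alpha$ to the antidiagonal $\{(g,g^{-1})\}$. We then obtain a linear algebraic group; it is affine because it is a $\bG_m$-torsor over an affine scheme. The inclusion $\bG_m=\cC_1^\times\hookrightarrow\cC^\times$ is central, since $\alpha$ is $\O$-bilinear.

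Conversely, given a central extension $1\to\bG_m\to G'\to G\to1$, we view $G'$ as a $\bG_m$-torsor over $G$. It has an associated line bundle $\cC:=G'\times^{\bG_m}\mathbb{A}^1$, corresponding to the weight-one character. Multiplication in $G'$ is $\bG_m\times\bG_m$-equivariant for the multiplication map of $\bG_m$, and hence descends to $\alpha\colon\cC\boxtimes\cC\simeq m^*\cC$. Associativity of $G'$ then gives \eqref{eqn:coc-assoc}. We will check that the two constructions are mutually inverse and functorial; here one uses that a morphism of line bundles compatible with $\alpha$ corresponds to a $\bG_m$-equivariant homomorphism. We will also check that tensor product of line bundles corresponds to the Baer sum, namely $(G'\times_G G'')/\{(z,z^{-1})\}$, and that the dual corresponds to inverting $\bG_m$. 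Together these give the monoidal equivalence. Each of these verifications is a routine check on the associated torsors.

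For the second claim, we pull back $\cC$ along $p:=p_{(\cC,\alpha)}\colon G_{(\cC,\alpha)}=\cC^\times\to G$. The tautological section of $p^*\cC$ over $\cC^\times$ is nowhere vanishing, and it is multiplicative: under $p^*\alpha$ it sends $s\boxtimes s$ to $m^*s$, because the group law on $\cC^\times$ was defined by $\alpha$. This gives an isomorphism of cocycles $(\O,\id)\simeq p^*(\cC,\alpha)$, so the class $[(\cC,\alpha)]$ lies in the kernel of $p^*$.

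For the last claim, suppose $[(\cC,\alpha)]$ has order $n$, and choose a trivialization $\tau\colon(\cC,\alpha)^{\otimes n}\simeq(\O,\id)$ in $\Coc(G)$. Define $\overline{G}_{(\cC,\alpha)}\subset\cC^\times$ to be the closed subscheme of those $c$ with $\tau(c^{\otimes n})=1$. Compatibility of $\tau$ with $\alpha$ makes this a subgroup, and it is a $\mu_n$-torsor over $G$, so it is a central extension of $G$ by $\mu_n\cong X^*(\Z/n)=X^*(\angles{[(\cC,\alpha)]})$. The main obstacle is canonicity, since $\tau$ is only unique up to $\Aut(\O,\id)=\Hom(G,\bG_m)=X^*(G)$. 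To handle this, replace the single trivialization by the entire finite subgroup $\angles{[(\cC,\alpha)]}\subset\M(G)$. Then define $\overline{G}_{(\cC,\alpha)}$ as the fiber product over $G$ of the central extensions for the classes $[\cC^{\otimes i}]$, $0\le i<n$, cut down compatibly using the tensor structure. The point to verify is that the result does not depend on these choices. If this proves delicate, we will fall back on a slightly weaker but sufficient form of the statement: the descended extension is canonical up to isomorphism, it is functorial in $(\cC,\alpha)$, and its pushout along $X^*(\angles{[(\cC,\alpha)]})\hookrightarrow\bG_m$ recovers \eqref{eqn:coc-cent-ext}.
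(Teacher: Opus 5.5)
Your construction is the paper's own. Its $\uSpec_G\bigl(\bigoplus_{n\in\Z}\cC^{\otimes n}\bigr)$, with product induced by $\alpha^{-1}$, is your $\cC^\times$ up to the sign convention for the $\bG_m$-weight. The converse takes the weight-one part of $\O_{G'}$. The kernel statement is the tautological trivialization $p^*\cC\simeq\cC\otimes\cR_{(\cC,\alpha)}\simeq\cR_{(\cC,\alpha)}$. The descent is the $\Z/d\Z$-graded algebra $\bigoplus_{0\le n<d}\cC^{\otimes n}$ built from an isomorphism $\cC^{\otimes d}\simeq\O_G$, which is exactly your $\overline{G}_\tau$ for a chosen cocycle trivialization $\tau$.

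Where you go astray is the canonicity step. Your worry about $\tau$ is justified, but neither your fix nor your fallback can work, because the dependence on $\tau$ survives even up to isomorphism.

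Write $\overline{G}_\tau=\ker\Phi_\tau$, where $\Phi_\tau\colon G_{(\cC,\alpha)}\to\bG_m$ is the homomorphism $c\mapsto\tau(c^{\otimes d})$. Replacing $\tau$ by $\chi\tau$ with $\chi\in X^*(G)$ replaces $\Phi_\tau$ by $(\chi\circ p)\cdot\Phi_\tau$. Any isomorphism of the two $\mu_d$-extensions induces an automorphism $c\mapsto c\,\psi(p(c))$, with $\psi\in X^*(G)$, of their common pushout $G_{(\cC,\alpha)}$. It follows that $\overline{G}_\tau\simeq\overline{G}_{\chi\tau}$ as extensions if and only if $\chi\in dX^*(G)$.

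Concretely, take $G=\Z/2\Z\times\Z/2\Z$, so $\M(G)\cong\Z/2\Z$ and $X^*(G)/2X^*(G)\neq 0$. The nontrivial $\bG_m$-extension descends to $\mu_2$-extensions given by both $D_8$ and $Q_8$, which are not even isomorphic as groups. So ``canonical up to isomorphism'' is false in general.

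Taking fiber products over the powers $\cC^{\otimes i}$ adds nothing. Those extensions are all pushouts of $G_{(\cC,\alpha)}$, and cutting down still requires identifying $\cC^{\otimes d}$ with $\O_G$.

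The paper's proof likewise just fixes such an isomorphism. The correct reading is as follows:
\begin{enumerate}
\item the descent is canonical once $\tau$ is fixed;
\item its pushout along $\mu_d\cong X^*(\angles{[(\cC,\alpha)]})\hookrightarrow\bG_m$ recovers $G_{(\cC,\alpha)}$;
\item it is independent of $\tau$ up to isomorphism when $X^*(G)$ is $d$-divisible, e.g.\ $X^*(G)=0$.
\end{enumerate}
This is all that is used later. Schur coverings are explicitly non-canonical there, and the construction of $Z_e^{\cov}$ even uses both $D_8$ and $Q_8$ as Schur covers of $S_2\times S_2$.
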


\begin{proof}
For the first assertion, we recall the construction, and leave the remaining details (some of which are carried out in \cite[\S1]{elagin}) to the reader. Given a cocycle $(\cC,\alpha)\in\M(G)$, we may form the graded $\O_G$-algebra
\begin{equation}
\label{eqn:R-coc-alg}
\cR_{(\cC,\alpha)}:=\bigoplus_{n\in\Z}\cC^{\otimes n}.
\end{equation}
The relative spectrum
\begin{equation*}
G_{(\cC,\alpha)}:=\uSpec_G(\cR_{(\cC,\alpha)})\xrightarrow{p_{(\cC,\alpha)}}G
\end{equation*}
is then a principal $\bG_m$-bundle. Moreover, we may equip $G_{(\cC,\alpha)}$ with a multiplication via the map of $\O_G$-algebras
\begin{equation*}
m^*\cR_{(\cC,\alpha)}\to\cR_{(\cC,\alpha)}\boxtimes\cR_{(\cC,\alpha)}
\end{equation*}
generated by (the inverse of) $\alpha$. The associativity condition \eqref{eqn:coc-assoc} now guarantees associativity of this multiplication, and the remaining properties can be verified similarly. Conversely, given a central extension
\begin{equation*}
1\to\bG_m\to G'\to G\to 1,
\end{equation*}
the coordinate ring $\O_{G'}$ carries a grading induced by the right-regular representation of $\bG_m$, and it is not difficult to show that the weight-$1$ component\footnote{Alternatively, we may take the weight-$(-1)$ component with respect to the left-regular representation.} yields a cocycle on $G$.

For the second assertion, note that we have natural $\cR_{(\cC,\alpha)}$-module isomorphisms
\begin{equation*}
p_{(\cC,\alpha)}^*\cC\simeq\cC\otimes\cR_{(\cC,\alpha)}\simeq\cR_{(\cC,\alpha)},
\end{equation*}
which clearly trivialize $p_{(\cC,\alpha)}^*\alpha$.

Finally, for the third assertion, note that when $[(\cC,\alpha)]$ has finite order $d$, we also have a $\Z/d\Z$-graded $\O_G$-algebra
\begin{equation*}
\overline{\cR}_{(\cC,\alpha)}:=\bigoplus_{0\le n<d}\cC^{\otimes n}
\end{equation*}
via the isomorphism $\cC^{\otimes d}\simeq\O_G$. It is now straightforward to verify that this yields a central extension \eqref{eqn:coc-cent-ext-fin} with the desired property.
\end{proof}

\begin{atom}
In fact, we shall soon see that \emph{every} cocycle on $G$ has finite order. Regardless, in the situation of \eqref{eqn:coc-cent-ext}, we obtain a canonical decomposition\footnote{See for instance \cite[Rem.~2.36]{benzvi}.}
\begin{equation}
\label{eqn:coc-rep-G-decomp}
\Rep(G_{(\cC,\alpha)})\simeq\bigoplus_{n\in X^*(\bG_m)}\Rep(G_{(\cC,\alpha)})_n
\end{equation}
as $\Rep(G)$-module categories (and similarly for $\overline{G}_{(\cC,\alpha)}$), where we have let $n$ denote the $n$th power of the tautological character of $\bG_m$. This allows us to ``twist'' any $\Rep(G)$-module category by a cocycle:
\end{atom}

\begin{definition}
\label{def:coc-twist-cat}
Let $\eC$ be a $\Rep(G)$-module category. The \emph{$(\cC,\alpha)$-twist} of $\eC$ is the $\Rep(G)$-module category
\begin{equation*}
\eC^{(\cC,\alpha)}:=\eC\tens{\Rep(G)}\Rep(G_{(\cC,\alpha)})_1.
\end{equation*}
Likewise, we define the $(\cC,\alpha)$-twist of a small category via the corresponding decomposition of $\Rep(G_{(\cC,\alpha)})^c$.
\end{definition}

\begin{atom}
\label{atm:coc-twist-cat}
Equivalently, we may write $\eC^{(\cC,\alpha)}\simeq\eC\otimes_{\Rep(G)}\Rep(G)^{(\cC,\alpha)}$. Note that by Proposition~\ref{prop:schur-mult-bij-cent-ext}, we have canonical $\Rep(G)$-linear equivalences
\begin{equation}
\label{eqn:rep-G-coc-twist-mult}
\Rep(G)^{(\cC,\alpha)}\tens{\Rep(G)}\Rep(G)^{(\cC',\alpha')}\simeq\Rep(G)^{(\cC,\alpha)\cdot(\cC',\alpha')}
\end{equation}
for any cocycles $(\cC,\alpha),(\cC',\alpha')\in\Coc(G)$ (see also \cite[Prop.~1.5]{elagin}). In particular,
\begin{equation}
\label{eqn:rep-G-coc-twist-comp}
\Rep(G_{(\cC,\alpha)})_n\simeq\Rep(G)^{(\cC,\alpha)^n}
\end{equation}
for each character $n$, and similarly for $\overline{G}_{(\cC,\alpha)}$. Moreover, given a homomorphism $\varphi\colon G'\to G$, the naturality statement yields a $\Rep(G)$-linear functor
\begin{equation}
\label{eqn:rep-G-coc-twist-res}
\Res^{G}_{G'}\colon\Rep(G)^{(\cC,\alpha)}\to\Rep(G')^{\varphi^*(\cC,\alpha)}.
\end{equation}

Two cases of Definition~\ref{def:coc-twist-cat} will be of especial interest to us. Letting $X$ be a scheme with an action of $G$, we refer to the objects of $\QC(X/G)^{(\cC,\alpha)}$ as \emph{$(\cC,\alpha)$-equivariant sheaves} on $X$, and to the objects of $\Rep(G)^{(\cC,\alpha)}$ as \emph{$(\cC,\alpha)$-representations} of $G$.
\end{atom}

\begin{remark}
An alternative definition of these objects which is less amenable to $\infty$-categorical methods, though more explicit, was given in \cite{elagin}; we record it for the reader's intuition. There, a $(\cC,\alpha)$-equivariant sheaf on $X$ was defined as a sheaf $\cF$ on $X$ and an isomorphism $\theta\colon\cC\boxtimes\cF\simeq\act^*\cF$, satisfying a cocycle condition given by commutativity of the following diagram:
\begin{equation*}
\begin{tikzcd}[row sep=tiny,column sep=huge]
&\cC\boxtimes\act^*\cF\arrow{r}{(\id\times\act)^*\theta}&(\act\circ(\id\times\act))^*\cF\arrow[dd,equals]\\
\cC\boxtimes\cC\boxtimes\cF\arrow{ur}{\id\boxtimes\theta}\arrow[dr,"\alpha\boxtimes\id"']&&\\
&m^*\cC\boxtimes\cF\arrow{r}{(m\times\id)^*\theta}&(\act\circ(m\times\id))^*\cF.
\end{tikzcd}
\end{equation*}
In particular, an $(\O_G,\id)$-equivariant sheaf is just a usual $G$-equivariant sheaf. Compatibility of these two notions is shown in \cite[Prop.~1.6]{elagin}.
\end{remark}

\begin{atom}
Finally, we record the following general properties of cocycles on $G$ (which are not needed in the present work).
\end{atom}

\begin{lemma}
\label{lem:coc-ad-eqvt}
Let $(\cC,\alpha)\in\Coc(G)$. Then $\cC$ is canonically equivariant with respect to the adjoint action of $G$, and $\alpha$ descends to an isomorphism of $G$-equivariant sheaves.
\end{lemma}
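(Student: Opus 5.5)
The cleanest route is through the central extension attached to $(\cC,\alpha)$ in Proposition~\ref{prop:schur-mult-bij-cent-ext}. Let $1\to\bG_m\to G_{(\cC,\alpha)}\xrightarrow{p}G\to 1$ be this extension, so that $\cC$ is the weight-$1$ component of $p_*\O_{G_{(\cC,\alpha)}}$ under the right-regular $\bG_m$-action.

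First, the conjugation action of $G_{(\cC,\alpha)}$ on itself commutes with the right action of $\bG_m$, since $\bG_m$ is central. Hence conjugation preserves the weight decomposition of $\cR_{(\cC,\alpha)}$, and each $\cC^{\otimes n}$ becomes $G_{(\cC,\alpha)}$-equivariant for the action on $G$ through $p$ by conjugation. The central subgroup $\bG_m$ acts trivially by conjugation, both on $G_{(\cC,\alpha)}$ and on $G$. So the $\bG_m$-part of the equivariance acts trivially on $\cC$, and the structure descends to a $G=G_{(\cC,\alpha)}/\bG_m$-equivariant structure on $\cC$ for the adjoint action.

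To make this descent precise, I would use that $\QC(G/G_{(\cC,\alpha)})$ decomposes by central character, as in \eqref{eqn:coc-rep-G-decomp}. The object $\cC$ lies in the central-character-$0$ summand, because conjugation by $\bG_m$ is the identity map. That summand is $\QC(G/G)$ for the adjoint action. Canonicity holds because the construction depends only on the extension, which is determined up to unique isomorphism by $(\cC,\alpha)$.

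Next, the multiplication map $m$ is equivariant for diagonal conjugation on $G\times G$. The isomorphism $\alpha$ corresponds to the algebra map $m^*\cR\to\cR\boxtimes\cR$ defining the group law of $G_{(\cC,\alpha)}$. Conjugation by any element of $G_{(\cC,\alpha)}$ is a group automorphism, so this map intertwines the diagonal conjugation equivariance on both sides. Restricting to weight $(1,1)$ shows that $\alpha$ is a map of equivariant sheaves for $G_{(\cC,\alpha)}$, and hence for $G$ after the descent of the previous step.

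The main obstacle is the descent from $G_{(\cC,\alpha)}$-equivariance to $G$-equivariance. Concretely, I must check that the $\bG_m$-component of the conjugation action on $\cC$ is trivial as an equivariance datum, not merely trivial on the underlying spaces. This amounts to observing that conjugation by a central element is the identity automorphism of $G_{(\cC,\alpha)}$, so the induced map on $\cR_{(\cC,\alpha)}$ is literally the identity. I would verify this at the level of coaction maps on $\cR_{(\cC,\alpha)}$.
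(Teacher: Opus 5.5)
Your proposal is correct and follows essentially the same route as the paper. The paper observes that the conjugation action of $G_{(\cC,\alpha)}$ on itself factors through $G$ and commutes with the $\bG_m$-translations, so every graded summand of $\cR_{(\cC,\alpha)}$, in particular $\cC$, is canonically $G$-equivariant; your central-character descent is a more explicit rendering of ``the action factors through $G$,'' and your equivariance argument for the group-law map $m^*\cR_{(\cC,\alpha)}\to\cR_{(\cC,\alpha)}\boxtimes\cR_{(\cC,\alpha)}$ spells out what the paper leaves as ``the conclusion follows'' for $\alpha$.
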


\begin{proof}
The structure sheaf $\O_{G_{(\cC,\alpha)}}$ is canonically equivariant with respect to the adjoint action of $G_{(\cC,\alpha)}$. This action factors through $G$, and commutes with the action of $\bG_m$ on $G_{(\cC,\alpha)}$ by translations. Thus, each summand of \eqref{eqn:R-coc-alg} is canonically $G$-equivariant, and the conclusion follows.
\end{proof}

\begin{remark}
One can construct this equivariance structure more explicitly via $\alpha$. Specifically, pulling back \eqref{eqn:coc-assoc} along the map
\begin{align*}
G\times G&\to G\times G\times G\\
(g,g')&\mapsto(g,g',g^{-1})
\end{align*}
gives an isomorphism
\begin{equation}
\label{eqn:coc-ad-iso}
(\cC\otimes i^*\cC)\boxtimes\cC\simeq\ad^*\cC,
\end{equation}
where $i\colon G\to G$ denotes the inversion map, and $\ad\colon G\times G\to G$ denotes the adjoint action. Next, pulling back $\alpha$ along the map $G\to G\times G$ given by $g\mapsto(g,g^{-1})$ gives an isomorphism
\begin{equation}
\label{eqn:coc-i-iso}
\cC\otimes i^*\cC\simeq\O_G\tens{k}\cC_e,
\end{equation}
where $e\in G(k)$ denotes the identity element. Finally, pulling back $\alpha$ to $(e,e)$ gives an isomorphism $\cC_e\otimes\cC_e\simeq\cC_e$, so tensoring with $\cC_e^\vee$ yields a canonical isomorphism $\cC_e\simeq k$. Thus, using \eqref{eqn:coc-ad-iso} and \eqref{eqn:coc-i-iso}, we obtain an isomorphism $\O_G\boxtimes\cC\simeq\ad^*\cC$, and one can check using the associativity condition \eqref{eqn:coc-assoc} that this gives a conjugation-equivariant structure on $\cC$ commuting with $\alpha$.
\end{remark}

\begin{lemma}
\label{lem:any-coc-rep-gens}
Let $(\cC,\alpha)\in\Coc(G)$, and let $V$ be a compact $(\cC,\alpha)$-representation. Then $V$ generates $\Rep(G)^{(\cC,\alpha)}$ as a $\Rep(G)$-module. Moreover, if $G$ is reductive\footnote{Recall that we use ``reductive'' to mean ``linearly reductive''; in particular, $G$ is not assumed to be connected.}, then we have canonical isomorphisms
\begin{equation}
\label{eqn:glob-sec-coc-K}
\Gamma(G/G,\cC)\simeq K_0(\Rep(G)^{(\cC,\alpha)})_k\cong R(G)_k\cdot[V]\subset R(G_{(\cC,\alpha)})_k,
\end{equation}
where the inclusion into the $k$-linearized representation ring of $G_{(\cC,\alpha)}$ is via \eqref{eqn:coc-rep-G-decomp}.
\end{lemma}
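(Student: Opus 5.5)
Two claims need proof: generation, and the identification \eqref{eqn:glob-sec-coc-K}. I work throughout inside $\Rep(G_{(\cC,\alpha)})$ via the decomposition \eqref{eqn:coc-rep-G-decomp}. Under it, $\Rep(G)^{(\cC,\alpha)}\simeq\Rep(G_{(\cC,\alpha)})_1$ is the summand of representations on which the central $\bG_m$ acts by the tautological character. The $\Rep(G)$-action is the tensor product with representations of weight $0$.

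For generation, let $W$ be any object of $\Rep(G_{(\cC,\alpha)})_1$. Then $W\otimes V^\vee$ has $\bG_m$-weight $0$, so it lies in $\Rep(G)$. The coevaluation $k\to V\otimes V^\vee$ induces a map
\begin{equation*}
W\to (W\otimes V^\vee)\otimes V,
\end{equation*}
and this map is split by evaluation, since $V$ is finite-dimensional and nonzero and $\dim V$ is invertible in characteristic $0$. Hence $W$ is a retract of $\Rep(G)\cdot V$, so $V$ generates. Note that this argument uses only that $V$ is dualizable (compact) and nonzero, not reductivity.

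For the $K$-theory, assume $G$ is reductive. Then $G_{(\cC,\alpha)}$ is reductive as well, being an extension of $G$ by a torus, so $\Rep(G_{(\cC,\alpha)})_1^c$ is semisimple. Its $K_0$ is free on the irreducible weight-$1$ representations, and it sits as a direct summand in $R(G_{(\cC,\alpha)})$. The map $R(G)_k\to K_0(\Rep(G)^{(\cC,\alpha)})_k$, $x\mapsto x\cdot[V]$, is surjective after $k$-linearization by the same splitting, now with the factor $1/\dim V$. Explicitly, $[W]=\tfrac{1}{\dim V}[W\otimes V^\vee]\cdot[V]$ in the $R(G)_k$-module $R(G_{(\cC,\alpha)})_k$, where $[W\otimes V^\vee]\in R(G)$. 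This gives the last isomorphism $K_0(\Rep(G)^{(\cC,\alpha)})_k\cong R(G)_k\cdot[V]$.

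For the first isomorphism I use characters. Since $k$ has characteristic $0$ and the group is reductive, $R(G_{(\cC,\alpha)})_k\simeq\O(G_{(\cC,\alpha)})^{G_{(\cC,\alpha)}}$, the ring of class functions. The isomorphism $W\mapsto\chi_W$ respects the $\bG_m$-grading: characters of weight-$n$ representations are exactly the conjugation-invariant functions of $\bG_m$-weight $n$. By \eqref{eqn:R-coc-alg}, the weight-$1$ part of $\O(G_{(\cC,\alpha)})$ is $\Gamma(G,\cC)$. The conjugation action of $G_{(\cC,\alpha)}$ factors through the adjoint $G$-action of Lemma~\ref{lem:coc-ad-eqvt}, so the weight-$1$ invariants are $\Gamma(G/G,\cC)$.

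The main obstacle is justifying, for disconnected reductive $G_{(\cC,\alpha)}$, that characters of irreducibles form a $k$-basis of the class functions. This is the Peter--Weyl decomposition $\O(H)\simeq\bigoplus_W\End(W)^\vee$ for reductive $H$: taking invariants under conjugation gives one line per irreducible $W$, spanned by $\chi_W$. I would invoke this decomposition, and check that it is compatible with the central $\bG_m$-grading.
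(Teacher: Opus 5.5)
Your generation argument and your character-theoretic identification $\Gamma(G/G,\cC)\simeq K_0(\Rep(G)^{(\cC,\alpha)})_k$ (weight-$1$ class functions on $G_{(\cC,\alpha)}$, via Peter--Weyl and \eqref{eqn:R-coc-alg}) are the paper's argument. The gap is the step $K_0(\Rep(G)^{(\cC,\alpha)})_k=R(G)_k\cdot[V]$. The splitting only shows that $W$ is a direct summand of $(W\otimes V^\vee)\otimes V$. In $K_0$ this says $[W\otimes V^\vee]\cdot[V]=[W]+[W']$ for some complement $W'$, and nothing controls $W'$. Your formula $[W]=\tfrac{1}{\dim V}[W\otimes V^\vee]\cdot[V]$ is false: already on ranks, the right side has dimension $\dim V\cdot\dim W$.

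No argument can close this step for arbitrary $V$. Under your character isomorphism, $R(G)_k\cdot[V]$ is the space of functions $f\chi_V$. This is a proper subspace whenever $\chi_V$ vanishes at a point where some twisted class function does not. For instance, take the trivial cocycle, $G=S_3$, and $V$ its $2$-dimensional irreducible representation. Then $\chi_V$ vanishes on transpositions, so $[k]\notin R(G)_k\cdot[V]$, although $K_0(\Rep(G))_k=R(G)_k$. A connected example is $G=\SL_2$ with $V=\Sym^2k^2$: here $R(G)_k=k[t]$ and $[V]=t^2-1$ is not a unit.

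The paper's one-line appeal to Peter--Weyl does not supply this step either. What it does give is the first isomorphism, with $R(G)_k$ acting by multiplication of class functions. The last identification needs an extra hypothesis, namely that $\chi_V$ generate $\Gamma(G/G,\cC)$ over $R(G)_k$. This holds, for example, for the standard representation of $\SL_2$ and the nontrivial cocycle on $\PGL_2$. Otherwise it must be weakened to the $R(G)_k$-span of all classes $[V']$. Only the generation statement is used elsewhere in the paper.

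A smaller point concerns the case where $V$ is a complex rather than a representation in the heart. Then $W\to(W\otimes V^\vee)\otimes V\to W$ is multiplication by the Euler characteristic of $V$, which can vanish (e.g.\ $V=V_0\oplus V_0[1]$). In that case, deduce generation instead from $\Hom(U\otimes V,W)\simeq\Hom(U,V^\vee\otimes W)$ for $U\in\Rep(G)$, together with conservativity of the forgetful functor to $\Vect_k$ and $V\neq 0$.
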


\begin{proof}
Let $V'\in\Rep(G)^{(\cC,\alpha)}$. Then $V'\otimes V^\vee$ is a $G$-representation by \eqref{eqn:rep-G-coc-twist-mult}, and $V'$ is a summand of $(V'\otimes V^\vee)\otimes V$ as the characteristic of $k$ is $0$. The latter assertion is now immediate from the Peter--Weyl theorem and the proof of Proposition~\ref{prop:schur-mult-bij-cent-ext}.
\end{proof}

\subsection{Computing the Schur multiplier}

\begin{atom}
We now give a series of criteria for computing Schur multipliers under various assumptions on $G$. We begin by recalling how to compute the Schur multiplier of a connected group (in characteristic $0$). This is a very specific case of the main result of \cite{kumar-neeb}, or alternatively, an (unpublished) result of Gabber.
\end{atom}

\begin{proposition}[{\cite[Thm.~0.1]{kumar-neeb}, \cite[Thm.~1.3]{rosengarten}}]
\label{prop:kumar-neeb}
Suppose $G$ is connected. Then
\begin{equation*}
\M(G)\cong X^*(\pi_1([G,G]))\cong\Pic(G),
\end{equation*}
functorially in $G$. In particular, it is finite.
\end{proposition}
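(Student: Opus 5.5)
The plan is to reduce everything to the structure of $G$ as an extension of a semisimple group by a torus, using Proposition~\ref{prop:schur-mult-bij-cent-ext} to translate cocycles into central extensions $1\to\bG_m\to G'\to G\to 1$. The chain of isomorphisms has two links. The first is $\M(G)\cong\Pic(G)$, obtained by forgetting $\alpha$. The second is $\Pic(G)\cong X^*(\pi_1([G,G]))$, which is a classical computation. Finiteness then follows from the finiteness of $\pi_1$ of a semisimple group.

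\textbf{Injectivity of $\M(G)\to\Pic(G)$.} Suppose $\cC\simeq\O_G$. Then a cocycle structure on $\cC$ is an invertible function $\alpha$ on $G\times G$ satisfying the $2$-cocycle identity. By Rosenlicht's lemma, units on a connected group $G\times G$ are characters times constants, so $\alpha(g,h)=c\,\chi(g)\psi(h)$. The cocycle identity forces $c\chi\psi$ to be trivial in the appropriate sense, and one checks that $\alpha$ is a coboundary $f(g)f(h)/f(gh)$ with $f=c\chi$, possibly after normalizing $c=1$ at the identity. So the forgetful map is injective.

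\textbf{Surjectivity of $\M(G)\to\Pic(G)$.} Given a line bundle $\cC$, I first need an isomorphism $\cC\boxtimes\cC\simeq m^*\cC$. Since $G$ is connected and rational in characteristic $0$, we have $\Pic(G\times G)\cong\Pic(G)\oplus\Pic(G)$, because $\Pic$ of a product of rational varieties with trivial units modulo characters behaves additively. Restricting $m^*\cC$ to $G\times e$ and $e\times G$ identifies it with $\cC\boxtimes\cC$. Associativity then fails by a unit on $G^3$, which by Rosenlicht can be normalized away. Alternatively, I would avoid this check by showing surjectivity directly on the level of extensions, as in the next step.

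\textbf{Computing via extensions.} Write $G=([G,G]\times Z(G)^\circ)/Z$ as in \eqref{eqn:der-gp-ses}, and let $\wt{G}_{\mathrm{sc}}\to[G,G]$ be the simply connected cover with kernel $\pi_1([G,G])$. The key classical inputs are the following.
\begin{enumerate}
\item Central extensions of a torus by $\bG_m$ split, since commutative extensions of tori by tori split and a central extension of a torus is commutative, so $\M(\text{torus})=0$.
\item Central extensions of a simply connected semisimple group split, by the universality of the simply connected cover, so $\M(\wt{G}_{\mathrm{sc}})=0$.
\end{enumerate}
Pulling back to $\wt{G}:=\wt{G}_{\mathrm{sc}}\times Z(G)^\circ$, every central extension becomes split. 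A splitting is unique up to $\Hom(\wt{G},\bG_m)$, so the extension of $G$ is recovered as $(\wt{G}\times\bG_m)/\Gamma$, where $\Gamma=\ker(\wt{G}\to G)$ embeds via a character of $\Gamma$ that extends to $\wt{G}$ modulo choices. This yields $\M(G)\cong\mathrm{coker}(X^*(\wt{G})\to X^*(\Gamma))$. Since $X^*(\wt{G})=X^*(Z(G)^\circ)$, the same cokernel computation with \eqref{eqn:char-res-map} gives $X^*(\pi_1([G,G]))$. The identical argument applied to $\Pic$, using Sansuc's exact sequence for $\wt{G}\to G$ and $\Pic(\wt{G})=0$, gives $\Pic(G)\cong\mathrm{coker}(X^*(\wt{G})\to X^*(\Gamma))$, and this is compatible with the forgetful map. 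Functoriality in $G$ is clear because every step is natural.

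\textbf{Main obstacle.} The hardest step is the identification of the cokernel $X^*(\Gamma)/X^*(Z(G)^\circ)$ with $X^*(\pi_1([G,G]))$. This requires checking that $\pi_1([G,G])$ sits as the appropriate subgroup of $\Gamma$ and that the restriction from $X^*(Z(G)^\circ)$ kills exactly the complementary part. I would treat this by a snake lemma on the exact sequences of character groups.
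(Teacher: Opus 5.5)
\textbf{Gap: your computation only covers reductive $G$.} Your argument is essentially complete for connected \emph{reductive} $G$. The paper gives no proof of this statement; it only cites Kumar--Neeb and Rosengarten, so your structural argument is an independent route. However, the proposition is stated for an arbitrary connected linear algebraic group, and the paper uses it in that generality. Lemma~\ref{lem:sch-cov-conn-red} invokes it to get $\M(\rmR_uG)=0$, and to conclude that $\M(G^{\red})\to\M(G)$ is an isomorphism for non-reductive $G$.

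Your step ``Computing via extensions'' rests on two things that require reductivity: the decomposition $G=([G,G]\times Z(G)^\circ)/Z$ of \eqref{eqn:der-gp-ses}, and a simply connected cover of $[G,G]$. For example, for a Borel subgroup $B\subset\SL_2$ one has $Z(B)^\circ=1$ and $[B,B]$ unipotent, so $[B,B]\cdot Z(B)^\circ\neq B$. Your first link $\M(G)\cong\Pic(G)$ (Rosenlicht plus $\Pic(G\times G)\cong\Pic(G)\oplus\Pic(G)$) does not depend on reductivity, but it computes neither side.

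The missing reduction is short:
\begin{enumerate}
\item Take a Levi decomposition $G=\rmR_uG\rtimes L$. As a variety $G\cong\A^n\times L$, so $\Pic(G)\cong\Pic(L)$.
\item Pullback $\M(L)\to\M(G)$ along $G\to G/\rmR_uG\cong L$ is split injective, split by restriction to $L\subset G$.
\item It is also surjective. If $E$ is a central extension of $G$ by $\bG_m$, the preimage of $\rmR_uG$ in $E$ is connected nilpotent with maximal torus $\bG_m$. Hence it equals $\bG_m\times U''$, with $U''$ its set of unipotent elements. $U''$ is characteristic, hence normal in $E$, and $E$ is pulled back from the central extension $E/U''$ of $L$.
\item Finally $[G,G]\cong\rmR_u([G,G])\rtimes[L,L]$, so $\pi_1([G,G])\cong\pi_1([L,L])$.
\end{enumerate}

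\textbf{Smaller points.} Functoriality is not ``clear'' from your cokernel description, because $Z(G)^\circ$ is not functorial in $G$: a maximal torus $T\hookrightarrow G$ does not send $Z(T)^\circ=T$ into $Z(G)^\circ$. Describe the map $\M(G)\to X^*(\pi_1([G,G]))$ intrinsically instead:
\begin{enumerate}
\item pull a central extension back to the simply connected cover $\wt{G}_{\mathrm{sc}}$ of $[G,G]$;
\item take its unique splitting, which is unique because $\wt{G}_{\mathrm{sc}}$ has no nontrivial characters;
\item restrict this splitting to $\pi_1([G,G])$, where it lands in $\bG_m$.
\end{enumerate}
This is manifestly natural and agrees with your cokernel. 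Naturality is used in the paper, e.g.\ in Lemma~\ref{lem:sch-cov-conn-red} and Corollary~\ref{cor:Ze-cov-simp-conn-der}.

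To deduce $\M(\wt{G}_{\mathrm{sc}}\times Z(G)^\circ)=0$ from the vanishing on each factor, you also need the mixed term of Lemma~\ref{lem:sch-prod} to vanish. It does, since both factors are connected.

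Your stated ``main obstacle'' is routine. $\Gamma$ is the preimage of $[G,G]\cap Z(G)^\circ$ in $\wt{G}_{\mathrm{sc}}$, so it is an extension of $[G,G]\cap Z(G)^\circ$ by $\pi_1([G,G])$. The image of $X^*(Z(G)^\circ)$ in $X^*(\Gamma)$ is exactly $X^*([G,G]\cap Z(G)^\circ)$, by surjectivity of \eqref{eqn:char-res-map}. The quotient is then $X^*(\pi_1([G,G]))$, because $X^*$ is exact on diagonalizable groups.
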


\begin{atom}
From this, we deduce the following finiteness result, which will allow us to construct Schur coverings in Proposition~\ref{prop:schur-cov}:
\end{atom}

\begin{corollary}
\label{cor:M-fin}
For any linear algebraic group $G$, the group $\M(G)$ is finite.
\end{corollary}

\begin{proof}
Consider the restriction homomorphism
\begin{equation}
\label{eqn:M-res}
\M(G)\to\M(G^\circ).
\end{equation}
By Proposition~\ref{prop:kumar-neeb}, the latter is finite, so it suffices to show that the kernel of \eqref{eqn:M-res} is finite.

Central extensions in the kernel of \eqref{eqn:M-res} are in bijection with short exact sequences
\begin{equation}
\label{eqn:ses-ker-M}
1\to\bG_m\times G^\circ\to G'\to\pi_0(G)\to 1
\end{equation}
such that the outer action
\begin{equation*}
\pi_0(G)\to\Out(\bG_m\times G^\circ)
\end{equation*}
is of the form
\begin{equation}
\label{eqn:out-matrix}
\begin{pmatrix}
\const_{\id_{\bG_m}}&c\\
0&\psi_G
\end{pmatrix},
\end{equation}
where $\psi_G$ is the original outer action of the extension
\begin{equation*}
1\to G^\circ\to G\to\pi_0(G)\to 1,
\end{equation*}
and $c\colon\pi_0(G)\to X^*(G^\circ)$ is a $1$-cocycle (the action of $\pi_0(G)$ on $X^*(G^\circ)$ is given by pullback along $\psi_G$). Moreover, the short exact sequences \eqref{eqn:ses-ker-M} are considered up to splittings of the extension
\begin{equation*}
1\to\bG_m\to G'^\circ\to G^\circ\to 1,
\end{equation*}
i.e., up to conjugation of \eqref{eqn:out-matrix} by
\begin{equation}
\label{eqn:conj-out-matrix}
\begin{pmatrix}
\id_{\bG_m}&a\\
0&\id_{G^\circ}
\end{pmatrix}
\end{equation}
for $a\in X^*(G^\circ)$. This operation corresponds to subtracting the coboundary corresponding to $a$ from $c$; thus, such outer actions are classified by the cohomology group
\begin{equation}
\label{eqn:H1-ses}
\H^1(\pi_0(G),X^*(G^\circ)).
\end{equation}

Since $\pi_0(G)$ is finite, \eqref{eqn:H1-ses} is torsion; moreover, $X^*(G^\circ)$ is a finite-rank lattice, and therefore \eqref{eqn:H1-ses} is finitely generated, hence finite. Finally, since $\bG_m\times G^\circ$ is linear and $k$ is of characteristic $0$, the set of short exact sequences \eqref{eqn:ses-ker-M} corresponding to any element of \eqref{eqn:H1-ses} is finite by \cite[Thm.~4.1]{arteche}, and so the kernel of \eqref{eqn:M-res} is finite, as desired.
\end{proof}

\begin{atom}
We now recall several useful criteria for computing the Schur multipliers of products, semidirect products, and central extensions, respectively. All of these statements are well-known in the case of finite groups (see for instance \cite[\S2]{hatui-kakkar-yadav}).
\end{atom}

\begin{notation}
Given linear algebraic groups $G,H$, we let $X^*(G,H)$ denote the group of \emph{bimultiplicative} morphisms $G\times H\to\bG_m$ (which in particular, necessarily factor through the abelianizations $G^\ab\times H^\ab$). Equivalently, we have
\begin{equation*}
\Hom_{\Gp}(H,X^*(G))\cong X^*(G,H)\cong\Hom_{\Gp}(G,X^*(H)),
\end{equation*}
i.e., the group of homomorphisms from one group to the Pontryagin dual of the other.
\end{notation}

\begin{lemma}
\label{lem:sch-prod}
Let $G,H$ be linear algebraic groups. There is a functorial isomorphism
\begin{equation}
\label{eqn:sch-prod}
\M(G\times H)\simeq\M(G)\times\M(H)\times X^*(G,H).
\end{equation}
In particular, if $|\pi_0(G^\ab)|$ and $|\pi_0(H^{\ab})|$ are coprime (e.g., if either $G$ or $H$ is connected), then
\begin{equation*}
\M(G\times H)\simeq\M(G)\times\M(H).
\end{equation*}
\end{lemma}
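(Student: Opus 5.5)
Via the equivalence of Proposition~\ref{prop:schur-mult-bij-cent-ext}, I will work with central extensions of $G\times H$ by $\bG_m$ rather than cocycles. The map from right to left is explicit. A pair $(\cC,\alpha)\in\Coc(G)$, $(\cC',\alpha')\in\Coc(H)$ gives the exterior product $\cC\boxtimes\cC'$ on $G\times H$, with the evident isomorphism $\alpha\boxtimes\alpha'$ after reordering factors. A bimultiplicative $\beta\colon G\times H\to\bG_m$ gives the trivial line bundle $\O_{G\times H}$, with $\alpha_\beta$ equal to multiplication by the function $((g,h),(g',h'))\mapsto\beta(g',h)$. The associativity condition \eqref{eqn:coc-assoc} for $\alpha_\beta$ reduces to bimultiplicativity of $\beta$, exactly as for the classical $2$-cocycle $\beta(g',h)$. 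Taking tensor products, these combine into a homomorphism $\M(G)\times\M(H)\times X^*(G,H)\to\M(G\times H)$. This map is visibly functorial for pairs of homomorphisms $G'\to G$, $H'\to H$.

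For the inverse, I restrict along the inclusions $G,H\hookrightarrow G\times H$. This gives a split surjection $\M(G\times H)\to\M(G)\times\M(H)$, whose splitting is the exterior product above. It remains to identify the kernel with $X^*(G,H)$. Let $1\to\bG_m\to E\to G\times H\to 1$ be a central extension whose restrictions to $G$ and to $H$ are trivial, and choose splittings $s\colon G\to E$ and $t\colon H\to E$. The commutator $(g,h)\mapsto s(g)t(h)s(g)^{-1}t(h)^{-1}$ lands in $\bG_m$, since $G$ and $H$ commute in $G\times H$. It is a morphism of varieties, and it is bimultiplicative by the standard commutator identities for central extensions. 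Changing $s$ by a character of $G$, or $t$ by a character of $H$, does not alter this commutator, because $\bG_m$ is central. The resulting assignment $\ker\to X^*(G,H)$ is a homomorphism for Baer sum.

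To show this is an isomorphism, I compose it with the construction $\beta\mapsto(\O,\alpha_\beta)$. In the extension attached to $\beta$, the commutator of the canonical lifts of $g$ and $h$ is $\beta(g,h)^{\pm1}$, which gives one composite up to sign. Conversely, the map $(g,h)\mapsto s(g)t(h)$ is an isomorphism of varieties $G\times H\to E/\bG_m$-sections. That is, it trivializes the $\bG_m$-torsor $E\to G\times H$. In this trivialization the multiplication is twisted exactly by the commutator function, which gives the other composite. Hence the kernel is $X^*(G,H)$, and \eqref{eqn:sch-prod} follows. Functoriality holds because every construction is natural in $(G,H)$.

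For the coprimality statement, a bimultiplicative map factors through $G^\ab\times H^\ab$, and these are abelian linear algebraic groups. If $G^\ab$ has a torus part, a homomorphism from it to $X^*(H^\ab)$ must be trivial on the torus. This holds because the torus is connected and $X^*(H^\ab)$ is discrete, and it uses that $H^{\ab}$ has finitely generated character group. The same argument applies to unipotent parts. Thus $X^*(G,H)=\Hom(\pi_0(G^\ab),X^*(H^\ab))$. This group is in turn $\Hom(\pi_0(G^\ab),X^*(\pi_0(H^\ab)))$, since the image is torsion of order dividing $|\pi_0(G^\ab)|$, and torsion characters of $H^\ab$ factor through $\pi_0$. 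It therefore vanishes when the two orders are coprime. The main obstacle I expect is the geometric bookkeeping in the kernel computation: checking that the commutator is a morphism of varieties, and that the product trivialization exists on the scheme level rather than only on $k$-points. I would handle both by working with the $\bG_m$-torsor $\uSpec\cR_{(\cC,\alpha)}$ directly.
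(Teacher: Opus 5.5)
Your proposal is correct and follows essentially the paper's argument: the split surjection $\M(G\times H)\to\M(G)\times\M(H)$ via exterior products, followed by identifying the kernel with $X^*(G,H)$. Your commutator pairing $(g,h)\mapsto s(g)t(h)s(g)^{-1}t(h)^{-1}$ is exactly the homomorphism $c\colon H\to X^*(G)$ by which the paper describes a kernel extension as a semidirect product $(\bG_m\times G)\rtimes H$, your explicit cocycle $\beta(g',h)$ just makes the inverse map concrete, and your coprimality step matches the paper's reduction from $G^{\ab}$ to the finite component groups.
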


\begin{proof}
Consider the restriction map
\begin{equation*}
\M(G\times H)\to\M(G)\times\M(H).
\end{equation*}
Taking the external tensor product of cocycles immediately impies that it is split surjective. Moreover, a central extension in the kernel is clearly given by a semidirect product
\begin{equation}
\label{eqn:semidirect}
(\bG_m\times G)\rtimes H
\end{equation}
such that the action
\begin{equation*}
H\to\Aut(\bG_m\times G)
\end{equation*}
is of the form
\begin{equation}
\label{eqn:aut-matrix}
\begin{pmatrix}
\const_{\id_{\bG_m}}&c\\
0&\const_{\id_G}
\end{pmatrix},
\end{equation}
where $c\colon H\to X^*(G)$ is a homomorphism. Since the group of homomorphisms of the form \eqref{eqn:aut-matrix} is abelian, quotienting by splittings of the restriction of \eqref{eqn:semidirect} to $G$ (as in \eqref{eqn:conj-out-matrix}) does nothing. Thus, we obtain \eqref{eqn:sch-prod}.

To show that the latter condition implies that $X^*(G,H)$ is trivial, recall that a commutative linear algebraic group is the product of its semisimple and unipotent parts. Thus, we may assume that $G^\ab$ is the product of a torus and a finite abelian group. Since any homomorphism from $H$ to a free group is trivial, we may assume that $G^\ab$ is finite. Thus, it suffices to classify homomorphisms $\pi_0(H^\ab)\to X^*(\pi_0(G^\ab))$, which are all trivial if $|\pi_0(G^\ab)|$ and $|\pi_0(H^\ab)|$ are coprime.
\end{proof}

\begin{lemma}
\label{lem:coc-rtimes}
Let $G\rtimes\Gamma$ be a semidirect product, with either $\Gamma$ finite or $X^*(G)$ trivial.\footnote{In the latter case, we interpret the group cohomology on either end as the trivial group.} Then we have an exact sequence
\begin{equation*}
0\to\H^1(\Gamma,X^*(G))\to\ker\big(\M(G\rtimes\Gamma)\twoheadrightarrow\M(\Gamma)\big)\to\M(G)^\Gamma\to\H^2(\Gamma,X^*(G)),
\end{equation*}
which is suitably functorial in both $G$ and $\Gamma$.
\end{lemma}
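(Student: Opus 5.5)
We will mimic the proof of Lemma~\ref{lem:sch-prod} and of Corollary~\ref{cor:M-fin}, interpreting everything through Proposition~\ref{prop:schur-mult-bij-cent-ext} as central extensions of $G\rtimes\Gamma$ by $\bG_m$. First, the restriction along the inclusion $\Gamma\hookrightarrow G\rtimes\Gamma$ is split by pullback along the projection $G\rtimes\Gamma\to\Gamma$, so $\M(G\rtimes\Gamma)\to\M(\Gamma)$ is split surjective. The kernel $K$ therefore consists of central extensions $1\to\bG_m\to E\to G\rtimes\Gamma\to1$ whose restriction to $\Gamma$ splits. Choosing such a splitting, $E$ becomes a semidirect product $E_G\rtimes\Gamma$, where $E_G$ is the restriction of $E$ to $G$, with $\Gamma$ acting on $E_G$ by automorphisms covering the given action on $G$ and fixing $\bG_m$ pointwise.

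Next, define the map $K\to\M(G)^\Gamma$ by $E\mapsto[E_G]$; its image is $\Gamma$-invariant because $\Gamma$ acts on $E_G$ compatibly with its action on $G$. Exactness at $K$: if $E_G$ splits, then $E_G\cong\bG_m\times G$, and an action of $\Gamma$ on $\bG_m\times G$ of the form above is given by a matrix as in \eqref{eqn:out-matrix} (now an honest action, not merely an outer one), i.e.\ by a crossed homomorphism $c\colon\Gamma\to X^*(G)$; changing the splitting of $E_G$ by $a\in X^*(G)$ conjugates as in \eqref{eqn:conj-out-matrix} and alters $c$ by a coboundary, while changing the splitting of $\Gamma$ is absorbed similarly. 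This yields an injection $\H^1(\Gamma,X^*(G))\hookrightarrow K$ whose image is exactly the kernel of $K\to\M(G)^\Gamma$; injectivity amounts to checking that a trivialization of $E$ as an extension forces $c$ to be a coboundary, which is the same bookkeeping as in Lemma~\ref{lem:sch-prod}.

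Finally, the obstruction map $\M(G)^\Gamma\to\H^2(\Gamma,X^*(G))$: given $E_G$ with $\gamma^*E_G\cong E_G$ for all $\gamma$, choose isomorphisms $\phi_\gamma\colon E_G\to E_G$ covering $\gamma$ and identity on $\bG_m$; these are unique up to $\Aut$ of the extension, which is $X^*(G)$ (automorphisms of a central extension of $G$ by $\bG_m$ trivial on both ends are characters of $G$). Then $\phi_\gamma\phi_{\gamma'}\phi_{\gamma\gamma'}^{-1}\in X^*(G)$ is a $2$-cocycle whose class is independent of choices, and it vanishes iff the $\phi_\gamma$ can be rectified to an action, i.e.\ iff $E_G$ extends to $E_G\rtimes\Gamma\in K$. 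When $X^*(G)$ is trivial the $\phi_\gamma$ are unique so the ends vanish; when $\Gamma$ is finite we need it only to make sense of the algebraic semidirect product and group cohomology with discrete coefficients. Functoriality in $G$ and $\Gamma$ follows since all constructions are natural under pullback. The main obstacle is the careful verification that the choices of splitting of $\Gamma$ in $E$ do not introduce extra ambiguity beyond coboundaries, i.e.\ exactness at the $\H^1$ term; I would handle this by noting that two splittings of $\Gamma$ differ by a $1$-cocycle $\Gamma\to\bG_m$ which is absorbed since $\M(\Gamma)$-part has been quotiented off and $\bG_m$ is central with trivial action.
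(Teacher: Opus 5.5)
Your proposal is correct and follows essentially the same route as the paper: you split over $\Gamma$ to write $E\cong E_G\rtimes\Gamma$ and deduce $\Gamma$-invariance of $[E_G]$, classify the kernel of restriction to $G$ by crossed homomorphisms $\Gamma\to X^*(G)$ modulo coboundaries, and obtain the obstruction in $\H^2(\Gamma,X^*(G))$ from lifts of the $\Gamma$-action to automorphisms of the central extension. The paper packages that obstruction as the class of the extension $A\times_{\Aut(G)}\Gamma$, whereas you write it as an explicit $2$-cocycle. One small correction to your last remark: the $\Gamma$-action on $E_G$ does not depend on the chosen splitting simply because two splittings differ by a homomorphism $\Gamma\to\bG_m$ valued in the center, so they induce the same conjugation action; the quotient by $\M(\Gamma)$ plays no role there.
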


\begin{proof}
We begin by showing that the natural restriction map
\begin{equation}
\label{eqn:rtimes-res}
\ker\big(\M(G\rtimes\Gamma)\twoheadrightarrow\M(\Gamma)\big)\to\M(G)
\end{equation}
lands in $\M(G)^\Gamma$. Suppose we have a central extension
\begin{equation}
\label{eqn:cent-ext-rtimes}
1\to\bG_m\to G'\to G\rtimes\Gamma\to 1
\end{equation}
whose pullback to $\Gamma$ is split. Then the projection $G'\to G\rtimes\Gamma\to\Gamma$ admits a section, and we obtain an adjoint action of $\Gamma$ on $G'$ lifting that on $G\rtimes\Gamma$. In particular, this action fixes $G'\times_{G\rtimes\Gamma}G$, and so the restriction of \eqref{eqn:cent-ext-rtimes} to $\M(G)$ is fixed by $\Gamma$.

Moreover, as in \eqref{eqn:semidirect}, the kernel of \eqref{eqn:rtimes-res} is given by semidirect products
\begin{equation*}
(\bG_m\times G)\rtimes\Gamma
\end{equation*}
with outer action as in \eqref{eqn:out-matrix}; as in \eqref{eqn:H1-ses}, these are classified by the group cohomology $\H^1(\Gamma,X^*(G))$.

To construct the final map, let
\begin{equation}
\label{eqn:cent-ext-Gamma-fix}
1\to\bG_m\to G'\to G\to 1
\end{equation}
be a central extension fixed under the action of $\Gamma$, and consider the group $A$ of automorphisms of this central extensions, i.e., commutative diagrams
\begin{equation*}
\begin{tikzcd}
1\arrow{r}&\bG_m\arrow{r}\arrow[d,equals]&G'\arrow{r}\arrow[d,"\sim" vert]&G\arrow{r}\arrow[d,"\sim" vert]&1\\
1\arrow{r}&\bG_m\arrow{r}&G'\arrow{r}&G\arrow{r}&1.
\end{tikzcd}
\end{equation*}
There is an evident exact sequence
\begin{equation}
\label{eqn:aut-cent-ext-ses}
1\to X^*(G)\to A\to\Aut(G),
\end{equation}
and since $\Gamma$ fixes \eqref{eqn:cent-ext-Gamma-fix}, pulling back along $\Gamma\to\Aut(G)$ gives a short exact sequence
\begin{equation}
\label{eqn:Gamma-ses}
1\to X^*(G)\to A\fibprod{\Aut(G)}\Gamma\to\Gamma\to 1
\end{equation}
with the obvious outer action of $\Gamma$ on $X^*(G)$. Thus, \eqref{eqn:Gamma-ses} represents a class in $\H^2(\Gamma,X^*(G))$, and it is straightforward to verify that it depends only on the isomorphism class of \eqref{eqn:cent-ext-Gamma-fix}.

Suppose now that \eqref{eqn:Gamma-ses} is a semi-direct product, i.e., we have a splitting homomorphism $\Gamma\to A$. Then \eqref{eqn:cent-ext-Gamma-fix} lifts to a central extension
\begin{equation*}
1\to\bG_m\to G'\rtimes\Gamma\to G\rtimes\Gamma\to 1,
\end{equation*}
whose restriction to $\M(\Gamma)$ is evidently trivial.

We leave the verification of functoriality for maps $G'\to G$ commuting with the $\Gamma$-actions, and for maps  $\Gamma'\to\Gamma$, as an exercise.
\end{proof}

\begin{lemma}
\label{lem:M-central}
Let $Z\subset G$ be central. Then we have a functorial exact sequence
\begin{equation*}
0\to X^*([G,G]\cap Z)\to\M(G/Z)\to\M(G)\to X^*(G,Z).
\end{equation*}
\end{lemma}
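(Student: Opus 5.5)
We work throughout with the equivalence $\Coc(G)\simeq\{\text{central extensions of }G\text{ by }\bG_m\}$ of Proposition~\ref{prop:schur-mult-bij-cent-ext}, and read off the four terms of the sequence from extension data. The maps are as follows.
\begin{enumerate}
\item The middle map $\M(G/Z)\to\M(G)$ is pullback along the quotient $q\colon G\to G/Z$.
\item The last map $\M(G)\to X^*(G,Z)$ is a commutator pairing. Given $1\to\bG_m\to G'\to G\to 1$, the commutator $(g,z)\mapsto[\tilde g,\tilde z]$ of lifts is independent of the lifts and lands in $\bG_m$, because $z$ is central in $G$. This defines a morphism $G\times Z\to\bG_m$, and the identities $[xy,z]=x[y,z]x^{-1}[x,z]$ together with centrality of the values show that it is bimultiplicative.
\item The first map $X^*([G,G]\cap Z)\to\M(G/Z)$ is a connecting map. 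Given a character $\chi$ of $[G,G]\cap Z$, we first extend it (non-canonically) to a character $\tilde\chi$ of $Z$; this is possible since $Z$ is diagonalizable up to its unipotent part, and $X^*$ is exact on diagonalizable groups. We then form the pushout extension $1\to\bG_m\to (G\times\bG_m)/\{(z,\tilde\chi(z)^{-1})\}\to G/Z\to 1$.
\end{enumerate}

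We check exactness at $\M(G)$ first. If $q^*$ of an extension $E$ of $G/Z$ is given, then over $Z$ the extension $q^*E\times_G Z=\bG_m\times Z$ splits and is central in $q^*E$, so the commutator pairing vanishes. Conversely, suppose $G'$ has trivial pairing. Then the preimage $Z'$ of $Z$ in $G'$ is central and abelian, and is an extension of $Z$ by $\bG_m$. Since $Z$ is commutative linear in characteristic $0$, such an extension splits as algebraic groups; this is the step where care is needed, because abelian extensions of a diagonalizable group by a torus split, and of a unipotent group by $\bG_m$ split too. Any splitting $s\colon Z\to Z'$ is central, and $G'/s(Z)$ is an extension of $G/Z$ by $\bG_m$ whose pullback recovers $G'$.

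Next we check exactness at $\M(G/Z)$. The kernel of $q^*$ consists of extensions $E$ of $G/Z$ with $q^*E\simeq \bG_m\times G$. Choosing such a trivialization gives a homomorphism $G\to E$ lifting $q$. On $Z$ it induces a character $Z\to\bG_m$, and changing the trivialization alters this character by the restriction of a character of $G$. So the kernel is $X^*(Z)/\mathrm{im}\,X^*(G)$. The step I expect to be the main obstacle is identifying this quotient with $X^*([G,G]\cap Z)$, that is, showing
\[
X^*(Z)/\mathrm{im}\,X^*(G)\cong X^*([G,G]\cap Z).
\]
By exactness of $X^*$ on diagonalizable groups (after discarding unipotent parts, which carry no characters), this reduces to the exactness of
\[
X^*(G/[G,G])\to X^*(Z)\to X^*([G,G]\cap Z)\to 0,
\]
which is dual to the exact sequence $1\to [G,G]\cap Z\to Z\to G^{\ab}$ with diagonalizable image. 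This matches the connecting map constructed in item 3 above. For the injectivity of $X^*([G,G]\cap Z)\to\M(G/Z)$, note that a trivialization of the pushout extension restricts on $G$ to a character of $G$ extending $\tilde\chi^{-1}$ up to the ambiguity above, and such a character is trivial on $[G,G]$, which forces $\chi=1$.

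Finally, functoriality in the pair $(G,Z)$ is clear from the naturality of each of the three constructions above, so we leave it as routine.
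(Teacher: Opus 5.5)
Your proposal is correct and follows essentially the same route as the paper. You identify the kernel of $\M(G/Z)\to\M(G)$ with $X^*(Z)/\mathrm{im}\,X^*(G)\cong X^*([G,G]\cap Z)$ via lifts $G\to E$ of the quotient map, define the last map as the commutator (conjugation) pairing of the preimage $Z'$ of $Z$ on $G'$, and get exactness at $\M(G)$ by splitting the commutative extension $Z'$ of $Z$ by $\bG_m$ and passing to $G'/s(Z)$. The only slip is cosmetic: the image of $Z$ in $G^{\ab}$ need not be diagonalizable, but discarding unipotent parts, as you do, is exactly what makes the character-extension step valid.
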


\begin{proof}
Suppose we are given a central extension
\begin{equation}
\label{eqn:Z-cent-ext}
1\to\bG_m\to G'\to G/Z\to 1
\end{equation}
whose pullback to $G$ is trivial. Then we have a short exact sequence
\begin{equation*}
1\to Z\to\bG_m\times G\to G'\to 1,
\end{equation*}
and we see that \eqref{eqn:Z-cent-ext} is determined by an element of $X^*(Z)$. Moreover, two such elements determine isomorphic central extensions if and only if they differ by an element of the image of $X^*(G)\to X^*(Z)$. The exact sequence
\begin{equation*}
1\to[G,G]\cap Z\to Z\to G^\ab
\end{equation*}
of abelian groups then shows that this quotient is isomorphic to $X^*([G,G]\cap Z)$.

To construct the final map, let
\begin{equation}
\label{eqn:G-cent-ext}
1\to\bG_m\to G'\to G\to 1
\end{equation}
be a central extension, let
\begin{equation}
\label{eqn:pullback-Z-cent-ext}
1\to\bG_m\to Z'\to Z\to 1
\end{equation}
be its pullback to $Z$, and let $A$ be as in \eqref{eqn:aut-cent-ext-ses}. The action of $Z'$ on $G'$ via inner automorphisms gives a homomorphism $Z'\to A$ that is trivial on $\bG_m$; moreover, the induced homomorphism $Z\to\Aut(G)$ is trivial by assumption, so we obtain a homomorphism $Z\to X^*(G)$, i.e., an element of $X^*(G,Z)$. It is easy to see that this element depends only on the isomorphism class of \eqref{eqn:G-cent-ext}.

Finally, suppose that this element of $X^*(G,Z)$ is trivial. Then $Z'\subset G'$ is central, and in particular, commutative. Since any injective homomorphism from a torus to a commutative linear algebraic group admits a retract, the extension \eqref{eqn:pullback-Z-cent-ext} is split, and so $Z$ lifts to a central subgroup of $G'$. In particular, \eqref{eqn:G-cent-ext} is pulled back from the central extension
\begin{equation*}
1\to\bG_m\to G'/Z\to G/Z\to 1,
\end{equation*}
as desired.

We leave the verification of functoriality as an exercise.
\end{proof}

\subsection{Schur coverings}

\begin{atom}
We now turn to the construction of Schur coverings of linear algebraic groups; these are (non-canonical) central extensions of $G$ by $\M(G)$ which trivialize all cocycles of $G$.
\end{atom}

\begin{proposition}
\label{prop:schur-cov}
The group $G$ admits a \emph{Schur covering}, i.e., there exists a central extension\footnote{Note that, unlike some authors, we do not require $X^*(\M(G))$ to be contained in the derived subgroup of $G^\sch$.}
\begin{equation}
\label{eqn:G-sch}
1\to X^*(\M(G))\to G^\sch\xrightarrow{p_{\sch}}G\to 1
\end{equation}
such that the map
\begin{equation}
\label{eqn:sch-cov-pullback}
\M(G)\to\M(G^\sch)
\end{equation}
is trivial. Moreover, we have a canonical $\Rep(G)$-linear decomposition
\begin{equation}
\label{eqn:sch-decomp}
\Rep(G^\sch)\simeq\bigoplus_{[(\cC,\alpha)]\in\M(G)}\Rep(G)^{(\cC,\alpha)},
\end{equation}
where the summand $\Rep(G)^{(\cC,\alpha)}$ is equivalent to the full subcategory of representations on which $X^*(\M(G))$ acts through the character given by evaluation at $[(\cC,\alpha)]$.
\end{proposition}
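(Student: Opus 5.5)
Since $\M(G)$ is finite by Corollary~\ref{cor:M-fin}, we build $G^\sch$ as a fiber product of the finite central extensions from Proposition~\ref{prop:schur-mult-bij-cent-ext}. First write $\M(G)\cong\prod_{j=1}^r\Z/d_j\Z$ as a product of cyclic groups, and choose cocycles $(\cC_j,\alpha_j)$ whose classes generate the factors. Each class has finite order $d_j$, so \eqref{eqn:coc-cent-ext-fin} provides central extensions
\begin{equation*}
1\to X^*(\angles{[(\cC_j,\alpha_j)]})\to\overline{G}_{(\cC_j,\alpha_j)}\to G\to 1 .
\end{equation*}
Set $G^\sch:=\overline{G}_{(\cC_1,\alpha_1)}\times_G\cdots\times_G\overline{G}_{(\cC_r,\alpha_r)}$. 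This is a central extension of $G$ by $\prod_jX^*(\Z/d_j\Z)\cong X^*(\M(G))$, which gives \eqref{eqn:G-sch}.

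Triviality of \eqref{eqn:sch-cov-pullback} follows because pullback is a group homomorphism, so it suffices to kill the generators. The class $[(\cC_j,\alpha_j)]$ dies on $\overline{G}_{(\cC_j,\alpha_j)}$, by the descended version of the second assertion of Proposition~\ref{prop:schur-mult-bij-cent-ext}: the tautological trivialization $\overline{p}^*\cC\simeq\overline{\cR}_{(\cC,\alpha)}\otimes\cC\simeq\overline{\cR}_{(\cC,\alpha)}$ is compatible with $\alpha$. Since $G^\sch\to G$ factors through each $\overline{G}_{(\cC_j,\alpha_j)}$, each generator dies on $G^\sch$.

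For \eqref{eqn:sch-decomp}, the central subgroup $A:=X^*(\M(G))$ is finite diagonalizable and $k$ has characteristic $0$. Hence $\Rep(G^\sch)$ splits $\Rep(G)$-linearly into isotypic pieces indexed by $X^*(A)\cong\M(G)$, by Pontryagin duality for finite abelian groups. Under the fiber product, the piece indexed by $\prod_j[(\cC_j,\alpha_j)]^{n_j}$ is $\bigotimes_{\Rep(G)}\Rep(\overline{G}_{(\cC_j,\alpha_j)})_{n_j}$. Using \eqref{eqn:rep-G-coc-twist-comp} and \eqref{eqn:rep-G-coc-twist-mult}, this is identified with $\Rep(G)^{(\cC,\alpha)}$ for the product class. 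For the factor computation I would use that representations of a fiber product of central extensions with a fixed central character are the relative tensor product over $\Rep(G)$ of the corresponding graded pieces. This follows from $\QC(\B G^\sch)\simeq\QC(\B\overline{G}_1)\otimes_{\QC(\B G)}\cdots$, which holds since these classifying stacks are perfect \cite{bfn}.

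The main obstacle is canonicity of this last identification. The construction of $G^\sch$ depends on the choice of generators, so one must check that the summands are independent of it. Two cocycles in the same class give equivalent twists via any isomorphism, and such isomorphisms are unique up to the characters $X^*(G)$. These act on $\Rep(G)^{(\cC,\alpha)}$ by $\Rep(G)$-linear autoequivalences, and I would check that they are trivializable, which would make each summand canonical.
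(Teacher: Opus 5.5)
Your argument is the paper's. You form the fiber product over $G$ of the finite extensions $\overline{G}_{(\cC_j,\alpha_j)}$ from Proposition~\ref{prop:schur-mult-bij-cent-ext}, get triviality of \eqref{eqn:sch-cov-pullback} from the factorizations $G^{\sch}\to\overline{G}_{(\cC_j,\alpha_j)}\to G$, and get \eqref{eqn:sch-decomp} from $\Rep(G^{\sch})\simeq\Rep(\overline{G}_{(\cC_1,\alpha_1)})\otimes_{\Rep(G)}\cdots\otimes_{\Rep(G)}\Rep(\overline{G}_{(\cC_r,\alpha_r)})$ together with \eqref{eqn:rep-G-coc-twist-mult} and \eqref{eqn:rep-G-coc-twist-comp}. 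Your choice of generators that split $\M(G)$ as the direct product of the cyclic groups they generate is actually needed, and is only implicit in the paper's ``a finite set of generators''. With dependent generators the kernel $\prod_jX^*(\angles{[(\cC_j,\alpha_j)]})$ is strictly larger than $X^*(\M(G))$.

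You should drop your final paragraph: the check you propose there fails, and the statement does not need it. An automorphism of $(\cC,\alpha)$ is a character $\psi\in X^*(G)$, and it acts on $\Rep(G)^{(\cC,\alpha)}$ by $-\otimes\psi$. This functor is not isomorphic to the identity for nontrivial $\psi$; already for the trivial class it sends the trivial representation $k$ to $\psi$.

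Independence of the choices cannot hold in any strong sense anyway, because $G^{\sch}$ itself is not unique up to isomorphism. For $G=\PGL_2\times\bG_m$, both $\SL_2\times\bG_m$ and $\GL_2$ (via $g\mapsto([g],\det g)$) are Schur coverings, and they are not isomorphic.

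The canonicity in the statement refers to a fixed $G^{\sch}$. The decomposition of $\Rep(G^{\sch})$ by central characters $c\in\M(G)\cong X^*(X^*(\M(G)))$ is canonical. The $c$-summand is canonically $\Rep(c_*G^{\sch})_1$, i.e.\ the twist by the cocycle attached to the pushout $\bG_m$-extension $c_*G^{\sch}$; for your construction this cocycle lies in the class $c$. Comparing it with the twist by another representative of $c$ involves a choice, exactly as in defining $\Rep(G)^{(\cC,\alpha)}$ for a class rather than a cocycle, and nothing more is required.
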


\begin{proof}
Since $\M(G)$ is finite abelian by Corollary~\ref{cor:M-fin}, we may choose a finite set of generators $[(\cC_1,\alpha_1)],\ldots,[(\cC_r,\alpha_r)]$, each of finite order. By Proposition~\ref{prop:schur-mult-bij-cent-ext}, we have central extensions
\begin{equation*}
1\to X^*(\angles{[(\cC_i,\alpha_i)]})\to\overline{G}_{(\cC_i,\alpha_i)}\to G\to 1.
\end{equation*}
for each $i$. Thus, taking fiber products over $G$ gives a central extension
\begin{equation*}
1\to X^*(\angles{[(\cC_1,\alpha_1)]})\times\cdots\times X^*(\angles{[(\cC_r,\alpha_r)]})\to\overline{G}_{(\cC_1,\alpha_1)}\fibprod{G}\cdots\fibprod{G}\overline{G}_{(\cC_r,\alpha_r)}\to G\to 1,
\end{equation*}
which is precisely \eqref{eqn:G-sch}. Applying Proposition~\ref{prop:schur-mult-bij-cent-ext} to the factorizations
\begin{equation*}
\begin{tikzcd}
G^\sch\arrow[r,two heads]&[-1em]\overline{G}_{(\cC_i,\alpha_i)}\arrow[r,two heads,"p_{(\cC_i,\alpha_i)}"]&[1em]G
\end{tikzcd}
\end{equation*}
of $p_\sch$ then gives \eqref{eqn:sch-cov-pullback}. Finally, applying \eqref{eqn:rep-G-coc-twist-mult} and \eqref{eqn:rep-G-coc-twist-comp} to the decomposition
\begin{equation*}
\Rep(G^\sch)\simeq\Rep(\overline{G}_{(\cC_1,\alpha_1)})\tens{\Rep(G)}\cdots\tens{\Rep(G)}\Rep(\overline{G}_{(\cC_r,\alpha_r)})
\end{equation*}
yields \eqref{eqn:sch-decomp}.
\end{proof}

\begin{atom}
The following lemma allows us to construct ``small'' $(\cC,\alpha)$-representations of connected semisimple groups, and will be used in \S\ref{sec:noncomm-spr} to remove the simply-connectedness assumption for the noncommutative Springer resolution.
\end{atom}

\begin{lemma}
\label{lem:sch-min-rep}
If $G$ is connected and semisimple, then any Schur covering $G^\sch$ is isomorphic to the universal cover of $G$. Moreover, for each $(\cC,\alpha)\in\Coc(G)$, there exists a minuscule weight $\lambda$ of $G^\sch$ such that the irreducible $G^\sch$-representation $V_{\lambda}$ descends to a $(\cC,\alpha)$-representation of $G$.
\end{lemma}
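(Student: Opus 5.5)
The plan is to compare a Schur covering of the connected semisimple group $G$ with its universal cover $\wt{G}^{u}\to G$, whose kernel is $\pi_1(G)$, a finite central subgroup. By Proposition~\ref{prop:kumar-neeb} we have $\M(G)\cong X^*(\pi_1([G,G]))=X^*(\pi_1(G))$, so $X^*(\M(G))\cong\pi_1(G)$. Thus $G^\sch$ and $\wt{G}^u$ are both central extensions of $G$ by finite groups of the same order. The isomorphism will be produced in two steps: first show that $G^\sch$ is connected, then use the universal property of simply-connected covers.

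For connectedness, I would argue by contradiction. Suppose $G^\sch$ is disconnected. Then $(G^\sch)^\circ\to G$ is a connected isogeny whose kernel $K$ is a proper subgroup of $X^*(\M(G))$. By \eqref{eqn:sch-decomp}, the representations of $G^\sch$ break up according to characters of $X^*(\M(G))$, and each summand $\Rep(G)^{(\cC,\alpha)}$ is nonzero by Lemma~\ref{lem:any-coc-rep-gens} and the construction. Choose a nontrivial character $\chi$ of $X^*(\M(G))$ that is trivial on $K$. The corresponding cocycle $(\cC,\alpha)$ is then nontrivial in $\M(G)$. On the other hand, its pullback to $(G^\sch)^\circ$ is trivialized, and it factors through the quotient $(G^\sch)^\circ/K\cong G$, so it is already trivial on $G$. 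This contradiction is the delicate point, and I expect it to be the main obstacle.

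A cleaner route is to apply Proposition~\ref{prop:kumar-neeb} functorially to the isogeny $(G^\sch)^\circ\to G$. The pullback $\M(G)\to\M((G^\sch)^\circ)$ is dual to the map $\pi_1((G^\sch)^\circ)\to\pi_1(G)$. This pullback is zero, since it factors through $\M(G^\sch)$, and \eqref{eqn:sch-cov-pullback} says the map to $\M(G^\sch)$ is trivial. Hence $\pi_1((G^\sch)^\circ)\to\pi_1(G)$ is zero. For an isogeny, the map on $\pi_1$ is injective with cokernel the kernel of the isogeny. It follows that $\pi_1((G^\sch)^\circ)=0$, so $(G^\sch)^\circ$ is simply connected, and therefore $(G^\sch)^\circ\cong\wt{G}^u$. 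The kernel of $(G^\sch)^\circ\to G$ is then all of $\pi_1(G)$, which has the same order as $X^*(\M(G))$. Therefore $(G^\sch)^\circ=G^\sch$, proving the first assertion.

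For the second assertion, fix $(\cC,\alpha)$ and let $\chi\in X^*(\pi_1(G))=X^*(Z(G^\sch)\cap\ker)$ be the corresponding character, using \eqref{eqn:sch-decomp}. An irreducible $V_\lambda$ lies in the summand $\Rep(G)^{(\cC,\alpha)}$ exactly when the central character of $V_\lambda$ restricted to $\ker p_\sch$ equals $\chi$. The central character of $V_\lambda$ depends only on the class of $\lambda$ in $X^*(T^\sch)/\angles{\Phi}$, so it suffices to realize every class of this quotient by a minuscule weight. By the standard classification fact, every class in $\Lambda/\angles{\Phi}$ contains a unique dominant minuscule weight (or $0$). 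I would lift $\chi$ to a character of the full center $Z(G^\sch)\cong\Lambda/\angles{\Phi}$; such a lift exists because $\Lambda/\angles{\Phi}$ is the Pontryagin dual of $Z(G^\sch)$ and restriction of characters to a subgroup is surjective. Taking the minuscule representative $\lambda$ of this class, $V_\lambda$ has the required central character, and hence descends to a $(\cC,\alpha)$-representation.
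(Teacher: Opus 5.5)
Your proof is correct and follows the paper: the second assertion is exactly the paper's argument (lift the class along $\Lambda/\angles{\Phi}\twoheadrightarrow\Lambda/X^*(T)$ and take the minuscule representative), and for the first assertion, which the paper calls immediate from $X^*(\M(G))\simeq\pi_1(G)$, your functoriality argument on the isogeny $(G^\sch)^\circ\to G$ supplies the omitted details (it is the same argument the paper later uses in Lemma~\ref{lem:sch-cov-conn-red}). Drop the preliminary ``contradiction'' sketch: as written, its last inference does not follow, since a cocycle becoming trivial after pullback along an isogeny need not be trivial downstairs, and the cleaner route makes it unnecessary anyway.
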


\begin{proof}
By Proposition~\ref{prop:kumar-neeb}, we have $X^*(\M(G))\simeq\pi_1(G)$, from which the first assertion is immediate. For the latter, let $T\subset G$ be a maximal torus, and recall that
\begin{equation*}
\pi_1(G)\cong X^*(T)^\vee/\angles{\Phi^\vee},
\end{equation*}
where $\angles{\Phi^\vee}$ denotes the coroot lattice. Thus,
\begin{equation}
\label{eqn:fund-gp-dual}
\M(G)\cong X^*(\pi_1(G))\cong\Lambda/X^*(T),
\end{equation}
where $\Lambda$ denotes the abstract weight lattice (of the root system of $G$), and for any dominant $\lambda\in\Lambda^+$, the character of $\pi_1(G)$ on the $G^\sch$-representation $V_{\lambda}$ is given by the image of $\lambda$ in \eqref{eqn:fund-gp-dual}. Recall that $\Lambda/\angles{\Phi}$ is canonically isomorphic to the set of minuscule weights, where $\angles{\Phi}$ denotes the root lattice (namely, take the unique minimal dominant weight lifting any element of the former set). Since $\Lambda/\angles{\Phi}\twoheadrightarrow\Lambda/X^*(T)$, it suffices by \eqref{eqn:sch-decomp} to choose any lift of $[(\cC,\alpha)]$ to $\Lambda/\angles{\Phi}$.
\end{proof}

\begin{atom}
Finally, the following two lemmas will be used in \S\ref{sec:Ze-cov} to show that a certain covering group has simply-connected derived subgroup.
\end{atom}

\begin{lemma}
\label{lem:sch-cov-conn-red}
If $G$ is reductive (resp., connected), then so is any Schur covering $G^\sch$. When $G$ is both connected and reductive, $G^\sch$ has simply-connected derived subgroup.
\end{lemma}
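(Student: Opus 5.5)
The plan is to use the explicit construction of $G^\sch$ from Proposition~\ref{prop:schur-cov}. There $G^\sch$ is an iterated fiber product over $G$ of the central extensions $\overline{G}_{(\cC_i,\alpha_i)}$. Each of these is a central extension of $G$ by a finite group $X^*(\angles{[(\cC_i,\alpha_i)]})$, and the kernel $X^*(\M(G))$ of $G^\sch\to G$ is finite by Corollary~\ref{cor:M-fin}.

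\emph{Reductivity.} A linear algebraic group in characteristic $0$ is linearly reductive if and only if its unipotent radical is trivial. The unipotent radical of $G^\sch$ meets the finite kernel trivially, since a unipotent group in characteristic $0$ has no nontrivial finite subgroups. It therefore maps isomorphically onto a connected unipotent normal subgroup of $G$. That subgroup is trivial when $G$ is reductive, so $G^\sch$ is reductive.

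\emph{Connectedness.} I would proceed in three steps.
\begin{enumerate}
\item Suppose $G$ is connected and $G^\sch$ is not. Then $(G^\sch)^\circ$ still surjects onto $G$, with kernel $F:=X^*(\M(G))\cap (G^\sch)^\circ$, and $G^\sch\cong (G^\sch)^\circ\cdot X^*(\M(G))$ with $\pi_0(G^\sch)$ a nontrivial quotient of the finite abelian group $X^*(\M(G))$.
\item Choose a nontrivial character $\chi$ of $X^*(\M(G))$ that is trivial on $F$. It extends to a character of $G^\sch$ that is trivial on $(G^\sch)^\circ$, that is, a one-dimensional representation of $G^\sch$. Under the decomposition \eqref{eqn:sch-decomp}, this representation lies in the summand $\Rep(G)^{(\cC,\alpha)}$ with $[(\cC,\alpha)]=\chi\neq 0$.
\item Hence $(\cC,\alpha)$ admits a one-dimensional representation, namely a trivialization, which forces $[(\cC,\alpha)]=0$ in $\M(G)$. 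This is a contradiction, so $G^\sch$ is connected.
\end{enumerate}

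\emph{Simply-connected derived subgroup.} Let $G$ be connected reductive, so that $G^\sch$ is connected reductive by the above. Write $H:=[G^\sch,G^\sch]$, which maps onto $[G,G]$ with finite central kernel. Let $H^{sc}\to H$ be the universal cover. I would show that $H=H^{sc}$ by proving that $\pi_1(H)$ is trivial. By Proposition~\ref{prop:kumar-neeb}, $\M(G^\sch)\cong X^*(\pi_1(H))$, so it suffices to show $\M(G^\sch)=0$. This does not follow immediately from \eqref{eqn:sch-cov-pullback}, which only says that the restriction map $\M(G)\to\M(G^\sch)$ is zero.

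The intended route is to compare $\pi_1$'s, functorially in Proposition~\ref{prop:kumar-neeb}. The map $\M(G)\to\M(G^\sch)$ is dual to the map $\pi_1(H)\to\pi_1([G,G])$. Its vanishing means that $\pi_1(H)\to\pi_1([G,G])$ is zero. Now $H\to[G,G]$ is an isogeny, so the induced map on $\pi_1$ is injective, with cokernel the kernel of the isogeny. Injectivity together with the map being zero forces $\pi_1(H)=0$.

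The main obstacle is making the isogeny and $\pi_1$ bookkeeping precise. In particular, one must check that functoriality of Proposition~\ref{prop:kumar-neeb} identifies the pullback on $\M$ with the dual of $\pi_1$ of the derived isogeny. This requires verifying that $H\to[G,G]$ is surjective with finite kernel, which holds because $G^\sch\to G$ is a central isogeny of connected groups.
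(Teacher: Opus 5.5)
Your proof is correct. For reductivity and for the simply-connected derived subgroup it agrees with the paper. The key step there is the same as yours: functoriality in Proposition~\ref{prop:kumar-neeb} (exactly the bookkeeping you flag, and asserted in that proposition) makes the map on $\pi_1$ induced by the derived isogeny both zero and injective.

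The genuine difference is connectedness. The paper first reduces a connected $G$ to the connected reductive case by dividing out unipotent radicals. This requires checking that $G^\sch/\rmR_uG^{\sch,\circ}\to G/\rmR_uG$ is again a Schur covering, using structure results on unipotent radicals and Levi decompositions together with Lemma~\ref{lem:coc-rtimes}. It then works with $G^{\sch,\circ}$, shows $[G^{\sch,\circ},G^{\sch,\circ}]$ is the universal cover of $[G,G]$, and counts. The kernel of $p_\sch|_{G^{\sch,\circ}}$ has order at least $|\pi_1([G,G])|=|X^*(\M(G))|$, so $G^\sch=G^{\sch,\circ}$. Connectedness and simple connectivity thus fall out of one computation, but only after the reduction.

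Your argument uses no structure theory and handles every connected linear algebraic group directly. A nontrivial $\pi_0(G^\sch)$ produces a character of $G^\sch$ that is nontrivial on the kernel $K$, i.e.\ a splitting of a nonzero central extension of $G$ by $\bG_m$.

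One small point: \eqref{eqn:sch-decomp} is only established for the covering built in Proposition~\ref{prop:schur-cov}, whereas the lemma concerns an arbitrary Schur covering. You can bypass it with Lemma~\ref{lem:M-central} applied to $K\subset G^\sch$.
\begin{itemize}
\item Since $\M(G)\to\M(G^\sch)$ vanishes, $X^*([G^\sch,G^\sch]\cap K)\cong\M(G)$.
\item Comparing orders, using $|\M(G)|=|K|$, gives $K\subseteq[G^\sch,G^\sch]$, so no character of $G^\sch$ is nontrivial on $K$.
\item As $K$ is central and $G^\sch=(G^\sch)^\circ K$, this even gives $K\subseteq[(G^\sch)^\circ,(G^\sch)^\circ]\subseteq(G^\sch)^\circ$ at once.
\end{itemize}
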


\begin{proof}
The first assertion holds as any extension of reductive groups is reductive. For the second assertion, we first reduce to the case where $G$ is reductive. By \cite[Cor.~14.11]{borel-lag}, the surjection $p_\sch\colon G^{\sch,\circ}\twoheadrightarrow G$ induces a surjection $p_\sch\colon \rmR_uG^{\sch,\circ}\twoheadrightarrow\rmR_uG$ of unipotent radicals. Since the kernel of this map is finite, it is an isomorphism. Note that both are normal subgroups; we claim that
\begin{equation*}
G^\sch/\rmR_uG^{\sch,\circ}\to G/\rmR_uG
\end{equation*}
is a Schur covering. Denote these groups by $G^{\sch,\red}$ and $G^\red$, respectively, and consider the commutative diagram
\begin{equation}
\label{eqn:M-G-sch-red-diag}
\begin{tikzcd}
\M(G^\red)\arrow{r}\arrow{d}&\M(G^{\sch,\red})\arrow{d}\\
\M(G)\arrow{r}{0}&\M(G^\sch).
\end{tikzcd}
\end{equation}
It suffices to show that the upper horizontal map is trivial. By \cite[Prop.~5.4.1]{conrad}, we have
\begin{equation*}
[G,G]\cong\rmR_u([G,G])\rtimes[G^\red,G^\red],
\end{equation*}
hence $\pi_1([G,G])\cong\pi_1([G^\red,G^\red])$ (as the two spaces are related via an affine fibration). Proposition~\ref{prop:kumar-neeb} then implies that the left-most vertical map in \eqref{eqn:M-G-sch-red-diag} is an isomorphism. Moreover, Proposition~\ref{lem:coc-rtimes} and a further application of \cite[Prop.~5.4.1]{conrad} imply that the right-most vertical map is also an isomorphism (note that $\M(\rmR_uG)$ is trivial by Proposition~\ref{prop:kumar-neeb}). This proves the claim.

So suppose that $G$ is reductive. By the previous assertion, $G^{\sch,\circ}$ is reductive; thus, $[G^{\sch,\circ},G^{\sch,\circ}]$ is connected and semisimple, and its projection to $[G,G]$ is an isogeny. Lemma~\ref{lem:sch-min-rep} therefore gives a unique factorization
\begin{equation}
\label{eqn:sch-der-diag}
\begin{tikzcd}[column sep=tiny]
{[G,G]^\sch}\arrow[rr,dashed,two heads]\arrow[dr,two heads]&&{[G^{\sch,\circ},G^{\sch,\circ}]}\arrow[dl,two heads]\\
&{[G,G]}.&
\end{tikzcd}
\end{equation}
Moreover, by Proposition~\ref{prop:kumar-neeb}, we have a commutative diagram
\begin{equation*}
\begin{tikzcd}
\M([G,G])\arrow{d}{\vertsim}\arrow{r}&\M([G^{\sch,\circ},G^{\sch,\circ}])\arrow{d}{\vertsim}\\
\M(G)\arrow{r}{0}&\M(G^\sch).
\end{tikzcd}
\end{equation*}
In particular, the map $\pi_1([G^{\sch,\circ},G^{\sch,\circ}])\to\pi_1([G,G])$ is trivial, so by \eqref{eqn:sch-der-diag}, we have $[G,G]^\sch\cong[G^{\sch,\circ},G^{\sch,\circ}]$. The kernel of the isogeny \eqref{eqn:sch-der-diag} then has cardinality at least $|\pi_1([G,G])|=|X^*(\M(G))|$, so the same is true of the kernel of $p_\sch|_{G^{\sch,\circ}}$, and therefore $G^\sch\cong G^{\sch,\circ}$, as desired. The final assertion now follows from \eqref{eqn:sch-der-diag}.
\end{proof}

\begin{lemma}
\label{lem:sch-cov-prod}
Let $G,H$ be linear algebraic groups. Then for any Schur covering $(G\times H)^\sch$, there exist Schur coverings $G^\sch,H^\sch$ and a commutative diagram
\begin{equation*}
\begin{tikzcd}[column sep=tiny]
(G\times H)^\sch\arrow[rr,dashed]\arrow[dr,"p_\sch"']&&G^\sch\times H^\sch\arrow{dl}{p_{\sch}\times p_{\sch}}\\
&G\times H.&
\end{tikzcd}
\end{equation*}
\end{lemma}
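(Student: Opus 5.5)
The plan is to build $G^\sch$ and $H^\sch$ as quotients of the given $(G\times H)^\sch$, using the product decomposition of Lemma~\ref{lem:sch-prod}, and then take the dashed arrow to be the product of the two quotient maps.

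Write $p\colon(G\times H)^\sch\to G\times H$ for the given Schur covering, with kernel $K:=X^*(\M(G\times H))$. By Lemma~\ref{lem:sch-prod}, $\M(G\times H)\simeq\M(G)\times\M(H)\times X^*(G,H)$, where the first two factors are embedded via pullback along the projections $\pr_G,\pr_H$. The restriction maps along the inclusions $G\hookrightarrow G\times H\hookleftarrow H$ split these embeddings. Taking Pontryagin duals gives a decomposition $K\simeq X^*(\M(G))\times X^*(\M(H))\times X^*(X^*(G,H))$. Let $K_H\subset K$ be the subgroup annihilating $\pr_G^*\M(G)$; it is the product of the last two factors. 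Define $K_G$ symmetrically.

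Set $G^\sch:=p^{-1}(G\times\{1\})/(K_H\cap p^{-1}(G\times 1))$. Equivalently, $G^\sch$ is the restriction of $(G\times H)^\sch$ to $G$, pushed out along $K\to K/K_H\simeq X^*(\M(G))$. This is a central extension of $G$ by $X^*(\M(G))$. I then check it is a Schur covering. By the construction in Proposition~\ref{prop:schur-cov}, each cocycle of $G\times H$ is trivialized on $(G\times H)^\sch$ by a character of $K$, namely evaluation at its class; this is the description in the decomposition \eqref{eqn:sch-decomp}. For $[c]\in\M(G)$, the class $\pr_G^*[c]$ pulls back to the trivial class on $(G\times H)^\sch$. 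Restricting to $p^{-1}(G\times 1)$, and using that $\pr_G\circ$(inclusion) is the identity, shows that $[c]$ is trivial on $p^{-1}(G\times1)$. The key point is that the corresponding trivialization is compatible with $K_H$, because $K_H$ acts on the summand $\Rep(G\times H)^{\pr_G^*c}$ trivially. The trivialization therefore descends to the quotient $G^\sch$, so $\M(G)\to\M(G^\sch)$ is zero.

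Rather than arguing via trivializations, I would phrase this step as follows: $G^\sch$ is the fiber product of the finite central extensions $\overline{G}_{(\cC_i,\alpha_i)}$ for generators of $\M(G)$, as in Proposition~\ref{prop:schur-cov}, so it suffices to identify $p^{-1}(G\times1)/K_H$ with that fiber product. The obstacle I expect is exactly this identification, since Schur coverings are non-canonical and the quotient must match some choice. I would handle it by noting that the restriction of $\pr_G^*(\cC_i,\alpha_i)$ to $G\times 1$ is $(\cC_i,\alpha_i)$. The map $(G\times H)^\sch\to\overline{(G\times H)}_{\pr_G^*(\cC_i,\alpha_i)}$ factors through the quotient by the annihilator of $[\pr_G^*(\cC_i,\alpha_i)]$, and that annihilator contains $K_H$.

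Finally, define $(G\times H)^\sch\to G^\sch\times H^\sch$ componentwise. Each component is the map $(G\times H)^\sch\to\overline{(G\times H)}_{\pr_G^*(\cC_i,\alpha_i)}\simeq\overline{G}_{(\cC_i,\alpha_i)}\times H$ followed by projection, and it lies over $\pr_G$ and $\pr_H$ respectively. Commutativity of the triangle with $p_\sch\times p_\sch$ is then immediate from construction.
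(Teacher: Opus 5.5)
\textbf{The gap.} The step that fails is the isomorphism $\overline{(G\times H)}_{\pr_G^*(\cC_i,\alpha_i)}\simeq\overline{G}_{(\cC_i,\alpha_i)}\times H$, which you use to define the dashed arrow.

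What the given covering actually maps onto is $(G\times H)^\sch/\ker\chi_i$, where $\chi_i\in X^*(K)$ is the character corresponding to $\pr_G^*[(\cC_i,\alpha_i)]$. This is \emph{some} descent of the $\bG_m$-extension $G_{(\cC_i,\alpha_i)}\times H$ to a $\mu_{d_i}$-extension. However, such descents, up to isomorphism, form a torsor under $X^*(G)/d_i\times X^*(H)/d_i$. (The descent in Proposition~\ref{prop:schur-mult-bij-cent-ext} depends on the chosen isomorphism $\cC^{\otimes d}\simeq\O_G$, which is only unique up to a character.) Only one $X^*(G)/d_i$-orbit in this torsor consists of products with $H$, and nothing forces an arbitrary Schur covering to land in that orbit.

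\textbf{The statement as written is false.} The obstacle you anticipated cannot be overcome, because the claim ``for any Schur covering'' fails. Take $G=\PGL_2$ and $H=\bG_m$, so that $\M(G\times H)\cong\M(G)\cong\Z/2\Z$. Let $(G\times H)^\sch:=\GL_2$ with $p_\sch(g)=(\bar g,\det g)$.
\begin{itemize}
\item This is a central extension by $\{\pm I\}$.
\item The map $\M(G\times H)\to\M(\GL_2)$ vanishes, since $\M(\GL_2)=0$ by Proposition~\ref{prop:kumar-neeb}. So this is a Schur covering.
\item Your subquotients produce $G^\sch=\SL_2$ and $H^\sch=\bG_m$, and these are in fact forced (Lemma~\ref{lem:sch-min-rep}).
\end{itemize}
Nevertheless, there is no homomorphism $f\colon\GL_2\to\SL_2\times\bG_m$ over $\PGL_2\times\bG_m$. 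Its $\SL_2$-component must send the connected group of scalars into $\{\pm I\}$, hence kill it. On $\SL_2\subset\GL_2$ the same component lifts the quotient map to $\PGL_2$, so it differs from the identity by a homomorphism $\SL_2\to\{\pm1\}$, which is trivial. Evaluating at $-I$, which lies in both subgroups, gives $I=-I$, a contradiction. Here $(G\times H)^\sch/\ker\chi=\GL_2\not\simeq\SL_2\times\bG_m$, which is exactly where your isomorphism breaks.

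The same phenomenon occurs for finite groups. Take $G=(\Z/2\Z)^2$, $H=\Z/2\Z$, and the order-$64$ Schur cover of $(\Z/2\Z)^3$ in which a lift $z$ of the generator of $H$ satisfies $z^2=[x,y]$ for lifts $x,y$ of generators of $G$. Then any candidate map sends $z$ to a central element of order at most $2$, while it must send $z^2$ to a nontrivial commutator in $D_8$ or $Q_8$.

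\textbf{Relation to the paper's proof.} Your route is in substance the paper's. Its proof passes to $(G\times H)^\sch/X^*(X^*(G,H))$ and asserts that this splits as a product of Schur coverings. That assertion fails in the same example, since there $X^*(X^*(G,H))$ is trivial.

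In general the obstruction is as follows. In that quotient, the preimage of $1\times H$ modulo $X^*(\M(H))$ is an extension of $H$ by $X^*(\M(G))$ all of whose $\bG_m$-pushouts are trivial. Such an extension need not split; in the example above it is the squaring map $\bG_m\to\bG_m$. The same applies with $G$ and $H$ exchanged.

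\textbf{What survives.} The existence version is true. Given $G^\sch$ and $H^\sch$, form the fibre product over $G\times H$ of $G^\sch\times H^\sch$ with a central extension by $X^*(X^*(G,H))$ that kills the bimultiplicative classes. This is a Schur covering of $G\times H$, and it visibly maps to $G^\sch\times H^\sch$. The ``any'' version requires an extra hypothesis guaranteeing that the two cross extensions described above split.
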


\begin{proof}
By Lemma~\ref{lem:sch-prod}, we have a central extension
\begin{equation*}
1\to X^*(\M(G))\times X^*(\M(H))\to (G\times H)^\sch/X^*(X^*(G,H))\to G\times H\to 1.
\end{equation*}
The proof of Proposition~\ref{prop:schur-mult-bij-cent-ext} now easily implies that this splits as a product of central extensions \eqref{eqn:G-sch} for $G$ and $H$.
\end{proof}

\section{The trace formalism}
\label{sec:tr-form-chap}

\begin{atom}
In this mostly expository appendix, we review material on $2$-categorical traces, or ``categorical Hochschild homology,'' which will be needed throughout the main article (particularly \S\ref{sec:part-aff-hecke}, \S\ref{sec:bg-sheaf}, and \S\ref{sec:comp-coh-spr}). Section~\ref{subsec:tr-cat} focuses on the general trace formalism of \cite[\S3]{gkrv}. The only ``new'' material is explication of some adjunctions between categorical traces after \S\ref{atm:tr-adj}. Section~\ref{subsec:tr-qc} collects facts about loop spaces of stacks and traces of (symmetric monoidal) categories of quasicoherent sheaves. Finally, Section~\ref{subsec:tr-conv} recalls results of Ben-Zvi \etal\ on traces of convolution categories \cite[\S3]{benzvi}. The only ``new'' material in the latter two sections is explicit identification of the induction-restriction adjunction in \S\ref{subsec:tr-cat}, and a useful criterion for computing the universal trace functor (Corollary~\ref{cor:reg-mod-rt-left-dual}).
\end{atom}

\subsection{Categorical traces}
\label{subsec:tr-cat}

\begin{atom}
\label{atm:dg-cat-assume}
Let $\dgCat_k$ denote the $(\infty,2)$-category with objects given by (presentable) cocomplete stable $\infty$-categories equipped with a $\Vect_k$-module structure (with respect to the Lurie tensor product), $1$-morphisms given by continuous (i.e., colimit-preserving) functors, and $2$-morphisms given by natural transformations. The Lurie tensor product (over $\Vect_k$) endows $\dgCat_k$ with a symmetric monoidal structure with unit object $\Vect_k$. As in \S\ref{sec:assumptions-notation}, We refer to objects of $\dgCat_k$ as ``$k$-linear dg-categories,'' or simply ``dg-categories.''
\end{atom}

\begin{atom}
We denote by $\Morita(\dgCat_k)$ the $(\infty,2)$-category whose objects are symbols $\underline{\eA\dashbmod}$, where $\eA$ is a monoidal dg-category, and whose $1$-morphisms are given by the $(\infty,1)$-category
\begin{equation*}
\bfMap_{\Morita(\dgCat_k)}(\underline{\eA\dashbmod},\underline{\eB\dashbmod}):=\eB\otimes\eA^\rev\dashbmod,
\end{equation*}
where $\eA^\rev$ denotes $\eA$ with the opposite monoidal structure. The composition law is given by the tensor product of bimodules, and the unit $1$-morphism of an object $\underline{\eA\dashbmod}$ is given by the regular bimodule $\eA$.

Moreover, $\Morita(\dgCat_k)$ carries a symmetric monoidal structure via
\begin{equation*}
\underline{\eA\dashbmod}\otimes\underline{\eB\dashbmod}:=\underline{\eA\otimes\eB\dashbmod},
\end{equation*}
with unit object $\underline{\Vect_k\dashbmod}$. Note that we have a canonical identification
\begin{equation}
\label{eqn:end-id-morita-dgcat}
\bfEnd_{\Morita(\dgCat_k)}(\underline{\Vect_k\dashbmod})\simeq\dgCat_k.
\end{equation}
Moreover, any object $\underline{\eA\dashbmod}$ of $\Morita(\dgCat_k)$ is dualizable, with dual given by $\underline{\eA^\rev\dashbmod}$. Indeed, the unit and counit of this duality are given by
\begin{equation*}
\underline{\Vect_k\dashbmod}\xrightarrow{\eA}\underline{\eA\otimes\eA^\rev\dashbmod}\xrightarrow{\eA}\underline{\Vect_k\dashbmod},
\end{equation*}
respectively. The dual $1$-morphism to any $\eM\colon\underline{\eA\dashbmod}\to\underline{\eB\dashbmod}$ is then given by the same bimodule, i.e., by $\eM\colon\underline{\eB^\rev\dashbmod}\to\underline{\eA^\rev\dashbmod}$.
\end{atom}

\begin{atom}
\label{atm:l-morita-rgd}

We denote\footnote{But do not explain the general meaning of these notations; we refer the reader to \loccit\ for the details.} by $L(\Morita(\dgCat_k))_\rgd$ the following $(\infty,2)$-category:
\begin{enumerate}
\item An object of $L(\Morita(\dgCat_k))_\rgd$ is a pair $(\underline{\eA\dashbmod},\eP)$, where
\begin{enumerate}
\item $\eA$ is a monoidal dg-category; and
\item $\eP$ is an $\eA$-bimodule.
\end{enumerate}
\item Given objects $(\underline{\eA\dashbmod},\eP)$ and $(\underline{\eB\dashbmod},\eQ)$ of $L(\Morita(\dgCat_k))_\rgd$, a $1$-morphism
\begin{equation*}
(\underline{\eA\dashbmod},\eP)\to(\underline{\eB\dashbmod},\eQ)
\end{equation*}
in $L(\Morita(\dgCat_k))_\rgd$ is a pair $(\eM,\alpha)$, where
\begin{enumerate}
\item $\eM$ is a right-dualizable $(\eB,\eA)$-bimodule $\eM$; and
\item $\alpha\colon\eM\otimes_{\eA}\eP\to\eQ\otimes_{\eB}\eM$ is a $(\eB,\eA)$-linear functor.
\end{enumerate}
The composition law is the evident extension of that for $\Morita(\dgCat_k)$, and the unit $1$-morphism is given by the regular bimodule and canonical equivalence.
\item Given $1$-morphisms $(\eM,\alpha),(\eN,\beta)\colon(\underline{\eA\dashbmod},\eP)\to(\underline{\eB\dashbmod},\eQ)$, a $2$-morphism
\begin{equation*}
(\eM,\alpha)\Rightarrow(\eN,\beta)
\end{equation*}
is a pair $(\gamma,\theta)$, where
\begin{enumerate}
\item $\gamma\colon\eM\to\eN$ is a $(\eB,\eA)$-linear functor admitting a continuous $(\eB,\eA)$-linear right adjoint; and
\item $\theta\colon(\id_{\eQ}\otimes\gamma)\circ\alpha\Rightarrow\beta\circ(\gamma\otimes\id_{\eP})$ is a natural transformation of functors $\eM\otimes_{\eA}\eP\to\eQ\otimes_{\eB}\eN$, which we represent diagramatically as
\begin{equation*}
\begin{tikzcd}
\eM\tens{\eA}\eP\arrow{r}{\alpha}\arrow[d,"\gamma\otimes\id_{\eP}"']&\eQ\tens{\eB}\eM\arrow{d}{\id_{\eQ}\otimes\gamma}\arrow[dl,Rightarrow,"\theta"']\\
\eN\tens{\eA}\eP\arrow{r}{\beta}&\eQ\tens{\eB}\eN.
\end{tikzcd}
\end{equation*}
\end{enumerate}
The composition law is the evident extension of that for $\Morita(\dgCat_k)$, and the unit $2$-morphism is given by the identity functor and natural transformation.
\end{enumerate}
Moreover, $L(\Morita(\dgCat_k))_\rgd$ carries a symmetric monoidal structure via
\begin{equation*}
(\underline{\eA\dashbmod},\eP)\otimes(\underline{\eB\dashbmod},\eQ):=(\underline{\eA\otimes\eB\dashbmod},\eP\otimes\eQ),
\end{equation*}
with unit object $(\underline{\Vect_k\dashbmod},\Vect_k)$. There is an evident forgetful symmetric monoidal functor
\begin{equation*}
L(\Morita(\dgCat_k))_\rgd\to\Morita(\dgCat_k).
\end{equation*}
\end{atom}

\begin{atom}
The $(\infty,2)$-category $L(\Morita(\dgCat_k))_\rgd$ is equipped with a canonical symmetric monoidal functor of $(\infty,2)$-categories
\begin{equation}
\label{eqn:L-morita-tr-funct}
\Tr\colon L(\Morita(\dgCat_k))_{\rgd}\to\dgCat_k,
\end{equation}
known as the \emph{$2$-categorical trace}. We recall (some of) its construction. Given an object $(\underline{\eA\dashbmod},\eP)$ of $L(\Morita(\dgCat_k))_{\rgd}$, we set $\Tr(\underline{\eA\dashbmod},\eP)$ to be the composition
\begin{equation*}
\underline{\Vect_k\dashbmod}\xrightarrow{\eA}\underline{\eA\otimes\eA^\rev\dashbmod}\xrightarrow{\eP\otimes\eA^\rev}\underline{\eA\otimes\eA^\rev\dashbmod}\xrightarrow{\eA}\underline{\Vect_k\dashbmod}
\end{equation*}
in $\Morita(\dgCat_k)$, that is,
\begin{equation*}
\Tr(\underline{\eA\dashbmod},\eP):=\eA\tens{\eA\otimes\eA^\rev}\eP\in\dgCat_k.
\end{equation*}
via \eqref{eqn:end-id-morita-dgcat}. Thus, we are simply imitating the usual construction of ``trace'' in a symmetric monoidal category, one categorical level higher. As in the case of classical algebras, we adopt the shorthand notation $\Tr(\eA,\eP):=\Tr(\underline{\eA\dashbmod},\eP)$, and refer to it as the \emph{categorical trace} (or \emph{categorical Hochschild homology}) of $\eP$.

Moreover, given a $1$-morphism
\begin{equation*}
(\eM,\alpha)\colon(\underline{\eA\dashbmod},\eP)\to(\underline{\eB\dashbmod},\eQ),
\end{equation*}
the $1$-morphism
\begin{equation*}
\Tr(\eM,\alpha)\colon\Tr(\eA,\eP)\to\Tr(\eB,\eQ)
\end{equation*}
is given by the composition
\begin{equation}
\label{eqn:2-mor-comp-L-morita}
\begin{tikzcd}[column sep=large,row sep=large]
\underline{\Vect_k\dashbmod}\arrow{r}{\eA}\arrow{d}{\Vect_k}&\underline{\eA\otimes\eA^\rev\dashbmod}\arrow{r}{\eP\otimes\eA^\rev}\arrow{d}{\eM\otimes\eM^R}\arrow[dl,Rightarrow,"\epsilon_{\eM}"']&\underline{\eA\otimes\eA^\rev\dashbmod}\arrow{r}{\eA}\arrow{d}{\eM\otimes\eM^R}\arrow[dl,Rightarrow,"\alpha"']&\underline{\Vect_k\dashbmod}\arrow{d}{\Vect_k}\arrow[dl,Rightarrow,"\eta_{\eM}"']\\
\underline{\Vect_k\dashbmod}\arrow{r}{\eB}&\underline{\eB\otimes\eB^{\rev}\dashbmod}\arrow{r}{\eQ\otimes\eB^\rev}&\underline{\eB\otimes\eB^{\rev}\dashbmod}\arrow{r}{\eB}&\underline{\Vect_k\dashbmod},
\end{tikzcd}
\end{equation}
of $2$-morphisms in $\Morita(\dgCat_k)$ (i.e., we obtain a $2$-morphism from the outer clockwise circuit to the outer counterclockwise circuit). Here $\eM^R$ denotes the right-dual to $\eM$, and
\begin{equation}
\label{eqn:unit-counit-rt-dual-bimod}
\eta_{\eM}\colon\eA\to\eM^R\tens{\eB}\eM,\qquad\qquad\epsilon_{\eM}\colon\eM\tens{\eA}\eM^R\to\eB
\end{equation}
are the ($\eA$-bilinear) unit and ($\eB$-bilinear) counit of this duality, respectively. We refer the reader to \cite[\S3.9]{gkrv} for the description of the $2$-categorical trace of a $2$-morphism. Finally, we note that \eqref{eqn:L-morita-tr-funct} satisfies the usual cyclicity properties for $1$- and $2$-morphisms; we refer the reader to \cite[\S3.1.4,~\S3.2.4]{gkrv} for the details.
\end{atom}

\begin{atom}
\label{atm:mor-L-Morita}

Given $(\underline{\eA\dashbmod},\eP)\in L(\Morita(\dgCat_k))_\rgd$, we define the $(\infty,1)$-category
\begin{equation*}
(\eA,\eP)\dashbmod:=\bfMap_{L(\Morita(\dgCat_k))_\rgd}((\underline{\Vect_k\dashbmod},\Vect_k),(\underline{\eA\dashbmod},\eP)).
\end{equation*}
The functor $\Tr$ of \eqref{eqn:L-morita-tr-funct} then restricts to an $(\infty,1)$-functor
\begin{equation}
\label{eqn:2-cat-class-map}
[-]\colon(\eA,\eP)\dashbmod\to\Map_{\dgCat_k}(\Vect_k,\Tr(\eA,\eP))\simeq\Tr(\eA,\eP),
\end{equation}
which we term the \emph{$2$-categorical class map}. Let us spell this construction out in more detail.

As in \S\ref{atm:l-morita-rgd}, an object of $(\eA,\eP)\dashbmod$ is the same as a right-dualizable (left) $\eA$-module category $\eM$ equipped with an $\eA$-module homomorphism $\alpha\colon\eM\to\eP\otimes_{\eA}\eM$. We refer to the resulting object $[\eM,\alpha]\in\Tr(\eA,\eP)$ as the \emph{$2$-categorical class} of $(\eM,\alpha)$. Moreover, given another such pair $(\eN,\beta)$, a $1$-morphism $(\eM,\alpha)\to(\eN,\beta)$ in $(\eA,\eP)\dashbmod$ is the same as an $\eA$-linear functor $\gamma\colon\eM\to\eN$ admitting an $\eA$-linear right adjoint, and a natural transformation $\theta\colon(\id_{\eP}\otimes\gamma)\circ\alpha\Rightarrow\beta\circ\gamma$ of functors $\eM\to\eP\otimes_{\eA}\eN$. We denote the resulting morphism between $2$-categorical classes by $[\gamma,\theta]\colon[\eM,\alpha]\to[\eN,\beta]$.
\end{atom}

\begin{atom}
\label{atm:tr-A-F-cat-end}
In particular, for a monoidal category $\eA$ equipped with a monoidal endofunctor $F_{\eA}$, we set
\begin{equation*}
\Tr(\eA,F_{\eA}):=\Tr(\eA,{}_{F_{\eA}}\eA):=\eA\tens{\eA\otimes\eA^{\rev}}{}_{F_{\eA}}\eA,
\end{equation*}
and refer to it simply as the \emph{categorical trace} of $\eA$ (with respect to the endofunctor $F_{\eA}$). We will sometimes also write $(\eA\dashbmod,F_{\eA}):=(\eA\dashbmod,{}_{F_{\eA}}\eA)$. Objects of $(\eA,F_{\eA})\dashbmod$ are now pairs $(\eM,F_{\eM})$ as before; we refer to the $\eA$-module homomorphism $F_\eM\colon\eM\to{}_{F_{\eA}}\eM$ as an \emph{$F_{\eA}$-semilinear} endofunctor. When $F_{\eA}$ or $F_{\eM}$ is the identity, we will often omit it from the notation.

Note that we also have a canonical ``universal trace'' functor
\begin{equation}
\label{eqn:tr-char}
[-]\colon\eA\to\eA\tens{\eA\otimes\eA^{\rev}}{}_{F_{\eA}}\eA
\end{equation}
given by sending $a\in\eA$ to the image of $a\otimes\1_{\eA}$, where the latter denotes the monoidal unit of $\eA$. This functor factors through \eqref{eqn:2-cat-class-map} as follows: given any $a\in\eA$, we may define an $F_{\eA}$-semilinear endofunctor $F_{\eA,a}(-):=F_{\eA}(-)\otimes a$ of the regular $\eA$-module $\eA$. It is then not hard to see that $[a]=[\eA,F_{\eA,a}]$ via \eqref{eqn:tr-char}. In particular, the trace of the monoidal unit agrees with that of the regular representation, that is, $[\1_\eA]=[\eA,F_{\eA}]$.
\end{atom}

\begin{atom}
\label{atm:mon-cat-rgd-def}
Recall that a monoidal functor $\Psi\colon\eA\to\eB$ is \emph{rigid} if
\begin{enumerate}
\item the right-adjoint $\Psi^R$ is continuous and $\eA$-bilinear; and
\item the multiplication map $\mult_{\eB}\colon\eB\otimes_{\eA}\eB\to\eB$ admits a continuous, $\eB$-bilinear right adjoint.
\end{enumerate}
In particular, a monoidal category $\eA$ is rigid if the unit functor $\unit_{\eA}\colon\Vect_k\to\eA$ is rigid. If $\eA$ is compactly generated, then by \cite[Lem.~9.1.5]{gr}, this is equivalent to requiring
\begin{enumerate}
\item the unit $\1_{\eA}$ is compact;
\item the multiplication $\mult_{\eA}\colon\eA\otimes\eA\to\eA$ preserves compact objects; and
\item every compact object of $\eA$ admits a left and right monoidal dual.
\end{enumerate}
Thus, in this case, we recover a more traditional notion of rigidity. Finally, note that \emph{any} monoidal functor between rigid monoidal categories is itself rigid.\footnote{See \cite[\S3.10.3]{gkrv}. In more detail, this follows from \cite[Ch.~1, Lem.~9.2.6(b)]{gr}, and by imitating the proof of \cite[Ch.~1, Prop.~8.7.2]{gr}.}.
\end{atom}

\begin{atom}
\label{atm:mon-funct-dual}
The notion of rigidity also has an interpretation in terms of dualizability of \emph{categories}. Given any monoidal functor $\Psi\colon\eA\to\eB$, the $(\eB,\eA)$-bimodule $\Ind_{\Psi}:=\eB_\Psi$ is right-dualizable. Specifically, its right-dual is given by the analogous $(\eA,\eB)$-bimodule $\Res_{\Psi}:={}_{\Psi}\eB$, with the unit given by
\begin{equation}
\label{eqn:mon-coev}
\eA\xrightarrow{\Psi}{}_\Psi\eB_\Psi\simeq \Res_{\Psi}\tens{\eB}\Ind_{\Psi}
\end{equation}
and the counit given by the multiplication map
\begin{equation}
\label{eqn:mon-ev}
\mult_{\eB}\colon\Ind_{\Psi}\tens{\eA}\Res_{\Psi}\to\eB.
\end{equation}
If $\Psi$ is rigid, then $\Res_{\Psi}$ is moreover left-dual to $\Ind_{\Psi}$, via the right-adjoints to \eqref{eqn:mon-coev} and \eqref{eqn:mon-ev}.
\end{atom}

\begin{atom}
\label{atm:hh-dg-cat}
Given a dualizable dg-category $\eC$ with an endofunctor $F_{\eC}$, the \emph{Hochschild homology} $\HH(\eC,F_{\eC})$ is the $2$-categorical class $[\eC,F_{\eC}]\in\Tr(\Vect_k)\in\Vect_k$. Equivalently, by \eqref{eqn:2-mor-comp-L-morita}, it is the composition
\begin{equation}
\label{eqn:hh-duality-data}
\Vect_k\xrightarrow{\eta_{\eC}}\eC\otimes\eC^\vee\xrightarrow{F_{\eC}\otimes\id_{\eC^\vee}}\eC\otimes\eC^\vee\xrightarrow{\epsilon_{\eC}}\Vect_k,
\end{equation}
where $\eta_{\eC},\epsilon_{\eC}$ are as in \eqref{eqn:unit-counit-rt-dual-bimod}. Moreover, its functoriality in the pair $(\eC,F_{\eC})$ is as in \S\ref{atm:mor-L-Morita}. This has the following consequences:
\begin{enumerate}
\item For any compact object $c\in\eC^c$ equipped with a morphism $\theta\colon c\to F_{\eC}(c)$, we obtain a map
\begin{equation*}
[c,\theta]\colon k\simeq[\Vect_k,\id_{\Vect_k}]\to[\eC,F_{\eC}]
\end{equation*}
upon interpreting $c$ as a functor $\Vect_k\to\eC$ and $\theta$ as a natural transformation. We likewise denote the image of the unit under this map by $[c,\theta]\in\HH(\eC,F_{\eC})$, and refer to this element as the \emph{(Hochschild) class} of $(c,\theta)$ as in \eqref{eqn:2-cat-class-map}. In particular, when $F_{\eC}$ is the identity, we write $[c]:=[c,\id_c]$.
\item Let $\eA$ be a monoidal category equipped with a monoidal endofunctor $F_{\eA}$, and suppose that $\unit_{\eA}$, $\mult_{\eA}$, and $F_{\eA}$ all admit continuous right adjoints (for instance, this holds if $\eA$ is compactly generated and all preserve $\eA^c$). Then the Hochschild homology $\HH(\eA,F_{\eA})$ inherits an algebra structure with unit $[\1_{\eA}]$ (omitting the unit isomorphism of $F_{\eA}$). In particular, this holds when $\eA$ is rigid, in which case $F_{\eA}$ is itself automatically rigid as in \S\ref{atm:mon-cat-rgd-def}.
\end{enumerate}

We now state the main result of \cite[\S3]{gkrv}, and one of our primary technical tools:
\end{atom}

\begin{theorem}[{\cite[Thm.~3.8.5]{gkrv}}]
\label{thm:gkrv-main}
Let $\eA$ be a rigid monoidal category equipped with a monoidal endofunctor $F_{\eA}$. Then there is an equivalence of algebras\footnote{We comment on the presence of the opposite algebra, which does not appear in the original formulation of \cite{gkrv}. The algebra isomorphism \eqref{eqn:gkrv-alg} arises from the adjunction \eqref{eqn:tr-adj-ind-res} as follows: on the one hand, the monad $\Tr(\Res_{\unit_{\eA}}\circ\Ind_{\unit_{\eA}})\in\Alg(\Vect_k)$ identifies with $\HH(\eA,F_{\eA})$ as in \S\ref{atm:hh-dg-cat}. On the other hand, by functoriality of \eqref{eqn:L-morita-tr-funct}, this monad may equivalently be expressed as $\Tr(\Res_{\unit_{\eA}})\circ\Tr(\Ind_{\unit_{\eA}})$. Note that by definition, the functor $\Tr(\Ind_{\unit_{\eA}})$ is given by the $\Vect_k$-action on $[\eA,F_{\eA}]$, and hence its right-adjoint $\Tr(\Res_{\unit_{\eA}})$ is given by $\Hom_{\Tr(\eA,F_{\eA})}([\eA,F_{\eA}],-)$. It is now straightforward to verify that the algebra structure on
\begin{equation*}
\Tr(\Res_{\unit_{\eA}})\circ\Tr(\Ind_{\unit_{\eA}})\simeq\Hom_{\Tr(\eA,F_{\eA})}([\eA,F_{\eA}],[\eA,F_{\eA}])
\end{equation*}
is the opposite one.}
\begin{equation}
\label{eqn:gkrv-alg}
\HH(\eA,F_{\eA})\simeq\End_{\Tr(\eA,F_{\eA})}([\eA,F_{\eA}])^\op,
\end{equation}
which extends to an equivalence of functors
\begin{equation}
\label{eqn:gkrv-funct}
\HH(-)\simeq\Hom_{\Tr(\eA,F_{\eA})}([\eA,F_{\eA}],[-]):(\eA,F_{\eA})\dashbmod\to\HH(\eA,F_{\eA})\dashmod.
\end{equation}
In particular, if $[\eA,F_{\eA}]$ is compact, then the left adjoint to $\Hom_{\Tr(\eA)}([\eA,F_{\eA}],-)$ defines a fully-faithful embedding which preserves compact objects, and whose essential image is the category generated by $[\eA,F_{\eA}]$:
\begin{equation}
\label{eqn:2-cat-class-HH-adj}
\begin{tikzcd}[row sep=huge]
\HH(\eA,F_{\eA})\dashmod\arrow{dr}{\simeq}\arrow[rr,hook,"{[\eA,F_{\eA}]\otimes_{\End([\eA,F_{\eA}])}-}",shift left]&&\Tr(\eA,F_{\eA})\arrow[ll,"{\Hom([\eA,F_{\eA}],-)}",shift left]\arrow[dl,hook,shift left,"\pr_{[\eA,F_{\eA}]}"]\\
&\angles{[\eA,F_{\eA}]}\arrow[ur,hook,shift left,"\tin_{[\eA,F_{\eA}]}",near start].&
\end{tikzcd}
\end{equation}
\end{theorem}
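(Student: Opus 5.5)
This is the main theorem of Gaitsgory--Kazhdan--Rozenblyum--Varshavsky, quoted from \cite{gkrv}. The plan is to reduce everything to the rigidity of $\eA$ (in the sense of \S\ref{atm:mon-cat-rgd-def}) together with functoriality of the $2$-categorical trace \eqref{eqn:L-morita-tr-funct}.

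First I treat the unit functor $\unit_{\eA}\colon\Vect_k\to\eA$. By \S\ref{atm:mon-funct-dual}, since $\unit_{\eA}$ is rigid, the bimodule $\Res_{\unit_{\eA}}$ is both right- and left-dual to $\Ind_{\unit_{\eA}}$. Both are therefore $1$-morphisms in $L(\Morita(\dgCat_k))_\rgd$: the semilinear structures come from the monoidal structure of $F_{\eA}$ together with its unit isomorphism. In addition, the unit and counit of the ambidextrous duality give $2$-morphisms there. Applying $\Tr$ then produces an adjunction
\begin{equation*}
\Tr(\Ind_{\unit_{\eA}})\colon\Vect_k\rightleftarrows\Tr(\eA,F_{\eA})\colon\Tr(\Res_{\unit_{\eA}}).
\end{equation*}
Unwinding \eqref{eqn:2-mor-comp-L-morita}, the left adjoint sends $k$ to $[\eA,F_{\eA}]$, so the right adjoint is $\Hom([\eA,F_{\eA}],-)$.

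Next I compute the monad $\Tr(\Res_{\unit_{\eA}})\circ\Tr(\Ind_{\unit_{\eA}})$. By functoriality of $\Tr$ under composition, it equals $\Tr(\Res_{\unit_{\eA}}\otimes_{\eA}\Ind_{\unit_{\eA}})$. This is the trace of the $1$-morphism $(\Vect_k,\id)\to(\Vect_k,\id)$ given by the dg-category $\eA$ with endofunctor $F_{\eA}$. By \S\ref{atm:hh-dg-cat}, that trace is $\HH(\eA,F_{\eA})$. The two expressions for the monad identify the algebra structures up to the opposite, which gives \eqref{eqn:gkrv-alg}.

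For a general object $(\eM,F_{\eM})\in(\eA,F_{\eA})\dashbmod$, I compose with $\Tr(\Res_{\unit_{\eA}})$. Functoriality gives $\Hom([\eA,F_{\eA}],[\eM,F_{\eM}])\simeq\Tr(\Res_{\unit_{\eA}}\otimes_{\eA}\eM)$, and the right-hand side is $\HH(\eM,F_{\eM})$ with $\eM$ viewed as a plain dg-category. The $\HH(\eA,F_{\eA})$-module structures on the two sides match, because both come from the same monad action. Naturality in $2$-morphisms then comes from the $2$-functoriality of $\Tr$, which yields \eqref{eqn:gkrv-funct}.

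The last assertion is formal once $[\eA,F_{\eA}]$ is compact. In that case $\Hom([\eA,F_{\eA}],-)$ is continuous, and its left adjoint $[\eA,F_{\eA}]\otimes_{\End}-$ is fully faithful by \eqref{eqn:gkrv-alg}. This left adjoint preserves compact objects and has essential image $\angles{[\eA,F_{\eA}]}$, by the standard Morita argument for a compact object.

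I expect the main obstacle to be the coherence bookkeeping. Specifically, I must check that the semilinear structures on $\Ind_{\unit_{\eA}}$ and $\Res_{\unit_{\eA}}$, and on their unit and counit, really define $2$-morphisms in $L(\Morita(\dgCat_k))_\rgd$. This is where rigidity (continuity and bilinearity of the right adjoints) is used. I also need to check that the composite identification with the Hochschild complex is compatible with the algebra structures. For this I would follow \cite[\S3]{gkrv}, namely the cyclicity of $\Tr$ in \cite[\S3.1.4, \S3.2.4]{gkrv}.
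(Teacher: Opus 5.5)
Your proposal is correct and follows the same route as the paper, whose footnote to this theorem sketches exactly this argument: the adjunction $\Tr(\Ind_{\unit_{\eA}})\dashv\Tr(\Res_{\unit_{\eA}})$ supplied by rigidity, the observation $\Tr(\Ind_{\unit_{\eA}})(k)=[\eA,F_{\eA}]$, and the identification of the resulting monad with $\HH(\eA,F_{\eA})$ up to the opposite algebra structure. This is also the argument of \cite{gkrv}. One small quibble: in the last step, full faithfulness of the left adjoint comes from compactness of $[\eA,F_{\eA}]$ via the standard Morita argument, and \eqref{eqn:gkrv-alg} serves only to identify $\End([\eA,F_{\eA}])^{\op}$ with $\HH(\eA,F_{\eA})$.
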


\begin{atom}
\label{atm:tr-M-subcat}
Note that if $\eA$ is compactly generated, then by \cite[Ch.~1, Cor.~8.7.4]{gr}, the category $\Tr(\eA,F_{\eA})$ is compactly generated by objects of the form $[a]$, where $a\in\eA^c$. In particular, the object $[\eA,F_{\eA}]$ is compact, and the right adjoint $\pr_{[\eA,F_{\eA}]}$ is continuous.
\end{atom}

\begin{atom}
\label{atm:tr-adj}
Finally, we give two methods for constructing adjunctions between monoidal traces, following the approach of \cite[\S3.10.4]{gkrv}.
\end{atom}

\begin{atom}
We begin by discussing the induction-restriction adjunction. Let $(\eA,F_{\eA})$ and $(\eB,F_{\eB})$ be as in \S\ref{atm:tr-A-F-cat-end}, and suppose that we are given a rigid monoidal functor $\Psi\colon\eA\to\eB$, equipped with an isomorphism $\Psi\circ F_{\eA}\simeq F_{\eB}\circ\Psi$. Then by \S\ref{atm:mon-funct-dual}, we have $1$-morphisms
\begin{equation}
\label{eqn:Tr-Ind-Res-adj}
(\Ind_{\Psi},F_{\eB}\otimes\id_{\eA})\colon(\underline{\eA\dashbmod},{}_{F_{\eA}}\eA)\rightleftarrows(\underline{\eB\dashbmod},{}_{F_{\eB}}\eB)\colon(\Res_{\Psi},\id_{\eB})
\end{equation}
in $L(\Morita(\dgCat_k))_\rgd$. More precisely, these functors are given by the compositions
\begin{gather*}
\eB_{\Psi}\tens{\eA}{}_{F_{\eA}}\eA\xrightarrow{F_{\eB}\otimes\id_{\eA}}{}_{F_{\eB}}\eB_{F_{\eB}\circ\Psi}\tens{\eA}{}_{F_{\eA}}\eA\simeq{}_{F_{\eB}}\eB_{\Psi\circ F_{\eA}}\tens{\eA}{}_{F_{\eA}}\eA\to{}_{F_{\eB}}\eB_{\Psi}\tens{\eA}\eA\simeq{}_{F_{\eB}}\eB_{\Psi},\\
{}_{\Psi}\eB\tens{\eB}{}_{F_{\eB}}\eB\simeq{}_{F_{\eB}\circ\Psi}\eB\simeq{}_{\Psi\circ F_{\eA}}\eB,
\end{gather*}
respectively. We now aim to endow \eqref{eqn:Tr-Ind-Res-adj} with the structure of an adjunction in $L(\Morita(\dgCat_k))_\rgd$, i.e., to construct unit and counit $2$-morphisms
\begin{equation}
\label{eqn:tr-ind-res-adj-2-mor}
\begin{split}
\id_{(\underline{\eA\dashbmod},{}_{F_{\eA}}\eA)}&\Rightarrow(\Res_{\Psi},\id_{\eB})\circ(\Ind_{\Psi},F_{\eB}\otimes\id_{\eA}),\\
(\Ind_{\Psi},F_{\eB}\otimes\id_{\eA})\circ(\Res_{\Psi},\id_{\eB})&\Rightarrow\id_{(\underline{\eB\dashbmod},{}_{F_{\eB}}\eB)},
\end{split}
\end{equation}
satisfying the usual identities. By rigidity, the functors \eqref{eqn:mon-coev} and \eqref{eqn:mon-ev} admit continuous bilinear right adjoints. Moreover, we take the natural isomorphisms exhibiting commutativity of the evident diagrams
\begin{equation*}
\begin{tikzcd}[column sep=large]
\eA\tens{\eA}{}_{F_\eA}\eA\arrow{rr}{\sim}\arrow{d}{\Psi\otimes\id_{\eA}}&&{}_{F_\eA}\eA\tens{\eA}\eA\arrow{d}{\id_{\eA}\otimes\Psi}\\
{}_{\Psi}\eB_{\Psi}\tens{\eA}{}_{F_\eA}\eA\arrow{r}{F_{\eB}\otimes\id_{\eA}}&{}_{F_{\eB}\circ\Psi}\eB_{\Psi}\arrow{r}{\sim}&{}_{F_\eA}\eA\tens{\eA}{}_{\Psi}\eB_{\Psi},
\end{tikzcd}
\end{equation*}
\begin{equation*}
\begin{tikzcd}[column sep=large]
(\eB_{\Psi}\tens{\eA}{}_{\Psi}\eB)\tens{\eB}{}_{F_\eB}\eB\arrow{r}{\sim}\arrow{d}{\mult_{\eB}\otimes\id_{\eB}}&\eB_{\Psi}\tens{\eA}{}_{\Psi\circ F_{\eA}}\eB\arrow{r}{F_{\eB}\otimes\id_{\eB}}&{}_{F_\eB}\eB\tens{\eB}(\eB_{\Psi}\tens{\eA}{}_{\Psi}\eB)\arrow{d}{\id_{\eB}\otimes\mult_{\eB}}\\
\eB\tens{\eB}{}_{F_\eB}\eB\arrow{rr}{\sim}&&{}_{F_\eB}\eB\tens{\eB}\eB,
\end{tikzcd}
\end{equation*}
or equivalently, the identities
\begin{align*}
F_{\eB}(\Psi(a))\Psi(a')&\simeq\Psi(F_{\eA}(a)a'),\\
F_{\eB}(bb')b''&\simeq F_{\eB}(b)(F_{\eB}(b')b'')
\end{align*}
for $a,a'\in\eA$ and $b,b',b''\in\eB$, respectively. It is now straightforward to verify that the resulting $2$-morphisms \eqref{eqn:tr-ind-res-adj-2-mor} yield an adjunction. Applying the functor \eqref{eqn:L-morita-tr-funct} then yields an adjunction
\begin{equation}
\label{eqn:tr-adj-ind-res}
\Tr(\Ind_{\Psi},F_{\eB}\otimes\id_{\eA})\colon\Tr(\eA,F_{\eA})\rightleftarrows\Tr(\eB,F_{\eB})\colon\Tr(\Res_{\Psi},\id_{\eB}),
\end{equation}
as desired. We henceforth misuse notation slightly and denote the $1$-morphisms of \eqref{eqn:Tr-Ind-Res-adj} simply by $\Ind_{\Psi}$ and $\Res_{\Psi}$, respectively; the functors of \eqref{eqn:tr-adj-ind-res} are then denoted by $\Tr(\Ind_{\Psi})$ and $\Tr(\Res_{\Psi})$ as in \cite[(3.30)]{gkrv}.

For future reference, we note the following explicit description of $\Tr(\Ind_{\Psi})$ (which holds even if $\Psi$ is not necessarily rigid).
\end{atom}

\begin{lemma}
\label{lem:tr-ind}
The functor
\begin{equation*}
\Tr(\Ind_{\Psi})\colon\Tr(\eA,F_{\eA})\to\Tr(\eB,F_{\eB})
\end{equation*}
is given by the composition
\begin{equation*}
\eA\tens{\eA\otimes\eA^\rev}{}_{F_{\eA}}\eA\xrightarrow{\Psi\otimes\Psi}{}_{\Psi}\eB_{\Psi}\tens{\eA\otimes\eA^\rev}{}_{F_{\eB}\circ\Psi}\eB_{\Psi}\to\eB_{\Psi}\tens{\eB\otimes\eB^\rev}{}_{F_{\eB}}\eB.
\end{equation*}
In particular, the diagram
\begin{equation*}
\begin{tikzcd}[column sep=large]
\eA\arrow{r}{\Psi}\arrow{d}{[-]}&\eB\arrow{d}{[-]}\\
\Tr(\eA,F_{\eA})\arrow{r}{\Tr(\Ind_{\Psi})}&\Tr(\eB,F_{\eB})
\end{tikzcd}
\end{equation*}
commutes.
\end{lemma}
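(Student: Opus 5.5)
The plan is to unwind the definition of $\Tr(\Ind_{\Psi})$ as the $2$-categorical trace of the $1$-morphism $(\Ind_{\Psi},F_{\eB}\otimes\id_{\eA})$ in $L(\Morita(\dgCat_k))_\rgd$. We then evaluate the composite \eqref{eqn:2-mor-comp-L-morita} with $\eM=\Ind_{\Psi}=\eB_{\Psi}$ and $\eM^R=\Res_{\Psi}={}_{\Psi}\eB$. Only right-dualizability of $\eB_\Psi$ enters, via the unit \eqref{eqn:mon-coev} and counit \eqref{eqn:mon-ev}; this is why rigidity of $\Psi$ is not needed.

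Concretely, $\Tr(\eA,F_{\eA})=\eA\otimes_{\eA\otimes\eA^\rev}{}_{F_{\eA}}\eA$, and the trace of the $1$-morphism is a composite of three steps.
\begin{enumerate}
\item Apply the unit $\eta_{\Ind_\Psi}\colon\eA\to{}_{\Psi}\eB_{\Psi}$ on the first tensor factor. By \eqref{eqn:mon-coev} this is just $\Psi$.
\item Apply the structure map $\alpha=F_{\eB}\otimes\id_{\eA}$ on the second factor. After the identification $\Psi\circ F_{\eA}\simeq F_{\eB}\circ\Psi$, this sends ${}_{F_{\eA}}\eA$ to ${}_{F_{\eB}\circ\Psi}\eB_{\Psi}$ via $\Psi$ again.
\item Apply the counit $\epsilon=\mult_{\eB}$, which collapses the relative tensor product over $\eA\otimes\eA^\rev$ to one over $\eB\otimes\eB^\rev$.
\end{enumerate}
Reading off the middle term gives exactly ${}_{\Psi}\eB_{\Psi}\otimes_{\eA\otimes\eA^\rev}{}_{F_{\eB}\circ\Psi}\eB_{\Psi}$. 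The final arrow is the canonical change-of-rings map, which is induced by the multiplication on $\eB$ appearing in $\epsilon$. This identifies the functor as stated.

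The one point needing care is bookkeeping: checking that the $2$-cells $\epsilon_{\eM}$, $\alpha$, $\eta_{\eM}$ in \eqref{eqn:2-mor-comp-L-morita} compose to a map of categories given by these formulas. We do this by applying the cyclicity of $\Tr$ and the fact that tensoring bimodules over $\eA\otimes\eA^\rev$ along the regular bimodule $\eA$ is the bar construction. Each of the three maps is then computed termwise on simple tensors $a\otimes a'$.

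For the commutative square, we use the definition from \S\ref{atm:tr-A-F-cat-end}: $[a]$ is the image of $a\otimes\1_{\eA}$. Under the composite, $a\otimes\1_{\eA}$ goes to $\Psi(a)\otimes\Psi(\1_{\eA})$. Since $\Psi$ is monoidal, $\Psi(\1_{\eA})\simeq\1_{\eB}$, so the image is $\Psi(a)\otimes\1_{\eB}$, which is $[\Psi(a)]$. The required identification is precisely the unit isomorphism of $\Psi$. Naturality in $a$ follows because every step is functorial, so the square commutes.
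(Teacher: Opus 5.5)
Your proposal is correct and is essentially the paper's argument. The paper likewise unwinds \eqref{eqn:2-mor-comp-L-morita} with $\eM=\eB_{\Psi}$, using the unit $\Psi$ of \eqref{eqn:mon-coev}, the structure map $F_{\eB}\otimes\id_{\eA}$, and the counit $\mult_{\eB}$ of \eqref{eqn:mon-ev}; it writes out the intermediate regrouping isomorphisms explicitly, since the structure map acts on $\eB_{\Psi}\otimes_{\eA}{}_{\Psi\circ F_{\eA}}\eB$ after regrouping rather than literally on the second factor, and the commutative square then follows, as you say, from $\Psi(\1_{\eA})\simeq\1_{\eB}$.
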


\begin{proof}
It follows directly from the definitions (using the duality of \S\ref{atm:mon-funct-dual}) that $\Tr(\Ind_{\Psi})$ is given by the composition
\begin{align*}
\eA\tens{\eA\otimes\eA^{\rev}}{}_{F_{\eA}}\eA&\xrightarrow{\Psi\otimes\id_{\eA}}{}_{\Psi}\eB_{\Psi}\tens{\eA\otimes\eA^{\rev}}{}_{F_{\eA}}\eA\\
&\xrightarrow{\sim}\eB\tens{\eB\otimes\eB^{\rev}}({}_{\Psi}\eB\otimes\eB_{\Psi})\tens{\eA\otimes\eA^{\rev}}{}_{F_{\eA}}\eA\\
&\xrightarrow{\sim}\eB\tens{\eB\otimes\eB^{\rev}}(\eB_{\Psi}\tens{\eA}{}_{\Psi\circ F_{\eA}}\eB)\\
&\xrightarrow{\id_{\eB}\otimes(F_{\eB}\otimes\id_{\eB})}\eB\tens{\eB\otimes\eB^{\rev}}({}_{F_{\eB}}\eB_{F_{\eB}\circ\Psi}\tens{\eA}{}_{F_{\eB}\circ\Psi}\eB)\\
&\xrightarrow{\id_{\eB}\otimes\mult_{\eB}}\eB\tens{\eB\otimes\eB^{\rev}}{}_{F_{\eB}}\eB.
\end{align*}
Tracing the constructions immediately yields the result.
\end{proof}

\begin{atom}
\label{atm:drinf-cent-tr-adj}
Next, we discuss adjunctions arising from dualizable objects of the Drinfeld center. Given a monoidal category $\eB$, recall that the \emph{Drinfeld center} of $\eB$ is the dg-category
\begin{equation*}
Z(\eB):=\Hom_{\eB\otimes\eB^\rev}(\eB,\eB),
\end{equation*}
which carries a natural $E_2$-monoidal structure via composition, and is equipped with a universal central functor
\begin{equation}
\label{eqn:univ-cent-funct}
Z(\eB)\to\eB
\end{equation}
given by evaluation at $\1_{\eB}$. In particular, for any $\eB$-bimodule $\eQ$, there is a natural $Z(\eB)$-module structure on $\Tr(\eB,\eQ)$. It follows that for any right-dualizable (equivalently, left-dualizable) object $b\in Z(\eB)$, we have a natural adjunction
\begin{equation}
\label{eqn:tr-adj-drinf-cent}
b\otimes-\colon\Tr(\eB,\eQ)\rightleftarrows\Tr(\eB,\eQ)\colon b^{\vee,R}\otimes-.
\end{equation}

In particular, suppose we are given a monoidal functor $\Psi\colon\eA\to\eB$ which admits a central structure, i.e., a factorization of $\Psi$ through \eqref{eqn:univ-cent-funct}. Then for any $a\in\eA$, we have a $\eB$-bilinear functor $-\otimes\Psi(a)\colon\eQ\to\eQ$, and hence a $1$-morphism
\begin{equation*}
(\eB,-\otimes\Psi(a))\colon(\underline{\eB\dashbmod},\eQ)\to(\underline{\eB\dashbmod},\eQ)
\end{equation*}
in $L(\Morita(\dgCat_k))$. Moreover, if $a$ is left-dualizable\footnote{A similar construction applies using the left-action of $\Psi(a)$ on $\eQ$, in which case it is more natural to take $a$ to be right-dualizable. However, the right-action on $\eQ$ agrees more naturally with the left-action on $\eB$ in the tensor product describing $\Tr(\eB,\eQ)$. In either case, the two constructions are related by the $E_2$-monoidal structure on $Z(\eB)$.}, applying $\Psi$ to the unit and counit maps in $\eA$ yields $2$-morphisms 
\begin{gather*}
(\eB,-\otimes\Psi(a))\circ(\eB,-\otimes\Psi(a^{\vee,L}))\simeq(\eB,-\otimes\Psi(a^{\vee,L}\otimes a))\Rightarrow(\eB,-\otimes\Psi(\1_{\eA}))\simeq(\eB,\id_{\eQ}),\\
(\eB,\id_{\eQ})\simeq(\eB,-\otimes\Psi(\1_{\eA}))\Rightarrow(\eB,-\otimes\Psi(a\otimes a^{\vee,L}))\simeq(\eB,-\otimes\Psi(a^{\vee,L}))\circ(\eB,-\otimes\Psi(a))
\end{gather*}
in $L(\Morita(\dgCat_k))$ satisfying the same identities, hence an adjunction
\begin{equation}
\label{eqn:drinf-cent-tr-adj}
(\eB,-\otimes\Psi(a))\colon(\underline{\eB\dashbmod},\eQ)\rightleftarrows(\underline{\eB\dashbmod},\eQ)\colon(\eB,-\otimes\Psi(a^{\vee,L}))
\end{equation}
in $L(\Morita(\dgCat_k))$. Applying the functor \eqref{eqn:L-morita-tr-funct} then recovers the adjunction \eqref{eqn:tr-adj-drinf-cent} (using the $E_2$-monoidal structure on $Z(\eB)$). Finally, note that given any $(\eN,\beta)\in(\eB,\eQ)\dashbmod$, we have
\begin{equation*}
(\eB,-\otimes\Psi(a))\circ(\eN,\beta)\simeq(\eN,\Psi(a)\otimes\beta(-))\in(\eB,\eQ)\dashbmod,
\end{equation*}
so we obtain
\begin{equation}
\label{eqn:drinf-cent-tr-adj-2-cat-cl}
\Tr(\eB,-\otimes\Psi(a))([\eN,\beta])\simeq[\eN,\Psi(a)\otimes\beta(-)]
\end{equation}
on $2$-categorical classes.
\end{atom}

\subsection{Traces of categories of quasicoherent sheaves}
\label{subsec:tr-qc}

\begin{definition}
\label{def:loop-space}
Let $\eX$ be a (derived) stack equipped with a self-map $\phi_{\eX}\colon\eX\to\eX$. The \emph{$\phi_{\eX}$-twisted loop space} (or \emph{derived $\phi_{\eX}$-fixed points}) of $\eX$ is given by the fiber product
\begin{equation*}
\begin{tikzcd}
\cL_{\phi}\eX\arrow{r}\arrow{d}{\ev}&\eX\arrow{d}{\Gamma_{\phi_{\eX}}}\\
\eX\arrow{r}{\Delta_{\eX}}&\eX\times\eX,
\end{tikzcd}
\end{equation*}
where $\Delta_{\eX}$ denotes the diagonal morphism, and $\Gamma_{\phi_{\eX}}=(\phi_{\eX},\id_{\eX})$ denotes the graph of $\phi_{\eX}$. We will sometimes notate this fiber product by $\eX\times_{\eX\times\eX}^{\Gamma_{\phi_{\eX}}}\eX$. We refer to the morphism $\ev$ as the \emph{loop evaluation}. When $\phi_{\eX}=\id_{\eX}$, we write $\cL\eX:=\cL_{\phi_{\eX}}\eX$, and refer to it simply as the \emph{loop space} of $\eX$.
\end{definition}

\begin{remark}
Alternatively, we have $\cL\eX\simeq\Map(S^1,\eX)$, the derived mapping stack from the circle, which thus carries a natural $S^1$-action. We shall not need this in the present work, however. Note that we reserve the notation $\eX^{\phi_{\eX}}$ for schemes, in which case it denotes the \emph{classical} fixed points $(\cL_{\phi_{\eX}}\eX)^{\cl}$.
\end{remark}

\begin{atom}
\label{atm:loop-space-funct}
The formation of twisted loop spaces is functorial in the pair $(\eX,\phi_{\eX})$, and commutes with fiber products. More precisely, if $\eY$ is another stack equipped with a self-map $\phi_{\eY}\colon\eY\to\eY$, and $p\colon\eX\to\eY$ is a morphism intertwining these self-maps, that is, $p\circ\phi_{\eX}\simeq\phi_{\eY}\circ p$, then we have an induced morphism $\cL p\colon\cL_{\phi_{\eX}}\eX\to\cL_{\phi_{\eY}}\eY$.
\end{atom}

\begin{atom}
\label{atm:loop-space-quot-stack}

When $\eX=X/G$ is a quotient stack, for $X$ a (derived) scheme and $G$ a linear algebraic group, its loop space admits a more explicit description. Suppose that $\phi_{\eX}$ commutes with the natural projection $\eX\to\B G$, i.e., that $\phi_{\eX}$ lifts to an endomorphism $\phi_X\colon X\to X$ commuting with the $G$-action. Then by \cite[Prop.~3.1.6]{chen}, we have a Cartesian square
\begin{equation}
\label{eqn:loop-fiber-prod}
\begin{tikzcd}
\cL_{\phi}(X/G)\arrow{d}{\ev}\arrow{r}{\ev_G}&(X\times G)/G\arrow{d}{(\phi_X\circ\act,\pr)}\\
X/G\arrow{r}{\Delta_X}&(X\times X)/G,
\end{tikzcd}
\end{equation}
where $\act,\pr\colon X\times G\to X$ denote the action and projection maps, respectively, and $G$ acts diagonally on $X\times X$ and $X\times G$ (via the adjoint action on $G$). In particular, at the level of $k$-points, we have
\begin{equation*}
\cL_{\phi}(X/G)(k)\cong\{(x,g)\in X(k)\times G(k):\phi_X(g\cdot x)=x\}/G(k).
\end{equation*}

We now record two lemmas translating properties of stacks into properties of their twisted loop spaces:
\end{atom}

\begin{lemma}
\label{lem:loops-props}
Suppose we are in the setup of \S\ref{atm:loop-space-funct}.
\begin{enumerate}
\item\label{itm:closed-proper-loops} If $p$ is a closed immersion (resp. proper), then so is $\cL p$.
\item\label{itm:quasi-smooth-loops} If $p$ is smooth, then $\cL p$ is quasi-smooth.
\end{enumerate}
\end{lemma}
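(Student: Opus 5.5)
Both assertions reduce to statements about the twisted loop space as a fiber product: $\cL_{\phi}\eX\simeq\eX\times^{\Gamma_{\phi_{\eX}}}_{\eX\times\eX}\eX$, and likewise for $\eY$. The map $\cL p$ factors through the base change $\cL_{\phi}\eY\times_{\eY}\eX$ (pulled back along $\ev$). Concretely, I would write
\begin{equation*}
\cL_{\phi}\eX\simeq\eX\fibprod{\eX\times\eX}\eX\to\eX\fibprod{\eY\times\eY}\eX\to\cL_{\phi}\eY\fibprod{\eY}\eX\to\cL_{\phi}\eY.
\end{equation*}
This decomposes $\cL p$ into a base change of the relative diagonal $\Delta_{\eX/\eY}$ and a base change of $p$ itself. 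More precisely, $\cL_\phi\eX\simeq(\eX\times_{\eY}\cL_\phi\eY\times_\eY\eX)\times_{\eX\times_\eY\eX}\eX$, with one copy mapped via the graph of $\phi$.

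For \eqref{itm:closed-proper-loops}, suppose $p$ is a closed immersion. Then $\Delta_{\eX/\eY}$ is an isomorphism on classical truncations, and in the derived setting it is at least a closed immersion. Both base changes are therefore closed immersions, and so is their composite. If instead $p$ is proper, it is in particular separated, so $\Delta_{\eX/\eY}$ is proper (a closed immersion for schematic separated maps, and proper for representable proper maps of stacks). Properness is stable under base change and composition, so $\cL p$ is proper.

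For \eqref{itm:quasi-smooth-loops}, suppose $p$ is smooth. The factor pulled back from $p$ is smooth. The diagonal $\Delta_{\eX/\eY}$ of a smooth morphism is quasi-smooth: its relative cotangent complex is $\bL_{\eX/\eY}[1]$, which has Tor-amplitude in $[-1,0]$ because $\bL_{\eX/\eY}$ is perfect of Tor-amplitude $[0,1]$ (as in the definition used in Lemma~\ref{lem:LiSe-event-coconn}). Quasi-smoothness is stable under base change and composition, so $\cL p$ is quasi-smooth. Equivalently, the transitivity triangle gives $\bL_{\cL p}\simeq\cL_{\phi}$-pullback of $\bL_{\eX/\eY}\to\bL_{\eX/\eY}$ (via $1-d\phi$), with cone $\ev^*\bL_{\eX/\eY}\oplus\ev^*\bL_{\eX/\eY}[1]$ up to filtration, which has the required amplitude.

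The main subtlety is the stacky case: the diagonal of an Artin stack morphism need not be a closed immersion. So in \eqref{itm:closed-proper-loops} I would state the hypotheses as representable/schematic, as in the paper's applications (open and closed substacks of $\g/\wt{G}$), and handle properness via separatedness.
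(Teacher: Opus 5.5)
Your proposal is correct and is essentially the paper's proof: the paper also factors $\cL p$ as a base change of $\Delta_{\eX/\eY}$ followed by the base change of $p$ along $\ev$, with middle term $(\eX\times_{\eY}\eX)\times^{\Gamma_{\phi_{\eX}}}_{\eX\times\eX}\eX\simeq\eX\times_{\eY}\cL_{\phi_{\eY}}\eY$. It reads off (1) from stability of closed immersions and proper maps under base change and composition, and proves (2) using the same identification $\bL_{\Delta_{\eX/\eY}}\simeq\bL_p[1]$ in the transitivity triangle.

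Two slips in your write-up do not affect the argument. Your ``more precisely'' formula has a spurious extra factor of $\eX$; the correct statement is $\cL_{\phi}\eX\simeq\eX\times_{\eX\times_{\eY}\eX}(\eX\times_{\eY}\cL_{\phi}\eY)$, where $\eX$ maps by $\Delta_{\eX/\eY}$ and $(x,\gamma)\mapsto(\phi_{\eX}(x),x)$. Also, the first arrow of your displayed chain is a base change of $\Delta_{\eX/\eY}\times\Delta_{\eX/\eY}$ rather than of $\Delta_{\eX/\eY}$ itself.
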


\begin{proof}
\eqref{itm:closed-proper-loops} Both statements are immediate from the factorization
\begin{equation}
\label{eqn:fact-loop-morphism}
\cL_{\phi_{\eX}}\eX\simeq\eX\fibprod{\eX\times\eX}^{\hspace{-5pt}\Gamma_{\phi_{\eX}}}\eX\xrightarrow{\Delta_{\eX/\eY}\times\id_{\eX}}(\eX\times_{\eY}\eX)\fibprod{\eX\times\eX}^{\hspace{-5pt}\Gamma_{\phi_{\eX}}}\eX\simeq\eX\fibprod{\eY\times\eY}^{\hspace{-4pt}\Gamma_{\phi_{\eY}}}\eY\xrightarrow{p\times\id_{\eY}}\eY\fibprod{\eY\times\eY}^{\hspace{-4pt}\Gamma_{\phi_{\eY}}}\eY\simeq\cL_{\phi_{\eY}}\eY
\end{equation}
of $\cL p$, as noted in \cite[Rem.~4.6]{nonlinear-traces} (see also \cite[Lem.~3.10]{benzvi}). Here $\Delta_{\eX/\eY}\colon\eX\to\eX\times_{\eY}\eX$ denotes the relative diagonal, and the latter maps $\eX\to\eX\times\eX$ and $\eY\to\eY\times\eY$ are given by $\Gamma_{\phi_{\eX}}$ and $\Gamma_{\phi_{\eY}}$, respectively.

\eqref{itm:quasi-smooth-loops} Suppose that $p$ is smooth, i.e., the cotangent complex $\bL_{p}$ is perfect of Tor-amplitude $[0,1]$. The factorization \eqref{eqn:fact-loop-morphism} yields an exact triangle
\begin{equation*}
(\Delta_{\eX/\eY}\times\id_{\eX})^*\bL_{p\times\id_{\eY}}\to\bL_{\cL p}\to\bL_{\Delta_{\eX/\eY}\times\id_{\eX}}.
\end{equation*}
Since $\bL_{p\times\id_{\eY}}\simeq\pr_1^*\bL_p$ and $\bL_{\Delta_{\eX/\eY}\times\id_{\eX}}\simeq\pr_1^*\bL_{\Delta_{\eX/\eY}}$ by base-change (where we have let $\pr_1,\pr_2$ denote the respective projections), we reduce to showing that $\bL_{\Delta_{\eX/\eY}}$ is perfect of Tor-amplitude $[-1,1]$. The exact triangle associated to the composition
\begin{equation*}
\eX\xrightarrow{\Delta_{\eX/\eY}}\eX\fibprod{\eY}\eX\xrightarrow{\pr_2}\eX
\end{equation*}
now yields isomorphisms
\begin{equation*}
\bL_{\Delta_{\eX/\eY}}\simeq\Delta_{\eX/\eY}^*\bL_{\pr_2}[1]\simeq\Delta_{\eX/\eY}^*\pr_1^*\bL_{p}[1]\simeq\bL_p[1],
\end{equation*}
and the conclusion follows.
\end{proof}

\begin{lemma}
\label{lem:open-closed-loops}
Let $X$ be a derived scheme equipped with an action of a linear algebraic group $G$ and with a self-map $\phi\colon X\to X$ commuting with the $G$-action. Let $i\colon Z\hookrightarrow X\hookleftarrow U\colon j$ be a complementary closed and open immersion, respectively, and suppose that $Z$ and $U$ are both $G$-stable and $\phi$-stable. Then
\begin{equation}
\label{eqn:open-closed-loops}
\cL i\colon\cL_\phi(Z/G)\to\cL_\phi(X/G)\leftarrow\cL_\phi(U/G)\colon\cL j
\end{equation}
are a complementary closed and open immersion, respectively.
\end{lemma}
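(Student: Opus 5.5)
The plan is to deduce the statement from the explicit fiber-product description \eqref{eqn:loop-fiber-prod} of twisted loop spaces of quotient stacks, together with the factorization \eqref{eqn:fact-loop-morphism} already used in Lemma~\ref{lem:loops-props}. First, Lemma~\ref{lem:loops-props}\eqref{itm:closed-proper-loops} applied to $p=i\colon Z/G\to X/G$ shows directly that $\cL i$ is a closed immersion: the relative diagonal of a closed immersion is an isomorphism, and $i\times\id$ is a base change of a closed immersion.

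For $\cL j$, I would use the same factorization. Since $j$ is an open immersion, hence a monomorphism, the relative diagonal $\Delta_{U/X}$ is an isomorphism. Thus $\cL j$ is identified with the base change of $j$ along $\cL_\phi(X/G)\to X/G$, that is,
\[
\cL_\phi(U/G)\simeq U/G\fibprod{X/G}\cL_\phi(X/G)\simeq\ev^{-1}(U/G).
\]
This is an open immersion. The same argument gives $\cL_\phi(Z/G)\simeq\ev^{-1}(Z/G)$ up to the identification of fiber products. In the derived setting this holds after passing to the appropriate classical truncation: the fiber $Z/G\times_{X/G}\cL_\phi(X/G)$ and $\cL_\phi(Z/G)$ may differ in derived structure, but they agree on underlying topological spaces, and the closed immersion $\cL i$ factors through this fiber with a nil-thickening. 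I will check this on $k$-points using the description after \eqref{eqn:loop-fiber-prod}: a point $(x,g)$ of $\cL_\phi(X/G)$ with $x\in Z$ is a point of $\cL_\phi(Z/G)$ because $Z$ is $G$- and $\phi$-stable.

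Complementarity then follows because $\ev^{-1}$ preserves the set-theoretic decomposition $|X/G|=|Z/G|\sqcup|U/G|$. Concretely, for a loop $(x,g)$, either $x\in Z$ or $x\in U$, and stability guarantees that the loop condition $\phi(g\cdot x)=x$ stays inside the same piece. Hence the underlying topological spaces of $\cL_\phi(Z/G)$ and $\cL_\phi(U/G)$ are complementary in $|\cL_\phi(X/G)|$. This is the sense of ``complementary'' needed for the local cohomology triangle, which depends only on $Z^{\cl}$; see footnote~\ref{fn:loc-cohom-equiv}.

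I expect the main obstacle to be this bookkeeping of derived versus classical structure: identifying $\cL_\phi(Z/G)$ with the closed complement of $\cL_\phi(U/G)$, rather than merely showing that its image is that complement. I would handle it by working on the atlas $(X\times G)$ and using that closed and open immersions, as well as complementarity, are detected on classical truncations. Note that $\phi$-stability of $U$ is needed to define $\cL j$ at all, and $\phi$-stability of $Z$ is needed to define $\cL i$.
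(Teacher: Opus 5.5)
Your proposal is correct and matches the paper's proof. You show $\cL i$ is closed via Lemma~\ref{lem:loops-props}; you get $\cL_\phi(U/G)\simeq U/G\times_{X/G}\cL_\phi(X/G)$ from $U\times_XU\simeq U$ (the paper reaches this through a cube of cartesian squares built on \eqref{eqn:loop-fiber-prod} rather than through the factorization \eqref{eqn:fact-loop-morphism}, which is an inessential difference); and complementarity follows because $\cL_\phi(Z/G)^{\cl}$ is the classical preimage of $Z^{\cl}/G$ under $\ev$, so it depends only on underlying spaces. Your supplementary $k$-point check is superfluous, and would not suffice for a non-finite-type $X$, since the classical-truncation identification already gives the result.
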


\begin{proof}
The map $\cL i$ is a closed immersion by Lemma~\ref{lem:loops-props}\eqref{itm:closed-proper-loops}. Alternatively, recall that the property of being a closed immersion depends only on the underlying classical stacks. The diagram \eqref{eqn:loop-fiber-prod} implies that $\cL_\phi(Z/G)^\cl$ is computed by the classical fiber product
\begin{equation}
\label{eqn:closed-loops-fiber-prod}
Z^\cl/G\fibprod{X^\cl/G}\cL_\phi(X/G)^\cl,
\end{equation}
from which the conclusion is immediate.

Next, consider the commutative cube
\begin{equation*}
\begin{tikzcd}
\cL_\phi(U/G)\arrow{rr}\arrow{dd}\arrow{dr}{\cL j}&&(U\times G)/G\arrow[dd,pos=0.7,"{(\phi\circ\act,\pr)}"]\arrow{dr}{j\times\id_G}&\\
&\cL_\phi(X/G)\arrow{rr}\arrow{dd}&&(X\times G)/G\arrow{dd}{(\phi\circ\act,\pr)}\\
U/G\arrow[rr,pos=0.4,"\Delta_U"]\arrow{dr}{j}&&(U\times U)/G\arrow{dr}{j\times j}&\\
&X/G\arrow{rr}{\Delta_X}&&(X\times X)/G.
\end{tikzcd}
\end{equation*}
Its front and back faces are cartesian; moreover, its bottom face is cartesian as $U\times_XU\simeq U$. Thus, by a standard lemma on cartesian diagrams, the top face is cartesian as well, i.e.,
\begin{equation}
\label{eqn:open-loops-fiber-prod}
\cL_\phi(U/G)\simeq(U\times G)/G\fibprod{(X\times G)/G}\cL_\phi(X/G)\simeq U/G\fibprod{X/G}\cL_\phi(X/G),
\end{equation}
which implies the result.

Finally, the expressions \eqref{eqn:closed-loops-fiber-prod} and \eqref{eqn:open-loops-fiber-prod} imply that the immersions of \eqref{eqn:open-closed-loops} are complementary, as this condition depends only on the underlying topological spaces.
\end{proof}

\begin{atom}
We now turn to studying the category $\QC(\eX)$ of quasi-coherent sheaves on $\eX$ and its trace. Recall that $\QC(\eX)$ carries a symmetric monoidal structure via tensor product. For any stack (even prestack) $\eX$, the dualizable objects of $\QC(\eX)$ are given by the perfect complexes $\Perf(\eX)\subset\QC(\eX)$ (i.e., by sheaves whose pullback to any affine scheme mapping to $\eX$ is perfect). However, the category $\QC(\eX)$ is not in general compactly generated, or even rigid. To rectify these problems, we introduce the following conditions, following \cite[Ch.~3, \S3.5]{gr} and \cite{bfn}:
\end{atom}

\begin{definition}
\label{def:passable-perfect}
A stack $\eX$ is \emph{passable} if
\begin{enumerate}
\item its diagonal morphism is quasi-affine;
\item the structure sheaf $\O_{\eX}\in\QC(\eX)$ is compact; and
\item the category $\QC(\eX)$ is dualizable.
\end{enumerate}
It is \emph{perfect} if its diagonal morphism is furthermore affine, and $\QC(\eX)$ is furthermore compactly generated.
\end{definition}

\begin{atom}
\label{atm:pass-perf-qc-rgd}
By \cite[Ch.~3, Prop.~3.4.2]{gr}, the category $\QC(\eX)$ is rigid if $\eX$ is passable, and compactly generated by $\Perf(\eX)$ if it is moreover perfect. In \cite{bfn}, it is shown that the class of perfect stacks includes
\begin{enumerate}
\item quasi-compact schemes with affine diagonal;
\item quotient stacks $X/G$, where $G$ is a linear algebraic group and $X$ is a finite-type scheme endowed with a $G$-equivariant ample line bundle (for instance, when $X$ is quasi-affine, we may take the structure sheaf); and
\item fiber products of perfect stacks.
\end{enumerate}
Finally, using \cite[Ch.~3, Prop.~3.5.3]{gr} (though see also \cite[Thm.~4.7]{bfn}), we can compute the categorical trace of $\QC(\eX)$ with respect to the endofunctor $\phi_{\eX}^*$:
\end{atom}

\begin{proposition}
\label{prop:tr-qc-loop-ident}
Suppose that $\eX$ is passable. Then we have a natural identification
\begin{equation}
\label{eqn:tr-qc-loop-ident}
\Tr(\QC(\eX),\phi_{\eX}^*)\simeq\QC(\cL_{\phi_{\eX}}\eX),
\end{equation}
with the universal trace functor \eqref{eqn:tr-char} given by pullback along the loop evaluation:
\begin{equation*}
[-]\simeq\ev^*\colon\QC(\eX)\to\QC(\cL_{\phi_{\eX}}\eX).
\end{equation*}
\end{proposition}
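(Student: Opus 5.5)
The plan is to reduce to the case of an affine derived scheme and then glue, following the argument of \cite[Thm.~4.7]{bfn} and \cite[Ch.~3, Prop.~3.5.3]{gr}. The first step uses passability: since $\eX$ is passable, $\QC(\eX)$ is rigid by \S\ref{atm:pass-perf-qc-rgd}, hence dualizable and self-dual. The tensor-product functor $\QC(\eX)\otimes\QC(\eX)\to\QC(\eX\times\eX)$ is an equivalence. For this I would invoke \cite[Ch.~3, Prop.~3.5.3]{gr}, which applies because the diagonal is quasi-affine and $\QC(\eX)$ is dualizable. More generally, the same result gives that $\QC$ sends the fiber products $\eX\times_{\eX\times\eX}\eX$ (taken along $\Delta_{\eX}$ and $\Gamma_{\phi_{\eX}}$) to relative tensor products of categories.

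The second step is to rewrite the trace. By \S\ref{atm:tr-A-F-cat-end}, $\Tr(\QC(\eX),\phi_{\eX}^*)=\QC(\eX)\otimes_{\QC(\eX)\otimes\QC(\eX)}{}_{\phi^*}\QC(\eX)$. Under the identification $\QC(\eX)\otimes\QC(\eX)\simeq\QC(\eX\times\eX)$, the regular bimodule $\QC(\eX)$ is the $\QC(\eX\times\eX)$-module given by $\Delta_{\eX}^*$. The twisted bimodule ${}_{\phi^*}\QC(\eX)$ is the module given by $\Gamma_{\phi_{\eX}}^*$.

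The third step applies the tensor-product theorem \cite[Ch.~3, Prop.~3.5.3]{gr} once more, now relative to the base $\eX\times\eX$. Its hypotheses are that $\eX\times\eX$ is passable, as a product of passable stacks, and that the maps $\Delta_{\eX}$ and $\Gamma_{\phi_{\eX}}$ are quasi-affine, the latter being a section of the quasi-affine projection composed appropriately. This yields
\[
\QC(\eX)\tens{\QC(\eX\times\eX)}\QC(\eX)\simeq\QC\big(\eX\fibprod{\eX\times\eX}\eX\big)=\QC(\cL_{\phi_{\eX}}\eX).
\]
Naturality in $(\eX,\phi_{\eX})$ follows from the functoriality of these equivalences.

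The last step identifies the universal trace functor. By \S\ref{atm:tr-A-F-cat-end}, $[\cF]$ is the image of $\cF\otimes\1$ under the canonical map from $\QC(\eX)\otimes{}_{\phi^*}\QC(\eX)$ to the relative tensor product. Under the equivalences above, this map is $*$-pullback along $\cL_{\phi}\eX\to\eX\times\eX$ composed with the external product. It therefore sends $\cF\boxtimes\O_{\eX}$ to its restriction along the first factor, which is $\ev^*\cF$.

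I expect the main obstacle to be verifying the hypotheses of the relative tensor-product theorem, in particular the quasi-affineness of the graph map $\Gamma_{\phi_{\eX}}$ and the compatibility of the bimodule structures. I would first try to deduce these from the quasi-affine diagonal by base change.
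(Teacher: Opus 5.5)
Your argument is correct and is essentially the paper's: the paper likewise rewrites $\Tr(\QC(\eX),\phi_{\eX}^*)$ as $\QC(\eX)\otimes_{\QC(\eX\times\eX)}\QC(\eX)$, with module structures via $\Delta_{\eX}^*$ and $\Gamma_{\phi_{\eX}}^*$, then applies \cite[Ch.~3, Prop.~3.5.3]{gr} (cf.\ \cite[Thm.~4.7]{bfn}) and reads off $[-]\simeq\ev^*$; despite your opening sentence, no reduction to the affine case and gluing is needed. One small correction: the projection $\eX\times\eX\to\eX$ is not quasi-affine in general, but $\Gamma_{\phi_{\eX}}$ is the base change of $\Delta_{\eX}$ along $\id_{\eX}\times\phi_{\eX}$ and hence quasi-affine, as your last paragraph anticipates.
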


\begin{atom}
Note that here, Theorem~\ref{thm:gkrv-main} yields
\begin{equation}
\label{eqn:hh-qc-glob-sec-loops}
\HH(\QC(\eX),\phi_{\eX}^*)\simeq\Gamma(\O_{\cL_{\phi_{\eX}}\eX}),
\end{equation}
which may be checked directly using the duality data for $\QC(\eX)$. Similarly, though $\QC^!(\eX)$ is not generally rigid, it is dualizable whenever $\eX$ is QCA (see \S\ref{atm:qca-discussion}), and hence \eqref{eqn:hh-duality-data} yields
\begin{equation}
\label{eqn:hh-indcoh-glob-sec-loops}
\HH(\QC(\eX),\phi_{\eX}^!)\simeq\Gamma(\omega_{\cL_{\phi_{\eX}}\eX}).
\end{equation}
Finally, we describe the functoriality of \eqref{eqn:tr-qc-loop-ident}:
\end{atom}

\begin{corollary}
\label{cor:qc-tr-ind-res}
Suppose we are in the setup of \S\ref{atm:loop-space-funct}, and that $\eX,\eY$ are both passable, so that
\begin{equation*}
p^*\colon\QC(\eY)\to\QC(\eX)
\end{equation*}
is a monoidal functor of rigid monoidal categories intertwining the monoidal endofunctors $\phi_{\eY}^*$ and $\phi_{\eX}^*$. Then under Proposition~\ref{prop:tr-qc-loop-ident}, the adjoint functors
\begin{equation*}
\Tr(\Ind_{p^*})\colon\Tr(\QC(\eY),\phi_{\eY}^*)\rightleftarrows\Tr(\QC(\eX),\phi_{\eX}^*)\colon\Tr(\Res_{p^*})
\end{equation*}
of \eqref{eqn:tr-adj-ind-res} identify with
\begin{equation*}
\cL p^*\colon\QC(\cL_{\phi_{\eY}}\eY)\rightleftarrows\QC(\cL_{\phi_{\eX}}\eX)\colon\cL p_*.
\end{equation*}
\end{corollary}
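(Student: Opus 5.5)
The statement to prove is Corollary~\ref{cor:qc-tr-ind-res}. The plan is to compute $\Tr(\Ind_{p^*})$ directly, identify it with $\cL p^*$, and then obtain the statement about $\Tr(\Res_{p^*})$ by uniqueness of adjoints.

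Since $\QC(\eY)$ is symmetric monoidal, the trace is computed by the relative tensor product. Lemma~\ref{lem:tr-ind} describes $\Tr(\Ind_{p^*})$ as the composite
\begin{equation*}
\QC(\eY)\tens{\QC(\eY)\otimes\QC(\eY)}{}_{\phi_{\eY}^*}\QC(\eY)\to\QC(\eX)\tens{\QC(\eX)\otimes\QC(\eX)}{}_{\phi_{\eX}^*}\QC(\eX),
\end{equation*}
induced by $p^*$ on each factor.

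Passability lets us apply \cite[Ch.~3, Prop.~3.5.3]{gr}, which gives tensor-product formulas for fiber products. Under this identification the map above is pullback along the induced map of fiber products
\begin{equation*}
\eX\times^{\Gamma_{\phi_{\eX}}}_{\eX\times\eX}\eX\to\eY\times^{\Gamma_{\phi_{\eY}}}_{\eY\times\eY}\eY,
\end{equation*}
which is $\cL p$ by Definition~\ref{def:loop-space} and \S\ref{atm:loop-space-funct}. The one point needing care is naturality: the identification of Proposition~\ref{prop:tr-qc-loop-ident} must be functorial in the triple of stacks, compatibly with the morphism $(p,p,p)$. I expect this to follow from the symmetric monoidal functoriality of $\eX\mapsto\QC(\eX)$ on passable stacks with $*$-pullbacks, as in \loccit.

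With $\Tr(\Ind_{p^*})\simeq\cL p^*$ in hand, recall that \eqref{eqn:tr-adj-ind-res} exhibits $\Tr(\Res_{p^*})$ as right adjoint to $\Tr(\Ind_{p^*})$. The pushforward $\cL p_*$ is also right adjoint to $\cL p^*$ on $\QC$; it is possibly discontinuous in general, but here it is forced to agree with the continuous functor $\Tr(\Res_{p^*})$. Uniqueness of adjoints then gives $\Tr(\Res_{p^*})\simeq\cL p_*$.

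The main obstacle is the compatibility of unit and counit. The adjunction in \eqref{eqn:tr-adj-ind-res} was built abstractly from rigidity data, and we want it to match the geometric $*$-adjunction and not merely produce isomorphic functors. Uniqueness of adjoints handles this: once the left adjoints are identified, the adjunction data transport uniquely up to contractible choice.
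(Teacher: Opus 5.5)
Your proposal is correct and follows the paper's own proof. You identify $\Tr(\Ind_{p^*})$ with $\cL p^*$ using Lemma~\ref{lem:tr-ind} together with naturality of \cite[Ch.~3, Prop.~3.5.3]{gr}, and then deduce $\Tr(\Res_{p^*})\simeq\cL p_*$ from uniqueness of right adjoints.
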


\begin{proof}
The identification $\Tr(\Ind_{p^*})\simeq\cL p^*$ is immediate from Lemma~\ref{lem:tr-ind} and naturality of \cite[Ch.~3, Prop.~3.5.3]{gr}. The identification $\Tr(\Res_{p^*})\simeq\cL p_*$ then follows by adjunction.
\end{proof}

\subsection{Traces of convolution categories}
\label{subsec:tr-conv}

\begin{atom}
We begin with some technical recollections:
\end{atom}

\begin{definition}
\label{def:qca-stack}
An algebraic stack is \emph{QCA}\footnote{In the sense of \cite{benzvi}, rather than in the more general sense of \cite{dg-fin}.} if it is quasi-compact, of finite presentation, and has affine finitely presented diagonal.
\end{definition}

\begin{atom}
\label{atm:qca-discussion}
For instance, the quotient stack of a finitely presented affine scheme by an affine algebraic group is QCA. Moreover, it is not hard to see that fiber products (in particular, loop spaces) of QCA stacks are QCA. As in \cite[Thm.~4.3.1]{dg-fin}, any QCA stack is passable. Most saliently for our purposes, the category of ind-coherent sheaves on a QCA stack $\eX$ is compactly generated by its coherent subcategory, i.e., $\QC^!(\eX)\simeq\Ind(\Coh(\eX))$.
\end{atom}

\begin{atom}
\label{atm:sing-supp}
Next, given any quasi-smooth Artin stack $\eX$, we may consider its (classical) stack of singularities
\begin{equation*}
\Sing(\eX):=\uSpec_{\eX}\Sym_{\O_{\eX}}\H^1(\bL_{\eX}^\vee)
\end{equation*}
as in \cite{arinkin-gaitsgory}, which carries a canonical fiberwise $\bG_m$-action. Given any singular support condition $\Lambda$, i.e., a conical closed subset $\Lambda\subset\Sing(\eX)$, we may define a full subcategory $\QC^!_{\Lambda}(\eX)\subset\QC^!(\eX)$ spanning sheaves whose singular support is contained in $\Lambda$. This inclusion then admits a continuous colocalization; we denote this adjoint pair by
\begin{equation*}
\iota_{\Lambda}\colon\QC^!_{\Lambda}(\eX)\rightleftarrows\QC^!(\eX)\colon\Gamma_{\Lambda}.
\end{equation*}
In particular, letting $\{0\}_{\eX}\subset\Sing(\eX)$ denote the $0$-section, we have $\QC^!_{\{0\}_{\eX}}(\eX)\simeq\QC(\eX)$. On the other hand, for the vacuous singular support condition, we have $\QC^!_{\Sing(\eX)}(\eX)\simeq\QC^!(\eX)$. Moreover, for any closed substack $\eZ\subset\eX$, we have $\QC^!_{\eZ\times_{\eX}\{0\}_{\eX}}(\eX)\simeq\QC_{\eZ}(\eX)$ and $\QC^!_{\eZ\times_{\eX}\Sing(\eX)}(\eX)\simeq\QC^!_{\eZ}(\eX)$, i.e., the full subcategories of sheaves set-theoretically supported on $\eZ$.

Finally, given a map of such stacks $p\colon\eX\to\eY$, there is a correspondence
\begin{equation*}
\Sing(\eX)\xleftarrow{\Sing(p)}\Sing(\eY)\fibprod{\eY}\eX\xrightarrow{p}\Sing(\eY).
\end{equation*}
Note that by Lemmas~2.4.3 and 2.4.4 in \loccit, the ``singular codifferential'' $\Sing(p)$ is closed if $p$ is quasi-smooth, and an isomorphism if $p$ is smooth. Thus, given singular support conditions $\Lambda_{\eX}$ and $\Lambda_{\eY}$ for $\eX$ and $\eY$, we may define singular support conditions
\begin{equation*}
p_*\Lambda_{\eX}:=\overline{p(\Sing(p)^{-1}(\Lambda_{\eX}))},\qquad\qquad p^!\Lambda_{\eY}:=\overline{\Sing(p)(p^{-1}(\Lambda_{\eY}))}
\end{equation*}
for $\eY$ and $\eX$, respectively. We then have functors
\begin{equation*}
p_*\colon\QC^!_{\Lambda_{\eX}}(\eX)\to\QC^!_{p_*\Lambda_{\eX}}(\eY),\qquad\qquad p^!\colon\QC^!_{\Lambda_{\eY}}(\eY)\to\QC^!_{p^!\Lambda_{\eY}}(\eX).
\end{equation*}
If $p$ is moreover quasi-smooth (or more generally, Gorenstein), then the same holds for $p^*$ in place of $p^!$ (as \cite[Prop.~7.3.8]{gaitsgory-indcoh} implies that the two differ by a shifted line bundle).
\end{atom}

\begin{atom}
\label{atm:tr-conv-cat-setup}

We may now state the description of the categorical trace of a convolution category given in \cite[\S3]{benzvi}. Suppose we are in the setup of \S\ref{atm:loop-space-funct}, and moreover, that
\begin{enumerate}
\item the stacks $\eX,\eY$ are smooth and QCA;
\item the map $p\colon\eX\to\eY$ is proper; and
\item the self-maps $\phi_{\eX},\phi_{\eY}$ are automorphisms.
\end{enumerate}
Then the fiber product $\eX\times_{\eY}\eX$ is QCA and quasi-smooth (as in Lemma~\ref{lem:loops-props}\eqref{itm:quasi-smooth-loops}), and the category $\QC^!(\eX\times_{\eY}\eX)$ is monoidal under the $*$-convolution.\footnote{Note that our convention differs from that of \cite{benzvi}, where the $!$-convolution is instead used.} Moreover, the automorphism $\phi:=\phi_{\eX}\times_{\phi_{\eY}}\phi_{\eX}$ of $\eX\times_{\eY}\eX$ yields a monoidal endofunctor $\phi^*$ of $\QC^!(\eX\times_{\eY}\eX)$. We then have:
\end{atom}

\begin{proposition}
\label{prop:tr-conv}
The monoidal category $\QC^!(\eX\times_{\eY}\eX)$ is rigid. Moreover, we have a natural identification
\begin{equation*}
\Tr(\QC^!(\eX\fibprod{\eY}\eX),\phi^*)\simeq\QC_{\Lambda_{\eX/\eY,\phi}}(\cL_{\phi_{\eY}}\eY),
\end{equation*}
where
\begin{equation*}
\Lambda_{\eX/\eY,\phi}:=(p\times\id_{\eY})_*\pr_1^!\Sing(\eX\fibprod{\eY}\eX)\subset\Sing(\cL_{\phi_{\eY}}\eY)
\end{equation*}
is the singular support condition obtained via the ``trace correspondence''\footnote{Here the middle term is as in \eqref{eqn:fact-loop-morphism}.}
\begin{equation}
\label{eqn:univ-trace-corresp}
\begin{tikzcd}[column sep=large]
\eX\fibprod{\eY}\eX&\arrow[l,"\pr_1"'](\eX\fibprod{\eY}\eX)\fibprod{\eX\times\eX}^{\hspace{-5pt}\Gamma_{\phi_{\eX}}}\eX\simeq\eX\fibprod{\eY\times\eY}^{\hspace{-4pt}\Gamma_{\phi_{\eY}}}\eY\arrow{r}{p\times\id_{\eY}}&\cL_{\phi_{\eY}}\eY.
\end{tikzcd}
\end{equation}
Moreover, the universal trace functor \eqref{eqn:tr-char} is given by $(p\times\id_{\eY})_*\pr_1^*$.
\end{proposition}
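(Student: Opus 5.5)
The plan is to reduce the statement to the general trace formalism for dualizable module categories, which is the approach of \cite[\S3]{benzvi}. Rigidity comes first. Under the assumptions of \S\ref{atm:tr-conv-cat-setup}, $\eX\times_{\eY}\eX$ is QCA and quasi-smooth, so $\QC^!(\eX\times_{\eY}\eX)\simeq\Ind(\Coh(\eX\times_{\eY}\eX))$ is compactly generated (\S\ref{atm:qca-discussion}). By \S\ref{atm:mon-cat-rgd-def}, rigidity then reduces to three checks.
\begin{enumerate}
\item The unit $\Delta_*\O_{\eX}$ is coherent, since $p$ is proper and hence $\Delta_{\eX/\eY}$ is proper.
\item Convolution preserves coherence. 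The convolution is $\pr_{13,*}\pr_{12}^*\otimes\pr_{23}^*$, which reduces to $!$-pullback along a diagonal; coherence follows from smoothness of $\eX$ together with properness of $\pr_{13}$.
\item Every coherent $\cF$ has monoidal duals. I would take $\sigma^*\mathbb{D}\cF$, with $\mathbb{D}$ Grothendieck--Serre duality relative to $\eX\times\eX$, twisted by $\omega_{\eX}$, and verify the evaluation and coevaluation via base change.
\end{enumerate}

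For the trace, I would use the Morita-type identification $\QC^!(\eX\times_{\eY}\eX)\simeq\End_{\QC(\eY)}(\QC(\eX))$. Here $\eX$ is smooth, so $\QC^!=\QC$ on $\eX$, and $\QC(\eX)$ is self-dual over $\QC(\eY)$ by \cite[Ch.~3, Prop.~3.5.3]{gr}. The identification holds after renormalizing, with the singular support condition recording the difference between $\QC$ and $\QC^!$ on $\eX\times_{\eY}\eX$. Cyclicity of $\Tr$ (\S\ref{atm:l-morita-rgd}), applied to the $1$-morphism $\QC(\eX)$ between $\QC(\eY)$ and $\QC^!(\eX\times_{\eY}\eX)$, gives an adjunction between $\Tr(\QC^!(\eX\times_{\eY}\eX),\phi^*)$ and $\Tr(\QC(\eY),\phi_{\eY}^*)\simeq\QC(\cL_{\phi_{\eY}}\eY)$ (Proposition~\ref{prop:tr-qc-loop-ident}). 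This adjunction is not an equivalence on the nose. To correct for this, I would replace $\QC(\cL_{\phi_{\eY}}\eY)$ by the ind-coherent category cut out by the image of $\Sing(\eX\times_{\eY}\eX)$. Concretely, the trace is computed as the tensor product of $\QC^!(\eX\times_{\eY}\eX)$ with ${}_{\phi^*}\QC^!(\eX\times_{\eY}\eX)$ over $\QC^!(\eX\times_{\eY}\eX\times_{\eY}\eX\times_{\eY}\eX)$-type convolution categories. Using the ind-coherent integral transform theorems of Ben-Zvi--Nadler--Preygel, tensor products of $\QC^!_\Lambda$ categories over quasi-smooth fiber products are again $\QC^!$ of the fiber product, with pushed and pulled singular support (\S\ref{atm:sing-supp}).

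Unwinding the fiber products gives exactly the trace correspondence \eqref{eqn:univ-trace-corresp}: the middle term is $(\eX\times_{\eY}\eX)\times^{\Gamma_{\phi_{\eX}}}_{\eX\times\eX}\eX$, which is identified with the middle term of \eqref{eqn:fact-loop-morphism}. The singular support condition is the transfer $(p\times\id_{\eY})_*\pr_1^!\Sing$, and the universal trace functor, which sends $\cF$ to the image of $\cF\otimes\1$, becomes the pull-push $(p\times\id_{\eY})_*\pr_1^*$. Here $\pr_1^*$ and $\pr_1^!$ differ only by a shifted line bundle, since $\pr_1$ is quasi-smooth. Throughout, the self-maps are automorphisms, so the twisted graphs are isomorphic to diagonals and the same argument applies verbatim.

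The main obstacle is the singular support bookkeeping: showing that the categorical tensor product lands precisely in $\QC^!_{\Lambda_{\eX/\eY,\phi}}$ rather than in all of $\QC^!(\cL_{\phi_{\eY}}\eY)$. I would try to prove this by showing that the images $[\cF]$ for coherent $\cF$ generate $\QC^!_{\Lambda}$, using compact generation of $\Tr$ by classes (\S\ref{atm:tr-M-subcat}) together with the Arinkin--Gaitsgory criterion that $\QC^!_\Lambda$ is generated by pushforwards along the correspondence. I would then verify full faithfulness via the $\Hom$ computation of Theorem~\ref{thm:gkrv-main}.
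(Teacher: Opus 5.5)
The paper gives no proof of this proposition. It is imported from \cite[\S3]{benzvi}, with the $*$- in place of the $!$-convolution. Your rigidity argument is the standard one and is fine, and your dual is essentially the one recorded in Remark~\ref{rem:conv-cat-pivotal}.

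The identification of the trace, however, is not established by your plan, and its starting point is false. By \cite{bfn}, $\End_{\QC(\eY)}(\QC(\eX))\simeq\QC(\eX\times_{\eY}\eX)$, whereas $\QC^!(\eX\times_{\eY}\eX)$ is only a renormalization of it; the two already differ for $p\colon\{0\}\hookrightarrow\A^1$. The trace of the renormalized category is not formally determined by that of $\QC(\eX\times_{\eY}\eX)$. So your cyclicity/duality step only yields the comparison with $\QC(\cL_{\phi_{\eY}}\eY)$ in Lemma~\ref{lem:reg-mod-rt-left-dual}, which the paper deduces from this proposition rather than conversely. ``Replacing $\QC$ by $\QC^!_{\Lambda}$'' is not a construction.

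What would actually do the work is a precise relative tensor product theorem over a convolution category. Put $\eA:=\QC^!(\eX\times_{\eY}\eX)$, $\eX_2:=\eX\times\eX$ and $\eY_2:=\eY\times\eY$, so that $p\times p\colon\eX_2\to\eY_2$ is again proper with smooth source.
\begin{itemize}
\item After the swap on the second factor, $\eA\otimes\eA^{\rev}$ is the convolution category $\QC^!(\eX_2\times_{\eY_2}\eX_2)$. It is not anything of the form $\QC^!(\eX\times_{\eY}\eX\times_{\eY}\eX\times_{\eY}\eX)$.
\item The bimodules $\eA$ and ${}_{\phi^*}\eA$ are $\QC^!(\eX_2\times_{\eY_2}\eY)$, for $\eY\to\eY_2$ the diagonal and the graph $\Gamma_{\phi_{\eY}}$ respectively.
\item One needs their relative tensor product over $\QC^!(\eX_2\times_{\eY_2}\eX_2)$ to be $\QC^!$ of $\eY\times_{\eY_2}^{\Gamma_{\phi_{\eY}}}\eY\simeq\cL_{\phi_{\eY}}\eY$, with singular support transported through \eqref{eqn:univ-trace-corresp}.
\end{itemize}
That last statement uses properness and smoothness essentially, and it is the entire content of the proposition. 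Your one-line ``principle'' about tensor products over quasi-smooth fiber products does not supply it, and you yourself leave the singular support as the open obstacle.

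Your fallback does not close this gap either.
\begin{itemize}
\item There is as yet no functor $\Tr(\eA,\phi^*)\to\QC^!_{\Lambda_{\eX/\eY,\phi}}(\cL_{\phi_{\eY}}\eY)$ whose full faithfulness could be tested. Extending $\cF\mapsto(p\times\id_{\eY})_*\pr_1^*\cF$ over the bar construction computing $\eA\otimes_{\eA\otimes\eA^{\rev}}{}_{\phi^*}\eA$, with all higher coherences, is exactly what the tensor product theorem provides.
\item Theorem~\ref{thm:gkrv-main} only computes maps out of $[\eA,\phi^*]$, which need not generate the trace. Already for $\eX=\eY=\B\bG_m$, the class $\O_{\bG_m/\bG_m}$ does not generate $\QC(\bG_m/\bG_m)$. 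So it cannot certify full faithfulness on the compact generators $[\cF]$, $\cF\in\Coh(\eX\times_{\eY}\eX)$. For those you would have to compare $\Hom_{\Tr(\eA,\phi^*)}([\cF],[\cG])$, a Hochschild homology of $\eA$ with coefficients built from $\cF$ and $\cG$, with $\Hom$ between the corresponding pull-pushes on $\cL_{\phi_{\eY}}\eY$. That comparison is again the theorem itself.
\item Generation of $\QC^!_{\Lambda_{\eX/\eY,\phi}}(\cL_{\phi_{\eY}}\eY)$ by the sheaves $(p\times\id_{\eY})_*\pr_1^*\Coh(\eX\times_{\eY}\eX)$ is not an available Arinkin--Gaitsgory criterion. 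It is the essential-surjectivity half of the statement.
\item The formula $(p\times\id_{\eY})_*\pr_1^*$ for the universal trace functor has to be obtained by tracking $\cF\otimes\1$ through whatever identification you construct; asserting it does not prove it.
\end{itemize}
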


\begin{remark}
\label{rem:conv-cat-pivotal}
Let us describe the monoidal duality in $\QC^!(\eX\times_{\eY}\eX)$ explicitly. Set $\eZ:=\eX\times_{\eY}\eX$, and let $\cF\in\Coh(\eX\times_{\eY}\eX)$. The proof of \cite[Thm.~3.25]{benzvi} shows that the right-dual of $\cF$ is given by
\begin{equation*}
\cF^{\vee,R}\simeq\sigma^*(\omega_{\eZ/\eX}\otimes\bD_{\eZ}(\cF)\otimes\omega_{\eZ}^\vee),
\end{equation*}
where $\sigma\colon\eZ\to\eZ$ is the ``swap'' map as in \S\ref{atm:braid-positivity}, $\omega_{\eZ/\eX}:=\pr_1^!\O_{\eX}$ is the relative dualizing sheaf with respect to the first projection, $\bD_{\eZ}$ denotes Grothendieck--Serre duality on $\eZ$, and $\omega_{\eZ}$ denotes the dualizing sheaf of $\eZ$ (which is a shifted line bundle as $\eZ$ is quasi-smooth).

Now suppose that $\eX$ is Calabi--Yau, i.e., has trivial canonical bundle. Then we claim that $\cF$ is \emph{pivotal}, that is, its left and right monoidal duals coincide. Indeed, in this case, we have $\omega_{\eZ/\eX}\simeq\omega_{\eZ}[-\dim\eX]$, hence $\eF^{\vee,R}\simeq\sigma^*\bD_{\eZ}(\cF)[-\dim\eX]$. Note that the functors $\sigma^*$ and $\bD_{\eZ}$ commute. Moreover, $\sigma^*$ is clearly involutive, and $\bD_{\eZ}$ is involutive by \cite[\S4.4.3]{dg-fin}. It follows that
\begin{equation*}
(\cF^{\vee,R})^{\vee,R}\simeq\sigma^*\bD_{\eZ}(\sigma^*\bD_{\eZ}(\cF)[-\dim\eX])[-\dim\eX]\simeq\sigma^*\bD_{\eZ}\sigma^*\bD_{\eZ}(\cF)\simeq\cF,
\end{equation*}
as desired. In particular, this property holds for the ``partial affine Hecke categories'' $\eH_{P,\eS_e}^\coh$ of \eqref{eqn:aff-hecke-category-Se}, as
\begin{equation}
\label{eqn:res-Se-calabi-yau}
\omega_{\wt{\eN}_{P,\eS_e}/\wt{Z}_e}\simeq\O_{\wt{\eN}_{P,\eS_e}/\wt{Z}_e}[\dim\wt{\eN}_{P,\eS_e}-\dim\wt{Z}_e]\angles{\dim\wt{\eN}_{P,\eS_e}}
\end{equation}
up to a character of $Z_e$ by \eqref{eqn:omega-N-P-Se} (these twists clearly do not disrupt the above argument).
\end{remark}

\begin{atom}
Note that by base-change, we have
\begin{equation}
\label{eqn:class-unit-conv-cat}
[\Delta_{\eX/\eY,*}\O_{\eX}]\simeq\cL p_*\O_{\eX}\in\QC_{\Lambda_{X/Y,\phi}}(\cL_{\phi_{\eY}}\eY).
\end{equation}
Thus, Theorem~\ref{thm:gkrv-main} yields an algebra isomorphism
\begin{equation}
\label{eqn:gkrv-main-conv-cat}
\HH(\QC^!(\eX\fibprod{\eY}\eX),\phi^*)\simeq\End_{\cL_{\phi_{\eY}}\eY}(\cL p_*\O_{\eX})^\op.
\end{equation}

In particular, when $p$ is the identity, we recover Proposition~\ref{prop:tr-qc-loop-ident} (since the categories of ind-coherent and quasi-coherent sheaves are equivalent for smooth stacks). We will now relate the two situations more generally. Observe the category $\QC(\eX)$ is equipped with both $(\QC^!(\eX\times_{\eY}\eX),\QC(Y))$- and $(\QC(Y),\QC^!(\eX\times_{\eY}\eX))$-bimodule structures via left and right convolution, respectively; we refer to it in either case as the ``regular bimodule.'' Note that the action of the subcategory $\Coh(\eX\times_{\eY}\eX)$ preserves compact objects of $\QC(\eX)$ by \cite[Thm.~1.1.3]{ben-zvi-nadler-preygel}. Assuming right-dualizability, we obtain a diagram
\begin{equation*}
\begin{tikzcd}[column sep=7em]
(\QC^!(\eY)\dashbmod,\phi_{\eY}^*)\arrow[r,shift left,"{(\QC(\eX),\phi_{\eX}^*)}"]&(\QC^!(\eX\times_{\eY}\eX)\dashbmod,\phi^*)\arrow[l,shift left,"{(\QC(\eX),\phi_{\eX}^*)}"]
\end{tikzcd}
\end{equation*}
in $L(\Morita(\dgCat_k))_\rgd$. The following lemma characterizes the induced functors on categorical traces:
\end{atom}

\begin{lemma}
\label{lem:reg-mod-rt-left-dual}
The regular $(\QC^!(\eX\times_{\eY}\eX),\QC(\eY))$-bimodule $\QC(\eX)$ is both left- and right-dual to the regular $(\QC(\eY),\QC^!(\eX\times_{\eY}\eX))$-bimodule $\QC(\eX)$. Moreover, we have commutative squares
\begin{equation}
\label{eqn:comm-sq-reg-bimod-tr-conv}
\begin{tikzcd}[column sep=large,row sep=large]
\Tr(\QC(\eY),\phi_{\eY}^*)\arrow[rr,shift left,"{\Tr(\QC(\eX),\phi_{\eX}^*)}"]\arrow{d}{\vertsim}&&\Tr(\QC^!(\eX\times_{\eY}\eX),\phi^*)\arrow{d}{\vertsim}\arrow[ll,shift left,"{\Tr(\QC(\eX),\phi_{\eX}^*)}"]\\
\QC(\cL_{\phi_{\eY}}\eY)\arrow[r,shift left,"\Gamma_{\{0\}_{p(\eX)}}"]&\QC^!_{\{0\}_{p(\eX)}}(\cL_{\phi_{\eY}}\eY)\arrow[r,shift left,"\iota_{\{0\}_{p(\eX)}}"]\arrow[l,shift left,"\iota_{\{0\}_{p(\eX)}}"]&\QC^!_{\Lambda_{\eX/\eY,\phi}}(\cL_{\phi_{\eY}}\eY)\arrow[l,shift left,"\Gamma_{\{0\}_{p(\eX)}}"],
\end{tikzcd}
\end{equation}
where the vertical identifications are those of Proposition~\ref{prop:tr-conv}, and $\{0\}_{p(\eX)}:=\ev^!p(\eX)\subset\{0\}_{\cL_{\phi_{\eY}}\eY}$ denotes the pullback of the classical support condition $p(\eX)\subset\Sing(\eY)\simeq\eY$.
\end{lemma}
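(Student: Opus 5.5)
Let $\eZ:=\eX\times_{\eY}\eX$, $\eH:=\QC^!(\eZ)$, and write $\QC(\eX)_r$ for the regular $(\eH,\QC(\eY))$-bimodule and $\QC(\eX)_\ell$ for the regular $(\QC(\eY),\eH)$-bimodule. The plan is to build the duality data between them by hand, using the geometry of $\eX\to\eY$, and then read off the traces from the factorization through $\QC(\eY)$-linear categories.

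\textbf{Duality data.} First I would construct the unit and counit exhibiting $\QC(\eX)_\ell$ as the right dual of $\QC(\eX)_r$.
\begin{enumerate}
\item For the counit, I would take the functor $\QC(\eX)_r\otimes_{\QC(\eY)}\QC(\eX)_\ell\to\eH$. Using $\QC(\eX)\otimes_{\QC(\eY)}\QC(\eX)\simeq\QC(\eZ)$ (\cite[Ch.~3, Prop.~3.5.3]{gr}), I would define it as the composite of $\QC(\eZ)\to\QC^!(\eZ)$ with $\Upsilon$, i.e.\ tensoring with $\omega_{\eZ}$, twisted by $\omega_{\eZ/\eX}$. This is the ind-coherent incarnation of the kernel $\cF\boxtimes\cG$.
\item For the unit, I would take $\QC(\eY)\to\QC(\eX)_\ell\otimes_{\eH}\QC(\eX)_r$. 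Here I would use the identification $\QC(\eX)\otimes_{\eH}\QC(\eX)\simeq\QC(\eY)$, or at least a functor from $\QC(\eY)$ given by $p^*$-type maps. This should follow from proper descent, in the form of \cite[Thm.~1.1.3]{ben-zvi-nadler-preygel}: $\QC(\eY)$ is the category of $\eH$-modules in $\QC(\eX)$, since $p$ is proper and surjective onto its image.
\end{enumerate}
The triangle identities then reduce to base change along the diagonal $\Delta_{\eX/\eY}$ and along the projections of $\eX\times_{\eY}\eX\times_{\eY}\eX$.

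For the left duality I would rather not redo this. Instead, I would swap roles using $\sigma^*$ and Grothendieck--Serre duality $\bD$, as in Remark~\ref{rem:conv-cat-pivotal}. The left/right asymmetry is corrected by the twist $\omega_{\eZ/\eX}$, and the rigidity of $\eH$ from Proposition~\ref{prop:tr-conv} guarantees that the right adjoints of these structure maps are bilinear and continuous.

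\textbf{Identifying the traces.} Next I would apply the trace functor \eqref{eqn:L-morita-tr-funct}, with the $1$-morphisms $(\QC(\eX),\phi_{\eX}^*)$ in both directions. By cyclicity, the composite endofunctor $\Tr(\eH,\phi^*)\to\Tr(\QC(\eY),\phi_{\eY}^*)\to\Tr(\eH,\phi^*)$ should be computed by traces of the composed bimodules, so it suffices to identify each leg separately.
\begin{enumerate}
\item \textbf{Leg $\Tr(\QC(\eY))\to\Tr(\eH)$.} Here I would factor $\QC(\eY)\to\eH$ through the central functor $\QC(\eY)\to\QC^!(\eZ)$, $\cF\mapsto\Delta_*p^*\cF\otimes\omega$. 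By Lemma~\ref{lem:tr-ind} and the base change \eqref{eqn:class-unit-conv-cat}, the resulting functor on traces sends $\ev^*\cF$ to $[\Delta_*p^*\cF]$.
\item \textbf{Comparison with the bottom row.} Under Proposition~\ref{prop:tr-conv}, the class $[\Delta_*p^*\cF]$ corresponds to $\cL p_*\ev^*p^*\cF$, which by the projection formula is $\ev^*\cF\otimes\cL p_*\O$. Since $\QC(\cL_{\phi_{\eY}}\eY)$ is generated by objects $\ev^*\cF$ with $\cF$ perfect, this matches $\iota\circ\Gamma_{\{0\}_{p(\eX)}}$ after checking supports.
\item \textbf{The other leg.} The functor $\Tr(\eH)\to\Tr(\QC(\eY))$ is the right adjoint of the first leg, via the duality established above. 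It therefore identifies with the right adjoint of the bottom composite, namely $\iota\circ\Gamma$ read in the other direction.
\end{enumerate}

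\textbf{Main obstacle.} The hard part is the duality data. Making the ind-coherent normalization ($\Upsilon$ together with the relative dualizing twists) consistent with $*$-convolution, so that the triangle identities hold on the nose and the support conditions match $\{0\}_{p(\eX)}$, will take real effort. I would first verify everything on compact generators $\ev^*\cF$ and $[\cG]$ with $\cG$ coherent, and then extend by continuity.
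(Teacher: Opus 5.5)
Your identification of the first leg uses the wrong bimodule. The $1$-morphism is the regular bimodule $\QC(\eX)_r$, not the induction bimodule $\Ind_\Psi=\eH_\Psi$ of the central functor $\Psi=\Delta_*p^*$. Lemma~\ref{lem:tr-ind} computes $\Tr(\Ind_\Psi)$, which is a different functor. As your own step~2 shows, $\Tr(\Ind_\Psi)(\ev^*\cF)\simeq\ev^*\cF\otimes\cL p_*\O_{\cL_{\phi_\eX}\eX}$, a coherent Springer-type sheaf. By contrast, $\iota_{\{0\}_{p(\eX)}}\Gamma_{\{0\}_{p(\eX)}}\ev^*\cF$ is a local cohomology sheaf, and no support check reconciles the two. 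For a concrete failure, take $\eY=\Spec k$ and $\eX=\mathbb{P}^1$. The bottom row is then the identity of $\Vect_k$; consistently, $\QC(\mathbb{P}^1)$ is an invertible bimodule. Your functor, however, is $V\mapsto V\otimes\HH(\mathbb{P}^1)$, and $\HH(\mathbb{P}^1)$ is two-dimensional. What is needed is to unwind \eqref{eqn:2-mor-comp-L-morita} for $\QC(\eX)_r$ using its own duality data, as in \cite[Prop.~3.32]{benzvi}, which is what the paper cites. For this your unit must also be corrected. One has $\QC(\eX)_\ell\otimes_{\eH}\QC(\eX)_r\simeq\QC^!_{p(\eX)}(\eY)$, not $\QC(\eY)$; these differ as soon as $p$ is not surjective. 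The unit is the local cohomology functor $\Gamma_{p(\eX)}$, not a $p^*$-type functor. Together with the counit $\QC(\eZ)\hookrightarrow\eH$, this is exactly what produces $\iota_{\{0\}_{p(\eX)}}\circ\Gamma_{\{0\}_{p(\eX)}}$ and the support condition $\{0\}_{p(\eX)}$.

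The reverse leg cannot be obtained as a right adjoint of the first, because the two bottom composites are not adjoint. In each pair of the bottom row, $\iota\dashv\Gamma$, so such an adjunction would force local cohomology to be left adjoint to the inclusion. This already fails on $\A^1$: for $E$ the injective hull of $k$ at $0$, $\Hom(\Gamma_0\O_{\A^1},E)\not\simeq\Hom(\O_{\A^1},E)$, since $\Hom(\O_{\A^1}[x^{-1}],E)\neq 0$. Structurally, the duality data are not in general $2$-morphisms of $L(\Morita(\dgCat_k))_{\rgd}$; for instance, $\Gamma_{p(\eX)}$ does not preserve compact objects, so it has no continuous right adjoint. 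Hence the bimodule duality does not induce an adjunction of traces. The actual role of the left duality in the lemma is to make $\QC(\eX)_\ell$ right-dualizable, hence a $1$-morphism of $L(\Morita(\dgCat_k))_{\rgd}$, and to supply the data for computing its trace directly. The paper obtains it formally from rigidity of $\eH$ and $\QC(\eY)$ via \cite[Cor.~6.4.1]{gaitsgory-dgcat}. The unit is $\Gamma_{\{0\}_{\eZ}}\colon\eH\to\QC(\eZ)$, right adjoint to the right-duality counit, and the counit is the inclusion $\QC^!_{p(\eX)}(\eY)\hookrightarrow\QC(\eY)$. The same unwinding then gives the counterclockwise square. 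Your $\sigma^*$/$\bD$ route might produce some left duality, but it is unnecessary and does not hand you these specific functors, which the second square requires.
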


\begin{proof}
Right-duality was established in the proof of \cite[Prop.~3.32]{benzvi} (it is not difficult to verify that all results apply in the setting of the $*$-convolution, in addition to the $!$-convolution). Left-duality now follows immediately using \cite[Cor.~6.4.1]{gaitsgory-dgcat} and rigidity of the monoidal categories $\QC^!(\eX\times_{\eY}\eX)$ and $\QC(\eY)$. More explicitly, the functors
\begin{equation}
\label{eqn:reg-bimod-left-dual-funct}
\begin{gathered}
\QC(\eX)\tens{\QC^!(\eX\fibprod{\eY}\eX)}\QC(\eX)\simeq\QC^!_{p(\eX)}(\eY)\xhookrightarrow{\iota_{p(\eX)}}\QC^!(\eY)\\
\QC^!(\eX\fibprod{\eY}\eX)\xrightarrow{\Gamma_{\{0\}_{\eX\times_{\eY}\eX}}}\QC(\eX\fibprod{\eY}\eX)\simeq\QC(\eX)\tens{\QC(\eY)}\QC(\eX)
\end{gathered}
\end{equation}
exhibit this left-duality. Note that the counit is evidently $\QC^!(\eY)$-bilinear and continuous. Moreover, the unit is the continuous right adjoint to the evident $\QC^!(\eX\times_{\eY}\eX)$-bilinear inclusion, hence $\QC^!(\eX\times_{\eY}\eX)$-bilinear by \cite[Cor.~6.2.4]{gaitsgory-dgcat}. The duality identities now follow immediately from the proof of \cite[Prop.~3.32]{benzvi} by adjunction.

For the latter assertion, commutativity of the clockwise square was established in \cite[Prop.~3.32]{benzvi}. The counter-clockwise square follows by an identical argument, using the functors \eqref{eqn:reg-bimod-left-dual-funct}.
\end{proof}

\begin{atom}
We now use this result to give an alternative description of the universal trace functor for the convolution category $\QC^!(\eX\times_{\eY}\eX)$. Write
\begin{equation}
\label{eqn:qc-class-not-cor}
(-)^{\QC}:=\iota_{\{0\}_{p(\eX)}}\Gamma_{\{0\}_{p(\eX)}}\colon\QC^!_{\Lambda_{\eX/\eY,\phi}}(\cL_{\phi_{\eY}}\eY)\to\QC(\cL_{\phi_{\eY}}\eY).
\end{equation}
\end{atom}

\begin{corollary}
\label{cor:reg-mod-rt-left-dual}
Let $\cF\in\QC^!(\eX\times_{\eY}\eX)$.
\begin{enumerate}
\item We have
\begin{equation*}
[\cF]^{\QC}\simeq[\QC(\eX),\phi_{\eX}^*(-)\star\cF]\in\QC(\cL_{\phi_{\eY}}\eY).
\end{equation*}
\item If $\cF$ is coherent, then so is $[\cF]$. Moreover, $[\cF]$ is connective (resp.\ coconnective) if and only if $[\QC(\eX),\phi_{\eX}^*(-)\star\cF]$ is connective (resp.\ coconnective).
\end{enumerate}
\end{corollary}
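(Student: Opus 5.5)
We would deduce both parts from Lemma~\ref{lem:reg-mod-rt-left-dual}, applied to the regular bimodule $\QC(\eX)$, together with the cyclicity of the $2$-categorical trace. For (1), the plan is to view $\cF$ as the $\phi^*$-semilinear endofunctor $\phi^*(-)\star\cF$ of the regular module $\QC^!(\eX\times_{\eY}\eX)$. By \S\ref{atm:tr-A-F-cat-end}, its $2$-categorical class is $[\cF]$. We then apply $\Tr(\QC(\eX),\phi_{\eX}^*)\colon\Tr(\QC^!(\eX\times_{\eY}\eX),\phi^*)\to\Tr(\QC(\eY),\phi_{\eY}^*)$, the functor induced by the $(\QC(\eY),\QC^!(\eX\times_{\eY}\eX))$-bimodule $\QC(\eX)$. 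By functoriality of \eqref{eqn:L-morita-tr-funct}, composing $1$-morphisms in $L(\Morita(\dgCat_k))_\rgd$ shows that the image of $[\QC^!(\eX\times_\eY\eX),\phi^*(-)\star\cF]$ is the class of the composite module. That composite is $\QC(\eX)\otimes_{\QC^!(\eX\times_\eY\eX)}\QC^!(\eX\times_\eY\eX)\simeq\QC(\eX)$, equipped with the endofunctor $\phi_{\eX}^*(-)\star\cF$; this is a routine check of how the endofunctors compose. The commutative square \eqref{eqn:comm-sq-reg-bimod-tr-conv} identifies this trace functor with $\iota_{\{0\}_{p(\eX)}}\Gamma_{\{0\}_{p(\eX)}}=(-)^{\QC}$. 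Hence $[\cF]^{\QC}\simeq[\QC(\eX),\phi_{\eX}^*(-)\star\cF]$.

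For (2), coherence of $[\cF]$ follows from Proposition~\ref{prop:tr-conv}, which gives $[\cF]\simeq(p\times\id_\eY)_*\pr_1^*\cF$. Here $\pr_1$ is quasi-smooth, being a base change of $\Gamma_{\phi_\eX}$ with $\eX$ smooth, so $\pr_1^*$ preserves coherence. The map $p\times\id_\eY$ is proper by Lemma~\ref{lem:loops-props}, so pushforward also preserves coherence.

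For the t-structure claim, by (1) it suffices to show that, for coherent $\cG$ with singular support in $\Lambda_{\eX/\eY,\phi}$, the sheaf $\cG$ is connective (resp.\ coconnective) if and only if $\cG^{\QC}$ is. The functor $\Gamma_{\{0\}}$ right adjoint to $\iota$ on ind-coherent sheaves is $\Psi\colon\QC^!\to\QC$, and this is t-exact. It also restricts to the identity on $\Coh^+$ in the sense that $\QC^!\to\QC$ induces an equivalence on bounded-below objects (\S\ref{atm:preygel-gen}). Thus $\cG^{\QC}$ is the image of the coherent sheaf $\cG$ in $\QC$, and t-exactness of $\Psi$ together with its conservativity on $\QC^{!,+}$ gives both directions of the comparison.

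The main obstacle is making precise that $\Gamma_{\{0\}_{p(\eX)}}$, viewed as a functor $\QC^!_\Lambda(\cL_{\phi_\eY}\eY)\to\QC(\cL_{\phi_\eY}\eY)$, agrees with the standard t-exact functor $\Psi$ on coherent objects, and that $\iota$ loses no t-structure information. We would first identify $\QC^!_{\{0\}_{p(\eX)}}$ with $\QC_{p(\eX)}$ via \S\ref{atm:sing-supp}. Then we would check that the colocalization is compatible with the t-structures, using that singular support conditions do not affect bounded-below objects.
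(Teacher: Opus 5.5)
Your proposal is correct and follows the paper's proof. Part (1) is the commutative square \eqref{eqn:comm-sq-reg-bimod-tr-conv} applied to $[\cF]=[\QC^!(\eX\times_\eY\eX),\phi^*(-)\star\cF]$, and the t-structure comparison is the t-exactness of $\Psi$ together with its equivalence on bounded-below objects, as in \cite[Prop.~11.7.5]{gaitsgory-indcoh}. The only deviation is in proving coherence of $[\cF]$: you use the explicit formula $(p\times\id_\eY)_*\pr_1^*\cF$, where $p\times\id_\eY$ is proper simply as the base change of $p$ along $\ev$. The paper instead gets coherence formally from \S\ref{atm:tr-M-subcat}, since $[\cF]$ is then compact in $\QC^!_{\Lambda_{\eX/\eY,\phi}}(\cL_{\phi_\eY}\eY)$; either argument works.
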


\begin{proof}
The first two assertions are immediate from \eqref{eqn:comm-sq-reg-bimod-tr-conv} and \S\ref{atm:tr-M-subcat}, respectively. The third assertion then follows from \cite[Prop.~11.7.5]{gaitsgory-indcoh}.
\end{proof}

\begin{atom}
\label{atm:conv-cat-funct}
Finally, we explain the functoriality of Proposition~\ref{prop:tr-conv}, as in Corollary~\ref{cor:qc-tr-ind-res}. Let $\eY'$ be a smooth QCA stack equipped with an automorphism $\phi_{\eY'}$, and let $f\colon\eY'\to\eY$ be a morphism intertwining $\phi_{\eY'}$ and $\phi_{\eY}$. Define $p'\colon\eX':=\eY'\times_{\eY}\eX\to\eY'$ and $\phi_{\eX'}:=\phi_{\eY'}\times_{\phi_{\eY}}\phi_{\eX}$ by pullback along $f$, and set $\phi':=\phi_{\eX'}\times_{\phi_{\eY'}}\phi_{\eX'}$ as before. Since $f$ is quasi-smooth, it is locally eventually coconnective by \cite[Cor.~2.2.4]{arinkin-gaitsgory}, and hence (misusing notation slightly) we have an adjoint pair
\begin{equation}
\label{eqn:f-conv-adj}
f^*\colon\QC^!(\eX\fibprod{\eY}\eX)\rightleftarrows\QC^!(\eX'\fibprod{\eY'}\eX')\colon f_*.
\end{equation}
In particular, $f^*$ is compact object-preserving and monoidal, and it intertwines the monoidal endofunctors $\phi^*$ and $\phi^{\prime,*}$.
\end{atom}

\begin{corollary}
\label{cor:tr-conv-funct}
Suppose that $f$ is a smooth relative scheme. Then we have
\begin{equation*}
\Lambda_{\eX'/\eY',\phi'}=\cL f^!\Lambda_{\eX/\eY,\phi},\qquad\qquad\cL f_*\Lambda_{\eX'/\eY',\phi'}\subseteq\Lambda_{\eX/\eY,\phi},
\end{equation*}
and the adjoint functors
\begin{equation*}
\Tr(\Ind_{f^*})\colon\Tr(\QC^!(\eX\fibprod{\eY}\eX),\phi^*)\rightleftarrows\Tr(\QC^!(\eX'\fibprod{\eY'}\eX'),\phi^{\prime,*})\colon\Tr(\Res_{f^*})
\end{equation*}
of \eqref{eqn:tr-adj-ind-res} identify with
\begin{equation}
\label{eqn:tr-conv-loop-adj}
\cL f^*\colon\QC^!_{\Lambda_{\eX/\eY,\phi}}(\cL_{\phi_{\eY}}\eY)\rightleftarrows\QC^!_{\Lambda_{\eX'/\eY',\phi'}}(\cL_{\phi_{\eY'}}\eY')\colon\cL f_*
\end{equation}
under Proposition~\ref{prop:tr-conv}.
\end{corollary}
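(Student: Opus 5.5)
The plan is to reduce both assertions to Corollary~\ref{cor:tr-conv-funct}. First, the singular support conditions: by Corollary~\ref{cor:tr-He-coh-ident}, these conditions are vacuous in our applications, but I will argue generally. Since $f$ is a smooth relative scheme, $f\times_{\eY}\eX\colon\eX'\to\eX$ and the induced map $\eX'\times_{\eY'}\eX'\to\eX\times_{\eY}\eX$ are base changes of $f$, hence smooth. For smooth maps the singular codifferential is an isomorphism (\S\ref{atm:sing-supp}), so $\Sing(\eX'\times_{\eY'}\eX')\simeq\Sing(\eX\times_{\eY}\eX)\times_{\eX\times_{\eY}\eX}(\eX'\times_{\eY'}\eX')$.

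Next, I will form the trace correspondence \eqref{eqn:univ-trace-corresp} for $(\eX',\eY')$. It is the base change of the one for $(\eX,\eY)$ along $\cL f$; this uses that twisted loop spaces commute with fiber products (\S\ref{atm:loop-space-funct}). The equality $\Lambda_{\eX'/\eY',\phi'}=\cL f^!\Lambda_{\eX/\eY,\phi}$ should then follow from base change of the correspondences $p_*$ and $\pr_1^!$ on singular supports. The relevant square is cartesian with a proper horizontal arrow $p\times\id$ and a quasi-smooth vertical arrow $\cL f$, the latter by Lemma~\ref{lem:loops-props}\eqref{itm:quasi-smooth-loops}, so pushforward and pullback of conical closed sets commute. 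The inclusion $\cL f_*\Lambda'\subseteq\Lambda$ then follows formally from $\cL f_*\cL f^!\Lambda\subseteq\Lambda$.

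For the functors, I will apply Lemma~\ref{lem:tr-ind} to the monoidal functor $f^*$. This shows that $\Tr(\Ind_{f^*})$ intertwines the universal trace functors, $\Tr(\Ind_{f^*})[\cF]\simeq[f^*\cF]$. By Proposition~\ref{prop:tr-conv}, $[\cF]=(p\times\id)_*\pr_1^*\cF$, and base change along the cartesian correspondence identifies $(p'\times\id)_*\pr_1'^*f^*\cF$ with $\cL f^*(p\times\id)_*\pr_1^*\cF$. This is legitimate because $\cL f$ is quasi-smooth, hence eventually coconnective, so $*$-pullback on $\QC^!$ satisfies base change against proper pushforward.

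Since the classes $[\cF]$ for compact $\cF$ generate $\Tr$ (\S\ref{atm:tr-M-subcat}), and both functors are continuous, I expect this to give $\Tr(\Ind_{f^*})\simeq\cL f^*$; the right adjoints then agree, giving $\Tr(\Res_{f^*})\simeq\cL f_*$. The main obstacle is upgrading this objectwise identification to a natural equivalence of functors compatible with the identification of Proposition~\ref{prop:tr-conv}. For that I would instead invoke the naturality of the construction in \cite[\S3]{benzvi}, which builds that identification from the ind-coherent Künneth and duality data, and check that $f^*$ is compatible with those data. This is essentially the argument of Corollary~\ref{cor:qc-tr-ind-res}, with ind-coherent base change replacing quasi-coherent.
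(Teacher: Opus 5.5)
Your proposal is correct and follows essentially the paper's proof. Like the paper, you check the singular-support identities from the definitions via base change, get $\Tr(\Ind_{f^*})([\cF])\simeq[f^*\cF]$ from Lemma~\ref{lem:tr-ind}, apply ind-coherent base change along the Cartesian square lying over $\cL f$, and reduce to the compact generators $[\cF]$ using \S\ref{atm:tr-M-subcat}.

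Two small remarks. Your opening sentence ``reducing to Corollary~\ref{cor:tr-conv-funct}'' is circular, presumably a slip for Proposition~\ref{prop:tr-conv}. The upgrade from an objectwise to a natural identification that you flag is a point the paper's proof also passes over silently.
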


\begin{proof}
The claims about singular supports are easily checked from the definitions and properties listed in \S\ref{atm:sing-supp} (in particular, the second claim follows from the first). Since $\cL f$ is quasi-smooth by Lemma~\ref{lem:loops-props}\eqref{itm:quasi-smooth-loops}, we indeed have an adjoint pair \eqref{eqn:tr-conv-loop-adj} as in \eqref{eqn:f-conv-adj}. It therefore suffices to identify $\Tr(\Ind_{f^*})$ with $\cL f^*$ under Proposition~\ref{prop:tr-conv}. By \S\ref{atm:tr-M-subcat}, we reduce to showing this for images of compact objects under the universal trace functor. Given $\cF\in\Coh(\eX\times_{\eY}\eX)$, we have $\Tr(\Ind_{f^*})([\cF])\simeq[f^*\cF]$ by Lemma~\ref{lem:tr-ind}. The result now follows from \eqref{eqn:univ-trace-corresp} by noting that the rightmost square in the commutative diagram
\begin{equation*}
\begin{tikzcd}[column sep=large]
\eX\fibprod{\eY}\eX&\arrow[l,"\pr_1"'](\eX\fibprod{\eY}\eX)\fibprod{\eX\times\eX}\eX\arrow{r}{\sim}&[-2em]\eX\fibprod{\eY\times\eY}\eY\arrow{r}{p\times\id_{\eY}}&\cL_{\phi_{\eY}}\eY\\
\eX'\fibprod{\eY'}\eX'\arrow{u}{f}&\arrow[l,"\pr_1"'](\eX'\fibprod{\eY'}\eX')\fibprod{\eX'\times\eX'}\eX'\arrow{u}\arrow{r}{\sim}&\eX'\fibprod{\eY'\times\eY'}\eY'\arrow{r}{p'\times\id_{\eY'}}\arrow{u}&\cL_{\phi_{\eY'}}\eY'\arrow{u}{\cL f}
\end{tikzcd}
\end{equation*}
is Cartesian; here all vertical arrows are induced by $f$ in the evident manner. 
\end{proof}

\begin{atom}
In particular, we obtain $[\Delta_{\eX'/\eY',*}\O_{\eX'}]\simeq\cL f^*[\Delta_{\eX/\eY,*}\O_{\eX}]$, which agrees with the base-change isomorphism coming from \eqref{eqn:class-unit-conv-cat}.
\end{atom}

\section{Resolutions of Koszul algebras with multiple simple modules}
\label{sec:koszul-res}

\begin{atom}
In this short appendix, we construct a Koszul resolution of the regular bimodule for a Koszul algebra with finitely many simple modules (rather than just one). This material is likely well-known, but we could find no reference in the literature. We apply it in \S\ref{sec:comp-coh-spr} to the noncommutative Springer resolution, to obtain a bounded complex computing the universal trace functor.
\end{atom}

\begin{atom}
Let $A$ be a non-negatively graded classical $k$-algebra, and write $A\simeq\bigoplus_{i\in I}E_i$ for the decomposition of the regular right $A$-module into indecomposable projectives, where $I$ is some finite indexing set. Assume that the $\{E_i\}$ are pairwise nonisomorphic, and let $\{L_i\}$ denote the corresponding simple modules (concentrated in weight $0$). As for any $k$-algebra, the \emph{bar complex} $\hBar_A^\bullet$ is the acyclic complex of $A$-bimodules
\begin{equation*}
\cdots\to A\otimes A\otimes A\xrightarrow{a\otimes b\otimes c\mapsto ab\otimes c-a\otimes bc}A\otimes A\xrightarrow{a\otimes b\mapsto ab}A\to 0\to 0\to\cdots
\end{equation*}
concentrated in degrees $\le 1$ (and free in degrees $\le 0$), with differentials given by the usual alternating sum of face maps. Consider the subcomplex $\hBar_{\{E_i\}}^\bullet\subseteq\hBar_A^\bullet$ defined by
\begin{equation}
\label{eqn:bar-proj}
\hBar_{\{E_i\}}^{-n}:=\bigoplus_{i_0,\ldots,i_{n+2}\in I}\Hom_{A^\op}(E_{i_0},E_{i_1})\tens{k}\Hom_{A^\op}(E_{i_1},E_{i_2})\tens{k}\cdots\tens{k}\Hom_{A^\op}(E_{i_{n+1}},E_{i_{n+2}})
\end{equation}
for each $n\ge-1$, which is evidently preserved under each face map $d_1,\ldots,d_{n+1}$. It is a complex of $A$-bimodules (each of which is projective in degrees $\le 0$) via the algebra isomorphism $A\cong\End_{A^\op}(\bigoplus_{i\in I}E_i)$, and the restriction of the extra degeneracy from $\hBar_A^\bullet$ exhibits it as acyclic.

Now, for each $n\ge-1$ and $i,j\in I$, consider the summand ${}_i(\hBar_{\{E_i\}}^{-n})_j\subseteq\hBar_{\{E_i\}}^{-n}$ given by setting $i_0=i$, $i_{n+2}=j$, and letting $i_1,\ldots,i_{n+1}\in I$ be arbitrary. Furthermore, let ${}_i(\hBar_{\{E_i\}}^{-n})_j^{1,\ldots,1}\subseteq{}_i(\hBar_{\{E_i\}}^{-n})_j$ be the subspace spanned by terms whose $n+2$ tensor factors all lie in weight $1$. Set
\begin{equation*}
{}_i(A^{!,\vee}_n)_j:=\bigcap_{m=1}^n\ker\big(d_m\colon{}_i(\hBar_{\{E_i\}}^{-n+2})_j^{1,\ldots,1}\to{}_i(\hBar_{\{E_i\}}^{-n+3})_j\big).
\end{equation*}
for each $n\ge 1$; it is clearly a finite-dimensional vector space concentrated in weight $n$ (for instance, we have ${}_i(A^{!,\vee}_1)_j=\Hom_{A^\op}(E_i,E_j)_1\subseteq A_1$). Consider the subcomplex $\Kos_A^\bullet\subseteq\hBar_{\{E_i\}}^\bullet$ given by
\begin{equation*}
\Kos_A^{-n}:=\bigoplus_{i_0,i_1,i_{n+1},i_{n+2}\in I}\Hom_{A^\op}(E_{i_0},E_{i_1})\tens{k}{}_{i_1}(A^{!,\vee}_n)_{i_{n+1}}\tens{k}\Hom_{A^\op}(E_{i_{n+1}},E_{i_{n+2}})
\end{equation*}
for each $n\ge 1$, and $\Kos_A^{-n}:=\hBar_{\{E_i\}}^{-n}$ otherwise. Equivalently, let $E_i^\ell$ be the indecomposable projective \emph{left} $A$-module corresponding to $E_i$ (i.e., generated by the same primitive idempotent), and $L_i^\ell$ be the corresponding simple module. Setting ${}_i(A^{!,\vee}_0)_j:=\Hom_{A^\op}(E_i,E_j)_0$, we may write
\begin{equation}
\label{eqn:kos-proj-left-right}
\Kos_A^{-n}=\bigoplus_{i,j\in I}E_i^\ell\tens{k}{}_i(A^{!,\vee}_n)_j\tens{k}E_j
\end{equation}
for each $n\ge 0$, which is a projective $A$-bimodule. Note that the differential on $\Kos_A^{-n}$ is given by the restriction of $d_1+(-1)^nd_{n+1}$, whose image is clearly contained in $\Kos_A^{-n+1}$.
\end{atom}

\begin{atom}
Recall that $A$ is \emph{Koszul} if for any $i,j\in I$ and $n\ge 0$, the graded vector space $\Ext_{A^\op}^n(L_i,L_j)$ is concentrated in weight $-n$. We may now state our main result:
\end{atom}

\begin{proposition}
\label{prop:kos-acyclic}
If $A$ is Koszul, then the complex $\Kos_A^\bullet$ is acyclic.
\end{proposition}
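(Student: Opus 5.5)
We reduce acyclicity of the bimodule complex $\Kos_A^\bullet$ to a one-sided statement. Each term \eqref{eqn:kos-proj-left-right} is of the form $E^\ell_i\otimes_k V\otimes_k E_j$, hence free as a right $A$-module, and the differential $d_1+(-1)^nd_{n+1}$ is a sum of a left-linear and a right-linear piece. Since everything is non-negatively graded with finite-dimensional weight spaces (in each fixed bimodule weight only finitely many terms contribute), a bounded-below complex of right-projective modules is acyclic if and only if it becomes acyclic after applying $-\otimes_A L_j$ for every simple right module $L_j$; this is the graded Nakayama lemma applied to the cone. So first reduce to showing that $\Kos_A^\bullet\otimes_A L_j$ is exact. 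Then $E_{i_{n+1}}\otimes_A L_j$ is $k$ if $i_{n+1}=j$ and $0$ otherwise, so the complex becomes $\bigoplus_i E^\ell_i\otimes{}_i(A^{!,\vee}_n)_j$ in degree $-n$, together with the augmentation to $L_j^\ell$-type data. This is the multi-idempotent analogue of the classical left Koszul complex $A\otimes (A^!_n)^\vee$.

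The second step is to identify this one-sided complex with a sub-complex of the (normalized) one-sided bar resolution $\hBar_{\{E_i\}}\otimes_A L_j$ that computes $\mathrm{Tor}^A(-,L_j)$. Work weight by weight. Because $A_0=\bigoplus_i k\,e_i$ (the weight-zero part spanned by idempotents, as the $E_i$ are pairwise nonisomorphic indecomposables with simple tops $L_i$ in weight $0$), the reduced bar complex over the semisimple ring $A_0$ computes $\Ext_{A^\op}(L_i,L_j)^\vee$ (equivalently $\mathrm{Tor}$), with the weight-$m$ part of the degree-$n$ term built from $A_+^{\otimes_{A_0} n}$. Koszulity says $\mathrm{Tor}_n$ lives purely in weight $n$, i.e.\ on the diagonal where all tensor factors have weight $1$. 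On the diagonal the bar complex has no incoming differentials from higher degree except those that multiply two weight-one factors, so $\mathrm{Tor}_n$ is exactly the intersection of kernels of the inner face maps $d_m$ on $(A_1)^{\otimes_{A_0} n}$, which is the definition of ${}_i(A^{!,\vee}_n)_j$. This identifies the $E_2$-page (or the diagonal part) and shows the inclusion $\Kos_A^\bullet\otimes_A L_j\hookrightarrow \hBar_{\{E_i\}}\otimes_A L_j$ induces an isomorphism on homology.

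Concretely I would filter both complexes by the internal weight minus homological degree and run the standard argument (as in Beilinson--Ginzburg--Soergel, but with $A_0$ semisimple rather than $k$): the inclusion is a quasi-isomorphism because both sides resolve $L_j^\ell$-data, the bar side by the acyclicity via the extra degeneracy already noted, and the Koszul side because its homology off the diagonal vanishes by Koszulity and on the diagonal matches by construction. The main obstacle is this comparison step: checking carefully that the differential $d_1+(-1)^nd_{n+1}$ restricted to $\Kos$ agrees with the bar differential after $\otimes_A L_j$ (only $d_1$ survives, as $d_{n+1}$ lands in positive weight of the last factor and is killed), and that working over $A_0=\prod k$ instead of $k$ causes no sign or idempotent bookkeeping issues. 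I would handle this by decomposing everything into ${}_i(-)_j$ summands, where the argument reduces literally to the one-simple-module case. Finally, the symmetric argument with $L_i^\ell\otimes_A-$ is not needed, since right-projectivity plus Nakayama suffices.
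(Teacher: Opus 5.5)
Your route is the paper's with left and right interchanged. The paper first reduces, by graded Nakayama applied to the top cohomology, to exactness of $L_i\otimes_A\Kos_A^\bullet$. It then compares with the normalized bar complex. Finally it uses Koszulity to show that $L_i\otimes_A\N\hBar_{\{E_i\}}^{\le 0}\otimes_A L_j^\ell$ has cohomology only on the diagonal, where it is the intersection of the kernels of the inner face maps on the all-weight-one part, i.e.\ ${}_i(A^{!,\vee}_n)_j$.

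The step that does not close as you have written it is the passage from this diagonal computation to the claim that $\Kos_A^{\le 0}\otimes_A L_j^\ell\to\N\hBar_{\{E_i\}}^{\le 0}\otimes_A L_j^\ell$ is a quasi-isomorphism. The diagonal computation concerns the two-sided complexes $L_i\otimes_A(-)\otimes_A L_j^\ell$; that is what $\mathrm{Tor}^A(L_i,L_j^\ell)$ and $\Ext_{A^\op}(L_i,L_j)$ see. Your complexes, however, are tensored only on the right.

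The filtration you name does not bridge this gap. Both complexes have weight-preserving differentials, so every differential raises ``internal weight minus homological degree'' by exactly one. Hence the associated graded has zero differential, and the $E_2$-page is already the homology you are trying to compute. That quantity is what defines the diagonal; it is not a useful filtration. For the same reason, ``the Koszul side resolves because its homology off the diagonal vanishes by Koszulity'' is circular unless ``homology'' means $\mathrm{Tor}$.

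The missing ingredient is precisely the paper's second application of Nakayama. So your closing remark that the argument with $L_i\otimes_A-$ is unnecessary holds only for the first reduction. The fix runs as follows:
\begin{enumerate}
\item The cone of the comparison map is a bounded-above complex of graded projective left $A$-modules with weights bounded below.
\item Applying $L_i\otimes_A-$ to it yields the two-sided map. Your diagonal computation shows this is a quasi-isomorphism: the source has zero differential and maps isomorphically onto the diagonal cohomology of the target.
\item The top-cohomology Nakayama argument then kills the cone.
\end{enumerate}
If you prefer a spectral sequence, filter by the weight of the left $A$-factor, $F^p=A_{\ge p}\cdot(-)$. Since $A_0$ is semisimple, the $E_1$-page is $A\otimes_{A_0}\H^*(A_0\otimes_A-)$; this is where $\mathrm{Tor}$, and hence Koszulity, actually enters. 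The filtration is finite in each internal weight, so the comparison converges.

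When you identify $\mathrm{Tor}_n$ with the intersection of the kernels of the $d_m$, also say why the kernel of the alternating sum equals that intersection: the $d_m$ land in distinct multi-weight summands. A small point in your first step: you need the left simples $L_j^\ell$ there, not right simples, and the terms are projective rather than free.
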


\begin{proof}
It suffices to show that $L_i\otimes_A\Kos_A^\bullet\simeq 0$ for each $i\in I$. Indeed, suppose the complex $\Kos_A^\bullet$ of projective left $A$-modules has its highest nonzero cohomology in degree $d$. Then the convergent spectral sequence
\begin{equation*}
E_2^{s,t}=\H^s(L_i\tens{A}\H^t(\Kos_A^\bullet))\Longrightarrow\H^{s+t}(L_i\tens{A}\Kos_A^\bullet)=0
\end{equation*}
shows that $\H^0(L_i\otimes_A\H^d(\Kos_A^\bullet))=0$ for each $i\in I$, and hence $\H^d(\Kos_A^\bullet)=0$ by the graded version of Nakayama's lemma, a contradiction.

Consider the normalized version $\N\hBar_{E_i}^\bullet$ of the complex \eqref{eqn:bar-proj} obtained by quotienting by the subspaces generated by the degeneracies of the associated simplicial object. Explicitly, we have
\begin{equation}
\label{eqn:nbar-complex}
\N\hBar_{\{E_i\}}^{-n}\simeq\bigoplus_{i_1,\ldots,i_{n+1}\in I}E_{i_1}^\ell\tens{k}\Hom_{A^\op}(E_{i_1},E_{i_2})_{\ge 1}\tens{k}\cdots\tens{k}\Hom_{A^\op}(E_{i_{n}},E_{i_{n+1}})_{\ge 1}\tens{k}E_{i_{n+1}}
\end{equation}
for each $n\ge 0$ (where the subscripts denote the strictly positive components), and the natural inclusion $\N\hBar_{\{E_i\}}^\bullet\to\hBar_{\{E_i\}}^\bullet\simeq 0$ is a quasi-isomorphism. Thus, it suffices to show that the natural map
\begin{equation}
\label{eqn:kos-to-nbar}
L_i\tens{A}\Kos_A^\bullet\to L_i\tens{A}\N\hBar_{\{E_i\}}^\bullet
\end{equation}
is a quasi-isomorphism. In fact, it suffices to show that it is a quasi-isomorphism after applying $-\otimes_AL_j^\ell$, for any $j\in I$: indeed, the cone of \eqref{eqn:kos-to-nbar} is a complex of projective right $A$-modules (concentrated in degrees $\le 0$), so we may apply the argument used in our first paragraph.

Let $j\in I$, and observe that
\begin{equation*}
L_i\tens{A}\Kos_A^{\le 0}\tens{A}L_j^\ell\simeq\bigoplus_{n\ge 0}{}_i(A^{!,\vee}_n)_j^\vee[n],
\end{equation*}
where $\Kos_A^{\le 0}$ denotes the ``na\"ive'' truncation of $\Kos_A^\bullet$. Since each graded component of $\N\hBar_{\{E_i\}}^\bullet$ is perfect over $k$ (as $\N\hBar_{\{E_i\}}^{-n}$ lies in weights $\ge n$), and since $\Hom_{A^\op}(E_{j'},L_j)\simeq E_{j'}\otimes_AL_j^\ell$ for any $j'\in I$, we have
\begin{equation*}
\Hom_{A^\op}(L_i,L_j)\simeq\Hom_{A^\op}(L_i\tens{A}\N\hBar_{\{E_i\}}^{\le 0},L_j)\simeq(L_i\tens{A}\N\hBar_{\{E_i\}}^{\le 0}\tens{A}L_j^\ell)^\vee.
\end{equation*}
Thus, Koszulity of $A$ implies that the cohomology of $L_i\otimes_A\N\hBar_{\{E_i\}}^{\le 0}\otimes_AL_j^\ell$ lies only in degree $-n$ and weight $n$ for $n\ge 0$. It follows that $\H^{-n}(L_i\otimes_A\N\hBar_{\{E_i\}}^{\le 0}\otimes_AL_j^\ell)$ is given by the kernel of the usual differential on
\begin{equation*}
\bigoplus_{i_1,\ldots,i_{n+1}\in I}\Hom_{A^\op}(E_{i_1},E_{i_2})_{1}\tens{k}\cdots\tens{k}\Hom_{A^\op}(E_{i_{n}},E_{i_{n+1}})_{1}
\end{equation*}
for each $n\ge 0$. Now, $L_i\otimes_A\N\hBar_{\{E_i\}}^{-n+1}\otimes_AL_j^\ell$ is graded by $(n-1)$-tuples of weights; since each face map lands in a distinct such tuple, this kernel coincides with ${}_i(A^{!,\vee}_n)_j^\vee$, as desired.
\end{proof}

\begin{atom}
In particular, we have shown:
\end{atom}

\begin{corollary}
\label{cor:kos-dual-ext}
Suppose $A$ is Koszul. Then for each $n\ge 0$ and $i,j\in I$, we have
\begin{equation*}
{}_i(A^{!,\vee}_n)_j\simeq\Ext_{A^\op}^n(L_i,L_j)^\vee.
\end{equation*}
\end{corollary}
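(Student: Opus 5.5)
The statement is Corollary~\ref{cor:kos-dual-ext}: for Koszul $A$, ${}_i(A^{!,\vee}_n)_j\simeq\Ext^n_{A^\op}(L_i,L_j)^\vee$. The plan is to read this off from the proof of Proposition~\ref{prop:kos-acyclic}. Once $\Kos_A^\bullet$ is known to be acyclic, its naive truncation $\Kos_A^{\le 0}$ is a resolution of the regular bimodule $A$. By \eqref{eqn:kos-proj-left-right} this resolution consists of projective bimodules, so it is in particular a projective resolution of $A$ as a left $A$-module. Tensoring on the left with $L_i$ therefore gives a projective resolution $L_i\otimes_A\Kos_A^{\le 0}$ of $L_i$ by right $A$-modules, with terms $\bigoplus_j L_i\otimes_A E_{i'}^\ell\otimes {}_{i'}(A^{!,\vee}_n)_j\otimes E_j$. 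Since $L_i\otimes_A E^\ell_{i'}$ is $k$ if $i=i'$ and $0$ otherwise (weight $0$), the degree $-n$ term is $\bigoplus_j {}_i(A^{!,\vee}_n)_j\otimes_k E_j$.

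Next I would compute $\Ext^n_{A^\op}(L_i,L_j)$ by applying $\Hom_{A^\op}(-,L_j)$ to this resolution. Using $\Hom_{A^\op}(E_{j'},L_j)\simeq k$ for $j'=j$ and $0$ otherwise, the degree $n$ term of the resulting complex is ${}_i(A^{!,\vee}_n)_j^\vee$.

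The key point is that the induced differentials vanish. The differential $d_1+(-1)^nd_{n+1}$ multiplies a weight-one outer factor into an $E$ or an $E^\ell$, so after tensoring with the simple modules $L_i$ and $L_j$, which live in weight $0$, it lands in strictly positive weight and acts by zero. The complex computing Ext is therefore $\bigoplus_n {}_i(A^{!,\vee}_n)_j^\vee[-n]$ with zero differential, and taking degree $n$ gives the claim. The same identification already appears at the end of the proof of Proposition~\ref{prop:kos-acyclic}, where $L_i\otimes_A\Kos_A^{\le0}\otimes_A L_j^\ell\simeq\bigoplus_n {}_i(A^{!,\vee}_n)_j^\vee[n]$ is matched with the dual of $\Hom_{A^\op}(L_i,L_j)$.

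The only step that needs some care is keeping the duals and the left/right conventions straight. Each ${}_i(A^{!,\vee}_n)_j$ is finite-dimensional, so dualizing is harmless. The minimality of the differential follows from the weight argument above.
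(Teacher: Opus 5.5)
Your argument is correct, but it runs in the opposite logical direction from the paper's.

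\textbf{The paper's route.} The corollary is read off from inside the proof of Proposition~\ref{prop:kos-acyclic}, before acyclicity is established. The normalized bar resolution identifies $\H^{-n}(L_i\otimes_A\N\hBar_{\{E_i\}}^{\le 0}\otimes_A L_j^\ell)$ with $\Ext^n_{A^\op}(L_i,L_j)^\vee$. Koszulity forces this cohomology to live in weight exactly $n$. In that weight, the degree $-n$ cohomology is the kernel of the differential on weight-$(1,\ldots,1)$ tensors, i.e.\ the intersection of the kernels of the face maps, which is ${}_i(A^{!,\vee}_n)_j$ by definition. This identification is then what shows that $\Kos_A^\bullet\to\N\hBar_{\{E_i\}}^\bullet$ is a quasi-isomorphism.

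\textbf{Your route.} You take acyclicity as a black box and apply the minimal-resolution principle. $L_i\otimes_A\Kos_A^{\le 0}$ is a projective resolution of $L_i$ whose differential has image in the radical: one face map dies against $L_i$, and the other pushes a weight-one factor into $E_{j'}$. Hence $\Hom_{A^\op}(-,L_j)$ has zero differential.

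\textbf{Comparison.} The paper's route costs nothing extra and shows exactly where Koszulity enters, without presupposing that $\Kos_A^\bullet$ is a resolution. Yours is more modular. It also shows directly that $L_i\otimes_A\Kos_A^{\le 0}$ is the minimal projective resolution of $L_i$, which makes the length statement used in \S\ref{sec:comp-coh-spr} transparent.

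\textbf{A small slip in your cross-reference.} By your own computation, $L_i\otimes_AE^\ell_{i'}$ and $E_{j'}\otimes_AL^\ell_j$ are each $k$ or $0$. So $L_i\otimes_A\Kos_A^{-n}\otimes_AL_j^\ell\simeq{}_i(A^{!,\vee}_n)_j$, with no dual. With the $^\vee$ you quote, matching against the dual of $\Hom_{A^\op}(L_i,L_j)$ would put the dual on the wrong side of the statement.
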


\clearpage

\bibliographystyle{amsalpha}
\bibliography{references}

\end{document}